\begin{document}

\frontmatter
\titleen
\subtitleen
\titlelt
\subtitlelt
\tableofcontents*
\chapter*{Notation}

\noindent
\begin{tabular}[h]{c l}
$\N$ & the set of positive integers\\
$\N_0$ & the set of nonnegative integers\\
$\Z$ & the set of integers\\
$\Q$ & the set of rational numbers\\
$\R$ & the set of real numbers\\
$\C$ & the set of complex numbers\\
$\overline{l,m}$ & the set $\{l,l+1,\ldots,m\}$\\
$\S_n$ & the symmetric group on a finite set of $n$ symbols\\
$\sigma$ & generic element of the $\S_n$\\
$\kk_j(\sigma )$ & the number of cycles of length $j$ in a permutation $\sigma$\\
$\bar{s}$ & generic vector $(s_1,\ldots,s_n)\in\N_0^n$\\
$\ell(\bar{s})$ & the sum $1s_1+2s_2+\ldots+ns_n$\\
$\ell_r(\bar{s})$ & the sum $1s_1+2s_2+\ldots+rs_r$\\
$\ell_{rn}(\bar{s})$ & the sum $(r+1)s_{r+1}+(r+2)s_{r+2}+\ldots+ns_n$\\
$\log$ & the natural logarithm\\
$\Gamma$ & the Gamma function\\
$\Pr$ & the probability\\
$\E X$ & the expected value of a random variable $X$ \\
$\re$ & the Euler number\\
$\gamma$ & the Euler–Mascheroni constant\\
$H_r$ & the $r$-th harmonic number\\
%% $u$ & the fraction $n/r\in\Q$\\
$x$ & the positive solution to the equation $\sum_{j=1}^rx^j=n$\\
$\xi(v)$ & the positive solution to the equation $\re^{\xi(v)}=1+v\xi(v)$\\
$I(v)$ & the function $\int_0^v(\re^t-1)t^{-1}\, dt$\\
$\varrho(v)$ & the Dickman function;\\ & $\varrho(v)=1$ if $v\in[0,1]$, and $v\varrho'(v)+\varrho(v-1)=0$ if $v>1$\\
$\omega(v)$ & the Buchstab function;\\ & $v\omega(v)=1$ if $v\in[1,2]$, and $(v\omega(v))'=\omega(v-1)$ if $v>2$\\
\end{tabular}

\mainmatter
\chapter{Introduction}\label{Chapter1}

\section{Problems and results}\label{section1.2}

\subsection{Problems}

Recall that $\mathrm{S}_n$ is the symmetric group on $n$ elements, and $k_j(\sigma)$ is the number of cycles of length $j$ in a permutation $\sigma$. The dissertation focuses on three problems:
\begin{enumerate}
	\item  Asymptotic formulas for the density
\begin{equation}\label{1nu}
 \nu(n,r)=\frac{1}{n!}\big|\{\sigma\in\S_n:\; k_j(\sigma)=0\,\ \forall j\in\overline{r+1, n}\}\big|
\end{equation}
 as $n\to\infty$. The goal is to cover the whole range $1\leq r\leq n$ for the parameter $r$ so that it could be imagined as any function of $n$ falling into the range, thereby $r:\N\to\N$ and $r:=r(n)\leq n$. 
\item We have the same goal for the density
\begin{equation}\label{2kap}
\kappa(n,r)=\frac{1}{n!}\left|\{\sigma\in\S_n :k_j(\sigma)=0\ \forall j\in \overline{1,r}\}\right|.
\end{equation}

\item Let $Z_j$, $j\in\overline{1,n}$, be the Poisson random variables such that $\E Z_j=1/j$. Set $\bar{k}_r(\sigma)=(k_1(\sigma),\ldots,k_r(\sigma))$ and $\bar{Z}_r=(Z_1,\ldots,Z_r)$. The final problem is to obtain an asymptotic formula for the total variation distance between the distributions of random vectors $\bar{k}_r(\sigma)$ and $\bar{Z}_r$. The distance is denoted by $d_{TV}(n,r)$. We have
	\begin{align}
		d_{TV}(n,r)&=\sup_{V\subset\N^r_0}\left|\frac{\#\{\sigma : \bar{k}_r(\sigma)\in V\}}{n!}-\Pr\left(\bar{Z}_r\in V\right)\right|\nonumber\\
		           &=\frac{1}{2}\sum_{m=0}^\infty\nu(m,r)\left|\kappa(n-m,r)-\re^{-H_r}\right|\label{dtvsum}
	\end{align}
	provided that $r:\mathbb{N}\to\mathbb{N}$ and $r=r(n)\leq n$. Thus, the solution to the problem is based on the success in the previous two problems. 
	
\end{enumerate}
	
	\begin{comment}The third and final problem that is investigated in the thesis is the asymptotic formula for the
	\begin{align}
		d_{TV}(n,r)&=\sup_{V\subset\N^r_0}\left|\frac{\#\{\sigma : \bar{k}_r(\sigma)\in V\}}{n!}-\Pr\left(\bar{Z}_r\in V\right)\right|\nonumber\\
		&=\frac{1}{2}\sum_{m=0}^\infty\nu(m,r)\left|\kappa(n-m,r)-\re^{-H_r}\right|\label{dtvsum}
	\end{align}
	provided that $1\leq r=r(n)\leq n$. It is the total variation distance between the distributions of random vectors $\bar{k}_r(\sigma)$ and $\bar{Z}_r$. The formula is known and is proved in Lemma~\ref{dtvformula}.
	\end{comment}
	
	\subsection{Results}
	
	The results for $\eqref{1nu}$ obtained in the thesis are consistently presented in Section~\ref{section2.2}. They improve on all the previous results of such kind posed in Section~\ref{section2.1}, and basically are proved using the saddle-point analysis. Its use is motivated by Chapter~III.5 of \cite{GT} and earlier papers. To be concrete, Theorems~\ref{thm1},~\ref{Theorem 1},~\ref{thm2},~\ref{1cor}, and Corollary~\ref{2cor} are the results on $\nu(n,r)$.
	
	The results for $\eqref{2kap}$ obtained in the work are expounded in Section~\ref{section3.2}. They improve on all the previous results of such type stated in Section~\ref{section3.1}, and basically are proved using the saddle-point analysis, the use of which is motivated by Chapter~III.6 of \cite{GT} and the paper \cite{GT-Crible}. To be specific, Theorems~\ref{thm3.2},~\ref{theorem3.3},~\ref{theorem3.4}, Corollary~\ref{corollary3.5}, and Theorem~\ref{theorem3.2} are the results on $\kappa(n,r)$.
	
	Estimates for \eqref{dtvsum} are given in Theorem~\ref{EMRP} and Corollary~\ref{corollary4.3}.
	Theorem~\ref{EMRP} is an improvement on the previous asymptotic formula for $d_{TV}(n,r)$. We get it by employing the technique developed in \cite{GT-Crible}, which requires obtaining Theorems~\ref{thm2} and~\ref{theorem3.2}. Corollary~\ref{corollary4.3} gives an estimate of \eqref{dtvsum} only through the function $\nu(n,r)$, which could potentially be used to refine previous estimates of $d_{TV}(n,r)$ in the regions where asymptotic formulas are still unknown. 
	
%	\section{Structure of the thesis}
%	
%	The thesis consists of six chapters followed by Bibliography. Most chapters are divided into sections, all of them have their own titles. The problems considered in the thesis and obtained results are briefly presented in the next section. Chapters~\ref{Chapter2},~\ref{Chapter3}, and~\ref{Chapter4} give a historical information on the issues and discuss the proofs of the principal propositions. Chapter~\ref{Chapter5} offers rigorous proofs for all the thesis and some related results. Finally, Chapter~\ref{Chapter6} provides some concluding remarks. 
%	
%	 The most important propositions in the text are allocated by the unique numbering. For example, Theorem 1.2 is the second proposition of Chapter 1, Corollary 2.5 is the fifth proposition of Chapter 2, Lemma 5.9 is the ninth proposition of Chapter 5, etc. 
%	 
%	 Some excerpts of the thesis naturally coincide with the ones in \cite{NotesDM2015}, \cite{RAMA2016}, \cite{EMRP-ArX15}, because the exposition is based on these works.
	
%\newpage	
\section{Dissemination of the results}

Most of the results can already be found in the papers: 
	\begin{itemize}
\item[$\blacktriangleright$] E.~Manstavi\v cius and R. Petuchovas, \textit{Local probabilities for random permutations without long cycles}, Electron. J. Combin., \textbf{23} (1) (2016), \#P1.58, 25 pp.
\item[$\blacktriangleright$] E.~Manstavi\v cius and R.~Petuchovas, \textit{Local probabilities and total variation distance for random permutations}, The Ramanujan J., (2016), doi: 10.1007/s11139-016-9786-0, 18 pp.
\end{itemize}
The papers are filed in Bibliography as \cite{EMRP-ArX15} and \cite{RAMA2016} respectively. There are only Theorems~\ref{thm3.2},~\ref{theorem3.3},~\ref{theorem3.4}, and Corollary~\ref{corollary3.5} submitted to a journal and waiting for approval.

All the results were exposed and discussed in a series of seminars during the period of 2012-2016 in \underline{Vilnius University} at the \underline{Department of Probability Theory} \underline{and Number Theory} of \underline{Faculty of Mathematics and Informatics}. Also, they were presented at the following conferences: 
\begin{itemize}
\item	R. Petuchovas, Permutations without long cycles, The 55th Conference of the Lithuanian Mathematical Society, 26-27 June 2014.	

\item	R. Petuchovas, On permutations without long cycles, The 11th International Vilnius Conference on Probability Theory and Mathematical statistics, 30 June - 4 July 2014.

\item	R. Petuchovas, The density of the permutations without short cycles in a symmetric group, The 56th Conference of the Lithuanian Mathematical Society, 16-17 June 2015.

\item	R. Petuchovas, Permutations without long or short cycles, European Conference on Combinatorics, Graph Theory and Applications, Norway, Bergen, 31~August - 4~September 2015.
	
\item	R. Petuchovas, Recent results on permutations without short cycles, The 27th International Conference on Probabilistic, Combinatorial and Asymptotic Methods for the Analysis of Algorithms, Poland, Krakow, \mbox{4-8 July 2016.}
\end{itemize}

In addition there are a few conference abstracts worth mentioning:

\begin{itemize}
	\item E. Manstavi\v cius and R. Petuchovas, \textit{Permutations without long or short cycles}, Electronic Notes in Discrete Mathematics, \textbf{49} (2015), 153--158.
	
	\item R.~Petuchovas, \textit{Recent results on permutations without short cycles},
	 \url{http://aofa.tcs.uj.edu.pl/proceedings/aofa2016.pdf} or \arxiv{1605.02690v1}, 2016, 3~pp.
\end{itemize}

The abstracts are filed in Bibliography as \cite{NotesDM2015} and \cite{AofA2016} respectively. 

\newpage

\section{Acknowledgements}

I regard that my grandparents Nina Petuchova and Georgij Petuchov had the strongest influence on my decision to study mathematics. When I was a child, they were the first ones to encourage my love for mathematics, taught and urged me to work hard and properly deal with its problems. I also remember that my mother Rasa Augait\. e was the first one to teach me how to count to one hundred; it happened one dark evening, as we were sitting in the car and waiting for my father finish working... I am certainly grateful to my father Jurij Petuchov for his benevolent and constant support.

I would like to express my gratitude to all my school teachers who taught me mathematics, all of them were very nice; Dalia Ru\v skien\. e from Klaip\. eda former primary school "Au\v srin\. e", and Vida Rudokien\. e, Edmundas Kvederis, Juozas Kukulskis, Vilija \v Sileikien\. e -- all of them from Klaip\. eda high school "\k A\v zuolyno".

%%at least for the fact that he backed me financially and in fact,

I also express my gratitude to the teaching staff of Vilnius University, who lectured me. In addition, I would like to express my gratitude to Antanas Laurin\v cikas, Art\= uras Dubickas, Paulius Drungilas, Ram\= unas Garunk\v stis, and Rima Stan\v cikien\. e, who have had a direct or indirect positive influence on my work at Vilnius University's Department of Probability Theory and Number Theory. 

And, finally, my biggest gratitude for doing mathematics for the past five years goes to a Great Teacher and colleague Professor Eugenijus Manstavi\v cius, who guided me into the subject and who has always been a pleasure to work with. Writing of this thesis would not have been possible without him.

\chapter{Permutations without long cycles}\label{Chapter2} 

\section{Previous results}\label{section2.1}

 Enumeration of decomposable structures missing large components started in 1930 with a paper by K. Dickman \cite{Dickman} dealing with the natural numbers composed of small prime factors. Two decades later, in a series of works, N.G. de Bruijn  extended this to a deep analytical theory. A survey \cite{Moree} gives a broad  historical account. In the 1980's, A. Hildebrand, G. Tenenbaum and other mathematicians again advanced in the same direction. Their results are  well exposed in a book \cite{GT} \iffalse, paper \cite{GT-Crible},\fi and in more recent papers. By analogy, a similar theory was carried out for polynomials over a finite field (for example,  \cite{Odlyzko},  \cite{GarePana}) and generalized to the so-called additive arithmetical semigroups (\hspace{-0.02mm}\cite{Warlimont}, \cite{EM-Pal92}, \cite{EM-LMJ92}).  A survey \cite{Granv} discusses the analogy between the theories. The list of the papers does not purport to be complete. However, it has influenced the decision to improve on results of the same problem in case of permutations. 

We focus only on permutations comprising the symmetric group $\S_n$ and seek asymptotic formulas for the probability $\nu(n,r)$
     that a permutation sampled uniformly at random has no cycles of length exceeding  $r$, where  $1\leq r\leq n$, $r\in \N$, and $n\to\infty$.
   The goal is to cover the whole  range for the parameter $r$.

    Let us start from an exact formula. Recall the Notation. If $k_j(\sigma)$ equals the number of cycles of length $j$ in a permutation $\sigma\in\S_n$ and $\bar k(\sigma):=
   \big(k_1(\sigma),\dots,k_n(\sigma)\big)$ is the cycle  structure vector, then 
   \[
                \big|\{\sigma\in\S_n:\; \bar k(\sigma)=\bar s\}\big|={\mathbf 1}\{\ell(\bar s)=n\} n!\prod_{j=1}^n\frac{1}{j^{s_j} s_j!}
   \]
   (\hspace{-0.02mm}\cite[p.~80]{MBona}). Hence,
   \begin{equation}\label{exact1}
                \nu(n,r)=\frac{1}{n!}\big|\{\sigma\in\S_n:\; k_j(\sigma)=0\,\ \forall j\in\overline{r+1, n}\}\big|=
                \sum_{\ell_r(\bar s)=n \atop \ell_{rn}(\bar s)=0}\prod_{j=1}^r\frac{1}{j^{s_j} s_j!},
   \end{equation}
    where the summation is over the vectors $\bar s\in\N_0^n$. The formula can be rewritten in terms of the independent Poisson random variables $Z_j$, $1\leq j\leq n$, such that $\E Z_j=1/j$. To wit,
   \begin{equation}
      \nu(n,r)= \exp\bigg\{\sum_{j^=1}^r\frac{1}{j}\bigg\} \Pr\Big(\sum_{j=1}^rjZ_j=n\Big).
      \label{P-ell}
      \end{equation}
There are  two trivial cases $ \nu(n,1)=1/n!$ and $\nu(n,n)=1$. Also, the first few values of $\nu(n,r)$ are in the \textit{On-Line Encyclopedia of Integer Sequences}:
   \begin{align*}
     \nu(n,2)n!&=A000085,\quad \nu(n,3)n!=A057693,\quad \nu(n,4)n!=A070945,\\
     & \nu(n,5)n!=A070946,\quad \nu(n,6)n!=A070947.
   \end{align*}
   Therein, a great attention is paid to $\nu(n,2)n!$, which equals the number of involutions in $\S_n$. In particular, there is an indication that $\nu(n,2)n!$ can be expressed via a value of the Hermite polynomial $H_n(x)$ at a point on the imaginary axis of a complex plane.

Asymptotic analysis of $\nu(n,r)$ provides more information than the formula~\eqref{exact1} if parameters are large. To start with, we have the Cauchy integral representation
   \begin{equation*}
   \nu(n,r)={1\over 2\pi i}\int_{|z|=\alpha}
  \exp\left\{\sum_{j=1}^r\frac{z^j}{j}\right\} {dz\over z^{n+1}}
   \end{equation*}
   (\eqref{P-ell} and Lemma~\ref{EGF}), where $\alpha>0$. 
    An idea of applying the saddle-point method might be taken from the pioneering work \cite{Pl-Ro29} concerning $H_n(x)$, $x\in\R$. Indeed, this method was used by  L. Moser and M. Wyman \cite{MoWy55} to prove that
     \begin{equation}
     \nu(n,2)\sim \frac{1}{\sqrt{4\pi n}}\exp\Big\{-\frac{n\log n}{2}+\frac{n}{2}+n^{1/2}- \frac{1}{4}\Big\}.\label{nun2}
     \end{equation}
In fact, the relation was established  earlier by S.\ Chowla, I.N.\ Herstein and W.K.\ Moore~\cite{ChHeMo1951}, by another method.
   H.~Wilf included a detailed proof of (\ref{nun2}) into Chapter 5  of his book \cite{Wilf}. However, Exercise 8 on pages 190-191 in it
     gives an erroneous expression for $r=3$. Actually,
         \begin{equation}
    \nu(n,3)\sim\frac{1}{\sqrt{6\pi n}} \exp\Big\{-\frac{n\log n}{3}+\frac{n}{3} +\frac{1}{2} n^{2/3} +\frac{5}{6} n^{1/3}-\frac{5}{18}\Big\}.\label{nun3}
     \end{equation}
    As one has been able to check, the last formula firstly appears in A.N. Timashov's paper \cite{Timash}. It gives a reference to  V.N.~Sachkov's work \cite{Sachkov} where asymptotic formula for $\nu(n,r)$ with $r$ fixed is presented. However, the formula is given without a proof and contains inaccuracies; it  goes in contrast to (\ref{nun3})  and even to (\ref{nun2}). A  year later, M. Lugo \cite{Lugo}  also gave (\ref{nun3}), leaving for the reader other cases of $\nu(n,r)$. 
 
W.K.~Hayman  \cite{Hayman} and later B.~Harris and  L.~Schoenfeld \cite{Har+Schon68} elaborated the methodology that let the latter to succeed in obtaining further asymptotic terms and, in particular, to obtain
\begin{equation} 
\nu(n,r)=\frac{\exp\left\{\sum_{j=1}^r {x^j j^{-1}}\right\}}{x^n\sqrt{2\pi \sum_{j=1}^rjx^j}}\left(1+O_r\Big(\frac{1}{n}\Big)\right)
\label{1Pnr}
\end{equation}
 for an arbitrary but fixed parameter $r\in\overline{1,n}$; where $x$ is the unique positive solution to the equation $\sum_{j=1}^rx^j=n$, and index of $O_r$ stresses indeterminate dependence on the parameter $r$ of constant in the estimate. Actually, we owe to E.~Schmutz whose Theorem 1 and the facts presented below it in  \cite{Schmutz88} clarify  the use of the general and fairly complicated expansion given in \cite{Har+Schon68}. 

 Recently, T.~Amdeberhan and V.H.~Moll \cite[Theorem 8.1]{AmdMoll} have proved by using the asymptotic behavior \eqref{1Pnr} (see also \cite{MoWy1956}) that
\begin{equation}\label{AmMoAsymp}
\nu(n,r)\sim\frac{1}{\sqrt{2\pi nr}}\exp\left\{-\frac{n\log n}{r}+\frac{n}{r}+\sum_{k=1}^{r-1}d_{rk}n^{(r-k)/r}-\frac{1}{r}\sum_{j=2}^r\frac{1}{j}\right\}
\end{equation}
if $r$ is fixed and $n\to\infty$, where $$d_{rk}=\frac{r}{k!(r-k)k}\prod_{j=0}^{k-1}\left(\frac{k+jr}{r}\right).$$ Thus, they have generalized the formulas (\ref{nun2}) and (\ref{nun3}). A proof of the statement is described in the next section in which Theorem~\ref{Theorem 1} validates \eqref{AmMoAsymp} in the region $1\leq r\leq \log n$ and adds an error estimate. 

%A.N.~Timashov \cite{Timash} mentions a  Sachkov's result from 1986, extending formula
%(\ref{1Pnr})  for $r=o(\log n)$. Unfortunately, we failed to find a relevant paper.

    The above mentioned results deal with the case when the ratio $n/r$ is "large". Besides that, there is
     the vast extent of literature dealing with the case when $u:=n/r$ is "small".
     Both cases are related to the limit distribution of the longest cycle length, denoted by $L_n(\sigma)$, of $\sigma\in\S_n$.
      V.L.~Goncharov's result  \cite{Gonch44} from 1944 shows that
    \begin{equation}\label{goncharov}
           \nu(n, r)= \frac{1}{n!}\big|\{\sigma\in \S_n:\; L_n(\sigma)\leq r\}\big|=\varrho(u)+o(1)
    \end{equation}
    uniformly in $u\geq 1$. Here  $\varrho(v)$ is the Dickman function, it is continuous for $v>1$. The challenge to estimate the remainder term in \eqref{goncharov} was taken up by X. Gourdon in his notable thesis \cite{Gourd-th}. Formula (12) on page 131 in it gives the result
            \[
           \nu(n, r)=\varrho(u)+O\left({\log n\over r}\right)
    \]
    for $u\geq1$. In addition, an $l$-term asymptotic expansion of $\nu(n,r)$  with a remainder term  $O_l\big( n^{-l+1/2}\log n\big)$ for an arbitrary $l\in\N$ is obtained in Theorem 2 on page 207. Recalling that
  \begin{equation}
  \varrho(v)=\exp\left\{ -v\log v-v\log\log (v+2)+v\left(1+O\left(\frac{\log\log(v+2)}{\log(v+2)}\right)\right) \right\}\label{rho explicit}
  \end{equation}
  if $v\geq 1$ (\hspace{-0.02mm}\cite[Corollary~2.3]{AH+GT}), one can verify that the error terms in either of Gourdon's asymptotic formulas swallow the main term $\varrho(u)$  if $u\geq \log n$. Therefore, it is more desirable to obtain the relative asymptotic estimate. Theorem 4.13 on page 91 in \cite{ABT}, applied for permutations, yields
           \begin{equation*}
           \nu(n, r)=\varrho(u)\big(1+o(1)\big)
    \end{equation*}
    if $u\leq C$, where $C\geq 1$ is an arbitrary but fixed positive constant. As a byproduct of enumeration of corresponding elements in an additive semigroup, the last relation extended to a region $u\leq \sqrt{n/\log n}$ has appeared in the paper \cite{EM-LMJ92}.
     The result is presented in this work as Theorem \ref{thm2} below, it was obtained by E.~Manstavi\v cius and the author \cite{EMRP-ArX15} using another method.

The results exposed in the next section cover what has just been mentioned. 

\section{Results of the thesis}\label{section2.2}

The original source of all the theorems presented in this section is \cite{EMRP-ArX15}. Recall that we are interested in an asymptotic formula for 
\begin{equation*}
                \nu(n,r)=\frac{1}{n!}\big|\{\sigma\in\S_n:\; k_j(\sigma)=0\,\ \forall j\in\overline{r+1, n}\}\big|
   \end{equation*}
when $1< r< n$ and $n\to\infty$. To start with, we have the Cauchy integral representation
   \begin{equation}
   \nu(n,r)={1\over 2\pi i}\int_{|z|=\alpha}
  \exp\left\{\sum_{j=1}^r\frac{z^j}{j}\right\} {dz\over z^{n+1}},
\label{C-integr}
   \end{equation}
   where $\alpha>0$. Applying the classic saddle-point method (described, for example, in~\cite[p.~551]{Fl-Sed}), we obtain 

\begin{theorem}\label{thm1} Let $x$ be the positive solution to the equation $\sum_{j=1}^rx^j=n$.  Then
\begin{equation}
\nu(n,r)=\frac{\exp\left\{\sum_{j=1}^r {x^j j^{-1}}\right\}}{x^n\sqrt{2\pi \sum_{j=1}^rjx^j}}\left(1+O\left(\frac{r}{n}\right)\right)\label{fullx}
\end{equation}
provided that $ 1\leq r\leq c n(\log n)^{-1}(\log\log n)^{-2}$,
where $c=(12\pi^2 \re)^{-1}$ and $n\geq 4$.
\end{theorem}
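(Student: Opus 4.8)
The plan is to run the classical saddle-point method on the Cauchy integral~\eqref{C-integr}. Because $\frac{d}{dz}\bigl(\sum_{j=1}^{r}z^{j}/j-n\log z\bigr)=z^{-1}\bigl(\sum_{j=1}^{r}z^{j}-n\bigr)$, the unique positive saddle point is precisely the number $x$ of the statement, so I would take the contour radius $\alpha=x$ and put $z=x\re^{i\theta}$, $\theta\in(-\pi,\pi]$. Writing $f(z):=\sum_{j=1}^{r}z^{j}/j$ and using $\sum_{j=1}^{r}x^{j}=n$, this rewrites~\eqref{C-integr} as
\[
\nu(n,r)=\frac{\re^{f(x)}}{2\pi x^{n}}\int_{-\pi}^{\pi}\re^{\phi(\theta)}\,d\theta,\qquad
\phi(\theta):=\sum_{j=1}^{r}\frac{x^{j}}{j}\bigl(\re^{ij\theta}-1-ij\theta\bigr).
\]
With $B_{k}:=\sum_{j=1}^{r}j^{k-1}x^{j}$ (so $B_{1}=n$, and $B_{2}$ is the sum under the square root in~\eqref{fullx}), the Taylor expansion at $\theta=0$ is $\phi(\theta)=-\tfrac12B_{2}\theta^{2}-\tfrac{i}{6}B_{3}\theta^{3}+\tfrac1{24}B_{4}\theta^{4}+\cdots$, and it remains to prove $\int_{-\pi}^{\pi}\re^{\phi(\theta)}\,d\theta=\sqrt{2\pi/B_{2}}\,(1+O(r/n))$.

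The first ingredient is two-sided control of the $B_{k}$. The bounds $B_{k}\le r^{k-1}n$ and $B_{k+1}\le rB_{k}$ are immediate; for the reverse direction, the hypothesis $r\le cn(\log n)^{-1}(\log\log n)^{-2}<n/4$ forces $x^{r}\ge4$ (otherwise $n=\sum_{j\le r}x^{j}<4r<n$), and then splitting $\sum_{j\le r}jx^{j}$ at $j=r/2$ gives $B_{2}\ge rn/4$; Cauchy--Schwarz in the form $B_{k}^{2}\le B_{k-1}B_{k+1}$ propagates this to $B_{k}\asymp r^{k-1}n$ for $k=2,3,4$. This is exactly what yields the stated error: on a symmetric contour the cubic term contributes nothing after integration, and the two surviving second-order corrections in the Gaussian expansion are $B_{4}/B_{2}^{2}\asymp r/n$ and $B_{3}^{2}/B_{2}^{3}\asymp r/n$, whereas a cruder treatment would only give $O(r^{2}/n)$.

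Next I would split at a level $\theta_{0}$ with $B_{3}\theta_{0}^{3}\to0$ and $B_{2}\theta_{0}^{2}\to\infty$ (possible because $r\ll n$; e.g.\ $\theta_{0}=(\log\log n)(rn)^{-1/2}$). On $|\theta|\le\theta_{0}$ one writes $\re^{\phi(\theta)}=\re^{-B_{2}\theta^{2}/2}\re^{\phi(\theta)+B_{2}\theta^{2}/2}$, uses $|\phi(\theta)+\tfrac12B_{2}\theta^{2}|\le\tfrac16B_{3}|\theta|^{3}=o(1)$ to expand the second factor into finitely many powers of $\theta$, integrates termwise against the Gaussian, discards odd powers by symmetry, and extends the integral to $\R$ at a cost $O(\re^{-B_{2}\theta_{0}^{2}/2})$, obtaining $\int_{|\theta|\le\theta_{0}}\re^{\phi}\,d\theta=\sqrt{2\pi/B_{2}}\,(1+O(r/n))$. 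On $\theta_{0}<|\theta|\le2/r$ one has $1-\cos j\theta\ge\tfrac14(j\theta)^{2}$ for every $j\le r$, hence $\mathrm{Re}\,\phi(\theta)\le-\tfrac14B_{2}\theta^{2}$, and this range contributes $\ll B_{2}^{-1/2}\re^{-B_{2}\theta_{0}^{2}/4}=o\bigl((r/n)B_{2}^{-1/2}\bigr)$.

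The main obstacle is the outer range $2/r<|\theta|\le\pi$, where one must show that $\sum_{j=1}^{r}\frac{x^{j}}{j}(1-\cos j\theta)$ stays above a suitable constant multiple of $\log n$, uniformly in $\theta$, so that this range contributes $o\bigl((r/n)B_{2}^{-1/2}\bigr)$. Here the sum is governed by the top indices $j\approx r$, where $\cos j\theta$ can be close to $1$ (the worst case being $\theta$ near $2\pi/r$), so the bound must be produced by retaining a positive proportion of the indices $j\le r$ on which $j\theta$ avoids $2\pi\Z$, and by distinguishing whether $x$ stays bounded away from $1$ (in which case the terms with $j\approx r$ alone give a bound of order $n/r$) or tends to $1$ (in which case one keeps $\Theta(r)$ terms and uses $x^{r}\asymp(n/r)\log(n/r)$, obtained by analysing the equation defining $x$); a geometric summation then gives a bound of order $n\bigl(r(\log(n/r))^{2}\bigr)^{-1}$, which beats $\log n$ precisely when $r\le cn(\log n)^{-1}(\log\log n)^{-2}$. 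Tracking the constants through this uniform lower bound is what fixes the admissible range and the explicit value $c=(12\pi^{2}\re)^{-1}$, and I expect it to be the real work. Combining the central contribution with the two tail estimates then gives~\eqref{fullx}.
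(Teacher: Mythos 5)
Your overall strategy coincides with the paper's: contour at the saddle $z=x$, Taylor expansion of the phase around $\theta=0$, identification of the dominant error terms $B_4/B_2^2\asymp r/n$ and $B_3^2/B_2^3\asymp r/n$, and a three-zone decomposition of $[-\pi,\pi]$. Your derivation of $B_2\asymp rn$ (via $x^r\ge 4$ and splitting at $j=r/2$) and the propagation $B_k\asymp r^{k-1}n$ for $k=3,4$ via Cauchy--Schwarz is sound and is an acceptable substitute for the paper's Lemma~\ref{x}.

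There are, however, two issues, one minor and one substantive. The minor one: your cutoff $\theta_0=(\log\log n)(rn)^{-1/2}$ is too small for the mid-range tail to beat the target. You need $B_2\theta_0^2\gtrsim\log u+O(1)$ so that $\re^{-B_2\theta_0^2/4}=o(r/n)=o(1/u)$; with your choice $B_2\theta_0^2\asymp(\log\log n)^2$, and for small $r$ (say $r$ bounded) one has $\log u\sim\log n\gg(\log\log n)^2$, so $\re^{-B_2\theta_0^2/4}$ is far larger than $1/u$. This is fixable by enlarging $\theta_0$: since $B_3\theta_0^3\to0$ only requires $\theta_0\ll(r^2n)^{-1/3}$ and $\sqrt{\log u/B_2}\ll(r^2n)^{-1/3}$ holds because $\log u\ll u^{1/3}$, you may take $\theta_0\asymp\sqrt{\log u/(rn)}$ (the paper's $t_0=r^{-7/12}n^{-5/12}$ is an even more comfortable choice, giving $B_2t_0^2\asymp u^{1/6}$).

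The substantive gap is the outer range. You correctly flag it as ``the real work,'' but the strategy you sketch --- ``retaining a positive proportion of the indices $j\le r$ on which $j\theta$ avoids $2\pi\Z$'' --- is unlikely to deliver the required quantitative strength, for a structural reason: the weight $x^j/j$ is concentrated on the top indices $j\approx r$, which are exactly the ones near a resonance when $\theta\approx2\pi/r$. If you discard a fixed fraction $\varepsilon$ of the top indices, the surviving sum is $\lesssim x^{(1-\varepsilon)r}/(r(x-1))\asymp(u\log u)^{1-\varepsilon}/\log u$, which for $u\asymp\log n\,(\log\log n)^2$ is $\asymp(\log n)^{1-\varepsilon}$ up to log-log factors, strictly below the $\gtrsim\log n$ threshold you need; letting $\varepsilon\to0$ reopens the resonance problem. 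The paper avoids this entirely by a lossless reduction: since each summand $\frac{x^j}{j}(\cos j\theta-1)$ is nonpositive and $1/j\ge1/r$, one has
\[
\Re\sum_{j=1}^{r}\frac{x^{j}(\re^{ij\theta}-1)}{j}\;\le\;\frac{1}{r}\,\Re\sum_{j=1}^{r}x^{j}(\re^{ij\theta}-1),
\]
and the right-hand side is a genuine geometric sum that can be evaluated in closed form and bounded (Lemma~\ref{7lema}), producing the decisive estimate $\Re g_r(t,x)\le-\frac{1}{4\pi^2}\,u^{1-4/(r+1)}/\log^2u+O(1)$ for $1/r\le|t|\le\pi$. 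That inequality, not a thinning of indices, is what makes the constant $c=(12\pi^2\re)^{-1}$ drop out. Without it, your proposal does not yet close.
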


The upper bound of the parameter $r$ indicates the limitation of the approach. For larger $r$, it is difficult to estimate the integrand in \eqref{C-integr} outside a vicinity of the saddle-point $z=\alpha=x$. The result clarifies formula \eqref{1Pnr} and extends its region of validity. The following theorem is a consequence of Theorem~\ref{thm1}.
   
   \begin{theorem}\label{Theorem 1}  If $ 1\leq r\leq \log n$, then
\[
   \nu(n,r)=
   \frac{1}{\sqrt{2\pi nr}}\exp\bigg\{-\frac{n\log n}{r}+\frac{n}{r}+\sum_{k=1}^r   d_{rk} n^{(r-k)/r}\bigg\} \big(1+O\left( n^{-1/r}\right)\big).
   \]
 Here
\[
   d_{rr}=-\frac{1}{r}\sum_{j=2}^r\frac{1}{j}
   \]
and
\[
             d_{rk}= \frac{\Gamma(k+k/r)}{(r-k)\Gamma(k+1) \Gamma(1+k/r)}
\]
if $1\leq k\leq r-1$.
\end{theorem}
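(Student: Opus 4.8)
\medskip
\noindent\textbf{Proof proposal.}
The plan is to deduce the formula from Theorem~\ref{thm1} by solving the saddle-point equation $\sum_{j=1}^rx^j=n$ asymptotically and substituting the resulting expansion of $x$ into \eqref{fullx}. First I would observe that $1\le r\le\log n$ lies inside the range of Theorem~\ref{thm1}, since this amounts to $(\log n)^2(\log\log n)^2\le cn$, which holds for all large $n$ (the finitely many remaining values being absorbed into the implied constant). Next I would record the elementary fact that on $[1,\log n]$ the function $r\mapsto\log r+(\log n)/r$ is nonincreasing and equals $\log n$ at $r=1$, so $\log r\le(1-1/r)\log n$, i.e.\ $r\le n^{1-1/r}$, whence $r/n\le n^{-1/r}$. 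Thus the factor $1+O(r/n)$ in \eqref{fullx} contributes only $O(n^{-1/r})$ after taking logarithms, and it remains to verify that
\[
\sum_{j=1}^r\frac{x^j}{j}-n\log x-\frac12\log\Bigl(2\pi\sum_{j=1}^rjx^j\Bigr)
\]
equals $-\frac{n\log n}{r}+\frac nr+\sum_{k=1}^rd_{rk}n^{(r-k)/r}-\frac12\log(2\pi nr)$ with an error $O(n^{-1/r})$.

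The crux is a precise enough asymptotic formula for $x$. Multiplying $\sum_{j=1}^rx^j=n$ by $x-1$ yields $x^{r+1}-x=n(x-1)$, i.e.\ the clean identity $x^r=n+1-n/x$. The substitution $x=(n+1)^{1/r}(1-s)^{1/r}$ turns this into the fixed-point equation $s=a(1-s)^{-1/r}$ with $a:=n(n+1)^{-1-1/r}=n^{-1/r}(1+O(1/n))$; since $(1-s)^{-1/r}$ is analytic at $s=0$ with value $1$ there, Lagrange inversion gives $s=s(a)=a+O(a^2)$ as a power series in $a$, and one checks that $a$ stays below its radius of convergence throughout the range (the case $r=1$ being exact: $x=n$). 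Then $\log x=\frac1r\log(n+1)+\frac1r\log(1-s)$ and $x^j=(n+1)^{j/r}(1-s)^{j/r}$, so the quantities above become explicit analytic functions of $s$ times powers of $(n+1)^{1/r}$.

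I would then substitute and collect by powers of $n$. For the ``variance'' factor a routine estimate based on $x^r=n+1-n/x$ gives $\sum_{j=1}^rjx^j=rn(1+O(n^{-1/r}))$, so $\frac12\log(2\pi\sum jx^j)=\frac12\log(2\pi nr)+O(n^{-1/r})$. For the main part I would expand $(1-s)^{j/r}$ and $\log(1-s)$ and use the Lagrange--B\"urmann formula $[a^k]H(s(a))=\frac1k[s^{k-1}](H'(s)(1-s)^{-k/r})$ (with $[\,\cdot\,]$ coefficient extraction): the leading term comes out to $-\frac{n\log n}{r}+\frac nr$; the coefficient of $n^{1-k/r}$ for $1\le k\le r-1$ works out to $\frac r{k(r-k)}[t^k](1-t)^{-k/r}$, which by $[t^k](1-t)^{-k/r}=\Gamma(k+k/r)/(\Gamma(k+1)\Gamma(k/r))$ and $\Gamma(1+k/r)=(k/r)\Gamma(k/r)$ is exactly $d_{rk}=\Gamma(k+k/r)/((r-k)\Gamma(k+1)\Gamma(1+k/r))$; and the $n^0$ contributions --- gathered from $\sum x^j/j$, from $n\log(1-s)$, and from $\frac nr\log(1+1/n)$ --- recombine to $-\frac1r(H_r-1)=d_{rr}$. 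Up to this bookkeeping the identification of the coefficients is the same algebraic computation by which Amdeberhan and Moll derived \eqref{AmMoAsymp} from \eqref{1Pnr}, and may be quoted; the new content of the theorem is the uniform error term.

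The main obstacle is precisely this uniform control of the remainder for all $1\le r\le\log n$. Unlike the fixed-$r$ situation, $a$ need not be small --- being of order $n^{-1/r}$, it is as large as about $\re^{-1}$ near $r=\log n$ --- so one must show that truncating $s(a)$ after the terms that still matter (because of the factor $n$ in $n\log x$ this means keeping $s$ through order $a^r$) leaves a remainder of size only $O(n^{-1/r})$, and that no contribution of size strictly between $n^{-1/r}$ and $1$ escapes the cancellations; these cancellations are already indispensable in producing $d_{rr}$. One must also verify that, uniformly for $r\le\log n$, the ratio of $a$ to the radius of convergence $r(r+1)^{-1-1/r}$ of $s(\cdot)$ stays bounded away from $1$ (it is at most $\re^{-1}+o(1)$), which makes the tail estimates uniform; this is exactly where the restriction $r\le\log n$ enters, since for appreciably larger $r$ that ratio tends to $1$. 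Granting this, everything else is the routine substitution sketched above, and at the endpoint $r=1$ the claim reduces to Stirling's formula with error $O(1/n)=O(n^{-1/r})$.
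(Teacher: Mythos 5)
Your plan is close to the paper's, and you have correctly identified both the right tool (Lagrange--B\"urmann applied to the \emph{composed} functions $H(s(a))$, not merely to the series for the saddle point) and the right coefficient formulas: the change of variable $x^r=n+1-n/x \Rightarrow s=a(1-s)^{-1/r}$ is just a reparametrization of the paper's implicit function $y=z\bigl((1-y^r)/(1-y)\bigr)^{1/r}$ with $y(n^{-1/r})=x^{-1}$, your $[t^k](1-t)^{-k/r}=\Gamma(k+k/r)/(\Gamma(k+1)\Gamma(k/r))$ gives exactly the paper's $d_{rk}$, the constant-term recombination to $-\tfrac1r(H_r-1)$ matches $h_0$ in Lemma~\ref{lema3}, and your reduction $r/n\le n^{-1/r}$ for $1\le r\le\log n$ is the right way to absorb the $O(r/n)$ of Theorem~\ref{thm1} into $O(n^{-1/r})$.

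Where the proposal has a genuine gap is precisely the place you flag and then wave past with ``granting this, everything else is routine.'' The uniform control of the discarded tail is \emph{the} content of the theorem --- the coefficient identification you cite from Amdeberhan--Moll already gives \eqref{AmMoAsymp} for fixed $r$, so the new work is exactly the remainder bound $O(n^{-1/r})$ holding uniformly for $1\le r\le\log n$, where $a\sim n^{-1/r}$ can be as large as $\re^{-1}$. The paper does not rely on ``the radius of convergence stays bounded away from $1$'' as a black box; it proves explicit coefficient bounds for the composed series (Corollaries~\ref{cor1} and~\ref{cor2}: $N|b_N|\le1$ for $N\le r-1$, $b_N=O(N/r)$ for $r<N\le2r-1$, $N|b_N|\le r^{N/r}$ and $|g_N|\le\frac1{N-1}r^{(N-1)/r}$ for $N\ge1$, all by Cauchy's inequality applied to $(1+y+\cdots+y^{r-1})^{\alpha}$), and then sums the tails directly to show $R(z)=O(|z|+n|z|^{r+1})$, hence $R(n^{-1/r})=O(n^{-1/r})$; the same bounds give $\sum_{N\ge1}N|b_N|n^{-N/r}\le\tfrac52 n^{-1/r}$, yielding your ``routine'' variance estimate $\lambda(x)=nr(1+O(n^{-1/r}))$. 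Without these explicit $N$-dependent bounds your argument does not actually establish the uniform remainder. Relatedly, the paper remarks explicitly (just after Theorem~\ref{Theorem 1}) that expanding $x$ alone (Lemma~\ref{lema11}) and substituting into \eqref{fullx} is \emph{not} effective, and for the reason you yourself name: the $O(1/n)$ error in $x$ gets multiplied by the large factor $n$ inside $n\log x$ and by the large powers $x^j\asymp n^{j/r}$, and the resulting cancellations (needed to produce $d_{rr}$) are exactly what the direct coefficient bounds make transparent. So: right route, right formulas, but the load-bearing uniform tail estimate is asserted rather than proved, and it is the crux.
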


The result improves on \eqref{AmMoAsymp} providing the error estimate and extending the range of the parameter $r$.
   To prove Theorem~\ref{Theorem 1}, we use the Lagrange-B\" urmann formula for the compositions of functions to obtain
\begin{equation*}
 \sum_{j=1}^r\frac{x^j}{j}-n\log x=-\frac{n\log n}{r}+\frac{n}{r}+\sum_{k=1}^r   d_{rk} n^{(r-k)/r}+O\left(n^{-1/r}\right)
\end{equation*}
and
\begin{equation*}
\left(\sum_{j=1}^r jx^j\right)^{-1/2}=\frac{1}{\sqrt{nr}}\left(1+O\left(n^{-1/r}\right)\right)
\end{equation*} 
for $1\leq r\leq \log n$. Clearly, these formulas along with Theorem~\ref{thm1} give the result. Notice, it is not effective here to apply the Lagrange-B\" urmann formula only to the function $x$ and then put the result
\begin{align*}
x&=
n^{1/r}-\frac{1}{r}  -\sum_{l=2}^{r}\frac{ \Gamma(l+(l-1)/r)}{(l-1)\Gamma(l+1)\Gamma((l-1)/r)}
n^{-(l-1)/r}\\
&\quad\   + \frac{1}{r}n^{-1+1/r}+O\left(\frac{1}{n}\right)
%\label{xnr-Skleid}
\end{align*}
if $2\leq r\leq \log n$ into \eqref{fullx}. This approximation is proved in Lemma~\ref{lema11}.

Recall that  $\varrho$ is the Dickman function. It should be said, $\varrho(v)\leq 1/\Gamma(v+1)$ if $v\geq 1$ \cite[p.~366]{GT}. Denote by $\mathcal{L}(\cdot)$ the Laplace transform. The next theorem is obtained by estimating the difference $\nu(n,r)-\mathcal{L}^{-1}(\mathcal{L}(\varrho)(s))(n/r)$, $s\in\C$. 

\begin{theorem}\label{thm2} Let $u=n/r$. If $\sqrt{n\log n}\leq r\leq n$, then
\[
\nu(n,r)=\varrho(u)\bigg(1+O\left(\frac{u\log(u+1)}{r}\right)\bigg).
\]
\end{theorem}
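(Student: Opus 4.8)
The plan is to realise both $\nu(n,r)$ and $\varrho(u)=\mathcal{L}^{-1}\bigl(\mathcal{L}(\varrho)\bigr)(u)$ as integrals over one and the same vertical line and then to estimate the difference of the two integrands. Recall that the Laplace transform of the Dickman function is entire, $\mathcal{L}(\varrho)(s)=\re^{\gamma+I(-s)}$, so that $\varrho(u)=(2\pi i)^{-1}\int_{\kappa-i\infty}^{\kappa+i\infty}\re^{\gamma+I(-s)+us}\,ds$ for every $\kappa\in\R$. On the permutation side, \eqref{P-ell} and Cauchy's formula give
\[
\nu(n,r)=\frac{\re^{H_r}}{2\pi i}\oint_{|z|=x}\exp\Bigl\{\sum_{j=1}^{r}\frac{z^{j}-1}{j}\Bigr\}\frac{dz}{z^{n+1}},
\]
where $x$ is the saddle, $\sum_{j\le r}x^{j}=n$. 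Substituting $z=\re^{-s/r}$ unrolls $|z|=x$ onto the segment $\Re s=\sigma_0:=-r\log x$, $|\Im s|\le\pi r$, turns $z^{-n}$ into $\re^{us}$, and yields
\[
\nu(n,r)=\frac{\re^{H_r}}{2\pi i\,r}\int_{\sigma_0-i\pi r}^{\sigma_0+i\pi r}\exp\bigl\{\Psi_r(s)\bigr\}\,\re^{us}\,ds,
\qquad
\Psi_r(s):=\sum_{j=1}^{r}\frac{\re^{-js/r}-1}{j}.
\]
Thus $\nu(n,r)$ is the inverse Laplace transform, evaluated at $u$, of the ``discretised transform'' $r^{-1}\re^{H_r}\exp\{\Psi_r(\cdot)\}$ with the contour truncated at height $\pi r$, and the whole matter is to compare this with $\mathcal{L}^{-1}(\mathcal{L}(\varrho))(u)=\varrho(u)$.

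First I would measure the pointwise gap. Clearly $r^{-1}\re^{H_r}=\re^{\gamma}(1+O(1/r))$, while $\Psi_r(s)=\sum_{j\le r}r^{-1}\bigl(\re^{-(j/r)s}-1\bigr)(j/r)^{-1}$ is a right–endpoint Riemann sum of mesh $1/r$ for $\int_0^1(\re^{-vs}-1)v^{-1}\,dv=I(-s)$, so Euler–Maclaurin gives $\Psi_r(s)=I(-s)+(2r)^{-1}(\re^{-s}-1+s)+O\bigl(r^{-2}(1+|s|)^{2}\re^{|\Re s|}\bigr)$. Since $\sigma_0=-\xi(u)(1+o(1))$ and $\re^{-\sigma_0}=1+u\xi(u)(1+o(1))$, the first–order correction at the saddle equals $(u-1)\xi(u)(1+o(1))/(2r)$, which is $O\bigl(u\log(u+1)/r\bigr)$ because $\xi(u)\ll\log(u+2)$. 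Hence, on a Gaussian window of a few standard deviations about $\sigma_0$, the integrand of $\nu(n,r)$ agrees with that of $\varrho(u)$ up to a factor $1+O\bigl(u\log(u+1)/r\bigr)$; integrating over the window — and using the classical saddle–point formula $\varrho(u)=\bigl(2\pi I''(\xi(u))\bigr)^{-1/2}\re^{\gamma+I(\xi(u))-u\xi(u)}(1+o(1))$ to pin down the value of the integral — delivers the main term $\varrho(u)\bigl(1+O(u\log(u+1)/r)\bigr)$.

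What remains is to bound the off–window contributions to
\[
\nu(n,r)-\varrho(u)=\frac{1}{2\pi i}\Bigl[\int_{|\Im s|\le\pi r}\mathcal{L}(\varrho)(s)\re^{us}\bigl(\re^{R_r(s)}-1\bigr)\,ds-\int_{\substack{\Re s=\sigma_0\\|\Im s|>\pi r}}\mathcal{L}(\varrho)(s)\re^{us}\,ds\Bigr],
\]
where $R_r(s):=\Psi_r(s)+H_r-\log r-\gamma-I(-s)$. The tail beyond height $\pi r$ is handled by the standard decay of $\mathcal{L}(\varrho)$ on vertical lines together with one integration by parts against $\re^{iu\tau}$, which bounds it by $\ll\varrho(u)\,\xi(u)/(\pi r)$. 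The genuinely delicate point is the minor–arc estimate for the first integral, i.e.\ a sufficiently strong lower bound for $D(\tau):=\Psi_r(\sigma_0)-\Re\Psi_r(\sigma_0+i\tau)=\sum_{j\le r}x^{j}j^{-1}\bigl(1-\cos(j\tau/r)\bigr)$: the weights $1/j$ make the naive bound $D(\tau)\gg\log|\tau|$ too lossy (it costs a spurious $\log r$ and hence does not beat the outer factor $1/r$), so one must either exploit the cancellation present in the oscillatory integral itself, or use $1\le x^{j}\le x^{r}\asymp u$ to gain $D(\tau)\gg x^{r/2}\asymp\sqrt{u\log u}$ once $|\tau|/r$ is bounded away from $0$ and $u$ is large, disposing of the remaining range of $u$ through the functional equation $n\,\nu(n,r)=\sum_{m=n-r}^{n-1}\nu(m,r)$ for $n\ge r$ — the discrete counterpart of $u\varrho(u)=\int_{u-1}^{u}\varrho(t)\,dt$ — which, run against the exact case $\nu(m,r)=1=\varrho(m/r)$ for $m\le r$, carries a controllable error when $u$ is not too large. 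Verifying that all of these pieces are $O\bigl(\varrho(u)\,u\log(u+1)/r\bigr)$ is exactly where the hypothesis $\sqrt{n\log n}\le r\le n$ is used, and this minor–arc/off–window analysis is the main obstacle of the proof.
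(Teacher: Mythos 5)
Your setup closely mirrors the paper's (Cauchy integral, substitution $z=\re^{-s/r}$, identification of $\hat\varrho(s)=\re^{\gamma+I(-s)}$), but there are two significant departures, one of which is a real gap.

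The less important departure: the paper places the contour at $|z|=\re^{\xi(u)/r}$, i.e.\ at $\Re s=-\xi(u)$, precisely so that the dictionary with $\hat\varrho(s)$ is clean; and in place of your Euler–Maclaurin comparison of $\Psi_r(s)$ with $I(-s)$ it uses the \emph{exact} decomposition $\sum_{j\le r}(\re^{js/r}-1)/j=I(s)+T(s)$ (Lemma~\ref{IT}), where $T$ absorbs all of the mesh-$1/r$ discretisation error. The $\nu$-integrand then factors exactly as $\hat\varrho(s)\re^{us}\cdot\exp\{-\gamma+T(-s)\}$, and the whole theorem becomes a question of controlling $T(-s)$ (Lemma~\ref{Tz}) and the decay of $\hat\varrho(s)$ on the line. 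Your Euler–Maclaurin expansion is an approximate version of this same identity; at the saddle it reproduces the same first-order term $u\xi(u)/(2r)=O(u\log(u+1)/r)$. So far this is a cosmetic difference.

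The genuine gap is the minor-arc estimate, which you yourself flag as ``the main obstacle.'' Your proposed route---bounding $D(\tau)=\sum_{j\le r}x^jj^{-1}(1-\cos(j\tau/r))$ from below---does not give enough. First, the bound $D(\tau)\gg x^{r/2}\asymp\sqrt{u\log u}$ (claimed for $|\tau|/r$ bounded away from $0$) does not cover the intermediate range $1/r\le|\tau|/r\ll1$, where neither your Gaussian-window estimate nor this bound applies and the naive $\log|\tau|$ saving is, as you note, too lossy. Second, and more fatally, in the regime covered by the theorem ($\sqrt{n\log n}\le r\le n$, so $u$ can be \emph{bounded} while $r\to\infty$), the minor arc has length $\asymp r$ and its contribution must be $O(\varrho(u)u\log(u+1)/r)=O(1/r)$; a uniform saving of size $\re^{-C\sqrt{u\log u}}$ with $u$ bounded produces $O(r\re^{-C})$, off by a factor of $r^2$. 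Pushing this back into the recurrence $n\nu(n,r)=\sum_{m=n-r}^{n-1}\nu(m,r)$ for ``$u$ not too large'' is a separate argument, and you give no indication of how the error accumulates over the $\asymp u$ iterations it would take, nor how to reach the optimal factor $u\log(u+1)/r$. The paper side-steps the trigonometric sum entirely: once the integrand is written as $\hat\varrho(s)\re^{us}(\re^{T(-s)}-1)$, the minor-arc decay comes from the known analytic estimates for $\hat\varrho(s)$ on vertical lines (Lemma~\ref{rholap}, imported from Tenenbaum's theory), including the crucial $\hat\varrho(s)\sim 1/s$ behaviour for $|\Im s|>1+u\xi$, together with two integrations by parts on the sidebands $\pi\le|\tau|\le\pi r$. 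That is the mechanism that makes the minor arc contribute $O(\varrho(u)(1+u\log u)/r^2)$ uniformly down to bounded $u$, and it has no counterpart in your sketch.
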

The result is not new, but the method used to obtain it is different than that in \cite{EM-LMJ92}. It brings some additional understanding how Theorem~\ref{thm2} could be improved.
%% A temptation to extend the region of validity of parameter $r$ for this approximation is natural. 
To do that, one has to use the function $\xi(v)$, which is a positive solution to the equation $\re^{\xi(v)}=1+v\xi(v)$ for $v>1$, where $\xi(1)=0$. The function $\xi(v)$ is important because of its involvement in the asymptotic formula
\begin{equation}\label{varrho1}
\varrho(v)=\sqrt{\frac{\xi'(v)}{2\pi}}\exp\left\{\gamma-v\xi(v)+\int_0^{\xi(v)}\frac{\re^t-1}{t}dt\right\}\Big(1+O\left(\frac{1}{v}\right)\Big)
\end{equation}
valid for $v\geq 1$ (Lemma~\ref{rho}). Formula \eqref{varrho1} gives us an idea that the functions $x$ and $\xi$ are related. Indeed, the analysis carried out in Lemma~\ref{4lema} gives the relations
 \begin{equation*}
x= \exp\Big\{\frac{\xi(u)}{r}\Big\}+O\left(\frac{\log (u+1)}{r^2}\right)
\end{equation*}
and
\begin{equation*}
\Big(\sum_{j=1}^rjx^j\Big)^{-1}= \frac{\xi'(u)}{r^2}\left(1+O\left(\frac{ \log (u+1)}{r}\right)\right)
\end{equation*}
if $\log u\leq r\leq n$. Combining these estimates and Theorem~\ref{thm1}, we get

\begin{theorem} \label{1cor} Let $u=n/r$. If $ n^{1/3}(\log n)^{2/3}\leq r\leq c n(\log n)^{-1}(\log\log n)^{-2}$,
where $c=(12\pi^2 \re)^{-1}$ and $n\geq 4$, then
  \begin{equation}
\nu(n,r)=
\varrho(u) \exp\Big\{\frac{u\xi(u)}{2r}\Big\}\left(1+O\left(\frac{u\log^2(u+1)}{r^2}+\frac{1}{u}\right)\right).
\label{nu-cor1}
\end{equation}
\end{theorem}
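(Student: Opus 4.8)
The plan is to combine Theorem~\ref{thm1} with the two asymptotic relations linking $x$ to $\xi(u)$ stated just above the theorem (and proved in Lemma~\ref{4lema}), and then to compare the resulting expression with the asymptotic formula~\eqref{varrho1} for $\varrho(u)$. First I would check that the hypothesis $n^{1/3}(\log n)^{2/3}\leq r\leq cn(\log n)^{-1}(\log\log n)^{-2}$ indeed lies inside the range $1\leq r\leq cn(\log n)^{-1}(\log\log n)^{-2}$ required by Theorem~\ref{thm1}, so that
\[
\nu(n,r)=\frac{\exp\{\sum_{j=1}^r x^j/j\}}{x^n\sqrt{2\pi\sum_{j=1}^r jx^j}}\Bigl(1+O\bigl(\tfrac{r}{n}\bigr)\Bigr),
\]
and also that it forces $u=n/r\to\infty$ with $\log u\leq r$, so that the relations from Lemma~\ref{4lema} apply; here $r/n\leq 1/u$, which is why the term $1/u$ appears in the final error.

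Next I would transform the two ingredients of the saddle-point formula. For the denominator, the relation $\bigl(\sum_{j=1}^r jx^j\bigr)^{-1}=\xi'(u)r^{-2}\bigl(1+O(\log(u+1)/r)\bigr)$ gives directly
\[
\Bigl(2\pi\sum_{j=1}^r jx^j\Bigr)^{-1/2}=\sqrt{\frac{\xi'(u)}{2\pi}}\,\frac{1}{r}\Bigl(1+O\bigl(\tfrac{\log(u+1)}{r}\bigr)\Bigr).
\]
For the exponent the key quantity is $\sum_{j=1}^r x^j/j - n\log x$. I would substitute $x=\exp\{\xi(u)/r\}+O(\log(u+1)/r^2)$, equivalently $\log x=\xi(u)/r+O(\log(u+1)/r^2)$, so that $-n\log x=-u\xi(u)+O(u\log(u+1)/r)$. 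For $\sum_{j=1}^r x^j/j$ I would write $x^j=\exp\{j\xi(u)/r\}$ times a correction and recognise $\sum_{j=1}^r x^j/j$ as a Riemann-sum approximation to $r\int_0^1 (e^{t\xi(u)}-1)\,t^{-1}\,dt\cdot$(appropriately rescaled) plus $\log r$-type terms; after the substitution $t\mapsto \xi(u)t$ this becomes $\gamma+\int_0^{\xi(u)}(e^t-1)t^{-1}\,dt+O(\cdots)$, matching the exponent in~\eqref{varrho1}. Tracking the errors, the Riemann-sum error and the error from the approximation of $x$ both contribute $O(u\log^2(u+1)/r^2)$ to the exponent — squared because one power of $\log(u+1)/r$ comes from each factor and the exponent carries a factor comparable to $u$; exponentiating a term of this size (which is $o(1)$ under our hypothesis, since $r\geq n^{1/3}(\log n)^{2/3}$) turns it into a multiplicative $1+O(u\log^2(u+1)/r^2)$.

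Finally I would divide the transformed saddle-point formula by the right-hand side of~\eqref{varrho1}. Most factors cancel: $\sqrt{\xi'(u)/2\pi}$ cancels, the exponential of $\gamma-u\xi(u)+\int_0^{\xi(u)}(e^t-1)t^{-1}dt$ appears in both, and what survives in the exponent is precisely the discrepancy between the discrete sum $\sum_{j=1}^r x^j/j - n\log x$ and its continuous counterpart $r\bigl(\gamma/r - \xi(u)\cdot\text{stuff}\bigr)$; the leading such discrepancy is $u\xi(u)/(2r)$ — this is the Euler–Maclaurin first-order correction, with $\xi(u)$ of size $\asymp\log u$ — yielding the factor $\exp\{u\xi(u)/(2r)\}$. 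The remaining factors collapse into the error term $1+O(u\log^2(u+1)/r^2+1/u)$, the last summand absorbing both the $O(r/n)=O(1/u)$ from Theorem~\ref{thm1} and the $O(1/v)=O(1/u)$ from~\eqref{varrho1}. The main obstacle is the second paragraph: carefully performing the Euler–Maclaurin expansion of $\sum_{j=1}^r x^j/j$ to isolate the term $u\xi(u)/(2r)$ while showing every discarded term is $O(u\log^2(u+1)/r^2)$ uniformly, and in particular controlling the interplay between the $O(\log(u+1)/r^2)$ error in the approximation of $x$ and the large factor $n=ur$ multiplying $\log x$. I expect all of this to be the content of Lemma~\ref{4lema}, which the theorem is entitled to invoke, so at the level of this proof the argument reduces to assembling those lemmas and~\eqref{varrho1} and bookkeeping the error terms.
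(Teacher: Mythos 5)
Your overall plan — pass from Theorem~\ref{thm1} through Lemma~\ref{4lema} and \eqref{varrho1} — matches the paper, and your treatment of the denominator via $\lambda(x)^{-1}\approx\xi'(u)/r^2$ and the Euler–Maclaurin origin of the factor $\re^{u\xi/(2r)}$ are both on the right track. But there is a real gap in the handling of the exponent, and it is not resolved by invoking Lemma~\ref{4lema}.

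You write that $\log x=\xi(u)/r+O(\log(u+1)/r^2)$ gives $-n\log x=-u\xi(u)+O(u\log(u+1)/r)$, and you hope the discarded terms come out to $O(u\log^2(u+1)/r^2)$. That exponent error is not acceptable: in the theorem's range the lower endpoint is $r\asymp n^{1/3}(\log n)^{2/3}$, where $u\asymp n^{2/3}(\log n)^{-2/3}$, so $u\log(u+1)/r\asymp n^{1/3}(\log n)^{-1/3}\to\infty$, while the claimed bound $u\log^2(u+1)/r^2$ is $O(1)$. So a naive first-order substitution of $\log x$ into $-n\log x$ already destroys the result, and no amount of Euler–Maclaurin care on the sum $\sum x^j/j$ alone can fix it. The paper does not substitute separately into $-n\log x$ and into the sum. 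Instead it writes, via Lemma~\ref{IT}, the \emph{exact} identity $\log Q(x)=-ur\log x+I(r\log x)+T(r\log x)$ and then replaces $r\log x$ by $\xi$ \emph{inside the combination} $-ut+I(t)$. Because $t=\xi$ is a stationary point of $t\mapsto -ut+I(t)$ (this is precisely the defining equation of $\xi$), the mean-value theorem gives
\[
-ur\log x+I(r\log x)=-u\xi+I(\xi)+(\xi-r\log x)\Big(u-\frac{\re^{t_0}-1}{t_0}\Big),
\]
and the second factor is only $O(u\xi/r)$, not $O(u)$. Multiplying by $\xi-r\log x=O(\log(u+1)/r)$ yields the advertised $O(u\log^2(u+1)/r^2)$. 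This stationary-point cancellation is the crux of the proof and is carried out in the proof of Theorem~\ref{1cor} itself (together with Lemma~\ref{Tz} for the $T$-term), not in Lemma~\ref{4lema}, which only supplies the approximations of $x$ and $\lambda(x)$. As written, your proposal would not produce a valid bound in the lower part of the stated $r$-range.
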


The extra exponential factor next to the Dickman function is a significant finding, it is necessary if $r=o\big(\sqrt{n\log n}\big)$ because of $\xi(u)\sim\log u$ when $u\to\infty$ (Lemma~\ref{xi}). In the proof of Theorem~\ref{1cor}, \mbox{we obtain}
\begin{align*}
\frac{\exp\left\{\sum_{j=1}^r {x^j j^{-1}}\right\}}{x^n\sqrt{2\pi \sum_{j=1}^rjx^j}}\left(1+O\left(\frac{1}{u}\right)\right)&=\varrho(u)\left(1+O\left(\frac{u\log(u+1)}{r}\right)\right)
\end{align*}
if $\sqrt{n\log n}\leq r\leq n$. Therefore, we can make the following conclusion.

\begin{corollary}\label{2cor}
Relations $\eqref{fullx}$ and $\eqref{nu-cor1}$ remain to hold if the upper bound of $r$ is substituted by $n$.
\end{corollary}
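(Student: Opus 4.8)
The plan is to reduce everything to results already established. On the range $1\le r\le c n(\log n)^{-1}(\log\log n)^{-2}$ the formula \eqref{fullx} is exactly Theorem~\ref{thm1}, and on the sub-range $n^{1/3}(\log n)^{2/3}\le r\le c n(\log n)^{-1}(\log\log n)^{-2}$ the formula \eqref{nu-cor1} is exactly Theorem~\ref{1cor}; so the only thing left is to show both hold on the complementary piece $c n(\log n)^{-1}(\log\log n)^{-2}< r\le n$. The elementary observations that drive the argument are: on this piece $u:=n/r$ satisfies $1\le u< c^{-1}(\log n)(\log\log n)^2$; one has $1/u=r/n$ identically; and, for all large $n$, $c n(\log n)^{-1}(\log\log n)^{-2}\ge\sqrt{n\log n}$, so this piece lies inside the range $\sqrt{n\log n}\le r\le n$ on which Theorem~\ref{thm2} and the identity displayed just before this corollary are both available.

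Fixing such an $r$, I would set $M:=\exp\{\sum_{j=1}^r x^jj^{-1}\}\bigl(x^n\sqrt{2\pi\sum_{j=1}^r jx^j}\bigr)^{-1}$ and combine the two relations
\[
\nu(n,r)=\varrho(u)\bigl(1+O(u\log(u+1)/r)\bigr),\qquad M\bigl(1+O(1/u)\bigr)=\varrho(u)\bigl(1+O(u\log(u+1)/r)\bigr),
\]
the first being Theorem~\ref{thm2} and the second the identity proved inside the proof of Theorem~\ref{1cor}. Dividing the two to eliminate $\varrho(u)$, and using that on this piece $u\log(u+1)/r=O\bigl(n^{-1}(\log n)^2(\log\log n)^5\bigr)=o(1/u)$ — which both legitimizes the division and lets that error be swallowed by $O(1/u)$ — gives $\nu(n,r)=M\bigl(1+O(1/u)\bigr)=M\bigl(1+O(r/n)\bigr)$, i.e.\ \eqref{fullx} on the missing piece. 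Together with Theorem~\ref{thm1} this yields \eqref{fullx} for all $1\le r\le n$.

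For \eqref{nu-cor1} I would then simply re-read the proof of Theorem~\ref{1cor}: there \eqref{nu-cor1} is deduced from \eqref{fullx} by substituting into it the approximations of Lemma~\ref{4lema} for $x$ and for $\sum_{j=1}^r jx^j$, and those approximations hold on the whole range $\log u\le r\le n$, hence in particular on $n^{1/3}(\log n)^{2/3}\le r\le n$ (where $u\le n$). The hypothesis $r\le c n(\log n)^{-1}(\log\log n)^{-2}$ entered that computation only through the appeal to Theorem~\ref{thm1}; since \eqref{fullx} is now available for every $r\le n$, the identical computation delivers \eqref{nu-cor1} on $n^{1/3}(\log n)^{2/3}\le r\le n$.

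There is no genuinely hard step — the corollary is essentially a stitching-together of Theorems~\ref{thm1},~\ref{thm2},~\ref{1cor} — but a handful of points need care: checking that $[1,\,c n(\log n)^{-1}(\log\log n)^{-2}]$ and $[\sqrt{n\log n},\,n]$ together cover $[1,n]$ for all large $n$; the identification $O(1/u)=O(r/n)$; and the estimate showing $u\log(u+1)/r$ is dominated by $1/u$ on the missing piece, which is precisely what prevents the threshold from being pushed much below $n(\log n)^{-1}(\log\log n)^{-2}$ and is harmless exactly because Theorem~\ref{thm1} already handles everything smaller.
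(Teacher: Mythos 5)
Your proposal is correct and follows the same route as the paper: in both cases one extends \eqref{fullx} to $[1,n]$ by observing that (Qlambda) (equivalently, the displayed identity you quote) together with Theorem~\ref{thm2} already forces \eqref{fullx} wherever Theorem~\ref{thm1} leaves off, and then \eqref{nu-cor1} is obtained by simply re-running the computation in the proof of Theorem~\ref{1cor}, whose only use of the upper bound on $r$ was through the appeal to Theorem~\ref{thm1}. The minor care you flag — verifying that $[1,\,cn(\log n)^{-1}(\log\log n)^{-2}]\cup[\sqrt{n\log n},\,n]\supseteq[1,n]$ for large $n$, and that $u\log(u+1)/r=o(1/u)$ on the complementary piece — is exactly what the paper's terse phrase ``the assertions of Theorems \ref{thm1} and \ref{thm2} can be joined up'' quietly presupposes.
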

One can deduce from Corollary~\ref{2cor} that the order of the error estimate in Theorem~\ref{thm2} is precise.

It should be noted that Theorems~\ref{thm2} and~\ref{1cor} are comparable to the similar result of A.~Hildebrand in number theory \cite{H86}.

\begin{comment}
Summary of the results:

\medskip

\begin{tabular}{|l|l|l|}
	\hline
	Result & Range & Improved result of \\
	\hline
	Theorem \ref{thm1} & $ 1\leq r\leq n $ & \cite{Har+Schon68} \\
	\hline
	Theorem \ref{Theorem 1} & $1\leq r\leq \log n$ & \cite{AmdMoll}, \cite{ChHeMo1951}, \cite{Lugo}, \cite{MoWy55}, \cite{Sachkov}, \cite{Timash} \\ 
	\hline  
	Theorem \ref{thm2} & $ n^{1/3}(\log n)^{2/3}\leq r\leq n$ & \cite{Gonch44}, \cite{Gourd-th}, \cite{EM-LMJ92}\\
	Theorem \ref{1cor} &  &\\
	\hline
\end{tabular}\\

In the next chapter, a similar problem is explored, and the saddle-point method with its variations is also a key approach for all advanced results there.
\end{comment}

\chapter{Permutations without short cycles}\label{Chapter3}

\section{Previous results}\label{section3.1}

Recall that $\S_n$ is the symmetric group on a finite set of $n$ symbols, and $k_j(\sigma)$ is the number of cycles of length $j$ in $\sigma$. The task is to find an asymptotic formula for the density
\[
\kappa(n,r)=\frac{1}{n!}\left|\{\sigma\in\S_n :k_j(\sigma)=0\ \forall j\in \overline{1,r}\}\right|
\]
as $1\leq r< n$ and $n\to\infty$. We have the classical example of derangements
%\left|\kappa(n,1)-\re^{-1}\right|<\frac{1}{n!}
\begin{align*}
\kappa(n,1)&=\sum_{j=0}^n\frac{\left(-1\right)^{j}}{j!}=\re^{-1}+O\left(\frac{1}{n!}\right)
\end{align*}
(\hspace{-0.02mm}\cite[p.~135]{1713} or \cite[p.~107]{MBona}) and the trivial case $\kappa(n,r)=1/n$ if $n/2\leq r< n$. Nevertheless, the general formula is complex. Formally, for $1\leq r< n$, we have 
         \[
                \kappa(n,r)=\sum_{\ell_{rn}(\bar s)=n \atop \ell_r(\bar s)=0}\prod_{j=1}^n\frac{1}{j^{s_j} s_j!}
   \]
   (\hspace{-0.02mm}\cite[p.~80]{MBona}), where the summation is over the vectors $\bar s\in\N_0^n$. The formula can be rewritten in terms of the independent Poisson random variables $Z_j$, $1\leq j\leq n$, such that $\E Z_j=1/j$. Namely,
   \begin{equation}
      \kappa(n,r)= \exp\bigg\{\sum_{j=r+1}^{n}\frac{1}{j}\bigg\} \Pr\Big(\sum_{j=r+1}^njZ_j=n\Big).
      \label{P-ell2}
      \end{equation}
On the other hand, the probability $\kappa(n,r)$ is just a mean value of an indicator function $i(\sigma)$ of a relevant subset of permutations. Videlicet,
  \[
       i(\sigma)=\prod_{j=1}^n i_j^{k_j(\sigma)},
       \]
  where $i_j=1$ if $ r<j\leq n$, otherwise $i_j=0$, and $0^0:=1$. General results on mean values of bounded
  multiplicative functions \cite{VZ-RJ09} give a relation
  \[
   \kappa(n,r)=\frac{1}{n!}\sum_{\sigma\in\S_n}i(\sigma)=\re^{-H_r}+O\left(\frac{r}{n}\right)
   \]
which is trivial, if $\sqrt n=o(r)$. To address the results we need the following information. Recall that the Buchstab and Dickman functions, respectively $\omega$ and $\varrho$, are described in Notation. We have $1/2\leq \omega(v)\leq 1$ for $v\geq 1$ \cite[p.~400]{GT}, and $\varrho(v)\leq 1/\Gamma(v+1)$ for $v\geq 1$ \cite[p.~366]{GT}. A function $R(v)$ introduced in \cite{GT-Crible} is described in the thesis by Definition~\ref{Rdefinition} and Lemmas~\ref{omega,R} and~\ref{Rneeded} in Chapter~5. In short, it comes from evaluation of $\omega(v)-\re^{-\gamma}$, and here it is sufficient to know that
\begin{equation}\label{R estimate}
R(v)=\varrho(v)\exp\left\{-\frac{\pi^2v}{2(\log v)^2}+O\left(\frac{v\log\log(v+2)}{(\log v)^3}\right)\right\}
\end{equation}
if $v\geq v_1$, where $v_1$ is some sufficiently large constant, and $R(v)=O\left(1\right)$ if $1\leq v< v_1$. Now, one can mention results obtained so far.

Set $u=n/r$. In 1992 R.~Arratia and S.~Tavar\' e in their notable work~\cite{AT-AP92} estimated the total variation distance for permutations. From the estimate it follows that 
\begin{align}
              \left|\kappa(n,r)-\re^{-H_r}\right|&\leq \sqrt{2\pi \lfloor u \rfloor}\frac{2^{\lfloor u \rfloor-1}}{(\lfloor u \rfloor-1)!}+\frac{1}{\lfloor u \rfloor !}+3\left(\frac{\re}{u}\right)^u\label{tikslus}\\ 
              &=\exp\Big\{-u\log u+u\log (2\re)+O\left(\log (u+2)\right)\Big\}\label{AT}
\end{align}
if $1\leq r< n$ and $H_r=\sum_{j=1}^r1/j$. Later in 2002 E.~Manstavi\v cius~\cite{EM-LMR02} showed that
\begin{equation}\label{EM}
\kappa(n,r)=\re^{-H_r+\gamma}\omega(u)+O\left(\frac{1}{r^2}\right)
\end{equation}
if $1\leq r< n$. His result had not been noticed and weaker results relying on the same proof technique were presented by other authors, to wit, in works \cite{BeMaPaRi-JComTh04} and \cite{Granv-EJC06}. There was applied a recurrence relation
\[
\kappa(n,r)=\frac{1}{n}+\frac{1}{n}\sum_{r< j<n-r}\kappa(j,r),
\]
the induction principle and approximation of sums by integrals. Moreover, all of the latter authors did not notice that there had been a result of  A.~Hildebrand and G.~Tenenbaum
\begin{equation}\label{HT}
\omega(u)=\re^{-\gamma}+O(R(u))
\end{equation}
if $u\geq 1$ (Lemma~\ref{omega,R}), which combined with estimates \eqref{rho explicit}, \eqref{R estimate}, and \eqref{AT} gave
\begin{equation*}
              \kappa(n,r)=\re^{-H_r+\gamma}\omega(u)+O\left(\exp\Big\{-u\log u+u\log (2\re)+O\left(\log u\right)\Big\}\right)
\end{equation*}
for $u\geq 1$. Therefore, in the case $(\log\log n)^{-1}\log n=o(u)$, the result \eqref{tikslus} combined with~\eqref{HT} is stronger than those mentioned after, but in the case $u\leq (\log\log n)^{-1}\log n$ E.~Manstavi\v cius' result (\ref{EM}) provides a better order of the error term. Of course, from the computational point of view, the result \eqref{tikslus} is more serviceable than those with error terms presented using big-O notation.

A recent result by A.~Weingartner~\cite[Proposition 2]{AW-2015} improves the previous ones;
\begin{proposition}\label{AW-proposition} For $1\leq r\leq n/\log n$, we have
\begin{equation}
\kappa(n,r)=\re^{-H_r}+O\left(\frac{(u/\re)^{-u}}{r^2}\right), \label{AW}
\end{equation}
where $u=n/r$. If $r\geq 3$, we can replace $\re$ by 1 in the error term.
\end{proposition}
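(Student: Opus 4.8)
The plan is to start from the representation \eqref{P-ell2}, which expresses $\kappa(n,r)$ via the probability $\Pr\big(\sum_{j=r+1}^n jZ_j=n\big)$ with independent Poisson variables $Z_j$, $\E Z_j=1/j$. First I would peel off the event that all the relevant cycle-counts vanish: write $\Pr\big(\sum_{j=r+1}^n jZ_j=n\big)=\Pr(\text{all }Z_j=0)+\Pr\big(\sum jZ_j=n,\ \text{some }Z_j\neq 0\big)$. The first term contributes exactly $\prod_{j=r+1}^n\re^{-1/j}=\re^{-H_n+H_r}$, so after multiplying by $\exp\{\sum_{j=r+1}^n 1/j\}=\re^{H_n-H_r}$ it yields the main term $\re^{-H_r}$. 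Everything then reduces to bounding the tail contribution, i.e.\ showing
\[
\exp\Big\{\sum_{j=r+1}^n\tfrac1j\Big\}\,\Pr\Big(\sum_{j=r+1}^n jZ_j=n,\ \exists j:\ Z_j\neq 0\Big)=O\Big(\frac{(u/\re)^{-u}}{r^2}\Big).
\]

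The second step is to control this tail by a combinatorial/inclusion argument rather than a full saddle-point analysis, since the claimed error is simply a clean power bound. I would split according to the largest index $j_0$ with $Z_{j_0}\neq 0$; since $j_0>r$, the event forces a "block" of total size at least $j_0$ to be consumed, and an elementary union bound over the at most finitely many configurations of Poisson atoms gives a sum of the shape $\sum$ over admissible $\bar s$ with $\ell_{rn}(\bar s)=n$, $\ell_r(\bar s)=0$, and $\bar s\neq \bar 0$, of $\prod 1/(j^{s_j}s_j!)$. The dominant such term has a single cycle of length between $r+1$ and $n$ (if such exists) together with the empty configuration on the complement, contributing of order $\kappa(n-j,r)/j$ summed over $r<j\le n-r$; iterating (or using the recurrence $\kappa(n,r)=\frac1n+\frac1n\sum_{r<j<n-r}\kappa(j,r)$ quoted above) converts this into the factor $(u/\re)^{-u}$ via the standard bound $\varrho(u)\le 1/\Gamma(u+1)$ together with \eqref{rho explicit}. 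The extra $1/r^2$ saving comes from the fact that the leading singleton-cycle term is itself of smaller order, because a block of size $\ge r+1$ costs at least a factor $1/(r+1)$, and a second constraint (parity/size of the remaining mass) supplies the second factor of $1/r$.

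The third step is the improvement "replace $\re$ by $1$ when $r\ge 3$": here I would localize more carefully, observing that when $r\ge 3$ the smallest cycle length available is $\ge 4$, so in the generating function $\exp\{\sum_{j>r}z^j/j\}$ the first non-trivial coefficient appears only at $z^{r+1}$, and the effective number of "building blocks" fitting into $n$ is essentially $\lfloor u\rfloor$ rather than being inflated by the small-$j$ terms; this sharpens the entropy constant from $\log(2\re)$-type to $\log(1\cdot u)$-type, i.e.\ removes the $\re$. Concretely I would rerun the tail estimate keeping track of the constant in $\prod 1/(j^{s_j}s_j!)\le (r+1)^{-\sum s_j}$ and sum the geometric-type series; the gain of the $\re$ is exactly the difference between $\sum_{s\ge1}1/s!$ (which produces $\re-1$, hence effectively $\re$) and a single dominant term when the block sizes are forced to be large.

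The main obstacle I anticipate is making the tail union bound tight enough to get \emph{both} the correct exponential rate $(u/\re)^{-u}$ \emph{and} the polynomial prefactor $1/r^2$ simultaneously: a naive union bound over configurations loses a power of $r$ (or a factor $u$), so one must argue that after extracting the single largest cycle the remainder is genuinely a $\kappa$-probability again and apply the estimate inductively, as in the recurrence argument of \cite{EM-LMR02}, being careful that the induction hypothesis already carries the $1/r^2$. Handling the boundary regime $u$ close to $1$ (where $(u/\re)^{-u}\asymp 1$ and the statement is nearly vacuous) and the upper constraint $r\le n/\log n$ (which is what keeps $u\ge\log n$ large enough for \eqref{rho explicit} to be usable) should be routine once the main inequality is set up.
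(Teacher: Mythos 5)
Your proposed decomposition breaks down at the very first step. You write
\[
\Pr\Big(\sum_{j=r+1}^n jZ_j=n\Big)=\Pr(\text{all }Z_j=0)+\Pr\Big(\sum_{j} jZ_j=n,\ \exists j:\ Z_j\neq 0\Big),
\]
but on the event $\{Z_j=0\ \forall j\in\overline{r+1,n}\}$ one has $\sum_{j=r+1}^n jZ_j=0\neq n$, so that event is \emph{disjoint} from $\{\sum jZ_j=n\}$, not contained in it. Moreover, even if one ignored this, the arithmetic still fails: $\prod_{j=r+1}^n\re^{-1/j}=\re^{-(H_n-H_r)}$, and multiplying by $\exp\{\sum_{j=r+1}^n 1/j\}=\re^{H_n-H_r}$ gives $1$, not the claimed main term $\re^{-H_r}$. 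The main term $\re^{-H_r}$ does not arise as the probability that the Poisson vector is identically zero in this setting; in the paper's own approach (to its closely related Theorems~\ref{thm3.2}--\ref{theorem3.4}) it emerges as the residue at $s=0$ after shifting the Cauchy contour past the pole of $1/(1-z)$, which is a fundamentally different mechanism. Your step~1 thus leaves the quantity $\kappa(n,r)-\re^{-H_r}$ completely unidentified, and the subsequent union bound is bounding the wrong object.

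Beyond this, the later steps are too heuristic to carry the claimed precision. Getting the full exponential rate $(u/\re)^{-u}$ \emph{and} the prefactor $1/r^2$ from a direct union bound over cycle configurations is exactly the difficulty you yourself flag, and you do not resolve it; the hand-waving about ``a second constraint (parity/size of the remaining mass)'' supplying the second factor of $1/r$ is not an argument. The claim that for $r\geq 3$ the constant $\re$ can be dropped because ``the smallest cycle length is $\geq 4$'' also does not, as stated, track how the exponential rate depends on $r$. Note also that Proposition~\ref{AW-proposition} is quoted in this thesis from Weingartner's paper~\cite{AW-2015}; no proof of it is given here. The thesis's original contributions in this direction (Theorems~\ref{thm3.2}--\ref{theorem3.4} and Corollary~\ref{corollary3.5}) are obtained by the saddle-point/contour-integral route sketched in Section~\ref{section3.2}, i.e.\ by applying the residue theorem to the Cauchy integral for $\kappa(n,r)$ and then integrating by parts twice; a correct elementary proof of \eqref{AW} would most plausibly iterate the recurrence $n\kappa(n,r)=1+\sum_{r<j<n-r}\kappa(j,r)$ together with the sharp bound $\varrho(v)\leq 1/\Gamma(v+1)$, but that argument would need to be set up from scratch, not grafted onto the incorrect Poisson decomposition above.
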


Also, the paper~\cite{WaMe} by C.~Wang and I.~Mezo has recently appeared on line in which they explore the density of permutations that consists of a particular number of cycles and has no short cycles.
 
The results in the next section cover that presented in this. 

\section{Results of the thesis}\label{section3.2}

To improve on \eqref{AW} and other results presented in the previous section, the author makes use of the Cauchy integral representation 
\[
\kappa(n,r)=\frac{1}{2\pi i}\int_{|z|=\beta}\exp\Big\{\sum_{j=r+1}^n\frac{z^j}{j}\Big\}\frac{dz}{z^{n+1}}
\]
(\eqref{P-ell2} and Lemma~\ref{EGF}), where $\beta>0$ is to be chosen. From the previous results, we know that $\kappa(n,r)\sim \re^{-H_r}$ if $n/r\to\infty$. Therefore, after some transformations of the integrand function, the residue theorem is applied to obtain the main asymptotic term:
\begin{align}
	\kappa(n,r)&=\re^{-H_r}+\frac{1}{2\pi i}\int_{-r\log\alpha-ir\pi}^{-r\log\alpha+ir\pi}f(s)\re^{us}{d}s\label{kappa_e-Hr},
\end{align}
where $u=n/r$,
$
r(\re^{s/r}-1)f(s)=\exp\left\{s/r-\sum_{j=1}^r (\re^{js/r}j)^{-1}\right\},
$
$1\leq r<n$, and $\alpha>1$.
 Then the remaining integral is twice integrated by parts (first with respect to $d\re^{us}$ and then with respect to $d\re^{(u-1)s}$) so that it could be possible to get a more accurate estimate of it:

\begin{theorem}\label{thm3.2}
Let $(t)_+=\max\{t,0\}$, $t\in\R.$
	For all $1\leq r<n/2$ and $\alpha>1$, we have 
	\begin{align*}
		|\kappa(n,r)-\re^{-H_r}| &\leq  \frac{\pi\re^4\alpha^{2r-n+3/2}}{n^2(\alpha-1)^2}\exp\left\{\sum_{j=1}^r\frac{\alpha^j-2}{j}+E(r,\alpha)\right\}\nonumber\\
		&\quad\ +\frac{4\re\alpha^{2r-n+2}}{\pi n^2r(\alpha-1)^3}\exp\left\{-\frac{\alpha(\alpha^r-1)}{2r(\alpha-1)}-H_r\right\}, 	
	\end{align*}
	where $$
	E(r, \alpha)=-\frac{2}{r}\frac{\alpha^{r+1}}{\alpha-1}\left(\frac{\pi^{-2}}{1+(r\alpha-r)^2}-\alpha^{-r/2} \right)_+ +\min\{2r\log\alpha, 2\log(\re r)\}.
	$$
\end{theorem}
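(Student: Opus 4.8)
I would start from \eqref{kappa_e-Hr}, first recalling how it arises. Writing $P(z):=\sum_{j=1}^{r}z^{j}/j$, the exponential formula together with \eqref{P-ell2} and Lemma~\ref{EGF} gives $\kappa(n,r)=[z^{n}]\,(1-z)^{-1}\exp\{-P(z)\}$ (cycles longer than $n$ do not affect $[z^{n}]$, so the admissible lengths may be taken to be all $j>r$), hence $\kappa(n,r)=\frac{1}{2\pi i}\oint_{|z|=\beta}(1-z)^{-1}\exp\{-P(z)\}\,z^{-n-1}\,dz$ for $0<\beta<1$. The integrand is meromorphic with a pole of order $n+1$ at $0$ and a simple pole at $1$ of residue $-\exp\{-P(1)\}=-\re^{-H_r}$; pushing the contour to $|z|=\alpha$ with $\alpha>1$ produces $\kappa(n,r)=\re^{-H_r}+\frac{1}{2\pi i}\oint_{|z|=\alpha}(1-z)^{-1}\exp\{-P(z)\}\,z^{-n-1}\,dz$, and the substitution $z=\re^{-s/r}$ (under which $z^{-n-1}\,dz=-r^{-1}\re^{us}\,ds$ and $(1-z)^{-1}=\re^{s/r}(\re^{s/r}-1)^{-1}$) turns this into \eqref{kappa_e-Hr}, with $f(s)=r^{-1}F(\re^{-s/r})$ and $F(z)=(1-z)^{-1}\exp\{-P(z)\}$.

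\textbf{The two integrations by parts.} I would then integrate the integral in \eqref{kappa_e-Hr} by parts twice: once via $\re^{us}\,ds=u^{-1}\,d\re^{us}$, then via $\re^{(u-1)s}\,ds=(u-1)^{-1}\,d\re^{(u-1)s}$. Both boundary terms vanish because $f$ and its derivatives are $2\pi ir$-periodic (they are functions of $\re^{-s/r}$), the two endpoints of the contour differ by exactly one period $2\pi ir$, and the accompanying exponentials $\re^{us}$, $\re^{(u-1)s}$ are multiplied at the endpoints by $\re^{2\pi iru}=\re^{2\pi in}=1$ and $\re^{2\pi ir(u-1)}=\re^{2\pi i(n-r)}=1$, using $n,r\in\N$. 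This gives
\[
\kappa(n,r)-\re^{-H_r}=\frac{1}{2\pi i\,u(u-1)}\int_{-r\log\alpha-ir\pi}^{-r\log\alpha+ir\pi}\bigl(f''(s)+f'(s)\bigr)\re^{us}\,ds .
\]
The payoff is that $f''+f'$ collapses: from $P'(z)=\sum_{j=1}^{r}z^{j-1}=(1-z^{r})/(1-z)$ the logarithmic derivative telescopes, $F'(z)/F(z)=(1-z)^{-1}-P'(z)=z^{r}/(1-z)$, and then, using $dz/ds=-z/r$ so that $f'(s)=-r^{-2}zF'(z)$ and $f''(s)=r^{-3}z\bigl(F'(z)+zF''(z)\bigr)$, a short computation yields the clean identity
\[
f'(s)+f''(s)=\frac{z^{r+1}\bigl(1+z^{r+1}\bigr)}{r^{3}(1-z)^{3}}\,\re^{-P(z)},\qquad z=\re^{-s/r}.
\]
On the contour $|\re^{us}|=\alpha^{-n}$; parametrising $z=\alpha\re^{i\theta}$ (so $|ds|=r\,d\theta$) and using $r\cdot r^{-3}u^{-1}(u-1)^{-1}=\bigl(n(n-r)\bigr)^{-1}\le 2n^{-2}$ — which is where the hypothesis $r<n/2$ enters — I obtain
\[
\bigl|\kappa(n,r)-\re^{-H_r}\bigr|\le\frac{\alpha^{\,r+1-n}}{\pi n^{2}}\int_{-\pi}^{\pi}\frac{\bigl|\re^{-P(\alpha\re^{i\theta})}\bigr|\,\bigl|1+\alpha^{r+1}\re^{i(r+1)\theta}\bigr|}{\bigl|1-\alpha\re^{i\theta}\bigr|^{3}}\,d\theta .
\]

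\textbf{The main obstacle.} Everything now rests on bounding this last integral by a sum matching the two stated terms, and this is the real difficulty. The factor $\bigl|\re^{-P(\alpha\re^{i\theta})}\bigr|=\exp\{-\operatorname{Re}P(\alpha\re^{i\theta})\}$, with $\operatorname{Re}P(\alpha\re^{i\theta})=\sum_{j\le r}\alpha^{j}\cos(j\theta)/j$, must be controlled through a lower bound for $\operatorname{Re}P(\alpha\re^{i\theta})$ uniform in $\theta$ (equivalently, a lower bound for $\min_{\theta}\sum_{j\le r}(1+\cos j\theta)\alpha^{j}/j$): the trivial bound $\operatorname{Re}P\ge-\sum_{j\le r}\alpha^{j}/j$ is too wasteful once it is paired with $|1-\alpha\re^{i\theta}|^{-3}$, which blows up like $(\alpha-1)^{-3}$ as $\theta\to0$. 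The resolution is a trade-off between the two factors. Near $\theta=0$, $\exp\{-\operatorname{Re}P(\alpha\re^{i\theta})\}$ is exponentially small (because $\operatorname{Re}P(\alpha)=\sum_{j\le r}\alpha^{j}/j$ is large and positive) and the near-singularity is absorbed by integration, since $\int_{-\pi}^{\pi}|1-\alpha\re^{i\theta}|^{-3}\,d\theta\le\pi\alpha^{-1/2}(\alpha-1)^{-2}$; this accounts for the denominator $(\alpha-1)^{2}$, the extra $\alpha^{3/2}$, and the factor $\exp\{\sum_{j\le r}(\alpha^{j}-2)/j\}$ of the first term. On the complementary range of $\theta$, where the refined lower bound on $\operatorname{Re}P$ is weakest, one falls back on $|1-\alpha\re^{i\theta}|\ge\alpha-1$ together with a coarser estimate, producing the second term with its $(\alpha-1)^{-3}$ and its factor $\exp\{-\tfrac{\alpha(\alpha^{r}-1)}{2r(\alpha-1)}-H_r\}$ (note $\tfrac{\alpha(\alpha^{r}-1)}{r(\alpha-1)}=\tfrac1r\sum_{j=1}^{r}\alpha^{j}$). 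Making this balancing precise is exactly what produces the correction $E(r,\alpha)$: the truncation $(\,\cdot\,)_{+}$ records that the improvement over the trivial bound is available only when $\alpha$ is not too close to $1$ (precisely, when $\alpha^{r/2}$ outweighs $\pi^{2}(1+r^{2}(\alpha-1)^{2})$), while $\min\{2r\log\alpha,2\log(\re r)\}$ is the residual loss, estimated by $2r\log\alpha$ for $\alpha$ near $1$ and by $2\log(\re r)$ otherwise. Assembling the two contributions and bounding the remaining numerical constants ($\re$'s and $\pi$'s) gives the asserted inequality; the delicate lower bound for $\operatorname{Re}P(\alpha\re^{i\theta})$ and its interplay with the pole of $|1-\alpha\re^{i\theta}|^{-3}$ at $\theta=0$ is the crux of the whole argument.
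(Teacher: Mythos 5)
Your setup is right and matches the paper's computation exactly, though you package it a bit more cleanly: after shifting the contour past the simple pole at $z=1$ (picking up the residue $-\re^{-H_r}$) and substituting $z=\re^{-s/r}$, you integrate twice by parts (once against $d\re^{us}$ and once against $d\re^{(u-1)s}$) and observe that the combination telescopes. Your identity
\[
f'(s)+f''(s)=\frac{z^{r+1}(1+z^{r+1})}{r^{3}(1-z)^{3}}\,\re^{-P(z)},\qquad z=\re^{-s/r},
\]
is correct (I checked it against the paper's form $\frac{\re^{s/r}+\re^{-s}}{(\re^{s/r}-1)^{2}}L(s)\re^{(u-1)s}$ and they agree), and your justification that the boundary terms vanish by $2\pi i r$-periodicity is the right one. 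The crude factor $|1+z^{r+1}|\le 2\alpha^{r+1}$, the use of $n(n-r)\ge n^2/2$ for $r<n/2$, and the reduction to $\int_{-\pi}^{\pi}\exp\{-\Re P(\alpha\re^{i\theta})\}\,|1-\alpha\re^{i\theta}|^{-3}\,d\theta$ are all in order.

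The gap is the final estimate, and your sketch of it is not merely incomplete — it attributes the two terms in the theorem to the wrong regions of $\theta$. Near $\theta=0$ one has $\Re P(\alpha\re^{i\theta})\approx\sum_{j\le r}\alpha^{j}/j>0$, so $\exp\{-\Re P\}$ is \emph{small}; a crude Jensen-type bound (the paper's \eqref{trig2}) gives $\exp\{-\Re P\}\le\re\,\exp\{-\frac{1}{2r}\sum_{j\le r}\alpha^{j}\}$, and the trivial $|1-\alpha\re^{i\theta}|\ge\alpha-1$ handles the pole; integrating over $|\theta|\le 1/r$ produces the \emph{second} term with its $(\alpha-1)^{-3}$, the extra $1/r$, and the negative exponent. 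It is on $1/r\le|\theta|\le\pi$ that $\Re P$ can drop to roughly $-\sum_{j\le r}\alpha^{j}/j$, making $\exp\{-\Re P\}$ as large as $\exp\{+\sum\alpha^{j}/j\}$; there the whole point of the paper's Lemma~\ref{trig} \eqref{trig1} is to salvage the extra factors $-\frac{2\alpha^{r+1}}{r(\alpha-1)}(\cdots)_{+}$, $\min\{2r\log\alpha,2\log(\re r)\}$, and a $\log\frac{|\alpha-\re^{i\theta}|}{\alpha-1}$ that cancels one power of $|1-\alpha\re^{i\theta}|^{-1}$, which (after $\int_{1}^{r\pi}J(\tau,\alpha)\,d\tau\le\frac{\pi^{2}r}{4\sqrt\alpha(\alpha-1)}$) gives the \emph{first} term with its $(\alpha-1)^{-2}$, $\alpha^{3/2}$, $\re^{4}$, and the large positive factor $\exp\{\sum(\alpha^{j}-2)/j+E(r,\alpha)\}$. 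Your text says the opposite — that the near-zero region yields $\exp\{\sum(\alpha^{j}-2)/j\}$ (a positive, potentially huge exponent) while near-zero is precisely where $\exp\{-\Re P\}$ is exponentially small — so the proposed split cannot produce the stated constants. In short, the key trigonometric inequality is the entire content of the theorem and is left unproved, and the strategy you outline for reconstructing it is internally inconsistent (the sign of the exponent in your near-zero contribution is wrong).
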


We are going to see that appropriate choice of $\alpha$ gives stronger error estimates than \eqref{AT} and \eqref{AW}. However, the author left the reader to find a practical version of Theorem~\ref{thm3.2}, which could be useful in computational mathematics.
         The following three propositions are obtained by virtue of two different approximations of the optimal value of $\alpha:=\alpha(n,r)$. Both approximations satisfy the inequalities $u^{1/r}\leq \alpha\leq u^{2/r}$. 
         Observing that we can exploit the equation \eqref{varrho1}, firstly we take $\alpha=\re^{\xi(u)/r}$:
          
%---------------------
\begin{theorem}\label{theorem3.3}
Let $\varepsilon$ and $\delta$ be arbitrary but fixed positive numbers, and $u=n/r$. We have
\begin{equation*}
\kappa(n,r)=\re^{-H_{r}}+O_{\varepsilon, \delta} \left(\frac{\varrho(u)}{r}\exp\left\{-\frac{2u\left(1-\delta\right)}{\pi^2(\log (1+u))^2}\right\}\right)
\end{equation*} 
if $(\log n)^{3+\varepsilon}\leq r< n$. Moreover, 
\begin{equation*}
\kappa(n,r)=\re^{-H_{r}}+O\left(\frac{\varrho(u)u^{u/r}}{r}\right)
\end{equation*}
if $\log n\leq r< (\log n)^{3+\varepsilon}$. Here $\varrho(u)$ is the Dickman function.
\end{theorem}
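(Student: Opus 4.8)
The plan is to substitute $\alpha=\re^{\xi(u)/r}$ into Theorem~\ref{thm3.2}, estimate every factor of the resulting bound, and recognise the output as a perturbation of $\varrho(u)$ via \eqref{varrho1}. I would first dispose of the range $n/2\le r<n$, where $\kappa(n,r)=1/n$ exactly and the claim follows by comparing $1/n$ with $\re^{-H_r}$ and with $\varrho(u)$, $u\in(1,2]$, by elementary means; so from now on $\log n\le r<n/2$ and $u=n/r>2$. With $\alpha=\re^{\xi(u)/r}$ and $\eta:=\xi(u)/r$, the defining relation $\re^{\xi(u)}=1+u\xi(u)$ gives $\alpha^{r}=1+u\xi(u)$ and $\alpha^{2r-n}=\re^{(2-u)\xi(u)}$, while $\log u\le\xi(u)\le2\log u$ forces $0<\eta\le2$---in fact $\eta\le2/(\log n)^{2+\varepsilon}\to0$ when $r\ge(\log n)^{3+\varepsilon}$ and $\eta\le1+o(1)$ when $\log n\le r<(\log n)^{3+\varepsilon}$---whence $\alpha-1\asymp\eta$. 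I would also note $\xi(u)\sim\log u\sim\log(1+u)$ (Lemma~\ref{xi}) and, by differentiating the defining relation, $\xi'(u)=\xi(u)/\bigl(1+u(\xi(u)-1)\bigr)\asymp1/u$.

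The heart of the matter is the exponential sum. Writing $\sum_{j\le r}\alpha^{j}/j=H_r+\sum_{j\le r}(\alpha^{j}-1)/j$, the last sum is a Riemann sum for the convex increasing function $g(t)=(\re^{\eta t}-1)/t$, and $\int_0^r g=I(\xi(u))$ after the substitution $t\mapsto\eta t$. The Hermite--Hadamard inequality gives $\sum_{j\le r}g(j)\le\int_{1/2}^{r+1/2}g$, and bounding the two boundary integrals with the growth estimate $g(r+1/2)\le\re^{\eta/2}g(r)(1+o(1))$ yields
\[
0\le\sum_{j\le r}\frac{\alpha^{j}-1}{j}-I(\xi(u))\le\frac{\re^{\eta/2}}{2}\cdot\frac{u\xi(u)}{r}\bigl(1+o(1)\bigr);
\]
the crucial feature is that the coefficient $\re^{\eta/2}/2$ is strictly below $1$ throughout $r\ge\log n$. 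Feeding this into Theorem~\ref{thm3.2}, combining $\alpha^{2r-n}=\re^{(2-u)\xi(u)}$ with the identity $\re^{-u\xi(u)+I(\xi(u))}=\re^{-\gamma}\sqrt{2\pi/\xi'(u)}\,\varrho(u)(1+O(1/u))$ read off from \eqref{varrho1}, using $\re^{-H_r}\asymp1/r$ and $\re^{2\xi(u)}=(1+u\xi(u))^{2}$, and noting that in $E(r,\alpha)$ the positive-part term is $\le0$ always and equals $-\frac{2u}{\pi^{2}\xi(u)^{2}}(1+o(1))$ once $u$ exceeds an absolute constant and $\eta\to0$, while $\re^{\min\{2\xi(u),\,2\log(\re r)\}}\le\min\{(1+u\xi(u))^{2},(\re r)^{2}\}$, one obtains that the first summand of Theorem~\ref{thm3.2} is at most
\[
\frac{\varrho(u)}{r}\,\bigl(1+u\xi(u)\bigr)^{\theta u/r}\exp\!\Bigl\{-\frac{2u}{\pi^{2}\xi(u)^{2}}(1+o(1))\Bigr\}\,\mathrm{poly}(u,r),\qquad\theta=\frac{\re^{\eta/2}}{2}(1+o(1))<1,
\]
where $\mathrm{poly}(u,r)$ collects $\sqrt u$, $(u\xi(u))^{-2}$ and $\min\{(1+u\xi(u))^{2},(\re r)^{2}\}$, so $\log\mathrm{poly}(u,r)=O(\log n)$. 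The second summand of Theorem~\ref{thm3.2} carries an extra $\re^{-u/2}$ and is negligible; and for $2<u$ below a suitable constant all these extra factors are $O(1)$, reducing the bound to $O(\varrho(u)/r)$.

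It remains to treat the two regimes. If $(\log n)^{3+\varepsilon}\le r<n$: for $u$ below a constant $v_2=v_2(\delta)$ the crude $O(\varrho(u)/r)$ already suffices, since $\varrho(u)$ and $\exp\{-2u(1-\delta)\pi^{-2}(\log(1+u))^{-2}\}$ are then bounded below; for $u\ge v_2$ the inequality $r\ge(\log n)^{3+\varepsilon}\gg(\log(1+u))^{3}$ makes $\log\bigl((1+u\xi(u))^{\theta u/r}\bigr)=O(u\log u/r)$ and $\log\mathrm{poly}(u,r)=O(\log n)$ both $o\bigl(u/(\log(1+u))^{2}\bigr)$, hence $\le\delta$ times the suppression exponent; replacing $\xi(u)$ by $\log(1+u)$ via $\xi(u)=\log(1+u)(1+o(1))$ and adjusting $\delta$ produces the first claimed formula. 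If $\log n\le r<(\log n)^{3+\varepsilon}$: then $u\to\infty$ and $\eta\le1+o(1)$, so $\theta<1$; dropping the harmless suppression factor and using $\log(1+u\xi(u))=\log u(1+o(1))$ gives $(1+u\xi(u))^{\theta u/r}\le u^{(\theta+o(1))u/r}$, and since $\theta<1$ and $u\log u/r\ge n/\bigl(2(\log n)^{5+2\varepsilon}\bigr)$ dwarfs $\log(nu)$ and $\log\mathrm{poly}(u,r)=O(\log n)$, the whole product stays below $u^{u/r}$; this gives the second claimed formula.

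The step I expect to be the main obstacle is obtaining the sharp constant in the Riemann-sum remainder. The naive comparison $\sum_{j\le r}g(j)\le\int_0^r g+(g(r)-g(0))$ only yields a remainder of size $\sim u\xi(u)/r$, hence a factor $u^{(1+o(1))u/r}$ too large to be absorbed into the $u^{u/r}$ of the second regime; it is essential to get a coefficient strictly below $1$, which is exactly what the convexity/Hermite--Hadamard refinement combined with the growth bound $g(r+1/2)\le\re^{\eta/2}g(r)(1+o(1))$ supplies, with $\re^{\eta/2}/2<1$ precisely because $\eta\le1+o(1)$ when $r\ge\log n$. The remaining work---handling the positive-part term of $E(r,\alpha)$, the polynomial factors, the bounded-$u$ range, and the smaller second summand of Theorem~\ref{thm3.2}---is routine.
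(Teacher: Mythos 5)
Your proposal is correct and follows the paper's overall architecture (substitute $\alpha=\re^{\xi(u)/r}$ into Theorem~\ref{thm3.2}, bound the error via the Dickman asymptotic \eqref{varrho1}, then split into the two regimes and handle bounded $u$ separately), but you differ from the paper on the one genuinely technical step: bounding $\sum_{j\le r}(\alpha^j-1)/j-I(\xi)$, i.e.\ the quantity $T(\xi)$ of Lemma~\ref{IT}. The paper does this via Lemma~\ref{Tz}, which rests on the Bernoulli-number expansion \eqref{Bern} of $w/(1-\re^{-w})$ and produces the cruder but explicit bound $T(\xi)\le\re^\xi/(2r)+\xi\re^\xi/(12r^2)$, hence the estimate $T(\xi)\le 2u\xi/(3r)+1/r$ used at \eqref{T}. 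You instead get the same object by Hermite--Hadamard applied to the convex increasing function $g(t)=(\re^{\eta t}-1)/t$, yielding $T(\xi)\le\tfrac{1}{2}g(r+\tfrac12)\le\tfrac{\re^{\eta/2}}{2}\cdot\tfrac{u\xi}{r}(1+o(1))$ with $\eta=\xi/r$. Both approaches deliver what is actually needed, namely a coefficient strictly below $1$ on $u\xi/r$ in the range $r\ge\log n$ (where $\eta\le1+o(1)$, so $\re^{\eta/2}/2\to\re^{1/2}/2\approx0.82<1$); your constant is larger than the paper's $\approx1/2$ from Tz, but this costs nothing in the final asymptotics. The convexity route is more elementary (no Bernoulli machinery), while Lemma~\ref{Tz} gives sharper explicit constants that the paper also reuses in other proofs (e.g.\ Theorem~\ref{thm2}). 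Two small points of care in your write-up: (i) $\log\mathrm{poly}(u,r)$ is actually $O(\log u)$, not merely $O(\log n)$, and it is the $O(\log u)$ bound that is $o(u/\xi^2)$ throughout the relevant range of $u$; the $O(\log n)$ version is not $o(u/\xi^2)$ when $u$ is bounded, so the $v_2$-split has to carry that case (which it does, since for bounded $u$ the polynomial factors are $O(1)$); (ii) the estimate $g(r+\tfrac12)\le\re^{\eta/2}g(r)(1+o(1))$ needs $u\xi\to\infty$, again harmless because small $u$ is handled trivially. With those caveats acknowledged, the proof is sound.
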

Stressing dependence on parameters $\varepsilon$ and $\delta$ of constant in the error estimate, the abbreviation $O_{\varepsilon, \delta}$ with the extra indexes is used. The proof of Theorem~\ref{theorem3.3} can give asymptotic formulas in the whole region $1\leq r< n$, but the precision of them for $2\leq r\leq \log n$ does not satisfy us as much as that in Corollary~\ref{corollary3.5}, which follows from the next theorem. Appealing to Theorem~\ref{thm1} with Corollary~\ref{2cor} and taking $\alpha=x$, we get

\begin{theorem}\label{theorem3.4}
Let $\varepsilon$ be an arbitrary but fixed positive number, and $u=n/r$, then 
\begin{equation*}
\kappa(n,r)=\re^{-H_r}+O_\varepsilon\Bigg(\frac{\nu(n,r)}{r}\exp\left\{-\frac{u^{1-4/r}(1-\varepsilon)}{4\pi^2(\log (u+1))^2}\right\}\Bigg)
\end{equation*}
if $5\leq r< n$, and
\begin{equation*}
\kappa(n,r)=\re^{-H_r}+O\Bigg(\nu(n,r)n^{5/2}\Bigg)
\end{equation*}
if $2\leq r<5$.
\end{theorem}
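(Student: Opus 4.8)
The plan is to apply Theorem~\ref{thm3.2} with $\alpha=x$, where $x$ is the saddle point of Theorem~\ref{thm1} (the positive root of $\sum_{j=1}^{r}x^{j}=n$), and then to recast the bound in terms of $\nu(n,r)$ by Corollary~\ref{2cor}. The range $n/2\le r<n$, where $\kappa(n,r)=1/n$, is immediate, so assume $5\le r<n/2$; then $x>1$ and Theorem~\ref{thm3.2} gives $|\kappa(n,r)-\re^{-H_{r}}|\le T_{1}+T_{2}$, the two displayed terms. The identity $x(x^{r}-1)=(x-1)n$ turns the exponent $-\alpha(\alpha^{r}-1)/(2r(\alpha-1))-H_{r}$ inside $T_{2}$ into $-u/2-H_{r}$, and rewrites $\alpha^{r+1}/(\alpha-1)$, the leading coefficient of $E(r,\alpha)$, as $n+x/(x-1)$, which lies between $n$ and $\frac{5}{4}n$. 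By Corollary~\ref{2cor}, which extends \eqref{fullx} to all $1\le r\le n$, one has $\exp\{\sum_{j=1}^{r}(x^{j}-2)/j\}=\nu(n,r)\,x^{n}(2\pi\sum_{j=1}^{r}jx^{j})^{1/2}\re^{-2H_{r}}(1+O(r/n))$, and $x^{-n}$ may be expressed the same way. Substituting, $T_{1}$ becomes $\nu(n,r)\re^{E(r,x)}$ times a ``prefactor'' comparable to $x^{2r+3/2}(\sum_{j}jx^{j})^{1/2}n^{-2}(x-1)^{-2}\re^{-2H_{r}}$, while $T_{2}$ picks up the extra factor $\re^{-u/2}/\exp\{\sum_{j=1}^{r}x^{j}/j\}\le\re^{-u/2-u/r}$; with the crude bounds $u\le x^{r}\le n$, $x\le n^{1/r}$, $\sum_{j}jx^{j}\le rn$ this makes $T_{2}$ negligible against the asserted right-hand side, and the whole problem reduces to $T_{1}$.

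For $T_{1}$ write $E(r,x)=-D+M$, where $M=\min\{2r\log x,2\log(\re r)\}$, so $\re^{M}=\min\{x^{2r},(\re r)^{2}\}$, and $D=\frac{2}{r}\cdot\frac{x^{r+1}}{x-1}\bigl(\frac{\pi^{-2}}{1+(r(x-1))^{2}}-x^{-r/2}\bigr)_{+}$. Since $x^{r+1}/(x-1)>n$ we get $D\ge 2u\bigl(\pi^{-2}/(1+(r(x-1))^{2})-x^{-r/2}\bigr)_{+}$, and everything reduces to making $D$ exceed the target exponent $G:=u^{1-4/r}(1-\varepsilon)/(4\pi^{2}(\log(u+1))^{2})$ by at least $\log r$ plus the logarithm of the prefactor. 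I would establish: (i) $x^{r/2}/(1+(r(x-1))^{2})\to\infty$ as $u\to\infty$, uniformly in $r$, so that the positive part above is $\ge(1-\varepsilon/2)\pi^{-2}/(1+(r(x-1))^{2})$ for $n$ large; and (ii) $1+(r(x-1))^{2}\le C_{0}(\log(u+1))^{2}u^{4/r}$ with an absolute constant $C_{0}<8$. Granted (i)–(ii), $D\ge\frac{2(1-\varepsilon/2)}{\pi^{2}C_{0}}\cdot\frac{u^{1-4/r}}{(\log(u+1))^{2}}$, whose coefficient exceeds $1/(4\pi^{2})$, so $D-G$ is a fixed positive multiple of $u^{1-4/r}/(\log(u+1))^{2}$.

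Proving (ii) and carrying out the comparison with the prefactor forces a split of the range $5\le r<n/2$, and this is the step I expect to be the main obstacle, chiefly because the prefactor must not be bounded crudely when $r$ is large. When $\log n\le r<n/2$ the saddle $x$ is bounded, often close to $1$, and the relations $x=\exp\{\xi(u)/r\}+O(\log(u+1)/r^{2})$ (from the proof of Theorem~\ref{1cor}, i.e.\ Lemma~\ref{4lema}) with $\xi(u)\sim\log u$ (Lemma~\ref{xi}) give $r(x-1)=\xi(u)(1+o(1))\asymp\log(u+1)$, so (ii) is immediate; the delicacy is the prefactor, where one keeps $\re^{-2H_{r}}\asymp r^{-2}$, uses the branch $\re^{M}=x^{2r}$ whenever $x^{r}<\re r$ (far below $(\re r)^{2}$), and uses $x-1\asymp\xi(u)/r$ instead of the lossy $x-1\ge(u-1)/n$, so that $\re^{M}$ times the prefactor is $O(1/r)$ when $u$ is bounded and $O((\log n)^{O(1)}/n)$ when $u\to\infty$ — in the first case $T_{1}=O(\nu(n,r)/r)$ matches the target, which for bounded $u$ is itself of order $\nu(n,r)/r$ (there $\nu(n,r)\asymp\varrho(u)$ by Theorem~\ref{thm2}); in the second, $D-G$ beats the required $\log(r\cdot(\log n)^{O(1)}/n)=O(\log\log n)$ because $u^{1-4/r}/(\log(u+1))^{2}\ge u/(4\re^{4}(\log u)^{2})\to\infty$. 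When $5\le r<\log n$ one has $x\ge\re^{1-o(1)}$ and $x^{r}\asymp n$, so $r(x-1)\asymp rn^{1/r}$, and (ii) reduces to $r^{2+4/r}\le C_{0}(\log(u+1))^{2}n^{2/r}$, which holds because $\frac{4\log r}{r}<2\le\frac{2\log n}{r}$ for $5\le r\le\log n$; here $D\asymp n^{1-2/r}r^{-3}\gg\log n$ comfortably dominates the merely polynomial prefactor. Finally, for $2\le r<5$ everything is crude: $x$ is large, $(\pi^{-2}/(1+(r(x-1))^{2})-x^{-r/2})_{+}=0$ and $\re^{E(r,x)}=O(1)$ for $n$ large, the prefactor is $O(n^{3/4})$, and $T_{2}$ is super-exponentially small, so $|\kappa(n,r)-\re^{-H_{r}}|=O(n^{3/4}\nu(n,r))=O(n^{5/2}\nu(n,r))$, with room to spare.
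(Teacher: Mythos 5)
Your proposal is correct and follows essentially the same route as the paper's proof: set $\alpha=x$ in Theorem~\ref{thm3.2}, use the saddle-point identity $x(x^{r}-1)=(x-1)n$ to simplify the exponents, invoke Theorem~\ref{thm1} together with Corollary~\ref{2cor} to recognize the $\nu(n,r)$ factor, and then absorb the algebraic prefactors into the exponential decay, with the range split at $r=5$ and $r\approx\log n$ governed by \eqref{wedge}. The paper organizes this slightly more compactly (it works directly with the bound $(x-1)/x^{r}\le x/n$ to push the $(rx-r)^{2}$ term into the $x^{3r/2+2}/u^{2}$ form, then lets the $(...)_{+}$ factor and the crude bound $\sqrt u\,x^{2r}\le u^{5}$ do the work before invoking \eqref{wedge}), but the structure, the choice of $\alpha$, and the use of Corollary~\ref{2cor} are identical; your claims (i)–(ii) are exactly the content of the paper's second and third inequalities for $E(r,x)$.
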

In the next chapter, it is shown that Theorem~\ref{theorem3.4} can be useful in formulas where both densities $\kappa(n,r)$ and $\nu(n,r)$ are involved. 

Applying Theorem~\ref{Theorem 1} to Theorem~\ref{theorem3.4} we get the following assertion. 

\begin{corollary}\label{corollary3.5} For $2\leq r\leq \log n$, we have
\[
\kappa(n,r)=\re^{-H_r}+O\left(\exp\left\{-\frac{n}{r}\log\frac{n}{\re}+\frac{n}{\log n}+\frac{3n}{(\log n)^2}\right\}\right).
\]
\end{corollary}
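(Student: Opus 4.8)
The plan is to combine Theorem~\ref{theorem3.4} with Theorem~\ref{Theorem 1} and reduce the claim to an elementary bound on the sum $\Sigma:=\sum_{k=1}^r d_{rk}n^{(r-k)/r}$ appearing in Theorem~\ref{Theorem 1}. First, fix $\varepsilon=1/2$ in Theorem~\ref{theorem3.4}. For $5\le r\le\log n$ the exponential factor in its error term is $\le 1$, so $\kappa(n,r)-\re^{-H_r}=O(\nu(n,r))$; for $2\le r<5$ the error is $O(\nu(n,r)n^{5/2})$. Hence $\kappa(n,r)-\re^{-H_r}=O(\nu(n,r)n^{5/2})$ uniformly for $2\le r\le\log n$, with an absolute constant. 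Now insert the formula of Theorem~\ref{Theorem 1}: since $r\le\log n$ forces $n^{1/r}\ge\re$, the factor $1+O(n^{-1/r})$ is bounded and $(2\pi nr)^{-1/2}n^{5/2}=\exp\{O(\log n)\}$, while $-\frac{n\log n}{r}+\frac{n}{r}=-\frac{n}{r}\log\frac{n}{\re}$. Therefore
\[
\kappa(n,r)-\re^{-H_r}=O\!\left(\exp\Big\{-\frac{n}{r}\log\frac{n}{\re}+\Sigma+O(\log n)\Big\}\right),
\]
and it suffices to prove $\Sigma\le\frac{n}{\log n}+O\!\big(n/(\log n)^2\big)$ uniformly for $2\le r\le\log n$, the term $O(\log n)$ being then absorbed into $3n/(\log n)^2$.

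To bound $\Sigma$, discard $d_{rr}=-\frac1r(H_r-1)<0$. For $1\le k\le r-1$ one has $d_{rk}=\frac{1}{k(r-k)}\prod_{i=1}^{k-1}\big(1+\frac{k}{ir}\big)$, and $n^{(r-k)/r}$ is multiplied by $n^{-1/r}\le\re^{-1}$ every time $k$ increases by $1$. Splitting the sum at $k\approx\sqrt r$ and at $k\approx r/2$: for $k\le\sqrt r$ one has $\prod_{i=1}^{k-1}(1+\frac{k}{ir})=1+o(1)$ and $\frac{1}{r-k}=\frac1r(1+o(1))$, so this block is at most $\frac{1+o(1)}{r}\sum_{k\ge1}\frac{n^{1-k/r}}{k}=-\frac{(1+o(1))n}{r}\log\!\big(1-n^{-1/r}\big)$, which, using that $y\mapsto-y^{-1}\log(1-y)$ is increasing on $(0,1)$ and $n^{-1/r}\le\re^{-1}$, is $\le\frac{(1+o(1))\,n^{1-1/r}}{r-1}\big(-\re\log(1-\re^{-1})\big)$; for $\sqrt r<k\le r/2$ the block is $O(n^{1-1/\sqrt r})=O\!\big(n\,\re^{-\sqrt{\log n}}\big)$ (because $\log n/\sqrt r\ge\sqrt{\log n}$); for $r/2<k\le r-1$ it is $O(n^{1/2}\log\log n)$; the last two are $o\!\big(n/(\log n)^2\big)$ uniformly in $r$. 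Since $-\re\log(1-\re^{-1})\approx1.25<\re$, this yields $\Sigma\le C\,\frac{n^{1-1/r}}{r-1}+o\!\big(n/(\log n)^2\big)$ with an absolute constant $C<\re$. (For the finitely many values $r\in\{2,3,4\}$ one simply has $\Sigma=O\!\big(n^{(r-1)/r}\big)=O(n^{3/4})$.)

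It remains to estimate $\frac{n^{1-1/r}}{r-1}=\frac{n}{(r-1)n^{1/r}}=n\exp\{-\psi(r)\}$ with $\psi(r)=\log(r-1)+\frac{\log n}{r}$. Minimizing $\psi$ over $2\le r\le\log n$: its critical point $r^\ast$ satisfies $(r^\ast)^2=(r^\ast-1)\log n$, so $r^\ast=\log n-1+O(1/\log n)$ and $\psi(r^\ast)=1+\log\log n-O(1/\log n)$. Hence $(r-1)n^{1/r}\ge\re\log n\,\big(1-O(1/\log n)\big)$ for every admissible $r$, i.e. $\frac{n^{1-1/r}}{r-1}\le\frac{n}{\re\log n}+O\!\big(n/(\log n)^2\big)$. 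Combining with the previous paragraph, $\Sigma\le\frac{C}{\re}\cdot\frac{n}{\log n}+O\!\big(n/(\log n)^2\big)$, and since $C/\re<1$ we get $\Sigma\le\frac{n}{\log n}+O\!\big(n/(\log n)^2\big)$, which completes the proof.

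The main obstacle is obtaining the constant $C<\re$ in the second paragraph. A careless bound such as $d_{rk}\le1.13/(r-k)$ combined with a geometric tail of ratio $2n^{-1/r}\le2/\re$ produces a constant near $4>\re$ and only shows $\Sigma\ll n/\log n$, not $\Sigma\le(1+o(1))n/\log n$. One must keep the $1/k$-decay of $d_{rk}$ on the dominant small-$k$ range, use the sharp step factor $n^{-1/r}\le\re^{-1}$ there, and check---uniformly in $r$---that the blocks with $k\gtrsim\sqrt r$ contribute only $o\!\big(n/(\log n)^2\big)$. The minimization of $\psi$ and the error bookkeeping of the first paragraph are routine.
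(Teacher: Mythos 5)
Your overall plan coincides with the paper's: reduce $\kappa(n,r)-\re^{-H_r}$ to $O(\nu(n,r)n^{5/2})$ via Theorem~\ref{theorem3.4}, then feed in Theorem~\ref{Theorem 1} and bound $\Sigma=\sum_{k=1}^r d_{rk}n^{(r-k)/r}$. Where you differ is in how $\Sigma$ is estimated, and the difference is instructive. The paper's route (Lemma~\ref{very small}) uses only the crude bound $d_{rN}\leq\frac{1}{r-N}$, rewrites the sum as $\sum_{M=1}^{r-1}\frac{n^{M/r}}{M}$, and compares it to $\int_1^{\log n}\frac{\re^t-1}{t}\,dt$ using monotonicity of $(\re^t-1)/t$ (since $r\le\log n$ makes the step $\log n/r\ge1$); two integrations by parts then yield precisely $\frac{n}{\log n}+\frac{3n}{(\log n)^2}-\frac{2n}{(\log n)^3}$, and the stray $\log\log n$, $\log n$, and $n^{5/2}$ factors are absorbed by the $-2n/(\log n)^3$ term. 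Your route keeps the exact product form $d_{rk}=\frac{1}{k(r-k)}\prod_{i<k}(1+\tfrac{k}{ir})$, splits the $k$-range into three blocks, and for the dominant block ($k\le\sqrt r$) bounds the geometric-type sum sharply via $-\re\log(1-\re^{-1})\approx1.25<\re$, then minimizes $\log(r-1)+\frac{\log n}{r}$ over $r$. Your method lands on the stronger $\Sigma\lesssim 0.46\,n/\log n$, but at the cost of more bookkeeping (block splits, uniformity of the $o(1)$'s for small vs.\ large $r$, explicit minimization), whereas the paper's integral comparison gives the constant $1$ for free and produces exactly the error term in the statement without needing to identify a ``constant $C<\re$''. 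In fact the obstacle you single out in your last paragraph does not arise for the paper: the bound $d_{rN}\le1/(r-N)$ is exactly what is used, and it is the subsequent comparison to $\int(\re^t-1)/t\,dt$ — not a term-by-term geometric estimate — that keeps the leading constant equal to $1$. Two minor points in your write-up: the first-block estimate naturally produces $\frac{n^{1-1/r}}{r}$ (not $r-1$), though this only alters the $o(1)$; and the uniformity in $r$ of the several $(1+o(1))$ factors deserves the explicit split ``$r\le\sqrt{\log n}$ (block is tiny) vs.\ $r>\sqrt{\log n}$ ($o(1)$'s genuinely vanish)'' that you leave implicit.
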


The next theorem is also obtained from \eqref{kappa_e-Hr} but relying on other results, which mainly were developed by G.~Tenenbaum to get Theorem~2 of \cite{GT-Crible}.

\begin{theorem} \label{theorem3.2} Let $u=n/r$. If $\sqrt{n\log n}\leq r<n$, then 
	\begin{equation*}
		\kappa(n,r)=\re^{-H_r+\gamma} \omega(u)+O\left(\frac{R(u)u^{3/2}\log^2 (u+1)}{r^2}\right),
	\end{equation*}
	where the function $R(u)$ is discussed in the previous section, see \eqref{R estimate}.
\end{theorem}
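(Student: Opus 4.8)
The plan is to start from the identity \eqref{kappa_e-Hr}, which reduces the theorem to showing that
\[
J:=\frac{1}{2\pi i}\int_{-r\log\alpha-ir\pi}^{-r\log\alpha+ir\pi}f(s)\re^{us}\,ds=\re^{-H_r}\bigl(\re^{\gamma}\omega(u)-1\bigr)+O\!\left(\frac{R(u)u^{3/2}\log^2(u+1)}{r^2}\right)
\]
for a suitable $\alpha>1$; the theorem then follows on adding $\re^{-H_r}$ and recalling $\omega(u)=\re^{-\gamma}+O(R(u))$. First I would substitute $s=r\zeta$, turning $J$ into $\frac{1}{2\pi i}\int_{-\log\alpha-i\pi}^{-\log\alpha+i\pi}g_r(\zeta)\re^{n\zeta}\,d\zeta$ with $g_r(\zeta)=\re^{\zeta}(\re^{\zeta}-1)^{-1}\exp\{-\sum_{j=1}^r\re^{-j\zeta}/j\}$, and then use the elementary identity $\sum_{j=1}^r w^j/j=-\log(1-w)-\int_0^w t^r(1-t)^{-1}\,dt$ with $w=\re^{-\zeta}$ to rewrite $g_r(\zeta)=\exp\{\int_0^{\re^{-\zeta}}t^r(1-t)^{-1}\,dt\}$. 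In this form $g_r$ is the permutation-theoretic analogue of the quantity $\zeta(s)\prod_{p\le y}(1-p^{-s})$ that governs the count $\Phi(x,y)$ of integers free of prime factors $\le y$: the residue at $\zeta=0$ (where $\re^{\zeta}(\re^{\zeta}-1)^{-1}$ has a simple pole of residue $1$) already produced the main term $\re^{-H_r}$, just as the pole of $\zeta(s)$ produces $x\prod_{p\le y}(1-1/p)$, and the sum $\sum_{j\le r}1/j=H_r$ plays the role of $-\log\prod_{p\le y}(1-1/p)\sim\log y$, with $u=n/r$ matching $\log x/\log y$.

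Next I would fix $\alpha$ at (an approximation of) the saddle point of the integrand, consistently with the bounds $u^{1/r}\le\alpha\le u^{2/r}$ exploited for the companion Theorems~\ref{theorem3.3} and~\ref{theorem3.4}, so that $r\log\alpha\asymp\xi(u)\asymp\log(u+1)$, and split the shifted vertical segment into a central arc $\{\zeta:\Im\zeta=\tau,\ |\tau|\le\delta\}$ with $\delta$ of size about $\log(u+1)$ and two complementary tails. On the tails I would bound $|g_r(\zeta)\re^{n\zeta}|$ by noting that $|\re^{n\zeta}|=(\alpha^r)^{-u}$ supplies the decay, that $\re^{\zeta}(\re^{\zeta}-1)^{-1}$ stays away from its pole, and that $|\exp\{-\sum_{j\le r}\re^{-j\zeta}/j\}|$ is controlled through $\mathrm{Re}\sum_{j\le r}\re^{-j\zeta}/j$; this is exactly the sort of Euler-product estimate Tenenbaum makes, and it should leave a contribution well below the asserted error. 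On the central arc I would expand $\int_0^{\re^{-\zeta}}t^r(1-t)^{-1}\,dt$ to leading order, carry out the local (quadratic) saddle analysis, and match the resulting integral to the contour representation of $\re^{\gamma}\omega(u)$ — equivalently, after reinstating the normalisation and subtracting the residue, to $\re^{-H_r}(\re^{\gamma}\omega(u)-1)$ — via the Laplace-transform identity for the Buchstab function and Tenenbaum's refined treatment of $\omega(v)-\re^{-\gamma}$ that produced the function $R(v)$ and estimate \eqref{R estimate}.

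The heart of the argument, and the main obstacle, is the central-arc estimate, which must be pushed to \emph{relative} precision $O(u^{3/2}\log^2(u+1)/r)$ because the quantity actually being computed, $\kappa(n,r)-\re^{-H_r}$, is itself only of size $\asymp R(u)/r$. Two points make this delicate. First, the integrand of $g_r$ is near-singular as $\zeta\to-\log\alpha$ along the real axis: there $\re^{-\zeta}=\alpha>1$ and the path $[0,\alpha]$ for $\int_0^{\re^{-\zeta}}t^r(1-t)^{-1}\,dt$ meets $t=1$, which is precisely the mechanism by which the Buchstab singularity is inherited, so keeping careful track of the branch (or of a slight bending of the contour) is essential. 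Second, Tenenbaum's lemmas in \cite{GT-Crible} are stated for Dirichlet series supported on primes, so one must transplant them by replacing $\log y$, $\sum_{p\le y}p^{-s}$ and prime-counting remainders by $H_r$, $\sum_{j\le r}\re^{-j\zeta}/j$ and elementary Euler–Maclaurin remainders, and then verify that the hypothesis $\sqrt{n\log n}\le r<n$, i.e. $u\le\sqrt{n/\log n}$, makes all his conditions hold and produces exactly the powers $u^{3/2}$ and $\log^2(u+1)$ — the former from the size of the saddle's second derivative, the latter from the width of the central arc. Assembling the central and tail contributions then yields the stated formula.
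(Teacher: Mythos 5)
Your outline correctly identifies the framework: the reduction to \eqref{kappa_e-Hr} via the residue at $z=1$, the substitution $s=-r\log z$, the role of the Laplace transforms $\hat\varrho$ and $\hat\omega$, and the eventual appeal to Tenenbaum's $R$-function analysis from \cite{GT-Crible}. But there is a genuine gap at the heart of the argument, and it concerns how the main term $\re^{-H_r+\gamma}\omega(u)$ is actually extracted. You propose a classical decomposition into a central arc plus tails with a ``local (quadratic) saddle analysis'' on the arc, and then hope to ``match'' the result to a contour representation of $\omega(u)$. That strategy cannot work here: $\omega(u)-\re^{-\gamma}$ is an oscillating function of size $\asymp R(u)$, and $R(u)$ is itself the modulus of an oscillatory inverse-Laplace contribution coming from the complex zero $\zeta_0(u)$ of $1-\re^\zeta-u\zeta$; no local Gaussian approximation at a real saddle will reproduce this. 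What the paper does instead is split the integrand exactly as $\re^{G(z)}=\re^{-I(r\log z)-T(r\log z)}$ (Lemma~\ref{IT}), recognize the $I$-piece via $\re^{-I(-s)}=\re^\gamma/\hat\varrho(s)=\re^\gamma s(1+\hat\omega(s))$ (Lemmas~\ref{hatrho},~\ref{hatomega}), and then deform the contour of $\int_\Delta \re^{(u-1)s}\frac{1+\hat\omega(s)}{s}\,ds$ back to $\Re s=\beta>0$ by Cauchy's theorem, which yields $-1+\re^\gamma\omega(u)$ \emph{exactly}, not asymptotically. No saddle expansion is performed for the main term.

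The second gap is that the error precision stated in the theorem hinges on one specific uniform contour bound that your plan does not identify: $\re^{uz}\bigl(1+\hat\omega(z)\bigr)\ll R(u)\sqrt u$ on $\Re z=-\xi_0(u)$, $|\Im z|\le \re^{\sqrt r}$ (Lemma~\ref{Robert}). This is precisely the transplantation of Tenenbaum's estimate $x^{s-1}\zeta(s)/\zeta(s,y)\ll R(\log x/\log y)\sqrt{\log x/\log y}$ via the known relation $\zeta(s,y)\approx\zeta(s)(s-1)(\log y)\hat\varrho((s-1)\log y)$. Without this bound — which you gesture at with ``transplant'' but do not pin down — the several remaining integrals ($J_{02}$, $J_{11}$, $J_{12}$, all of which still carry the factor $1+\hat\omega(s)$) cannot be estimated at the $R(u)$-level; a crude $|\re^{n\zeta}|=\alpha^{-n}$ tail decay gives only a $\varrho(u)$-type bound, which is far too weak since $R(u)/\varrho(u)\to 0$. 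Finally, a smaller but real issue: your representation $g_r(\zeta)=\exp\{\int_0^{\re^{-\zeta}}t^r(1-t)^{-1}\,dt\}$ forces the integration path across $t=1$ when $\re^{-\zeta}>1$, while the paper's decomposition $G(z)=-I(r\log z)-T(r\log z)$ keeps everything entire, so you would be creating branch-tracking work that the proof does not need.
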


This is the result of \cite{RAMA2016}. Comparing to other presented results for the case $\sqrt{n\log n}\leq r<n$, Theorem~\ref{theorem3.2} provides most accurate order of the error term. This theorem is applied in the next chapter and the function $R$ has a vital role there.  

Analytic results obtained for permutations usually can be compared to that obtained in number theory. In our case, one can have in mind analysis of the number of natural numbers missing small prime factors: 
\vspace{-5pt}
\begin{itemize}[itemsep=-3pt]	
\item[-] Theorem~\ref{theorem3.3} is comparable to Corollary 7.4 given in~\cite{GT} on page 417; 
\item[-] Theorem~\ref{theorem3.4} is comparable to Theorem 1 given in~\cite{GT} on page 397; 
\item[-] Theorem~\ref{theorem3.2} is comparable to Corollary~3 of \cite{GT-Crible}. 
\end{itemize}
\vspace{-5pt}
Information about parallelism between theories can be found in~\cite{ABT} or partly in~\cite{AW-2015}.

\begin{comment}
Summary of the section results:

\medskip

 \begin{tabular}{|l|l|l|}
 	\hline
 	Result & Range & Improved result of\\
 	\hline
 	Theorem~\ref{theorem3.3} & $2\leq r<n$  & Corollary from \cite{AT-AP92}, \cite{AW-2015}\\
 	Corollary~\ref{corollary3.5} &  & \\
 	\hline
 	Theorem~\ref{theorem3.4} & $2\leq r< n$ &  -- (new result)\\
 	\hline
 	Theorem~\ref{theorem3.2} & $\sqrt{n\log n}\leq r\leq n$ & \cite{BeMaPaRi-JComTh04}, \cite{Granv-EJC06}, \cite{EM-LMR02}\\
 	\hline
 \end{tabular}\\
 \end{comment}

\chapter{Total variation distance for random permutations}\label{Chapter4}

\section{The state of the art}\label{section4.1}

Recall that $k_j(\sigma)$ is the number of cycles of length $j$ in a permutation $\sigma\in\S_n$. Let $Z_j$, $1\leq j\leq n$, be the random Poisson variables such that $\E Z_j=1/j$. Set $\bar{k}_r(\sigma)=(k_1(\sigma),k_2(\sigma),\ldots,k_r(\sigma))$ and $\bar{Z}_r=(Z_1,Z_2,\ldots,Z_r)$. Taking $\sigma$ uniformly at random from $\S_n$ we observe that $\bar{k}_r(\sigma)$ is a random cycle structure vector of a permutation and its coordinates are random but dependent variables, while the vector $\bar{Z}_r$ coordinates are independent random variables. The task is to estimate the difference between the distributions of the random vectors $\bar{k}_r(\sigma)$ and $\bar{Z}_r$. The distributions we denote by $\mathcal{L}\left(\bar{k}_r(\sigma)\right)$ and $\mathcal{L}\left(\bar{Z}_r\right)$. For that we choose a special metric $d_{TV}$ called a total variation distance. In \mbox{particular case,}
\begin{equation*}
d_{TV}(\mathcal{L}(\bar{k}_r(\sigma)), \mathcal{L}(\bar{Z}_r))=\sup_{V\subset\N^r_0}\left|\frac{\#\{\sigma : \bar{k}_r(\sigma)\in V\}}{n!}-\Pr\left(\bar{Z}_r\in V\right)\right|.
\end{equation*}
Actually,
\begin{align}\label{dtvseries}
d_{TV}(\mathcal{L}(\bar{k}_r(\sigma)), \mathcal{L}(\bar{Z}_r))&=
\frac{1}{2}\sum_{m=0}^\infty \nu(m,r)\left|\kappa(n-m,r) -\re^{-H_r}\right|.
\end{align}
The formula is a simple inference from known facts, for details see Lemma~\ref{dtvformula}. General formulas for $d_{TV}$ are well represented in \cite[p.~67]{ABT}. Further, we denote $d_{TV}(\mathcal{L}(\bar{k}_r(\sigma)), \mathcal{L}(\bar{Z}_r))$ by $d_{TV}(n,r)$ and let 
\[
H(v)=\frac{1}{2}\int_0^\infty\big|\omega(v-t)-{\re}^{-\gamma}\big| \varrho(t) dt +\frac{\varrho(v)}{2}
\]
for $v\geq 1$, where $\varrho$ is the Dickman function and $\omega$ is the Buchstab function
%%. Here, we have expanded domains of the functions: 
such that $\omega(v)=0$ if $v<1$. It is worth to note that
\begin{equation}
H(v)=\varrho(v)2^{v+v/\xi(v)+O\left(v/\xi(v)^2\right)}
\label{Hu-asymp}
\end{equation}
if $v>1$ \cite[p.~4]{GT-Crible}, where $\xi(v)\sim \log v$ if $v\to\infty$ (Lemma~\ref{xi})  and the order of $\varrho(v)$ is given in Chapter~2 by \eqref{rho explicit}.

In \cite{AT-AP92}, some heuristics was proposed to guess a limit form  of $d_{TV}(n,r)$ as $n\to\infty$. Basing on this, D.~Stark \cite{Stark} established it even for a logarithmic class  of decomposable structures. Confined to permutations, it yields

\begin{proposition}\label{DS}
If $n/r\to\beta\in[1,\infty)$ as $n\to\infty$, then
$ d_{TV}(n,r)\to H(\beta)$.
\end{proposition}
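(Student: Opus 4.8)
The plan is to start from the exact series representation
\eqref{dtvseries},
\[
d_{TV}(n,r)=\frac12\sum_{m=0}^{\infty}\nu(m,r)\bigl|\kappa(n-m,r)-\re^{-H_r}\bigr|,
\]
and show that, under the hypothesis $u=n/r\to\beta$, this discrete sum converges to the integral $H(\beta)$. The heuristic is clear: writing $m=rt$, the factor $\nu(m,r)$ should be governed by the Dickman function $\varrho(t)$ (Theorem~\ref{thm2} and Corollary~\ref{2cor} give $\nu(m,r)=\varrho(m/r)(1+o(1))$ in the relevant range), while $\re^{H_r}\bigl(\kappa(n-m,r)-\re^{-H_r}\bigr)$ should be governed by $\re^{\gamma}\omega\bigl((n-m)/r\bigr)-1$, i.e. by $\re^{\gamma}\omega(\beta-t)-1$, since Theorem~\ref{theorem3.2} (or \eqref{EM} together with \eqref{HT}) gives $\kappa(n-m,r)=\re^{-H_r+\gamma}\omega((n-m)/r)+$(small). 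Because $\omega(v)=0$ for $v<1$, one has $\kappa(n-m,r)-\re^{-H_r}\to-\re^{-H_r}$ for $t>\beta-1$, which produces exactly the extra term $\varrho(\beta)/2$ in $H$ after accounting for the tail where $n-m$ is $O(r)$ or negative. Thus the target is to make the Riemann-sum passage $\frac1r\sum_{m}\varrho(m/r)\bigl|\re^{\gamma}\omega(\beta-m/r)-1\bigr|\cdot\frac12\re^{-H_r}\to H(\beta)$ rigorous, using $\re^{-H_r}\sim \re^{-\gamma}/r$.

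\textbf{Key steps, in order.}
First I would split the sum at, say, $m\le n-Kr$ and $m>n-Kr$ for a large constant $K$, and further cut the low range into $m\le \varepsilon r$, $\varepsilon r<m\le n-Kr$. On the bulk range $\varepsilon r\le m\le n-Kr$ I would substitute the asymptotics $\nu(m,r)=\varrho(m/r)(1+o(1))$ and $\kappa(n-m,r)-\re^{-H_r}=\re^{-H_r}\bigl(\re^{\gamma}\omega((n-m)/r)-1\bigr)+o(\cdot)$, turning the partial sum into a Riemann sum for $\frac12\re^{-\gamma}\int_{\varepsilon}^{\infty}\varrho(t)\bigl|\re^{\gamma}\omega(\beta-t)-1\bigr|\,dt$; here I would use uniform continuity and the known bounds $\varrho(t)\le 1/\Gamma(t+1)$, $\tfrac12\le\omega(v)\le1$ to control the error of the Riemann approximation and to justify dominated convergence. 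Second, for the small range $m\le\varepsilon r$ I would use $\kappa(n-m,r)-\re^{-H_r}=O(R(u)u^{O(1)}/r^2)$ (Theorem~\ref{theorem3.2}, with $u\approx\beta$ bounded) together with $\sum_{m\ge0}\nu(m,r)\le\exp\{\sum_{j\le r}1/j\}=O(r)$ (from \eqref{P-ell}, summing over all residues) to see this contributes $o(1)$, and then let $\varepsilon\to0$ at the end. Third, for the tail $m>n-Kr$, i.e. $n-m\le Kr$ so $(n-m)/r\le K$ is bounded: here $\kappa(n-m,r)-\re^{-H_r}$ is bounded (indeed $|\kappa-\re^{-H_r}|\le 1+\re^{-H_r}\le 2$), and $\nu(m,r)$ with $m/r$ near $\beta$ is $\varrho(m/r)(1+o(1))$; the contribution is a Riemann sum for $\frac12\re^{-\gamma}\int_{\beta-K}^{\beta}\varrho(t)\bigl|\re^{\gamma}\omega(\beta-t)-1\bigr|\,dt$ plus the part with $m$ so large that $n-m<r$, where $\omega((n-m)/r)=0$ and the summand is simply $\tfrac12\nu(m,r)\re^{-H_r}\to\tfrac12\re^{-\gamma}\int_{\beta-1}^{\infty}\varrho(t)\,dt\cdot$(no, rather) — more precisely $\tfrac12\re^{-H_r}\sum_{m>n-r}\nu(m,r)$, which I would evaluate by noting $\sum_{m=0}^{\infty}\nu(m,r)\re^{-H_r}=\sum_m\Pr(\sum_{j\le r}jZ_j=m)=1$ and $\sum_{m\le n-r}\nu(m,r)\re^{-H_r}\to\re^{-\gamma}\int_0^{\beta-1}\varrho(t)\,dt$ (again a Riemann sum using $\nu\sim\varrho$), so the remaining mass tends to $1-\re^{-\gamma}\int_0^{\beta-1}\varrho(t)\,dt$. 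Finally I would assemble the three pieces: combining them gives $H(\beta)=\tfrac12\re^{-\gamma}\int_0^{\infty}\varrho(t)\bigl|\re^{\gamma}\omega(\beta-t)-1\bigr|\,dt$, and using $\omega\equiv0$ on $[0,1)$ one checks this equals $\tfrac12\int_0^{\infty}|\omega(\beta-t)-\re^{-\gamma}|\varrho(t)\,dt+\tfrac{\varrho(\beta)}{2}$ after a short manipulation splitting the integral at $t=\beta-1$ and using $\int_0^{\infty}\varrho(t)\,dt=\re^{\gamma}$ (equivalently, the normalization of the Dickman density); this matches the definition of $H$ in the excerpt.

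\textbf{Main obstacle.}
The delicate point is the uniformity of the two input asymptotics across the whole summation range simultaneously. Theorem~\ref{thm2}/Corollary~\ref{2cor} controls $\nu(m,r)$ with a relative error $O\bigl((m/r)\log(m/r+1)/r\bigr)$, which is $o(1)$ only when $m=o(r^2/\log r)$; since $m$ ranges up to $n\asymp r$ here, that is fine, but one must check the constants are genuinely uniform in $m$. Likewise Theorem~\ref{theorem3.2} needs $n-m$ in its range of validity, whereas for the tail $n-m$ is small and one instead needs $\omega((n-m)/r)\to\omega(\beta-t)$ with $\omega$ extended by $0$ — requiring a separate crude bound $|\kappa(j,r)-\re^{-H_r}|\le 2$ and the continuity of $\omega$ at its boundary. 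So the real work is bookkeeping: choosing the cut-off parameters $\varepsilon,K$, invoking the right estimate on each piece with its correct uniformity, and showing the Riemann-sum errors and the tail remainders all vanish as $n\to\infty$ and then as $\varepsilon\to0$, $K\to\infty$. The elementary identities $\sum_{m}\nu(m,r)\re^{-H_r}=1$ and $\int_0^\infty\varrho=\re^\gamma$ are what make the final algebraic reduction to $H(\beta)$ clean.
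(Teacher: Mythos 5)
Your high-level approach --- a Riemann-sum comparison in \eqref{dtvseries}, replacing $\nu(m,r)$ by $\varrho(m/r)$ via Theorem~\ref{thm2}/Corollary~\ref{2cor} and $\kappa(n-m,r)-\re^{-H_r}$ by $\re^{-H_r}\bigl(\re^{\gamma}\omega((n-m)/r)-1\bigr)$ via Theorem~\ref{theorem3.2} or \eqref{EM} --- is the same one the paper uses to prove the stronger quantitative Theorem~\ref{EMRP}, from which the proposition follows; the paper itself only cites Stark for the qualitative statement. But as written the proposal has a genuine gap: it never isolates the single term $m=n$ of the series, and that term is precisely where the separate $\varrho(\beta)/2$ in the definition of $H$ comes from.

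Concretely, $\kappa(0,r)=1$ (the empty permutation vacuously has no short cycles), so the $m=n$ summand is $\tfrac12\,\nu(n,r)\bigl(1-\re^{-H_r}\bigr)\to\tfrac12\varrho(\beta)$, an order-one atomic contribution. Your tail treatment asserts that for every $m$ with $n-m<r$ the summand is ``simply $\tfrac12\nu(m,r)\re^{-H_r}$''. That is correct for $1\le n-m\le r$ (where indeed $\kappa(n-m,r)=0$) and for $m>n$, but false for $m=n$: you have silently replaced $1-\re^{-H_r}$ by $\re^{-H_r}$, demoting the dominant discrete term to an $O(1/r)$ Riemann-sum crumb. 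The symptom is your closing algebraic claim. Factoring $\re^{-\gamma}$ through gives exactly
\[
\tfrac12\re^{-\gamma}\int_0^{\infty}\varrho(t)\bigl|\re^{\gamma}\omega(\beta-t)-1\bigr|\,dt
=\tfrac12\int_0^{\infty}\bigl|\omega(\beta-t)-\re^{-\gamma}\bigr|\varrho(t)\,dt,
\]
with no extra $\varrho(\beta)/2$; the ``short manipulation'' you appeal to does not exist, and executed as described the argument would yield $H(\beta)-\varrho(\beta)/2$, not $H(\beta)$. The fix is exactly the paper's decomposition in Section~\ref{section5.8}: split the sum as $K_1+K_2+K_3+K_4$ with $K_3$ the lone $m=n$ term, show $K_3=\varrho(u)\bigl(1+O(\re^{\xi}/r)\bigr)$ directly, and only then run the Riemann-sum comparison on the remaining ranges.
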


E.~Manstavi\v cius and the author~\cite{RAMA2016} extended the validity of the last relation and obtained a remainder term estimate.

\begin{theorem}\label{EMRP} Let $u=n/r$. If $\sqrt{n\log n}\leq r\leq n$, then
\begin{equation*}
d_{TV}(n,r)=H(u)\bigg(1+O\left(\frac{u^{3/2}\log^2 (u+1)}{r}\right)\bigg).
\end{equation*}
\end{theorem}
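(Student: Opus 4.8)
The plan is to start from the exact series representation \eqref{dtvseries}, namely
\[
d_{TV}(n,r)=\frac12\sum_{m=0}^\infty\nu(m,r)\bigl|\kappa(n-m,r)-\re^{-H_r}\bigr|,
\]
and to replace each of the two densities appearing in it by its known asymptotic approximation, namely $\nu(m,r)$ by the Dickman term coming from Theorem~\ref{thm2} (with Corollary~\ref{2cor}), and $|\kappa(n-m,r)-\re^{-H_r}|$ by the term $\re^{-H_r+\gamma}|\omega((n-m)/r)-\re^{-\gamma}|$ together with the $R$-error coming from Theorem~\ref{theorem3.2}. The target object $H(u)$ is precisely the continuous analogue $\frac12\int_0^\infty|\omega(v-t)-\re^{-\gamma}|\varrho(t)\,dt+\frac{\varrho(v)}{2}$, so the strategy is to show that the sum over $m$ is a Riemann-sum approximation (step size $1/r$ in the variable $t=m/r$) of this integral, plus a boundary contribution from the cycles of length near $n$ that produces the isolated $\varrho(v)/2$ term.

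The key steps, in order, would be: (i) split the sum at some cutoff, say $m\le n-r$ versus $m>n-r$; in the first range $n-m\ge r$ so that $\kappa(n-m,r)$ is genuinely close to $\re^{-H_r}\omega((n-m)/r)\re^{\gamma}$ via Theorem~\ref{theorem3.2}, and in the second range $n-m<r$ forces $\sigma$ restricted to the remaining $n-m$ symbols to be the identity, so $\kappa(n-m,r)=1$ there and $|\kappa(n-m,r)-\re^{-H_r}|$ contributes the $\frac12\varrho(v)$ piece once we also insert $\nu(m,r)\approx\varrho(u)$; (ii) in the main range substitute $\nu(m,r)=\varrho(m/r)(1+O(\cdot))$ and $\kappa(n-m,r)-\re^{-H_r}=\re^{-H_r+\gamma}(\omega((n-m)/r)-\re^{-\gamma})+O(R(\cdot)/r^2\cdots)$, using $\re^{-H_r}=\re^{-\gamma}/r\cdot(1+O(1/r))$; (iii) recognize the resulting sum $\frac1{2r}\sum_m \varrho(m/r)\re^{\gamma}\re^{-H_r}|\omega((n-m)/r)-\re^{-\gamma}|$ as a Riemann sum for $\frac12\int_0^\infty\varrho(t)|\omega(u-t)-\re^{-\gamma}|\,dt$ and bound the discretization error by the total variation / Lipschitz behaviour of $\varrho$ and $\omega$; (iv) control the error terms: the $\nu$-error is $O(u\log(u+1)/r)$, the $\kappa$-error accumulates to $O(R(u)u^{3/2}\log^2(u+1)/r^2)$ after summing against $\sum_m\varrho(m/r)\approx r$, and one must check these are dominated by $H(u)$ times $O(u^{3/2}\log^2(u+1)/r)$, which follows from \eqref{Hu-asymp}, \eqref{R estimate}, and \eqref{rho explicit} since $R(u)/\varrho(u)$ and $H(u)/\varrho(u)$ are both of the shape $2^{O(u/\log u)}$.

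The main obstacle I expect is step (iii)–(iv): controlling the replacement and the Riemann-sum error uniformly in the whole range $\sqrt{n\log n}\le r\le n$, because $\varrho(t)$ decays super-exponentially and $\omega(v-t)-\re^{-\gamma}$ is not smooth (the Buchstab function has corners at integers and $\omega$ is only piecewise smooth, with a jump of its derivative), so the convolution $t\mapsto|\omega(u-t)-\re^{-\gamma}|\varrho(t)$ has limited regularity and the naive $O(1/r)$ Riemann bound must be justified via a bounded-variation argument rather than a derivative bound. A second delicate point is that the error estimates from Theorems~\ref{thm2} and~\ref{theorem3.2} hold for the summands individually with their own $u$-dependent constants, where here the effective variable runs over $m/r$ and $(n-m)/r$ and can be as small as $1$ or as large as $u$; one must verify the hypotheses $\sqrt{(n-m)\log(n-m)}\le r$ etc. remain valid (or treat the boundary zones separately as in (i)) and that summing the per-term errors against the rapidly decaying weight $\nu(m,r)$ does not lose more than the claimed factor. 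Once these are in hand, the comparison with $H(u)$ and the final relative error $1+O(u^{3/2}\log^2(u+1)/r)$ follow by collecting the three error contributions and invoking the stated lower bound $H(u)\gg\varrho(u)$.
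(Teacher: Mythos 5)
Your plan follows essentially the same route the paper takes: split the sum in \eqref{dtvsum} into a bulk range and boundary ranges, plug in Theorem~\ref{thm2} (via Lemma~\ref{Prm}) for $\nu$ and Theorem~\ref{theorem3.2} for $\kappa$, and compare the resulting sum with the convolution integral defining $H(u)$ via an Euler--Maclaurin/Riemann-sum argument (Lemma~\ref{KH-ineq}) applied piecewise between the $O(u)$ zeros of $\omega-\re^{-\gamma}$ (Lemma~\ref{omegau}). So the architecture is correct. Two points, however, need fixing.

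First, a small but real slip in step (i): for $1\le n-m\le r$ you assert $\kappa(n-m,r)=1$ because ``$\sigma$ restricted to the remaining $n-m$ symbols must be the identity.'' That is backwards. Every nonempty permutation of at most $r$ symbols has some cycle of length between $1$ and $r$, so in fact $\kappa(n-m,r)=0$ for $1\le n-m\le r$, and $\kappa(0,r)=1$ only for $n-m=0$. The boundary range $n-r\le m<n$ therefore contributes $\tfrac12\sum\nu(m,r)\re^{-H_r}$ (which, after inserting $\nu(m,r)\approx\varrho(m/r)\re^{H_r-\gamma}/r$, matches $\tfrac12\int_{u-1}^u|\omega(u-v)-\re^{-\gamma}|\varrho(v)\,dv$ because $\omega(u-v)=0$ there), while the isolated $\varrho(u)/2$ comes solely from $m=n$. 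This is the decomposition the paper carries out with its four pieces $K_1,\dots,K_4$.

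Second, and more seriously, the crucial estimate that turns the accumulated $\kappa$-error into a relative error against $H(u)$ is the bound
\[
\int_1^u\varrho(u-v)R(v)\,dv\ll H(u),
\]
and your justification of it is a genuine gap. Observing that $R(u)/\varrho(u)$ and $H(u)/\varrho(u)$ are both of shape $2^{O(u/\log u)}$ does not compare the two convolutions: both quantities are convolutions of $\varrho$ against weights of the same rough size, so comparing each to $\varrho(u)$ cannot distinguish their ratio. The real difficulty is that $|\omega(v)-\re^{-\gamma}|$, unlike $R(v)$, vanishes at the (infinitely many, roughly unit-spaced) zeros of $\omega-\re^{-\gamma}$, so the pointwise inequality $R(v)\ll|\omega(v)-\re^{-\gamma}|$ fails. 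The paper handles this with a measure-theoretic argument: the function $g(v)=|\omega(v)-\re^{-\gamma}|R(v)^{-1}$ is uniformly continuous with $O(u)$ simple separated zeros on $[1,u]$, so for any $\varepsilon>0$ the set where $g(v)\le\varepsilon$ has Lebesgue measure $\ll u\,\delta(\varepsilon)$ with $\delta(\varepsilon)\to0$; choosing $\varepsilon$ fixed small enough makes the contribution of that exceptional set at most half of $L(u):=\int\varrho(u-v)R(v)\,dv$, and on the complement $R(v)\le\varepsilon^{-1}|\omega(v)-\re^{-\gamma}|$ pointwise, yielding $L(u)\le\tfrac12 L(u)+\varepsilon^{-1}H(u)$. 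Without some version of this argument, the claimed relative error against $H(u)$ is not established.
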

To prove Theorem~\ref{EMRP}, we apply Theorems~\ref{thm2} and~\ref{theorem3.2} to \eqref{dtvseries}. The function $R(u)$ in Theorem~\ref{theorem3.2} is crucial, it allows us to get the full asymptotic expansion with $H(u)$ standing for the \mbox{main term.}

It was already indicated in \cite[p.~68]{AT-AP92} that the counts of the first $r$ cycle sizes are well approximated by the independent Poisson random variables, as long as $u\to\infty$. Theorem~\ref{EMRP} shows the rate of the approximation. The author thinks that the error estimate could be refined to $O\left(H(u)u\log (u+1)/r\right)$ at best. The hypothesis follows from the proof of Theorem~\ref{EMRP}. However, it is enough to recall Theorems~\ref{thm2} and~\ref{1cor} with Corollary~\ref{2cor} to understand the reasons. Sure enough, an improvement of the factor $u^{3/2}\log^2(u+1)$ in Theorem~\ref{theorem3.2} would cause an improvement on the same factor in Theorem~\ref{EMRP}.

As it was noted in Chapter~\ref{Chapter3}, R.~Arratia and S.~Tavar\' e~\cite{AT-AP92} showed that  
\begin{align}
	d_{TV}(n,r)&\leq \sqrt{2\pi \lfloor u \rfloor}\frac{2^{\lfloor u \rfloor-1}}{(\lfloor u \rfloor-1)!}+\frac{1}{\lfloor u \rfloor !}+3\left(\frac{\re}{u}\right)^u\nonumber\\ 
	&=\exp\Big\{-u\log u+u\log (2\re)+O\left(\log (u+2)\right)\Big\}\label{estdtv}
\end{align}
if $1\leq r\leq n$. The problem of an upper estimate for $d_{TV}(n,r)$ is not studied in the thesis, but applying Theorem~\ref{theorem3.4} to \eqref{dtvseries} we obtain the hint how \eqref{estdtv} might be improved. We may need only results concerning the density $\nu(n,r)$.
\begin{corollary}\label{corollary4.3}
	If $5\leq r<n$, then
	\[
	d_{TV}(n,r)\ll \frac{1}{r}\sum_{m=0}^{n-r-1} \nu(m,r)\nu(n-m,r)+\frac{1}{r}\sum_{m=n-r}^\infty\nu(m,r)+\nu(n,r).
	\]
\end{corollary}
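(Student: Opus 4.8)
The starting point is the exact identity \eqref{dtvseries},
\[
d_{TV}(n,r)=\frac{1}{2}\sum_{m=0}^{\infty}\nu(m,r)\bigl|\kappa(n-m,r)-\re^{-H_r}\bigr|,
\]
and the plan is to split this series at $m=n-r$, estimating $\bigl|\kappa(j,r)-\re^{-H_r}\bigr|$ by one of two different means according to whether $j:=n-m$ exceeds $r$ or not. The three terms on the right-hand side of the Corollary will arise, respectively, from the head $0\le m\le n-r-1$, the tail $m\ge n-r$ with $m\ne n$, and the single term $m=n$.

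For the head of the series we have $j=n-m\ge r+1>r\ge 5$, so Theorem~\ref{theorem3.4} applies to $\kappa(j,r)$ with $u=j/r\ge 1$. Fixing, say, $\varepsilon=1/2$ and using that $r\ge 5$ forces $1-4/r>0$ and hence $u^{1-4/r}\ge 1$, the exponential factor $\exp\{-u^{1-4/r}(1-\varepsilon)/(4\pi^2(\log(u+1))^2)\}$ is at most $1$; therefore $\bigl|\kappa(n-m,r)-\re^{-H_r}\bigr|\ll\nu(n-m,r)/r$ uniformly for $0\le m\le n-r-1$. Summing over $m$ gives the first term $\frac{1}{r}\sum_{m=0}^{n-r-1}\nu(m,r)\nu(n-m,r)$; no control on the number of summands is needed, as $\nu(m,r)$ decays.

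For the tail range $m\ge n-r$ Theorem~\ref{theorem3.4} is unavailable, and one exploits instead an elementary observation: a permutation of $j$ symbols with $1\le j\le r$ cannot avoid all cycles of length in $\overline{1,r}$, so $\kappa(j,r)=0$ for $1\le j\le r$ (and $\kappa(j,r)=0$ for $j<0$ by convention), while $\kappa(0,r)=1$. Hence for every $m\ge n-r$ with $m\ne n$ one gets $\bigl|\kappa(n-m,r)-\re^{-H_r}\bigr|=\re^{-H_r}$, and since $H_r=\log r+\gamma+O(1/r)$ we have $\re^{-H_r}\asymp 1/r$; summing these contributions yields $\frac{1}{r}\sum_{m=n-r}^{\infty}\nu(m,r)$. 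The term $m=n$ contributes $\nu(n,r)\bigl|1-\re^{-H_r}\bigr|\le\nu(n,r)$. Adding the three pieces proves the assertion.

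Since each of the three estimates is immediate once Theorem~\ref{theorem3.4} is in hand, the only point demanding (mild) care is the tail $m\ge n-r$: there Theorem~\ref{theorem3.4} gives nothing, and one must replace it by the identical vanishing of $\kappa(j,r)$ on $1\le j\le r$ together with the elementary size estimate $\re^{-H_r}\asymp 1/r$ — this is precisely what manufactures the factor $1/r$ in front of the tail sum and isolates the boundary term $\nu(n,r)$.
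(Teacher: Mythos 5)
Your proof is correct and follows precisely the approach the paper indicates (applying Theorem~\ref{theorem3.4} termwise to the series \eqref{dtvseries} over the head $0\le m\le n-r-1$, where $n-m\ge r+1>r\ge 5$ makes the theorem applicable, and treating the tail $m\ge n-r$ via the identical vanishing $\kappa(j,r)=0$ for $1\le j\le r$, $\kappa(0,r)=1$, together with $\re^{-H_r}\asymp 1/r$). The paper does not write out a proof of this corollary but only remarks that it comes from Theorem~\ref{theorem3.4} and \eqref{dtvseries}; your argument supplies one faithfully, including the needed observation that for $r\ge5$ and $u=j/r>1$ the exponent $1-4/r$ is positive, so the exponential factor in Theorem~\ref{theorem3.4} is at most $1$.
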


\begin{comment}
Summary of the results:

\medskip

 \begin{tabular}{|l|l|l|}
 	\hline
 	Result & Range & Improved result of\\
 	\hline
 	Theorem~\ref{EMRP} & $\sqrt{n\log n}\leq r\leq n$  & \cite{AT-AP92}, \cite{Stark}\\
 	\hline
 	Corollary~\ref{corollary4.3} & $5\leq r<n$ &  -- (new)\\
 	\hline
 \end{tabular}
 \end{comment}
 
 \bigskip

\subsection{Historical Note to the proof of Theorem~\ref{EMRP}}

The survey starts with a result called Kubilius' fundamental lemma, after Jonas~Kubilius (\hspace{-0,02mm}\cite{JK1956},~\cite{JK1964}). Denote by $p$ a generic prime number. Let $f$ be an additive arithmetic function and $f_b(n)=\sum_{p\leq b,\ p^v||n}f(p^v)$. Set $Z_b=\sum_{p\leq b}\xi_p$ where $\Pr(\xi_p=f(p^v))=(1-1/p)p^{-v}$ ($v=0,1,2,\ldots$). As noted by Ruzsa and others, the lemma may be stated as follows
\[
d_{TV}(f_b,Z_b):=\sup_{A\subset\R}\left|\eta_a(f_b\in A)-\Pr(Z_b\in A)\right|\ll \re^{-c_1\log a/ \log b}
\]
if $a\geq b\geq 2$ where $\eta_a$ is the uniform measure over the set of natural integers not exceeding $a$, and $c_1$ is some positive constant. Later, M.B.~Barban and A.I.~Vinogradov \cite{Bar-Vin1964} refined Kubilius' estimate, the improvement is described in detail by Elliot~\cite{Elliott}. Eventually, G.~Tenenbaum \cite{GT-Crible} developed a technique, which let him to improve on the results. In particular, he showed 
\[
d_{TV}(f_b,Z_b)=H\left(\frac{\log a}{\log b}\right)\left(1+O\left(\frac{\log\log b}{\log b}\right)\right)
\]
if $a\geq 3$ and $\exp\left\{(\log a)^{2/5+\varepsilon}\right\}\leq b\leq a$ for any fixed $\varepsilon>0$. The technique is implemented in Section~\ref{section5.8}.

\chapter{Proofs}\label{Chapter5}

\section{Auxiliary lemmas}\label{Aux}

Later on the function $\xi(v)$, $v\geq 1$, is defined as in the next lemma. 

\begin{lemma}\label{xi}For $v>1$, define $\xi=\xi(v)$ as the nonzero solution to the equation
\begin{equation*}
                 \re^\xi=1+v\xi
\end{equation*}
and put $\xi(1)=0$. If $v>1$, then $\log v<\xi< 2\log v$,
\begin{equation}\label{xi asymp}
\xi=\log v +\log\log (v+2)+O\left(\frac{\log\log (v+2)}{ \log (v+2)}\right)
\end{equation}
and
\begin{equation}
\xi':=\xi'(v)=\frac{1}{v}\,\frac{\xi}{\xi-1+1/v}=\frac{1}{v}\exp\bigg\{O\left(\frac{1}{\log (v+1)}\right)\bigg\}.
\label{deriv}
\end{equation}
\end{lemma}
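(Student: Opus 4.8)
The plan is to treat the three assertions in order, since each feeds into the next. First I would establish existence, uniqueness, and the two-sided bound $\log v < \xi < 2\log v$. Define $g(t) = \re^t - 1 - vt$ for $t \geq 0$. We have $g(0) = 0$, $g'(0) = 1 - v < 0$ for $v > 1$, so $g$ is initially decreasing, and $g'(t) = \re^t - v$ vanishes only at $t = \log v$, after which $g$ is strictly increasing to $+\infty$; hence there is exactly one nonzero root $\xi$, and it satisfies $\xi > \log v$. For the upper bound I would check that $g(2\log v) = v^2 - 1 - 2v\log v > 0$ for $v > 1$ (e.g.\ by noting this holds at $v=1$ with value $0$ and has positive derivative $2v - 2 - 2\log v > 0$... actually one should verify the derivative sign; alternatively just observe $v^2 - 1 > 2v\log v \iff v - v^{-1} > 2\log v$, which holds for $v>1$ since both sides vanish at $v=1$ and $1 + v^{-2} > 2/v$), placing $\xi$ below $2\log v$.

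Next, for the asymptotic expansion \eqref{xi asymp}, I would bootstrap from the defining relation written as $\xi = \log(1 + v\xi) = \log v + \log\xi + \log(1 + 1/(v\xi))$. From the crude bound $\log v < \xi < 2\log v$ we get $\log \xi = \log\log v + O(1)$ as a first pass; to get the stated precision I would iterate once more, using $\log\xi = \log\log v + \log(\xi/\log v)$ and the improved estimate $\xi/\log v = 1 + O(\log\log(v+2)/\log(v+2))$ that follows from substituting the first-pass bound back in. The term $\log(1 + 1/(v\xi))$ is $O(1/(v\log v))$, which is absorbed into the error. Care is needed near $v = 1$ where $\xi \to 0$ and $\log v \to 0$; the $(v+2)$ shifts in the logarithms are precisely there to keep everything bounded, so I would either treat a neighborhood of $v=1$ separately (where the claim is a statement about continuity and a Taylor expansion of $g$) or note the error term is trivially $\Theta(1)$-dominating there.

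For the derivative formula \eqref{deriv}, differentiate $\re^\xi = 1 + v\xi$ implicitly: $\re^\xi \xi' = \xi + v\xi'$, so $\xi'(\re^\xi - v) = \xi$, and since $\re^\xi = 1 + v\xi$ gives $\re^\xi - v = 1 + v\xi - v = v(\xi - 1 + 1/v)$, we obtain $\xi' = \xi / (v(\xi - 1 + 1/v)) = \frac{1}{v}\cdot\frac{\xi}{\xi - 1 + 1/v}$, which is the first equality. For the second, write $\frac{\xi}{\xi - 1 + 1/v} = 1 + \frac{1 - 1/v}{\xi - 1 + 1/v} = \exp\{O(1/\xi)\} = \exp\{O(1/\log(v+1))\}$ using $\xi > \log v$ and $\xi \geq 0$; again the neighborhood of $v=1$ needs a glance, but there $\xi' $ is finite and positive by the implicit function theorem applied to $g$ (since $g'$ does not vanish at the root for $v>1$), so the bound holds with the $O$-constant absorbing the behavior.

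The main obstacle is not any single step but the uniformity of the error terms across the whole range $v > 1$, especially as $v \downarrow 1$ where $\xi \to 0$: one must check that the logarithmic manipulations in the bootstrap stay valid and that the $(v+2)$-shifted logarithms genuinely tame the singularity. I expect the cleanest route is to prove the estimates for $v \geq v_0$ for some fixed $v_0 > 1$ by the iteration above, and then dispatch $1 < v < v_0$ by a compactness/continuity argument together with a local expansion of $g$ at the origin.
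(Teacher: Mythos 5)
Your proof is correct, and it is somewhat more self-contained than the paper's, which handles only the two-sided bound $\log v < \xi < 2\log v$ directly and then simply cites Hildebrand--Tenenbaum and Tenenbaum's book for the asymptotic expansion \eqref{xi asymp} and the derivative formula \eqref{deriv}. For the bounds, the paper's route is slightly different from yours: it uses the strictly increasing function $h(t)=(\re^t-1)/t$, observes $h(\xi)=v$, and compares $h(\log v)=(v-1)/\log v$ (resp.\ $h(2\log v)=(v^2-1)/(2\log v)$) against $v$, which reduces to exactly your inequalities $v\log v - v + 1>0$ and $v - v^{-1} > 2\log v$; you instead analyze the sign change of $g'(t)=\re^t - v$ and check $g(2\log v)>0$. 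The two arguments are equivalent in content. Where you go further is in actually sketching the bootstrap for \eqref{xi asymp} and the implicit differentiation for \eqref{deriv}; the latter is clean, and the $O(1/\xi)\Rightarrow O(1/\log(v+1))$ conversion is fine for $v$ bounded away from $1$, with the neighborhood of $v=1$ handled, as you note, by observing $\xi'(v)\to 2$ as $v\downarrow 1$ so the $O$-constant absorbs it. Your awareness that the $(v+2)$-shifted logarithms are what keeps the error term meaningful near $v=1$ is exactly the point to watch; if you wanted a fully rigorous version of the bootstrap you would need to make that compactness argument near $v=1$ explicit, but as a proposal the outline is sound.
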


\begin{proof}
 To establish the effective bounds for all $v>1$, it suffices to employ the strictly increasing function $(\re^v-1)/v$ if $v\geq 0$. Indeed, the lower bound follows from the inequality
 \[
      v=\frac{\re^{\xi(v)}-1}{\xi(v)}> \frac{\re^{\log v}-1}{\log v}=\frac{v-1}{\log v}
 \]
 following from $ v\log v-v+1>0$  if $v>1$. To prove the upper estimate, it suffices to repeat the same argument.

 The asymptotic formulas for $\xi(v)$ and its derivative can be found in \cite{AH+GT} or in the book \cite{GT}.
\end{proof}

\smallskip

\begin{lemma}\label{EGF}
	Denote a finite set of natural numbers by $I$. Let $Z_j$, $j\in I$, be independent Poisson random variables such that $\E Z_j=1/j$ or, equivalently, $\Pr\left(Z_j=k\right)=\re^{-1/j}/(j^kk!)$, $k\in\N_0$. We have
	\[
	\sum_{n=0}^\infty\Pr\left(\sum_{j\in I}jZ_j=n\right)z^n=\exp\left\{\sum_{j\in I}\frac{z^j-1}{j}\right\},
	\]
	where $z\in\C$.
\end{lemma}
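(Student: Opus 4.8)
The statement to prove is Lemma~\ref{EGF}: the exponential generating function identity for a sum of scaled independent Poisson random variables.

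\textbf{Approach.} The plan is to compute the generating function by exploiting independence, which turns the generating function of a sum into a product of individual generating functions. First I would observe that $\sum_{n=0}^\infty \Pr\bigl(\sum_{j\in I}jZ_j = n\bigr)z^n = \E z^{\sum_{j\in I} jZ_j}$, interpreting the power series as the expectation of $z$ raised to the random exponent; since $I$ is finite and $|z|$ may be taken in a suitable range (or one works with formal power series, as the identity is ultimately an identity of formal power series that then extends to $z\in\C$ because both sides are entire), there is no convergence subtlety worth dwelling on. Then, using independence of the $Z_j$, $j\in I$, I would factor $\E z^{\sum_{j\in I}jZ_j} = \prod_{j\in I}\E z^{jZ_j}$.

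\textbf{Key steps.} The next step is to evaluate each factor $\E z^{jZ_j}$. Since $Z_j$ is Poisson with parameter $\lambda_j = 1/j$, we have $\E w^{Z_j} = \sum_{k=0}^\infty w^k \re^{-1/j}/(j^k k!) = \re^{-1/j}\sum_{k=0}^\infty (w/j)^k/k! = \re^{-1/j}\re^{w/j} = \exp\{(w-1)/j\}$, the standard probability generating function of a Poisson variable. Substituting $w = z^j$ gives $\E z^{jZ_j} = \exp\{(z^j-1)/j\}$. Multiplying over $j\in I$ and using that the exponential of a sum is the product of exponentials yields $\prod_{j\in I}\exp\{(z^j-1)/j\} = \exp\bigl\{\sum_{j\in I}(z^j-1)/j\bigr\}$, which is exactly the claimed right-hand side. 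Finally I would note that the left-hand side, as written, is a power series in $z$ with nonnegative coefficients summing to $1$ (a probability distribution on $\N_0$), hence converges for $|z|\le 1$ and extends to an entire function matching the entire right-hand side, so the identity holds for all $z\in\C$.

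\textbf{Main obstacle.} There is no serious obstacle here; this is a routine computation. The only point requiring a word of care is the interchange of the sum over $n$ with the expectation (equivalently, justifying the factorization over the infinite sums defining each Poisson generating function), which is handled by absolute convergence for $|z|$ small together with the fact that $I$ is finite, after which analytic continuation delivers the full complex-variable statement. I would present the computation cleanly in two or three lines and relegate the convergence remark to a parenthetical comment.
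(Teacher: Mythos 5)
Your proof is correct and follows essentially the same route as the paper: rewrite the left side as $\E z^{\sum_{j\in I} jZ_j}$, factor by independence, and evaluate each factor via the Poisson series $\E z^{jZ_j}=\re^{-1/j}\sum_{k\geq 0}(z^j/j)^k/k!=\re^{(z^j-1)/j}$. The only cosmetic difference is that you phrase the inner step via the standard Poisson probability generating function $\E w^{Z_j}$ with $w=z^j$, while the paper computes the series directly; the convergence remark you add is fine but unnecessary since $I$ is finite and each factor is entire.
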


\begin{proof}
	At first, notice that
	\[
	\sum_{n=0}^\infty\Pr\left(\sum_{j\in I}jZ_j=n\right)z^n=\E z^{\sum_{j\in I}jZ_j}.
	\]
	Following calculations are straightforward.
	\begin{align*}
		\E z^{\sum_{j\in I}jZ_j}&=\prod_{j\in I}\E z^{jZ_j}\\
		&=\prod_{j\in I}\sum_{k=0}^\infty\re^{-1/j}\frac{1}{j^kk!}z^{jk}\\
		&=\prod_{j\in I}\re^{-1/j}\sum_{k=0}^\infty\frac{(z^j/j)^k}{k!}\\	
		&=\prod_{j\in I}\re^{(z^j-1)/j}.
	\end{align*}
\end{proof}

\smallskip

\begin{lemma}\label{IT}
Let
\[
I(s)=\int_0^s\frac{\re^t-1}{t}dt
\]
and
\[
T(s)=\int_0^s\frac{\re^t-1}{t}\left(\frac{t}{r}\frac{\re^{\frac{t}{r}}}{\re^{\frac{t}{r}}-1}-1\right)dt,
\]
where $s\in\C$. Then 
\[
I(s)+T(s)=\sum_{j=1}^r\frac{\re^{js/r}-1}{j}.
\]
\end{lemma}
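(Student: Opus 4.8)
The identity to be established is
\[
I(s)+T(s)=\sum_{j=1}^r\frac{\re^{js/r}-1}{j},
\]
and the natural route is to recognize the right-hand side as a Riemann-type sum that is exactly captured by an integral with a ``correction'' factor. The plan is to start from the definitions of $I(s)$ and $T(s)$, combine the two integrals into a single one over $[0,s]$, and show that the resulting integrand telescopes into the summands $\re^{js/r}-1$ divided by $j$. Concretely, adding the integrands gives
\[
I(s)+T(s)=\int_0^s\frac{\re^t-1}{t}\cdot\frac{t}{r}\,\frac{\re^{t/r}}{\re^{t/r}-1}\,dt
=\frac1r\int_0^s(\re^t-1)\,\frac{\re^{t/r}}{\re^{t/r}-1}\,dt,
\]
since the $-1$ inside the parenthesis of $T$ cancels the $+1$ coming from $I$ exactly, leaving only the product of $(\re^t-1)/t$ with $(t/r)\re^{t/r}(\re^{t/r}-1)^{-1}$, and the factor $t$ cancels.

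The next step is to evaluate $\frac1r\int_0^s(\re^t-1)\re^{t/r}(\re^{t/r}-1)^{-1}\,dt$. First I would substitute $w=t/r$, $dt=r\,dw$, turning it into $\int_0^{s/r}(\re^{rw}-1)\re^{w}(\re^{w}-1)^{-1}\,dw$. Then factor the numerator using the geometric identity
\[
\re^{rw}-1=(\re^{w}-1)\sum_{j=0}^{r-1}\re^{jw}=(\re^w-1)\bigl(\re^{(r-1)w}+\re^{(r-2)w}+\cdots+1\bigr),
\]
so the $\re^w-1$ in the denominator cancels and the integrand becomes $\re^w\sum_{j=0}^{r-1}\re^{jw}=\sum_{j=1}^{r}\re^{jw}$. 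Integrating termwise from $0$ to $s/r$ yields $\sum_{j=1}^{r}\tfrac1j(\re^{js/r}-1)$, which is precisely the right-hand side. Finally I would note the harmless point that although $(\re^{t/r}-1)^{-1}$ and $(\re^t-1)/t$ are individually singular at $t=0$, the combined integrand $\frac1r(\re^t-1)\re^{t/r}(\re^{t/r}-1)^{-1}$ extends to an entire function of $t$ (equal to $\sum_{j=1}^r\re^{jt/r}$ after the factorization), so all the integrals in sight are genuine integrals of entire functions along any path from $0$ to $s$ and the manipulations are valid for every $s\in\mathbb{C}$.

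There is essentially no obstacle here: the only thing to be slightly careful about is the bookkeeping of the cancellation at $t=0$ and making sure the integral defining $T(s)$ is interpreted as the integral of the entire integrand (which it is, by the same geometric-series argument applied to the bracketed factor). Everything else is the algebraic identity $\re^{rw}-1=(\re^w-1)\sum_{j=0}^{r-1}\re^{jw}$ followed by termwise integration.
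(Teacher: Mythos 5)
Your proof is correct and takes essentially the same route as the paper, which dismisses this as "direct integration of the geometric series": you factor $\re^{rw}-1=(\re^w-1)\sum_{j=0}^{r-1}\re^{jw}$ and integrate the resulting $\sum_{j=1}^r\re^{jw}$ termwise. The remarks about the removable singularities (both at $t=0$ and at $t=2\pi i rk$) being harmless are accurate and appropriate for validating the identity over all of $\C$.
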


\begin{proof}
Direct integration of the geometric series. 
\end{proof}

\smallskip

\begin{lemma}\label{rho} Let $\varrho(v)$, $v\geq 1$, be the Dickman function. For $v\geq 1$, we have
\[
\varrho(v)=\frac{\exp\big\{\gamma-v\xi(v)+I(\xi(v))\big\}}{\sqrt{2\pi \big(v+(1-v)/\xi(v)\big)}}\Big(1+O\left(\frac{1}{v}\right)\Big).
\]
\end{lemma}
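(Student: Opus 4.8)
The plan is to obtain $\varrho(v)$ by a saddle-point evaluation of a Laplace-type integral representation. The starting point is the classical Laplace transform identity for the Dickman function, namely that for $s\in\C$,
\[
\int_0^\infty \varrho(v)\,\re^{-sv}\,dv = \frac{1}{s}\exp\Big\{\gamma - I(-s)\Big\},
\]
equivalently $\mathcal{L}(\varrho)(s) = s^{-1}\exp\{\gamma + \int_0^{-s}(\re^t-1)t^{-1}\,dt\}$, where $I$ is the function from Lemma~\ref{IT}. Inverting, for $v\ge 1$,
\[
\varrho(v) = \frac{1}{2\pi i}\int_{c-i\infty}^{c+i\infty} \exp\Big\{\gamma + sv + I(-s)\Big\}\,\frac{ds}{s},
\]
with $c>0$ arbitrary. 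I would then substitute $s=-w$ (or keep $s$ and track signs) and prepare to apply the saddle-point method to the phase $\phi(s) = sv + I(-s)$, whose derivative is $\phi'(s) = v - \frac{\re^{-s}-1}{-s} = v - \frac{1-\re^{-s}}{s}$.

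First I would locate the saddle point: $\phi'(s)=0$ means $\frac{1-\re^{-s}}{s} = v$, i.e. $\re^{-s} = 1 - vs$. Writing $s = -\xi$ gives $\re^{\xi} = 1 + v\xi$, which is exactly the defining equation for $\xi(v)$ in Lemma~\ref{xi}. So the saddle point sits at $s_0 = -\xi(v)$ on the negative real axis (for $v>1$; the degenerate case $v=1$, where $\xi=0$ and $\varrho\equiv 1$, is handled directly since both sides equal $1$ up to the claimed error). Next I would compute the second derivative at the saddle: $\phi''(s) = -\frac{d}{ds}\frac{1-\re^{-s}}{s}$, and after differentiating and using the saddle equation $\re^{-s_0}=1-vs_0 = 1+v\xi$, this simplifies to $\phi''(s_0) = \frac{1}{\xi}\big(v + (1-v)/\xi\big)$ — the quantity under the square root in the statement. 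I would also evaluate the phase itself at the saddle: $\phi(s_0) = -v\xi + I(\xi)$, using $I(-s_0)=I(\xi)$. Combining with the $1/s$ factor, which contributes $1/(-s_0) = 1/\xi$, and the standard saddle-point formula $\varrho(v) \sim \re^\gamma \cdot \frac{1}{\xi}\cdot \exp\{\phi(s_0)\}\cdot \sqrt{\frac{1}{2\pi\,\phi''(s_0)}}\cdot \xi$ — wait, more carefully, $\frac{1}{2\pi i}\int \re^{\phi(s)} g(s)\,ds \approx g(s_0)\re^{\phi(s_0)}/\sqrt{2\pi\phi''(s_0)}$ with $g(s)=\re^\gamma/s$ — yields precisely
\[
\varrho(v) = \frac{\exp\{\gamma - v\xi(v) + I(\xi(v))\}}{\sqrt{2\pi\big(v + (1-v)/\xi(v)\big)}}\Big(1 + O\big(1/v\big)\Big),
\]
after checking that $g(s_0)\xi$-type bookkeeping matches (the $1/\xi$ from $g(s_0)$ cancels against a $\xi$ coming from rescaling $\phi''$, leaving $v+(1-v)/\xi$ under the root).

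The main obstacle is the rigorous error analysis: one must justify shifting the contour to pass vertically through $s_0=-\xi$, split the integral into a central arc $|\mathrm{Im}\,s|\le \delta$ and two tails, show the tails are negligible (here the growth of $\mathrm{Re}\,I(-s)$ as $|\mathrm{Im}\,s|\to\infty$ along the vertical line must be controlled — $I(-s)$ does not decay, so one needs the oscillation of $\re^{sv}$, or better, a bound exploiting that $\mathrm{Re}\,\phi$ has a strict maximum at $s_0$ on the line), and on the central arc perform the quadratic expansion $\phi(s) = \phi(s_0) + \tfrac12\phi''(s_0)(s-s_0)^2 + O(\phi'''(\cdot)|s-s_0|^3)$ with enough uniformity to get the $O(1/v)$ relative error. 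This requires uniform estimates on $\phi'''$ near the saddle and on $\xi(v)$, $\xi'(v)$ via Lemma~\ref{xi} (e.g. $\xi \asymp \log v$, $\phi''(s_0) \asymp v/\log v$), so that the natural width of the saddle is $\asymp (\log v / v)^{1/2}$ and the cubic error is $O((\log v/v)^{1/2})$ — I would then need to do a slightly more refined expansion (one more term) or a cleaner change of variables to push the relative error down to the claimed $O(1/v)$ rather than $O(v^{-1/2}\log^{1/2}v)$; this sharpening, already present in the literature of Alladi–Hildebrand–Tenenbaum cited as \cite{AH+GT}, is the technically delicate part. An alternative I would keep in reserve is to cite the asymptotic expansion of $\varrho$ from \cite{AH+GT} or \cite{GT} directly and merely verify that its leading term, re-expressed through $\xi$, coincides with the displayed formula — algebraically a short computation using the saddle equation.
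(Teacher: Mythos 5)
The paper offers no proof of this lemma at all: it cites Theorem~8 on p.~374 of Tenenbaum's book \cite{GT}, which is exactly the fallback you keep in reserve. Your saddle-point plan is the standard route underlying that theorem, but as written it contains two independent errors, and they do not cancel the way you suggest. First, the spurious $1/s$: the paper's own Lemma~\ref{hatrho} (quoting Theorem~7, p.~372 of \cite{GT}) gives $\hat\varrho(s)=\exp\{\gamma+I(-s)\}$ with no $1/s$ factor, so the inversion reads $\varrho(v)=\frac{1}{2\pi i}\int_{c-i\infty}^{c+i\infty}\exp\{\gamma+sv+I(-s)\}\,ds$ and the amplitude in the saddle-point formula is the constant $\re^\gamma$, not $\re^\gamma/s$. (Your first display also has a sign slip, $\gamma-I(-s)$, contradicted by the line that follows.) Second, the value of $\phi''$ at the saddle: differentiating $\phi'(s)=v-\frac{1-\re^{-s}}{s}$ and evaluating at $s_0=-\xi$ via $\re^{\xi}=1+v\xi$ gives
\[
\phi''(-\xi)=-\frac{s_0\re^{-s_0}-1+\re^{-s_0}}{s_0^{2}}
=\frac{1-v+v\xi}{\xi}=v+\frac{1-v}{\xi},
\]
which is exactly the quantity under the square root in the statement, not $\frac{1}{\xi}\big(v+(1-v)/\xi\big)$. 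With both corrections the formula closes at once, $\varrho(v)\approx\re^{\gamma}\re^{\phi(-\xi)}\big/\sqrt{2\pi\,\phi''(-\xi)}=\exp\{\gamma-v\xi+I(\xi)\}\big/\sqrt{2\pi\big(v+(1-v)/\xi\big)}$, with no residual $\xi$-bookkeeping. By contrast, if you track your own formulas honestly, the amplitude $\re^\gamma/s_0=-\re^\gamma/\xi$ is negative, the extra $\xi^{-1}$ in your $\phi''$ contributes a $\sqrt\xi$, and the net ratio to the correct answer is $-\xi^{-1/2}$, which does not vanish. The remaining, genuinely delicate part — controlling the tails and the cubic term sharply enough to reach a relative error $O(1/v)$ rather than the naive $O\big((\log v/v)^{1/2}\big)$ — is precisely what Tenenbaum's Theorem~8 provides; since the paper outsources this, so should you.
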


\begin{proof}
See Theorem~8 on p.~374 in \cite{GT}. 
\end{proof}

\smallskip

\begin{lemma} \label{hatrho} Let $$\hat\varrho(s)=\int_0^\infty \re^{-sv} \varrho(v) dv,$$ $s\in\C$, be the Laplace transform of the Dickman function $\varrho(v)$, $v \geq 0$. The function $\hat\varrho(s)$  is entire and
\[
\hat\varrho(s)=\exp\{\gamma+I(-s)\}.
\]
\end{lemma}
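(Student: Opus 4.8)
The plan is to prove Lemma~\ref{hatrho} by verifying that the entire function defined by the right-hand side has the same Laplace transform as the Dickman function, and that $\hat\varrho$ is indeed entire. First I would check that $\hat\varrho(s)=\int_0^\infty\re^{-sv}\varrho(v)\,dv$ defines an entire function of $s$: since $\varrho(v)\le 1/\Gamma(v+1)$ for $v\ge 1$ (recalled earlier from \cite[p.~366]{GT}) and $\varrho(v)=1$ on $[0,1]$, the integral converges absolutely and locally uniformly in $s$ for all $s\in\C$, so differentiation under the integral sign is justified and $\hat\varrho$ is holomorphic on all of $\C$.

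Next I would derive a differential equation for $\hat\varrho$. The defining delay-differential relation $v\varrho'(v)+\varrho(v-1)=0$ for $v>1$, together with $\varrho\equiv 1$ on $[0,1]$ (so $\varrho'\equiv 0$ there and $\varrho$ is continuous at $v=1$ with $\varrho(1)=1$), can be transcribed into a relation for the Laplace transform. Writing $\int_0^\infty \re^{-sv} v\varrho'(v)\,dv$ and $\int_0^\infty \re^{-sv}\varrho(v-1)\,dv=\re^{-s}\hat\varrho(s)$, and using integration by parts on the first integral (being careful with the boundary terms at $v=0$ and at infinity, and with the kink at $v=1$), one obtains a first-order linear ODE of the form $\hat\varrho\,'(s)=-\bigl(\re^{-s}-1\bigr)s^{-1}\hat\varrho(s)$, or equivalently $\frac{d}{ds}\log\hat\varrho(s)=-(\re^{-s}-1)/s=(1-\re^{-s})/s$. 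I would then integrate this from $0$ to $s$: since $\hat\varrho(0)=\int_0^\infty\varrho(v)\,dv=\re^\gamma$ (a standard normalization of the Dickman function, which I would either cite or recover), and $\int_0^s(1-\re^{-t})/t\,dt=I(-s)$ by the definition of $I$ in Lemma~\ref{IT}, exponentiation gives $\hat\varrho(s)=\re^{\gamma}\exp\{I(-s)\}=\exp\{\gamma+I(-s)\}$, as claimed.

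The main obstacle is the careful handling of the integration by parts and the boundary/discontinuity terms in the derivation of the ODE. One must track that $v\varrho'(v)$ has a jump in $\varrho'$ at $v=1$, split the integral $\int_0^\infty=\int_0^1+\int_1^\infty$, use $\varrho'\equiv 0$ on $(0,1)$ so the first piece vanishes, and verify that the boundary contribution at $v=1$ from integrating by parts on $\int_1^\infty \re^{-sv}v\varrho'(v)\,dv$ combines correctly with the term $-\re^{-s}\hat\varrho(s)$ coming from the $\varrho(v-1)$ part; the decay $\varrho(v)\le 1/\Gamma(v+1)$ makes all boundary terms at infinity vanish. Alternatively, and perhaps more cleanly, I would avoid the ODE entirely: substitute the power-series/recursive representation of $\varrho$ term by term, or simply appeal to the asymptotic formula for $\varrho(v)$ in Lemma~\ref{rho} together with a saddle-point evaluation — but the ODE route is the shortest and most transparent, so I would present that, with a remark that the result is classical (it appears in \cite{GT}).
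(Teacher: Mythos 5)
The paper's ``proof'' is a one-line citation to Theorem~7 on p.~372 of \cite{GT}, so your proposal is genuinely different in that you actually prove the statement rather than cite it. Your overall strategy is the standard one (and essentially the one used in \cite{GT}): show entirety via the decay $\varrho(v)\le 1/\Gamma(v+1)$, derive a first-order ODE for $\hat\varrho$ from the delay-differential equation $v\varrho'(v)+\varrho(v-1)=0$, then integrate using $\hat\varrho(0)=\re^\gamma$. That structure is sound.

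However, two of your intermediate identities carry sign errors that happen to cancel. Differentiating $s\hat\varrho(s)$ and substituting the delay equation gives $s\hat\varrho'(s)=(\re^{-s}-1)\hat\varrho(s)$, so the correct logarithmic derivative is
\[
\frac{d}{ds}\log\hat\varrho(s)=\frac{\re^{-s}-1}{s},
\]
not $(1-\re^{-s})/s$ as you wrote; one can sanity-check this at $s=0$, where $\hat\varrho'(0)=-\int_0^\infty v\varrho(v)\,dv<0$ forces the ratio $\hat\varrho'/\hat\varrho$ to be negative near the origin, which rules out your version. Correspondingly, the integral identity you quote is also off by a sign: substituting $t\mapsto -t$ in the definition $I(v)=\int_0^v(\re^t-1)t^{-1}\,dt$ gives
\[
I(-s)=\int_0^s\frac{\re^{-t}-1}{t}\,dt=-\int_0^s\frac{1-\re^{-t}}{t}\,dt,
\]
whereas you asserted $\int_0^s(1-\re^{-t})t^{-1}\,dt=I(-s)$. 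Because these two mistakes compensate, your final formula $\hat\varrho(s)=\exp\{\gamma+I(-s)\}$ is still correct, but the argument as written has internal errors that a careful reader would flag. With the signs fixed, the proof is complete and rigorous.
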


\begin{proof}
This is Theorem~7 on p.~372 in \cite{GT}.
\end{proof}

\smallskip

\begin{lemma}\label{rholap} Let
\[
\hat\varrho(s)=\exp\left\{\gamma+I(-s)\right\}, \quad s \in \C,
\]
be the Laplace transform of $\varrho(v)$ (see Lemma~\ref{hatrho}), $s=-\xi(u)+i\tau=:-\xi+i\tau$ and $\tau\in\R$. Then
\begin{equation*}
\hat{\varrho}(s) =\begin{cases} O\left(\exp\left\{I(\xi)-\tau^2u/2\pi^2\right\}\right) & \textrm{if}\quad |\tau|\leq\pi,  \\
O\left(\exp\left\{ I(\xi)-u/(\pi^2+\xi^2) \right\}\right) & \textrm{if}\quad |\tau|>\pi  \end{cases}
\end{equation*}
and
\begin{equation*}
 \hat{\varrho}(s)=\frac{1}{s}\bigg(1+O\left(\frac{1+\xi u}{s}\right)\bigg)\ \textrm{if}\ |\tau|>1+u\xi.
\end{equation*}
\end{lemma}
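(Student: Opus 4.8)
The plan is to start from $\hat\varrho(s)=\exp\{\gamma+I(-s)\}$ (Lemma~\ref{hatrho}), so that $|\hat\varrho(s)|=\re^{\gamma}\exp\{\mathrm{Re}\,I(-s)\}$; since $\hat\varrho(\bar s)=\overline{\hat\varrho(s)}$ I may take $\tau\ge 0$. As $I$ is entire, I would deform the segment from $0$ to $-s=\xi-i\tau$ into the broken line $0\to\xi\to\xi-i\tau$; parametrising the vertical part by $t=\xi-iv$, $0\le v\le\tau$, and inserting the defining relation $\re^{\xi}=1+u\xi$ of Lemma~\ref{xi}, this gives
\begin{equation*}
\mathrm{Re}\,I(-s)=I(\xi)+\int_0^{\tau}g(v)\,dv,\qquad g(v)=\frac{(1+u\xi)(v\cos v-\xi\sin v)-v}{\xi^{2}+v^{2}} .
\end{equation*}
The case of bounded $u$ is immediate, since then $\hat\varrho$ is bounded on $\mathrm{Re}\,w=-\xi$ while $I(\xi),u,\xi$ are $O(1)$; from now on assume $\xi$ exceeds a fixed absolute constant, in particular $\xi>\pi$. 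The first two assertions reduce to bounding $\int_0^{\tau}g$, the third to a separate argument.

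For $0\le\tau\le\pi$, use $(\xi-iv)^{-1}=\xi^{-1}+iv\xi^{-2}+O(v^{2}\xi^{-3})$ (valid for $v\le\pi<\xi$); taking imaginary parts in $g(v)=(1+u\xi)\,\mathrm{Im}\,\frac{\re^{-iv}}{\xi-iv}-\frac{v}{\xi^{2}+v^{2}}$ and integrating termwise gives
\begin{equation*}
\int_0^{\tau}g(v)\,dv=u(\cos\tau-1)+O\!\left(\frac{u\tau^{2}}{\xi}\right) .
\end{equation*}
Jordan's inequality $\sin(\tau/2)\ge\tau/\pi$ on $[0,\pi]$ yields $\cos\tau-1=-2\sin^{2}(\tau/2)\le-2\tau^{2}/\pi^{2}$, hence $\int_0^{\tau}g\le-u\tau^{2}/(2\pi^{2})$ for $\xi$ large, and exponentiating gives $|\hat\varrho(s)|=O(\re^{I(\xi)-\tau^{2}u/(2\pi^{2})})$.

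The case $\tau>\pi$ is the main obstacle: the crude bound $\big|\int_0^\tau g\big|\le(1+u\xi)\big|\mathrm{Im}\int_0^\tau\frac{\re^{-iv}}{\xi-iv}\,dv\big|+\tfrac12\log\frac{\xi^{2}+\tau^{2}}{\xi^{2}}$ only gives $O(u)$, so genuine cancellation must be displayed. It is convenient to use the identity (obtained by the substitution $z=-\xi+iv$)
\begin{equation*}
\int_0^{\tau}g(v)\,dv=\mathrm{Re}\int_{-\xi}^{-\xi+i\tau}\frac{\re^{-z}}{z}\,dz-\frac12\log\frac{\xi^{2}+\tau^{2}}{\xi^{2}} .
\end{equation*}
For $\tau$ bounded away from the even multiples of $\pi$ the leading term $u(\cos\tau-1)$ is $\le-c\,u$ for an absolute $c>0$, and nothing more is required; the delicate case is $\tau$ near some $2\pi k$, $k\in\N$, where this term cancels and one must pass to the next order, whose integral is $(1+u\xi)\xi^{-3}\int_0^{\tau}v^{2}\sin v\,dv$ up to smaller terms — equal to $-4\pi^{2}(1+u\xi)\xi^{-3}$ at $\tau=2\pi$, of order $-u/\xi^{2}$, which is precisely what produces the exponent $-u/(\pi^{2}+\xi^{2})$, and only more negative for $k\ge2$. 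For $\tau$ exceeding a fixed multiple of $\xi$ the term $-\tfrac12\log\frac{\xi^{2}+\tau^{2}}{\xi^{2}}$, reinforced by the much stronger decay $\hat\varrho(s)=O(1/|s|)$ established below, takes over. Making these estimates uniform in $\tau$ is the delicate point; they run parallel to the estimates for $\re^{I(-s)}$ on vertical lines in Chapter~III.5 of \cite{GT}.

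For the final assertion I would drop the contour. Differentiating $\log\hat\varrho(s)=\gamma+I(-s)$ and using $I'(w)=(\re^{w}-1)/w$ gives $\hat\varrho'(s)/\hat\varrho(s)=(\re^{-s}-1)/s$, so $\big(\log(s\hat\varrho(s))\big)'=\re^{-s}/s$. One integration by parts in $\hat\varrho(s)=\int_0^\infty\re^{-sv}\varrho(v)\,dv$, together with $\varrho(0)=1$ and the Riemann--Lebesgue lemma applied to $\re^{\xi v}\varrho'(v)\in L^{1}([1,\infty))$, shows $s\hat\varrho(s)\to1$ as $\tau\to\infty$ along the line; integrating $\re^{-w}/w$ from $s$ up to $-\xi+i\infty$ therefore gives $\log(s\hat\varrho(s))=-\int_s^{-\xi+i\infty}\re^{-w}w^{-1}\,dw$. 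Integrating by parts once more turns the right-hand side into $-\re^{-s}/s+\int_s^{-\xi+i\infty}\re^{-w}w^{-2}\,dw$, and since $|\re^{-w}|=\re^{\xi}=1+u\xi$ on the line while the hypothesis $|\tau|>1+u\xi\ge\xi$ forces $|s|\le\sqrt2\,|\tau|$, both terms are $O((1+u\xi)/|s|)$. As this is $O(1)$, exponentiating gives $s\hat\varrho(s)=1+O((1+\xi u)/s)$, which is the claimed formula.
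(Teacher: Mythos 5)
The paper itself does not give a proof: it cites Lemma~8.2 of Section~III.5.4 in \cite{GT} and stops. So your attempt must stand on its own. The first bound ($|\tau|\le\pi$) and the third bound ($|\tau|>1+u\xi$) are essentially correct. For the third, one technical caution: the integral $\int_s^{-\xi+i\infty}\re^{-w}w^{-1}\,dw$ is only conditionally convergent on the vertical line (the integrand has modulus $\re^{\xi}/|w|$), so it must be interpreted as a limit, or you should integrate by parts before invoking the endpoint at $i\infty$; with that small repair, the chain leading to $\log(s\hat\varrho(s))=O(\re^{\xi}/|s|)$ and hence the stated asymptotic is sound.

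The genuine gap is the second bound, $|\tau|>\pi$, and you flag it yourself. The expansion $(\xi-iv)^{-1}=\xi^{-1}+iv\xi^{-2}-v^{2}\xi^{-3}+\cdots$ is valid only for $|v|<\xi$, so the analysis of $\int_0^\tau g$ near $\tau=2\pi k$ loses meaning as soon as $2\pi k\gtrsim\xi$; and in the enormous range $\xi\lesssim|\tau|\lesssim 1+u\xi$ nothing in the sketch produces a saving of size $u/(\pi^2+\xi^2)$: the term $-\tfrac12\log\frac{\xi^2+\tau^2}{\xi^2}=O(\log\tau)$ is far too small (the required saving is typically $\asymp\re^{\xi}/\xi^3$, which dwarfs any power of $\log\tau$), and the $O(1/|s|)$ bound only kicks in once $|\tau|>1+u\xi=\re^{\xi}$. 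A route that handles every $\tau$ uniformly is to interchange orders of integration in $I(w)=\int_0^w(\re^t-1)t^{-1}dt$ via $(\re^t-1)/t=\int_0^1\re^{t\theta}d\theta$, which yields the exact identity
\begin{equation*}
\Re I(-s)-I(\xi)=\int_0^1\re^{\xi\theta}\,\frac{\cos(\tau\theta)-1}{\theta}\,d\theta .
\end{equation*}
Here the integrand is nonpositive for every $\tau$; the first bound then follows from $1-\cos\phi\ge2\phi^{2}/\pi^{2}$ on $[-\pi,\pi]$ together with $\int_0^1\theta\,\re^{\xi\theta}d\theta\ge cu$, and the second from a lower bound, uniform in $\tau>\pi$, on the contribution of $\theta$ near $1$ in $\int_0^1\re^{\xi\theta}(1-\cos\tau\theta)\theta^{-1}d\theta$. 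This decouples the oscillation (entirely inside $\cos(\tau\theta)$) from the exponential growth ($\re^{\xi\theta}$), which is exactly what your vertical-segment contour fails to do, and is the missing idea.
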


\begin{proof} 
This is Lemma 8.2 in Section III.5.4 of \cite{GT}.
\end{proof} 

\smallskip

\begin{lemma}\label{zeta} If  $v\geq v_1$, where $v_1>1$ is sufficiently large constant, then
there exists a unique complex solution $\zeta=\zeta_0(v)$ to the equation
 \[
                 \re^\zeta=1-v\zeta
 \]
satisfying
\[
        \big|\zeta_0(v)-\xi(v)+i\pi\big|\leq \pi.
        \]
 Moreover,
\[
\zeta_0(v)=\xi(v)+\frac{\pi^2}{2\xi(v)^2}-i \frac{\pi\xi(v)}{\xi(v)-1}+O\left(\frac{1}{\xi(v)^3}\right).
\]
\end{lemma}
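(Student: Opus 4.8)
The plan is to analyze the equation $\re^\zeta = 1 - v\zeta$ for large $v$ as a perturbation of the real equation $\re^\xi = 1 + v\xi$ that defines $\xi(v)$ in Lemma~\ref{xi}. First I would rewrite the equation as $\zeta = \log(1 - v\zeta) = \log(v) + \log(-\zeta) + \log(1 + 1/(v\zeta))$ on the appropriate branch, noting that a solution near $\xi(v) - i\pi$ must have $-\zeta$ with argument close to $\pi$, so $\log(-\zeta) = \log|\zeta| + i(\arg(-\zeta))$. Writing $\zeta = \xi + w$ with $\xi = \xi(v)$ and $w$ small, and subtracting the defining relation $\re^\xi = 1 + v\xi$, one gets $\re^{\xi}(\re^{w} - 1) = -v\zeta - v\xi = -v(2\xi + w)$, i.e. $(1+v\xi)(\re^w - 1) = -v(2\xi + w)$. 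Since $v\xi$ is large, this forces $\re^w - 1 \approx -2$, i.e. $\re^w \approx -1$, so $w \approx i\pi$ (choosing the branch that lands in the disk $|\zeta - \xi + i\pi| \le \pi$); this is the heuristic behind the stated location.

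Next I would set up a rigorous fixed-point / Rouché argument. Define $g(\zeta) = \zeta - \log(1 - v\zeta)$ (with the branch of $\log$ continuous on the relevant region, cut away from $[0,\infty)$ in the $1 - v\zeta$ variable). On the boundary circle $|\zeta - (\xi - i\pi)| = \pi$ (or a slightly adjusted radius), I would show $|\text{correction terms}| < \pi$, so that by Rouché (comparing with the dominant linear behavior) there is exactly one zero of $\re^\zeta - (1 - v\zeta)$ inside, giving existence and uniqueness of $\zeta_0(v)$ with $|\zeta_0(v) - \xi(v) + i\pi| \le \pi$. The bounds $\log v < \xi < 2\log v$ from Lemma~\ref{xi} and the derivative estimate \eqref{deriv} keep all the error terms under control uniformly for $v \ge v_1$.

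For the refined asymptotic expansion, I would iterate the fixed-point map once or twice. Starting from $\zeta_0 = \xi - i\pi + \delta$ with $\delta$ to be determined, plug into $(1 + v\xi)(\re^{w} - 1) = -v(2\xi + w)$ where $w = \zeta_0 - \xi = -i\pi + \delta$. Expand $\re^{-i\pi + \delta} - 1 = -\re^{\delta} - 1 = -2 - \delta - \delta^2/2 - \cdots$. The right side is $-v(2\xi - i\pi + \delta)/(1)$ divided by... more precisely $\re^w - 1 = -v(2\xi + w)/(1 + v\xi)$. Since $v/(1+v\xi) = 1/\xi \cdot (1 + O(1/(v\xi)))$ and $2\xi + w = 2\xi - i\pi + \delta$, the right side is $-(2\xi - i\pi + \delta)/\xi \cdot(1 + O(1/(v\xi))) = -2 + i\pi/\xi - \delta/\xi + O(\xi^{-1}/(v))$. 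Matching with $-2 - \delta - \delta^2/2 + \cdots$ gives $\delta = -i\pi/\xi + (\text{second order})$ to leading order, i.e. $\delta \approx -i\pi/\xi - \delta^2/2 + \cdots$; substituting $\delta \approx -i\pi/\xi$ into $-\delta^2/2$ yields $\pi^2/(2\xi^2)$, and collecting the imaginary part of order $1/\xi$ more carefully (using $\delta/\xi$ with $\delta = -i\pi/\xi$ contributing $i\pi/\xi^2$, so the imaginary correction refines to $-i\pi/\xi - i\pi/\xi^2 + \cdots = -i\pi\xi/(\xi-1) + O(\xi^{-3})$, matching the geometric series $\xi/(\xi-1) = 1 + 1/\xi + 1/\xi^2 + \cdots$) reproduces exactly
\[
\zeta_0(v) = \xi(v) + \frac{\pi^2}{2\xi(v)^2} - i\frac{\pi\xi(v)}{\xi(v)-1} + O\!\left(\frac{1}{\xi(v)^3}\right).
\]

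The main obstacle, I expect, is bookkeeping the branch of the logarithm and verifying that the perturbation genuinely stays inside the claimed disk so that Rouché applies cleanly — in particular, tracking that the real part stays near $\xi$ and the imaginary part near $-\pi$ simultaneously, and that the error terms in \eqref{xi asymp} and \eqref{deriv} do not degrade the $O(\xi^{-3})$ precision. Once the fixed-point contraction is set up on the right domain, the asymptotic expansion is a routine (if delicate) iteration, so the analytic heart is the existence/uniqueness via Rouché together with the uniform control of remainders for $v \ge v_1$.
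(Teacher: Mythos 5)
The paper does not prove Lemma~\ref{zeta} itself: its entire proof is a citation, ``See \cite[p.~6]{GT-Crible}.'' Your proposal is therefore an independent reconstruction rather than a reproduction of the paper's argument. That said, the route you sketch is the standard one, and your algebra for the asymptotic expansion is correct. Setting $\zeta=\xi+w$ and subtracting $\re^\xi=1+v\xi$ gives $(1+v\xi)(\re^w-1)=-v(2\xi+w)$; with $w=-i\pi+\delta$ and $v/(1+v\xi)=\xi^{-1}(1-\re^{-\xi})$ this becomes $\delta(1-1/\xi)=-i\pi/\xi-\delta^2/2+O(\delta^3,\re^{-\xi})$, and iterating yields $\delta=\pi^2/(2\xi^2)-i\pi/(\xi-1)+O(\xi^{-3})$, which, after combining $-i\pi-i\pi/(\xi-1)=-i\pi\xi/(\xi-1)$, reproduces the stated formula exactly. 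Your intermediate line ``$\delta\approx-i\pi/\xi-\delta^2/2+\cdots$'' momentarily drops the $(1-1/\xi)^{-1}$ factor, but you reinstate it when ``collecting the imaginary part,'' so the bookkeeping lands in the right place.

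What remains genuinely unfinished is the existence-and-uniqueness half. You name Rouch\'e but do not specify the comparison function, the contour, or the estimates on it. To close this you would need to (i) fix a branch of $\log(1-v\zeta)$ on a neighborhood of the disk $|\zeta-\xi+i\pi|\le\pi$ (safe, since there $1-v\zeta$ has $\arg$ bounded away from $0$ because $\Re\zeta\approx\xi>0$ and $-2\pi\le\Im\zeta\le 0$); (ii) verify that the trivial real zero $\zeta=0$ of $\re^\zeta-1+v\zeta$ and the ``companion'' zeros near $\xi-i\pi\pm 2\pi i$ lie outside the disk (they do, for $v\ge v_1$, since $\xi>\pi$); and (iii) run an actual Rouch\'e or contraction-mapping estimate on the boundary circle. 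None of this looks hard, and it is almost certainly what Tenenbaum does in the cited reference, but as written the proposal asserts rather than establishes the existence/uniqueness step; that is the one gap to fill before the argument is a proof.
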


\begin{proof}
See \cite[p.~6]{GT-Crible}.
\end{proof} 

\smallskip

\begin{lemma} \label{IsJs} Let $ s\in\C\setminus]-\infty,0]$,
\[
     J(s):=\int_0^\infty\frac{\re^{-s-t}}{s+t} dt.
     \]
Then
     \[
              I(-s)+J(s)+\gamma+\log s=0
     \]
and, if $\tau:=\Im s\not=0$,
\[
   J(s)=\frac{\re^{-s}}{s}\Big(1-\frac{1}{s}\Big)+ O\left(\frac{\re^{-s}}{|\tau|^3}\right).
   \]
   \end{lemma}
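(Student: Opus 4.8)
The statement has two parts — an exact identity, which I would get by differentiation, and an asymptotic expansion, which I would get by integrating by parts twice.

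\emph{The identity.} Set $F(s)=I(-s)+J(s)+\gamma+\log s$ on $\C\setminus]-\infty,0]$, which is connected; it suffices to show $F'\equiv0$ and then pin down the constant. Since $I$ is entire with $I'(u)=(\re^u-1)/u$, we have $\tfrac{d}{ds}I(-s)=(\re^{-s}-1)/s$ and $\tfrac{d}{ds}(\gamma+\log s)=1/s$. For $J$, write $J(s)=\int_0^\infty\phi(s+t)\,dt$ with $\phi(w)=\re^{-w}/w$; differentiation under the integral sign is legitimate because $\Re(s+t)\to+\infty$ as $t\to\infty$ locally uniformly in $s$, and since $\partial_s\phi(s+t)=\partial_t\phi(s+t)$ we get $J'(s)=\int_0^\infty\partial_t\phi(s+t)\,dt=-\phi(s)=-\re^{-s}/s$. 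The three derivatives sum to $0$, so $F$ is constant. To evaluate the constant I would let $s\to0^+$ along the positive reals: then $I(-s)\to0$, while the substitution $w=s+t$ gives $J(s)=\int_s^\infty\re^{-w}w^{-1}\,dw$, and splitting at $w=1$,
\[
J(s)+\log s=\int_s^1\frac{\re^{-w}-1}{w}\,dw+\int_1^\infty\frac{\re^{-w}}{w}\,dw\;\longrightarrow\;\int_0^1\frac{\re^{-w}-1}{w}\,dw+\int_1^\infty\frac{\re^{-w}}{w}\,dw=-\gamma,
\]
the classical integral representation of the Euler–Mascheroni constant. Hence $F(s)\to0$, so $F\equiv0$. (Equivalently one may invoke $J(s)=E_1(s)$, $I(-s)=-\mathrm{Ein}(s)$ and $E_1(s)=-\gamma-\log s+\mathrm{Ein}(s)$.)

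\emph{The asymptotic formula.} Starting from $J(s)=\int_0^\infty\re^{-s-t}(s+t)^{-1}\,dt$ and integrating by parts with $u=(s+t)^{-1}$, $dv=\re^{-s-t}\,dt$, the boundary term gives $\re^{-s}/s$ (the contribution at $\infty$ vanishing), leaving $J(s)=\re^{-s}/s-\int_0^\infty\re^{-s-t}(s+t)^{-2}\,dt$. Integrating that remaining integral by parts once more in the same way yields
\[
J(s)=\frac{\re^{-s}}{s}-\frac{\re^{-s}}{s^2}+2\int_0^\infty\frac{\re^{-s-t}}{(s+t)^3}\,dt.
\]
To bound the last integral the only input needed is $|s+t|\ge|\Im(s+t)|=|\tau|$ for every $t\ge0$, valid precisely because $\tau\ne0$; hence its modulus is at most $|\tau|^{-3}\int_0^\infty\re^{-\Re s-t}\,dt=\re^{-\Re s}|\tau|^{-3}=O\big(\re^{-s}/|\tau|^3\big)$ in modulus. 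This gives $J(s)=\frac{\re^{-s}}{s}\big(1-\frac{1}{s}\big)+O\big(\re^{-s}/|\tau|^3\big)$, as claimed.

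The computations are entirely routine; the only points requiring any care are the identification of the integration constant with $-\gamma$ in the first part and the bookkeeping that keeps the horizontal ray $\{s+t:t\ge0\}$ — and the principal branch of $\log$ — off the slit $]-\infty,0]$, which is exactly why that slit is excluded in the hypothesis. I do not anticipate a genuine obstacle.
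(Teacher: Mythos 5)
Your proof is correct. The paper does not actually prove this lemma---it just cites Lemma 7.1 on p.~371 and a formula on p.~406 of Tenenbaum's book \cite{GT}---so there is no in-paper argument to compare against. Your route is the standard one for this classical fact about the exponential integral $E_1(s)=J(s)$: for the identity, differentiate $F(s)=I(-s)+J(s)+\gamma+\log s$ on the slit plane (noting $\partial_s\phi(s+t)=\partial_t\phi(s+t)$ so that $J'(s)=-\re^{-s}/s$, which cancels $\tfrac{d}{ds}I(-s)+1/s=\re^{-s}/s$), then fix the constant by sending $s\to0^+$ and invoking the representation $-\gamma=\int_0^1\frac{\re^{-w}-1}{w}\,dw+\int_1^\infty\frac{\re^{-w}}{w}\,dw$; for the expansion, integrate by parts twice and bound the remainder via $|s+t|\ge|\tau|$ for $t\ge0$, which is exactly where $\tau\ne0$ is used. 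One small remark worth keeping in mind but not a gap: when $\Re s<0$ the factor $\re^{-\Re s}$ is large, so the $O(\re^{-s}/|\tau|^3)$ must be read as a bound in modulus $\ll|\re^{-s}|/|\tau|^3=\re^{-\Re s}/|\tau|^3$; you in fact state it this way, so this is fine.
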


   \begin{proof}
   These are Lemma 7.1 on p.~371  of \cite{GT} and a formula on p.~406 of the same book, respectively.
   \end{proof}    

\smallskip

\begin{lemma} \label{hatomega} The Laplace transform
\begin{equation}
   \hat\omega(s)=\int_0^\infty \re^{-st} \omega(t) dt, \quad \Re s>0,
   \label{hatomeg}
   \end{equation}
    extends to a meromorphic function on $\C$ by the formula
\begin{equation}
   1+ \hat\omega(s)=\frac{1}{s\hat\varrho(s)}, \quad s\not=0.
\label{mero}
\end{equation}
Moreover,
\[
   1+\hat\omega(s)=\exp\{J(s)\}
   \]
 if $s\in\C\setminus]-\infty,0]$ and
\begin{equation}
s\hat\omega'(s)=-\re^{-s}\big(\hat\omega(s)+1\big)
\label{tapat}
\end{equation}
if $s\not=0$.
\end{lemma}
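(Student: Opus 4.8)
The plan is to derive the identity \eqref{mero} from the functional equation of the Buchstab function and then read off the remaining claims from Lemma~\ref{hatrho} and Lemma~\ref{IsJs}. First I would record the defining relation: $\omega(v)=0$ for $v<1$, $v\omega(v)=1$ on $[1,2]$, and $(v\omega(v))'=\omega(v-1)$ for $v>2$; equivalently, $\omega(v)v=1+\int_1^v\omega(t-1)\,dt$ for $v\ge1$. Multiplying by $\re^{-sv}$ and integrating over $v\in[0,\infty)$ for $\Re s>0$ (where convergence is clear since $\omega$ is bounded), the left side becomes $-\hat\omega'(s)$ after an integration by parts, using $v\omega(v)\to0$·$\re^{-sv}$ decay; the right side splits into $\int_1^\infty\re^{-sv}\,dv=\re^{-s}/s$ plus a double integral that, after swapping the order of integration and substituting $t-1\mapsto t$, equals $(\re^{-s}/s)\hat\omega(s)$. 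This yields the differential identity $s\hat\omega'(s)=-\re^{-s}(1+\hat\omega(s))$, which is exactly \eqref{tapat}.

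Next I would integrate \eqref{tapat}. Writing $g(s)=1+\hat\omega(s)$, the equation reads $g'(s)/g(s)=-\re^{-s}/s$, and the primitive of $-\re^{-s}/s$ is, up to a constant, $-J(s)$ — more precisely, differentiating $J(s)=\int_0^\infty\re^{-s-t}(s+t)^{-1}\,dt$ one checks $J'(s)=-\re^{-s}/s$ on $\C\setminus\,]-\infty,0]$ (this is implicit in Lemma~\ref{IsJs}). Hence $\log g(s)=J(s)+C$ on that slit plane, and the constant is fixed by the behaviour as $s\to+\infty$ along the reals: $\hat\omega(s)\to0$ forces $g(s)\to1$, while $J(s)\to0$, so $C=0$ and $1+\hat\omega(s)=\exp\{J(s)\}$. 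Combining this with the identity $I(-s)+J(s)+\gamma+\log s=0$ from Lemma~\ref{IsJs} gives $1+\hat\omega(s)=\exp\{-\gamma-I(-s)-\log s\}=\bigl(s\exp\{\gamma+I(-s)\}\bigr)^{-1}=1/(s\hat\varrho(s))$ by Lemma~\ref{hatrho}, which is \eqref{mero} on the slit plane; both sides are meromorphic on all of $\C$ with the only possible singularity at $s=0$, so the identity extends there by analytic continuation, and this also furnishes the claimed meromorphic extension of $\hat\omega$.

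The main obstacle is the justification of the integration-by-parts and Fubini steps that turn the Buchstab recursion into \eqref{tapat}: one must be careful that the integrated boundary term $v\omega(v)\re^{-sv}$ vanishes at both ends (trivially at $0$ since $\omega\equiv0$ near $0$, and at $\infty$ because $\omega$ is bounded — in fact $1/2\le\omega(v)\le1$ for $v\ge1$ as quoted in the excerpt — so the exponential decay dominates for $\Re s>0$), and that the order of integration in the double integral may be interchanged (again immediate from boundedness of $\omega$ and absolute convergence of $\int\int\re^{-sv}\,dt\,dv$ over the relevant triangular region). Everything after that is bookkeeping with Lemmas~\ref{hatrho} and~\ref{IsJs}. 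One extra point worth a line: the passage from the slit-plane identity $1+\hat\omega(s)=\exp\{J(s)\}$ to \eqref{mero} on $\C\setminus\{0\}$ needs that the right-hand side $1/(s\hat\varrho(s))$ is genuinely meromorphic with no zeros of $\hat\varrho$ to create spurious poles — but $\hat\varrho(s)=\exp\{\gamma+I(-s)\}$ is entire and nowhere zero by Lemma~\ref{hatrho}, so $1/(s\hat\varrho(s))$ has exactly one simple pole, at $s=0$, and the continuation is unambiguous.
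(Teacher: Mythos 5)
Your proof is correct, but it runs in the opposite direction from the paper's. The paper simply cites Theorem~5 on p.~404 of Tenenbaum's book~\cite{GT} for the meromorphic extension \eqref{mero} and the formula $1+\hat\omega(s)=\exp\{J(s)\}$, and then obtains the differential identity \eqref{tapat} as a quick corollary by differentiating \eqref{mero} and using $\hat\varrho'(s)=\frac{\re^{-s}-1}{s}\hat\varrho(s)$ (which gives $\hat\varrho(s)+s\hat\varrho'(s)=\re^{-s}\hat\varrho(s)$, whence the claim). You instead derive \eqref{tapat} from first principles — Laplace-transforming the integrated Buchstab recursion $v\omega(v)=1+\int_1^v\omega(t-1)\,dt$ on $\Re s>0$, applying Fubini on the triangular region and shifting $t\mapsto t+1$ — and then integrate the resulting first-order ODE, fixing the constant by the limit $s\to+\infty$ and invoking $J'(s)=-\re^{-s}/s$ together with Lemmas~\ref{IsJs} and~\ref{hatrho} to recover \eqref{mero} and the exponential form. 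The calculations check out, including the observation that $\hat\varrho$ is entire and zero-free so that \eqref{mero} furnishes the claimed meromorphic extension with a single simple pole at $s=0$. Your route is more self-contained and arguably more instructive: it makes the ODE \eqref{tapat} the structural core rather than an afterthought, at the cost of a longer argument than a one-line citation. One small quibble with the write-up: when you Laplace-transform the integrated recursion, $\int_0^\infty v\omega(v)\re^{-sv}\,dv=-\hat\omega'(s)$ is just differentiation under the integral sign, not an integration by parts — the IBP/boundary-term language would be apt if you started from the derivative form $(v\omega(v))'=\omega(v-1)$ for $v>2$, which also works but requires tracking the boundary contribution $2\omega(2)\re^{-2s}=\re^{-2s}$ from the point $v=2$.
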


\begin{proof}
The first assertion is just Theorem~5 given on p.~404 of~\cite{GT}. The last relation follows from (\ref{mero}) and Lemmas~\ref{hatrho} and~\ref{IsJs}.
\end{proof} 

\smallskip

\begin{lemma}\label{Robert} Let $u=n/r$ and $z=-\xi_0(u)+i\tau$, where 
$$\xi_0(u):=\left\{
\begin{array}{ll}
\Re\zeta_0(u) & (1< v_1\leq u)\\
\xi_0(v_1)    & (1\leq u\leq v_1)
\end{array}
\right.$$
and $\zeta_0$ is defined in Lemma~\ref{zeta}. Uniformly in  $(\log n)^2\leq r\leq n/2$ and $|\tau|\leq \re^{\sqrt r}$, we have
\begin{equation*}
e^{uz}\left(1+\hat{\omega}(z)\right)\ll R(u)\sqrt{u},
\end{equation*}
where the function $R$ is defined by Definition~\ref{Rdefinition} and Lemma~\ref{omega,R} below.
\end{lemma}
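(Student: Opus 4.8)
The plan is to reduce the estimate of $e^{uz}\bigl(1+\hat\omega(z)\bigr)$ to a comparison with the Laplace transform $\hat\varrho$ of the Dickman function, for which Lemma~\ref{rholap} already supplies sharp bounds on the relevant vertical line, and then to identify $R(u)$ as what remains after extracting $\varrho(u)$. First I would use the identity $1+\hat\omega(s)=1/\bigl(s\hat\varrho(s)\bigr)$ from Lemma~\ref{hatomega}, valid for $s\neq 0$, so that the quantity in question becomes $e^{uz}/\bigl(z\,\hat\varrho(z)\bigr)$. On the line $\Re z=-\xi_0(u)$ with $\xi_0(u)$ as in the statement, the point $z=-\xi_0(u)$ (or $z=-\Re\zeta_0(u)$ for $u\geq v_1$) is, up to the $O(1/\xi^3)$ correction in Lemma~\ref{zeta}, precisely where $e^{z}=1-uz$, i.e.\ where the integrand $e^{uz}\hat\varrho(z)^{-1}$ has a near-stationary phase; this is the saddle underlying the asymptotics of $\omega(u)-e^{-\gamma}$. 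So the bound to prove is a pointwise (not integrated) estimate at the far end of the range after one has moved the contour to this abscissa.

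Next I would split by the size of $|\tau|$. For $|\tau|\leq\pi$ (or some fixed bounded range) I would write $\hat\varrho(z)=\exp\{\gamma+I(-z)\}$ from Lemma~\ref{hatrho} and use $I(-z)+J(z)+\gamma+\log z=0$ from Lemma~\ref{IsJs} to get $e^{uz}/(z\hat\varrho(z))=e^{uz}e^{J(z)}=e^{uz}\bigl(1+\hat\omega(z)\bigr)$ back again — so really the work is: expand $e^{J(z)}$ using $J(s)=\frac{e^{-s}}{s}(1-\frac1s)+O(e^{-s}/|\tau|^3)$ from Lemma~\ref{IsJs}, plug $z=-\xi_0+i\tau$, and combine the exponent $uz+J(z)$ with the known formula for $\zeta_0(u)$ in Lemma~\ref{zeta}; the real part of $u\xi_0$ minus the contributions of $I(\xi_0)$-type terms should reconstruct $\log\varrho(u)$ via Lemma~\ref{rho}, while the leftover $-\pi^2 u/(2(\log u)^2)$-order term is exactly the exponent appearing in \eqref{R estimate}. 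For $|\tau|>\pi$ I would invoke the second branch of Lemma~\ref{rholap} for $\hat\varrho$ evaluated not at $-\xi$ but at $-\xi_0$; since $\xi_0(u)=\xi(u)+O(1/\xi^2)$ the bound $O\bigl(\exp\{I(\xi_0)-u/(\pi^2+\xi_0^2)\}\bigr)$ transfers, and then $|e^{uz}/(z\hat\varrho(z))|\ll e^{-u\xi_0}\exp\{-I(\xi_0)+u/(\pi^2+\xi_0^2)\}/|\tau|$, which one checks is $\ll R(u)\sqrt u$ using $\varrho(u)=\exp\{-u\xi_0+I(\xi_0)+O(\log u)\}/\sqrt{u/\xi_0}$ and the definition of $R$. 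The extra factor $\sqrt u$ absorbs the $\sqrt{\xi'(u)}\asymp 1/\sqrt u$ mismatch plus the $1/|z|\asymp 1/\xi_0$ loss, comfortably. The uniformity constraints $(\log n)^2\leq r\leq n/2$ and $|\tau|\leq e^{\sqrt r}$ enter only to guarantee $u=n/r$ is bounded below (so $\xi_0$ is well-defined and the asymptotics of Lemmas~\ref{zeta},~\ref{rholap} apply) and to keep $|\tau|$ away from ranges where the crude $1/|\tau|$ decay would be insufficient.

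The main obstacle I anticipate is the bookkeeping in the bounded-$|\tau|$ régime: one must expand $uz+J(z)$ around the true complex saddle $\zeta_0(u)$ rather than the real point $\xi(u)$, and show that after subtracting $\log\varrho(u)$ (whose precise shape comes from Lemma~\ref{rho}) what is left matches the exponent defining $R(u)$ in \eqref{R estimate} up to the stated $O\bigl(v\log\log(v+2)/(\log v)^3\bigr)$ error — this is essentially re-deriving the Hildebrand--Tenenbaum estimate $\omega(u)-e^{-\gamma}=R(u)(1+o(1))$ in a form uniform enough to survive multiplication by $e^{uz}$ on the shifted line. Controlling the $O(1/\xi_0^3)$ term from Lemma~\ref{zeta} so it does not spoil the main exponential term, and handling the crossover region $|\tau|\asymp 1$ where neither branch of Lemma~\ref{rholap} is ideal, are the delicate points; everything else is routine substitution. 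A cleaner alternative, which I would try first, is to cite the relevant pointwise bound already proved inside \cite{GT-Crible} (from which Definition~\ref{Rdefinition} and Lemmas~\ref{omega,R},~\ref{Rneeded} are drawn) and merely verify that the abscissa $\xi_0(u)$ and range of $\tau$ used here match the hypotheses there, reducing the proof to a short citation plus the observation that $\xi_0(u)$ agrees with $\Re\zeta_0(u)$ up to negligible order.
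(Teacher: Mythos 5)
Your ``cleaner alternative''---cite the pointwise estimate already established in \cite{GT-Crible} and translate parameters---is exactly what the paper does, and it is the right call. The paper invokes Lemma~9.1 of \cite{GT} (the asymptotic $\zeta(s,y)=\zeta(s)(s-1)(\log y)\,\hat\varrho\bigl((s-1)\log y\bigr)\bigl(1+O(1/L_\varepsilon(y))\bigr)$) together with Lemma~9 of \cite{GT-Crible} (the bound $x^{s-1}\zeta(s)/\zeta(s,y)\ll R(\log x/\log y)\sqrt{\log x/\log y}$ on the abscissa $\Re s=1-\xi_0(\log x/\log y)/\log y$), rewrites $1+\hat\omega(z)=1/(z\hat\varrho(z))$ via Lemma~\ref{hatomega}, and substitutes $x=\re^n$, $y=\re^r$, $z=(s-1)\log y$, $\varepsilon=1/15$. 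After this dictionary the two cited estimates compose directly to give $\re^{uz}(1+\hat\omega(z))\ll R(u)\sqrt u$, and the hypotheses $(\log n)^2\le r\le n/2$, $|\tau|\le\re^{\sqrt r}$ are precisely what is needed to put $(x,y,s)$ inside the validity regions $\exp\{(\log\log x)^2\}\le y\le x$ and $|\Im s|\le L_\varepsilon(y)$. So the ``verify the hypotheses match'' step you anticipate is genuinely all there is to do, and you should open with that rather than present it as a fallback.

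Your primary from-scratch sketch has a gap that would bite. In the bounded-$|\tau|$ r\'egime you propose to use $J(s)=\frac{\re^{-s}}{s}\bigl(1-\frac{1}{s}\bigr)+O(\re^{-s}/|\tau|^{3})$ from Lemma~\ref{IsJs}, but that error term diverges as $\tau\to 0$ and is of the same size as the main term when $|\tau|\asymp 1$; the expansion is designed for large $|\tau|$, not for the saddle region. Near the saddle one must instead expand the phase $uz+I(-z)$ to second order about the complex saddle $\zeta_0(u)$ and control the remainders, which is exactly the content of Lemma~9 of \cite{GT-Crible}; reproducing it from Lemma~\ref{rholap} alone is not straightforward because Lemma~\ref{rholap} is stated on the real abscissa $-\xi(u)$ rather than $-\xi_0(u)$, and the discrepancy $\xi_0-\xi\asymp\pi^2/(2\xi^2)$ is precisely what generates the extra exponentially small factor distinguishing $R(u)$ from $\varrho(u)$. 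You flag this bookkeeping as the delicate point; the paper avoids it entirely by citing.
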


\begin{proof}
At first, an excursion  to number theory is worthy. Let $\zeta(s)$ be the Riemann zeta function defined for $\Re s>1$ by
\[
        \zeta(s)=\sum_{m=1}^\infty \frac{1}{m^s}=\prod_{p}\Big(1-\frac{1}{p^s}\Big)^{-1}
  \]
and extended analytically to the whole complex plane with the exception of $s=1$. Here $p$ denotes a prime number. Set
\[
        \zeta(s, y)=\prod_{p\leq y}\Big(1-\frac{1}{p^s}\Big)^{-1}, \qquad  L_{\varepsilon}(y)=\exp\big\{(\log y)^{3/5-\varepsilon}\big\},\quad y\geq 2.
        \]
Lemma 9.1 in \cite[p.~378]{GT}  asserts that given $\varepsilon>0$ there exists a $y_0=y_0(\varepsilon)$ such that, under conditions
\[
y\geq y_0, \qquad \Re s\geq1-(\log y)^{-(2/5)-\varepsilon}, \qquad |\Im s|\leq L_{\varepsilon}(y),
\]
we have uniformly
\begin{equation}
   \zeta(s,y)=\zeta(s)(s-1) (\log y)\hat\varrho((s-1)\log y)\Big(1+O\left(\frac{1}{L_\varepsilon(y)}\right)\Big).
\label{zet1}
\end{equation}
On the other hand, it has been proved in \cite[Lemma 9]{GT-Crible} that, for $\exp\big\{ (\log\log x)^2\big\}\leq y\leq x$, $x\geq 3$,
\begin{equation}
\frac{x^{s-1}\zeta(s)}{\zeta(s,y)}\ll R\Big(\frac{\log x}{\log y}\Big)\sqrt{\frac{\log x}{\log y}}
\label{zet2}
\end{equation}
on the line segment
\[
\Re s=1-\xi_0\Big(\frac{\log x}{\log y}\Big)\frac{1}{\log y}, \qquad |\Im s|\leq L_\varepsilon(y).
\]

Thereby, the argument is the following. Let $n$ be  sufficiently large. Taking into account Lemma~\ref{hatomega}, it suffices to compare relations (\ref{zet1}) and (\ref{zet2}) with $\varepsilon=1/15$, $x=\re^n$, $y=\re^r$ and $z=(s-1)\log y$. 
\end{proof} 

\smallskip

\begin{definition}\label{Rdefinition} 
	Recall the function $\zeta_0$ of Lemma~\ref{zeta}. If $v\geq v_1$, where $v_1>1$ is sufficiently large constant, then 
	\[
	R(v)=\left|\frac{\exp\left\{-v\zeta_0(v)-I(\zeta_0(v))\right\}}{\zeta_0(v)\sqrt{2\pi v(1-1/\zeta_0(v))}}\right|.
	\]
	If $1\leq v<v_1$, we set $R(v)=O(1).$ See \cite[p.~6]{GT-Crible}.
\end{definition}

\smallskip

\begin{lemma}\label{omega,R}
	Let $\omega(v)$, $v\geq 1$, be the Buchstab function. Then, for $v\geq 1$, we have
	$$
	\omega(v)-{\re}^{-\gamma}=-2{\re}^{-\gamma}R(v)\left(\cos\vartheta(v)+O\left(1/v\right)\right),
	$$
	where $R(v)$ and $\vartheta(v)$ are real valued differentiable functions. Moreover, $R(v)$ is decreasing for sufficiently large $v$ and 
	\[
	R(v)=\varrho(v)\exp\left\{\frac{-\pi^2v}{2\xi(v)^2}+O\left(\frac{v}{\xi(v)^3}\right)\right\}.
	\]  
\end{lemma}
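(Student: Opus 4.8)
The statement of Lemma~\ref{omega,R} has three assertions: (i) the identity
$\omega(v)-\re^{-\gamma}=-2\re^{-\gamma}R(v)(\cos\vartheta(v)+O(1/v))$ with $R,\vartheta$ real differentiable; (ii) monotonicity of $R$ for large $v$; and (iii) the explicit estimate $R(v)=\varrho(v)\exp\{-\pi^2 v/(2\xi(v)^2)+O(v/\xi(v)^3)\}$. My plan is to derive all three from the analytic structure of the Laplace transform $\hat\omega(s)$ recalled in Lemma~\ref{hatomega}, namely $1+\hat\omega(s)=1/(s\hat\varrho(s))=\exp\{J(s)\}$, together with the location of its singularities as encoded by the equation $\re^\zeta=1-v\zeta$ in Lemma~\ref{zeta}.

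First I would write $\omega(v)-\re^{-\gamma}$ as an inverse Laplace integral. Since $\hat\omega(s)$ is meromorphic with $\re^{-\gamma}=\lim_{s\to0}s\hat\omega(s)$ essentially accounting for the constant, the Perron-type formula gives $\omega(v)=\re^{-\gamma}+\frac{1}{2\pi i}\int_{(c)}\hat\omega(s)\re^{sv}\,ds$ after subtracting the pole at $s=0$; shifting the contour to $\Re s<0$ the dominant contribution comes from the conjugate pair of nearest singularities, which by Lemma~\ref{zeta} are $\zeta_0(v)$ and $\overline{\zeta_0(v)}$ with $\Re\zeta_0(v)=\xi(v)+\pi^2/(2\xi^2)+O(\xi^{-3})$ and $\Im\zeta_0(v)=-\pi\xi/(\xi-1)+O(\xi^{-3})$ — note these are branch points of $s\mapsto1/(s\hat\varrho(s))$, not poles, because $\hat\varrho$ never vanishes. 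A saddle-point/steepest-descent evaluation at this pair produces a term of modulus $2R(v)\re^{-\gamma}$, where $R(v)$ is exactly $|\exp\{-v\zeta_0-I(\zeta_0)\}/(\zeta_0\sqrt{2\pi v(1-1/\zeta_0)})|$ of Definition~\ref{Rdefinition}, and a real phase $\vartheta(v)=\arg$ of the same complex amplitude, with the $O(1/v)$ absorbing both the subdominant part of the saddle expansion and the contribution of the shifted contour away from the critical points. Differentiability of $R$ and $\vartheta$ follows because $\zeta_0(v)$ depends analytically on $v$ (implicit function theorem applied to $\re^\zeta=1-v\zeta$, whose $\zeta$-derivative $\re^\zeta+v=-v\zeta+1+v\ne0$ near $\zeta_0$).

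For (iii), the plan is to compare $R(v)$ with $\varrho(v)$ using Lemma~\ref{rho}: there $\varrho(v)=\exp\{\gamma-v\xi+I(\xi)\}/\sqrt{2\pi(v+(1-v)/\xi)}\,(1+O(1/v))$, and in the definition of $R$ the same structural expression appears but evaluated at $\zeta_0(v)$ in place of $\xi(v)$. So I would Taylor-expand $-v\zeta_0-I(\zeta_0)$ around $\zeta=\xi(v)$. Since $\xi$ is the real saddle (where $\frac{d}{d\zeta}(-v\zeta+I(\zeta))=0$, i.e. $(\re^\xi-1)/\xi=v$), the first-order term drops and the quadratic term governs: $-v\zeta_0-I(\zeta_0)=-v\xi-I(\xi)+\tfrac12 I''(\xi)(\zeta_0-\xi)^2+O(|\zeta_0-\xi|^3)$. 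Plugging $\zeta_0-\xi=\pi^2/(2\xi^2)-i\pi\xi/(\xi-1)+O(\xi^{-3})$, the real part of $\tfrac12 I''(\xi)(\zeta_0-\xi)^2$ is dominated by $-\tfrac12 I''(\xi)\pi^2\xi^2/(\xi-1)^2$; using $I''(\xi)=\frac{d}{d\xi}\big((\re^\xi-1)/\xi\big)$ and $(\re^\xi-1)/\xi=v$ one checks $I''(\xi)\sim v/\xi$, giving real part $\sim-\pi^2 v/(2\xi^2)$, while the factor $\zeta_0\sqrt{\cdots}$ versus $\xi\sqrt{\cdots}$ contributes only $O(1/v)$ relative error; hence $R(v)=\varrho(v)\exp\{-\pi^2v/(2\xi^2)+O(v/\xi^3)\}$. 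Monotonicity of $R$ for large $v$ then follows by differentiating the logarithm: $(\log R)'=(\log\varrho)'-\pi^2/(2\xi^2)+\ldots$, and since $(\log\varrho)'=-\xi(v)\to-\infty$ while the correction terms are lower order, $R$ is eventually decreasing.

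\textbf{Main obstacle.} The delicate point is the contour-shift/steepest-descent bookkeeping in step~two: one must justify that after passing the branch points $\zeta_0,\overline{\zeta_0}$ the remaining integral is $O(R(v)/v)$, which requires controlling $\hat\omega(s)=\exp\{J(s)\}-1$ uniformly on the shifted contour and near the branch cuts — this is where the estimates for $J(s)$ in Lemma~\ref{IsJs} (in particular $J(s)=\frac{\re^{-s}}{s}(1-1/s)+O(\re^{-s}/|\tau|^3)$ for $\Im s\ne0$) do the heavy lifting, and getting the error genuinely down to relative order $1/v$ (rather than a cruder power of $\log$) is the technical heart of the argument. Everything else is analytic-combinatorial routine once the saddle at $\xi(v)$ and the branch points $\zeta_0(v)$ are correctly identified.
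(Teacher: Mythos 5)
The paper itself gives no argument: it simply cites Lemma~4 of \cite{GT-Crible}, so there is no in-paper proof to compare against. Your high-level strategy — inverse Laplace transform, residue at $s=0$ producing $\re^{-\gamma}$, steepest descent through the conjugate pair $\zeta_0(v),\overline{\zeta_0(v)}$, and identifying $R(v)$ and $\vartheta(v)$ as modulus and argument of the saddle contribution — is indeed the correct framework and matches Tenenbaum's. Two small remarks first: you call $\zeta_0,\overline{\zeta_0}$ ``nearest singularities'' and ``branch points'' of $1/(s\hat\varrho(s))$, and in the same breath note that $\hat\varrho$ never vanishes; since $\hat\varrho$ is entire and nonvanishing, $1/(s\hat\varrho(s))$ is holomorphic away from the simple pole at $s=0$, and the $\zeta_0$ are saddle points of the exponent, not singularities — this is a conceptual slip, though you do subsequently treat them as saddles.

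The genuine gap is in your derivation of $R(v)=\varrho(v)\exp\{-\pi^2v/(2\xi^2)+O(v/\xi^3)\}$. You Taylor-expand $-v\zeta-I(\zeta)$ around $\zeta=\xi(v)$ and assert that the linear term drops because $\xi$ is a saddle. But $\xi$ is the critical point of $-v\zeta+I(\zeta)$ (note the sign), not of $-v\zeta-I(\zeta)$; at $\xi$ the derivative of the latter is $-v-I'(\xi)=-2v$, which is large, so the linear term absolutely does not vanish and your expansion is wrong from that point on. Compounding this, the estimate $I''(\xi)\sim v/\xi$ is incorrect: from $e^\xi=1+v\xi$ one gets $I''(\xi)=\dfrac{\xi e^\xi-(e^\xi-1)}{\xi^2}=\dfrac{1+v\xi-v}{\xi}\sim v$. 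Even accepting your own numbers, the exponent you obtain is of order $-\pi^2v/(2\xi)$, not the stated $-\pi^2v/(2\xi^2)$, so the errors do not cancel. A clean route to (iii) is instead to differentiate the saddle values with respect to $v$: since $\xi$ and $\zeta_0$ are critical, $\dfrac{d}{dv}\bigl(-v\xi(v)+I(\xi(v))\bigr)=-\xi(v)$ and $\dfrac{d}{dv}\Re\bigl(-v\zeta_0(v)-I(\zeta_0(v))\bigr)=-\Re\zeta_0(v)$, so the derivative of $\log\bigl(R(v)/\varrho(v)\bigr)$ (up to lower-order square-root and $e^{\gamma}$ factors) is $\xi(v)-\Re\zeta_0(v)=-\pi^2/(2\xi(v)^2)+O(\xi^{-3})$ by Lemma~\ref{zeta}; integrating, and using $\int^{v}\pi^2/(2\xi(t)^2)\,dt=\pi^2v/(2\xi(v)^2)\bigl(1+O(1/\xi)\bigr)$, yields the claimed exponent with the claimed $O(v/\xi^3)$ error.
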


\begin{proof}
	This is Lemma 4 in~\cite{GT-Crible}.
\end{proof}

\smallskip

\begin{lemma}\label{Rneeded} For a sufficiently large constant $v_1$, we have 
\[
R(v)=\varrho(v)\exp\left\{\frac{-\pi^2v}{2(\log v)^2}+O\left(\frac{v\log\log(v+2)}{(\log v)^3}\right)\right\}
\] 
if $v\geq v_1$ and $R(v)=O(1)$ if $1\leq v<v_1$.
\end{lemma}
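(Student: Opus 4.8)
The plan is to derive Lemma~\ref{Rneeded} directly from the asymptotic formula for $R(v)$ already established in Lemma~\ref{omega,R}, by replacing the occurrences of $\xi(v)$ there with $\log v$ and tracking the resulting error. Concretely, Lemma~\ref{omega,R} gives, for $v$ beyond a sufficiently large constant,
\[
R(v)=\varrho(v)\exp\left\{\frac{-\pi^2 v}{2\xi(v)^2}+O\left(\frac{v}{\xi(v)^3}\right)\right\},
\]
and for $1\le v<v_1$ the definition (Definition~\ref{Rdefinition}) simply stipulates $R(v)=O(1)$, which already matches the claimed estimate in that range. So the only work is in the range $v\ge v_1$, and the only thing to verify is that the exponent above equals $-\pi^2 v/(2(\log v)^2)+O\big(v\log\log(v+2)/(\log v)^3\big)$.

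First I would recall from Lemma~\ref{xi} that $\xi(v)=\log v+\log\log(v+2)+O\big(\log\log(v+2)/\log(v+2)\big)$ and that $\log v<\xi(v)<2\log v$ for $v>1$. Writing $\xi=\xi(v)$ and $L=\log v$, we have $\xi=L+O(\log\log(v+2))$ — indeed $\xi=L(1+O(\log\log(v+2)/L))$. Then
\[
\frac{1}{\xi^2}=\frac{1}{L^2}\Big(1+O\Big(\frac{\log\log(v+2)}{L}\Big)\Big)^{-2}=\frac{1}{L^2}+O\Big(\frac{\log\log(v+2)}{L^3}\Big),
\]
using that $\log\log(v+2)/L\to 0$. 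Multiplying by $-\pi^2 v/2$ converts the first term of the exponent into $-\pi^2 v/(2(\log v)^2)+O\big(v\log\log(v+2)/(\log v)^3\big)$. For the error term of Lemma~\ref{omega,R}, since $\xi\asymp\log v=L$, we have $O(v/\xi^3)=O(v/L^3)=O\big(v\log\log(v+2)/(\log v)^3\big)$ as well (absorbing the constant into the bigger bound, as $\log\log(v+2)\ge 1$ eventually). Adding the two pieces and adjusting $v_1$ upward if necessary so that all the asymptotic estimates from Lemma~\ref{xi} and Lemma~\ref{omega,R} are simultaneously in force yields exactly the stated formula.

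There is essentially no obstacle here: the lemma is a bookkeeping restatement of Lemma~\ref{omega,R} in which the implicitly defined quantity $\xi(v)$ is eliminated in favor of the explicit $\log v$, at the cost of enlarging the error term from $O(v/\xi(v)^3)$ to $O\big(v\log\log(v+2)/(\log v)^3\big)$. The only mild care needed is in choosing $v_1$ large enough that the expansion $\xi(v)=\log v+\log\log(v+2)+\cdots$ is genuinely valid and that $\log\log(v+2)<\log v$, so that the ratio $\log\log(v+2)/\log v$ can be treated as a small quantity in the binomial expansion of $1/\xi(v)^2$; this same $v_1$ then serves as the threshold in the statement. Thus I would simply carry out the short substitution above and conclude.
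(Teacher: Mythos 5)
Your proof is correct and is essentially the paper's own argument: the paper's proof of Lemma~\ref{Rneeded} consists precisely of the one-line remark that it follows by combining Lemma~\ref{xi} with Lemma~\ref{omega,R}, and your computation (replacing $\xi(v)$ by $\log v$ using $\xi(v)=\log v+O(\log\log(v+2))$, tracking the resulting $O(v\log\log(v+2)/(\log v)^3)$ error, and absorbing the $O(v/\xi^3)$ term) is exactly the bookkeeping the paper leaves implicit. Nothing more to add.
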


\begin{proof}
This is a conclusion of Lemma~\ref{xi} and Lemma~\ref{omega,R}.
\end{proof}

\smallskip

\begin{lemma}\label{x} Let $1\leq r\leq n$. Denote $x$ to be the positive solution to the equation 
 $\sum_{j=1}^rx^j=n$. Set $u=n/r$ and $\lambda(x)=\sum_{j=1}^rjx^j$. We have
\[
\re^{(\log u)/r}\leq x \leq \re^{(2\log u)/r}
\]
if $u\geq 1$,
\begin{equation}
          x=\exp\bigg\{{\log\big( u\cdot\min\{r, \log u\}\big)\over r}\bigg\}\bigg(1+O\left({1\over r}\right)\bigg)
\label{wedge}
\end{equation}
if $u\geq 3$, and
 \begin{align}
x&=\exp\left\{\frac{\log\left(u\log u\right)}{r}\right\}\left(1+O\left(\frac{\log\log u}{r\log u}\right)+O\left(\frac{\log u}{r^2}\right)\right)
\label{xxi}
\end{align}
if $3\leq u\leq \re^r$. Moreover, for $u>1$,
\begin{equation}
   |\lambda(x)/(r^2u)-1|\leq \log^{-1} u.
\label{lambda}
\end{equation}
\end{lemma}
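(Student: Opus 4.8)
The plan is to use that $f(x):=\sum_{j=1}^{r}x^{j}$ is strictly increasing on $(0,\infty)$, so any claim about the root $x$ of $f(x)=n=ru$ can be verified by substituting a trial value and comparing with $n$; the substitution $x=\re^{\theta/r}$ then transfers estimates on $x$ into estimates on $\theta$. Note that $x=1$ exactly when $u=1$, and that summing the geometric progression gives, for $x\neq1$,
\[
n=x\,\frac{x^{r}-1}{x-1},\qquad\text{equivalently}\qquad x^{r}=1+n\bigl(1-1/x\bigr);
\]
the second form is the engine behind \eqref{wedge} and \eqref{xxi}.

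The two-sided bound $\re^{(\log u)/r}\le x\le\re^{(2\log u)/r}$ is trivial for $u=1$ (both sides equal $x=1$), and for $u>1$ I would test $x_{0}=u^{1/r}$ and $x_{1}=u^{2/r}$. Since $u\ge1$ and $j\le r$ we have $x_0^{\,j}=u^{j/r}\le u$, so $f(x_{0})\le ru=n$ and hence $x\ge x_{0}$. For the upper bound, comparing the sum with the integral $\int_{0}^{r}x_1^{\,t}\,dt=(x_1^{\,r}-1)/\log x_{1}=r(u^{2}-1)/(2\log u)$ gives $f(x_{1})\ge r(u^{2}-1)/(2\log u)\ge ru=n$, the last inequality being equivalent to $u-u^{-1}\ge2\log u$, which holds for $u\ge1$ because the difference of the two sides vanishes at $u=1$ and has derivative $(1-u^{-1})^{2}\ge0$. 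Hence $x\le x_{1}$.

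For \eqref{wedge} I would write $x=\re^{\theta/r}$ with $\theta\in[\log u,2\log u]$ by the bound just proved and insert it into the identity above, obtaining $\re^{\theta}=1+n\bigl(1-\re^{-\theta/r}\bigr)$. If $r\le\log u$ then $\theta/r\ge1$, so $1-\re^{-\theta/r}\asymp1$, hence $\re^{\theta}\asymp n=ur$ and $\theta=\log(ur)+O(1)$. If $r>\log u$ then $\theta/r<2$, so by concavity $1-\re^{-\theta/r}\asymp\theta/r$, hence $\re^{\theta}\asymp n\theta/r=u\theta$, and as $\theta\asymp\log u$ this gives $\theta=\log(u\theta)+O(1)=\log u+\log\log u+O(1)$. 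In both cases $\theta=\log\bigl(u\min\{r,\log u\}\bigr)+O(1)$, so $x=\re^{\theta/r}=\exp\{\log(u\min\{r,\log u\})/r\}\,\re^{O(1/r)}$, which is \eqref{wedge}. For \eqref{xxi} one is automatically in the second case, since $u\le\re^{r}$ means $r\ge\log u$, and the estimate is sharpened: from $1-\re^{-\theta/r}=(\theta/r)\bigl(1+O(\theta/r)\bigr)=(\theta/r)\bigl(1+O(\log u/r)\bigr)$ the identity yields $\re^{\theta}=u\theta\bigl(1+O(1/(u\theta))+O(\log u/r)\bigr)$, so $\theta=\log(u\theta)+O(1/(u\log u))+O(\log u/r)$; plugging in $\theta=\log u+\log\log u+O(1)$ gives $\log\theta=\log\log u+O(\log\log u/\log u)$, hence $\theta=\log(u\log u)+O(\log\log u/\log u)+O(\log u/r)$, the term $O(1/(u\log u))$ being absorbed since $1/(u\log\log u)$ is bounded for $u\ge3$. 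Dividing by $r$ and exponentiating (the exponent is $O(1)$, so $\re^{t}=1+O(t)$ applies) gives \eqref{xxi}.

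Finally, for \eqref{lambda} I would use the telescoping identity, valid for $x\neq1$,
\[
rn-\lambda(x)=\sum_{j=1}^{r}(r-j)x^{j}=\sum_{i=1}^{r-1}\sum_{j=1}^{i}x^{j}=\sum_{i=1}^{r-1}\frac{x^{i+1}-x}{x-1}=\frac{n-rx}{x-1}.
\]
Since $x\ge1$ forces $x^{j}\ge x$ for $j\ge1$, we have $n-rx\ge0$, so $\lambda(x)\le rn=r^{2}u$; and since $n-rx\le n$ while $x-1\ge\re^{(\log u)/r}-1\ge(\log u)/r$ by the two-sided bound, $rn-\lambda(x)\le n/(x-1)\le rn/\log u=r^{2}u/\log u$, i.e.\ $\lambda(x)\ge r^{2}u(1-1/\log u)$. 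Combining, $|\lambda(x)/(r^{2}u)-1|\le\log^{-1}u$. The routine parts are the casework and the geometric-series algebra; the one place needing genuine care is the error accounting in \eqref{xxi}, where several $O$-terms of comparable magnitude must be ordered and absorbed uniformly down to $u=3$, where $\log\log u$ is positive but small.
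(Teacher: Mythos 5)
Your proof is correct and follows essentially the same strategy as the paper — exploit the defining identity $x^r = 1 + n(1-x^{-1})$, bootstrap through the two-sided bound to get \eqref{wedge}, iterate once more for \eqref{xxi}, and derive a closed form for $rn-\lambda(x)$ to obtain \eqref{lambda}. A couple of sub-steps use genuinely different tricks. For the upper bound $x\le\re^{(2\log u)/r}$ the paper invokes the AM–GM inequality, $x^{(r+1)/2}=(x\cdot x^2\cdots x^r)^{1/r}\le\frac{1}{r}\sum_j x^j=u$, giving the slightly sharper $x\le u^{2/(r+1)}$; you instead compare $f(u^{2/r})$ with the integral $\int_0^r u^{2t/r}\,dt$, which requires the elementary inequality $u-u^{-1}\ge 2\log u$. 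Both are clean; AM–GM gives marginally more but it is not needed. For \eqref{lambda} the paper obtains $\lambda(x)=r^2u+\frac{r(x-u)}{x-1}$ by manipulating the derivative of the geometric sum, whereas you get the equivalent $rn-\lambda(x)=(n-rx)/(x-1)$ by a double-sum/telescoping argument; again, same identity, different route. One small point worth flagging in your write-up of \eqref{xxi}: when you pass from $\theta=\log u+\log\log u+O(1)$ to $\log\theta=\log\log u+O(\log\log u/\log u)$, the $O(1)$ inside contributes a term of size $O(1/\log u)$, and absorbing this into $O(\log\log u/\log u)$ uses that $1/\log\log u$ is bounded on $u\ge3$ — the same implicit fact you invoke for absorbing $O(1/(u\log u))$, and one the paper also relies on tacitly, but it is worth stating once since $\log\log 3\approx0.094$ makes the implied constant sizeable.
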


\begin{proof}  By definition,  $x>1$ and  $u\leq x^r\leq ru$ for $u>1$. The well-known property of geometric
   and arithmetic means
\[
    x^{(r+1)/2}=(x^1 x^2\cdots x^r)^{1/r}\leq {1\over r}\sum_{j=1}^r x^j=u
    \]
yields
\begin{equation}
              u^{1/r}\leq x\leq u^{2/(r+1)}\leq u.
\label{urx}
\end{equation}
We have from the definition that
  \begin{equation}
   x^r=1+ru(1- x^{-1}).
\label{xr}
\end{equation}
Consequently, by (\ref{urx}) and by virtue of $1-\re^{-t}\geq t\re^{-t}$ if $t\geq 0$,
\[
   x^r> ru\big(1-\exp\{-(\log u)/r\}\big)\geq u\log u \exp\{-(\log u)/r\}\geq {\re}^{-1} u\log u
\]
 provided that $r\geq \log u$. Similarly,
  \[
   x^r\leq 1+ru(1-\exp\{-2(\log u)/r\})\leq 1+2u\log u.
   \]
   The last two inequalities imply
   \begin{equation}
            r\log x =\log (u\log u)+ O\left(1\right)
   \label{rlog}
   \end{equation}
   for $r\geq \log u$. If $r\leq  \log u$, we  have
 \[
   x^r> ru\big(1-\exp\{-(\log u)/r\}\big)\geq \big(1-\re^{-1}\big) ru.
\]
and $x^r\leq 1+ru$. Now,
\[
            r\log x =\log (ur)+ O\left(1\right).
   \]
      The latter and (\ref{rlog}) lead to relation (\ref{wedge}).

 To sharpen (\ref{wedge}) for $3\leq u\leq \re^r$, we  iterate once more and obtain
\begin{align*}
  r\log x &=\log \Big[1+ru\big(1-x^{-1}\big)\Big]\\
   &=
     \log\bigg[1+ r  u\bigg(1-\exp\bigg\{{-\log( u\log u)\over r}\bigg\}\Big(1+O\left(1/r\right)\Big)\bigg)\bigg]\\
     &=
         \log\Big( u\log( u\log u)+O\left(u\right)+O\left(u\log^2 u/r\right)\Big)\\
     &=
     \log( u\log u)+O\left({\log\log u\over \log u}\right)+O\left({\log u\over r}\right).
         \end{align*}
 This is (\ref{xxi}). To prove (\ref{lambda}), we first of all observe that
\begin{equation}
\lambda(x)=\frac{rx^{r+1}}{x-1}-\frac{x^{r+1}-x}{(x-1)^2}=\frac{rx^{r+1}-ru}{x-1}= r^2u+\frac{r(x-u)}{x-1}.
\label{Lambda}
\end{equation}
Further,
\[
0\leq \frac{1}{ru} \frac{u-x}{x-1}<\frac{1}{r(x-1)}\leq \frac{1}{\log u},
\]
due to  (\ref{urx}) and  $r(x-1)\geq r(\re^{(\log u)/r}-1)\geq \log u$.
\end{proof}

\smallskip

\begin{lemma}\label{very small} For $1\leq r\leq \log n$, we have
\[
\nu(n,r)\ll \exp\left\{-\frac{n\log n}{r}+\frac{n}{r}+\frac{n}{\log n}+\frac{3n}{(\log n)^2}-\frac{2n}{(\log n)^3}+\log\frac{\log n}{r}\right\}.
\]
\end{lemma}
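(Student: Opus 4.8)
The plan is to reduce the estimate to Theorem~\ref{thm1}, replace the saddle value of the exponent by the value at the convenient point $n^{1/r}$ using convexity, and then estimate the resulting sum by an integral comparison. Throughout I would take $n$ as large as needed, the remaining finitely many cases being absorbed into the implied constant. Since $1\le r\le\log n$ forces $(\log n)^2(\log\log n)^2\le cn$ for $n$ large, Theorem~\ref{thm1} applies; combined with \eqref{lambda}, which gives $\lambda(x):=\sum_{j=1}^r jx^j\ge r^2u(1-\log^{-1}u)\ge\tfrac12 rn$ because $u=n/r\ge n/\log n\to\infty$, it yields $\nu(n,r)\ll\re^{g(x)}$, where $g(y):=\sum_{j=1}^r y^j/j-n\log y$.

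Next I would note that $g$ is strictly convex on $(0,\infty)$ and $g'(y)=y^{-1}\bigl(\sum_{j=1}^r y^j-n\bigr)$ vanishes exactly at $y=x$, so $x$ is the global minimiser of $g$; hence $g(x)\le g(n^{1/r})=-\tfrac{n\log n}{r}+\tfrac nr+\sum_{j=1}^{r-1}n^{j/r}/j$. It then remains to show that $\Sigma:=\sum_{j=1}^{r-1}n^{j/r}/j\le\tfrac n{\log n}+\tfrac{3n}{(\log n)^2}-\tfrac{2n}{(\log n)^3}$. Because $r\le\log n$, the function $t\mapsto n^{t/r}/t$ is nondecreasing on $[1,\infty)$, so $\Sigma\le\int_1^r n^{t/r}t^{-1}\,dt$; the substitution $s=t(\log n)/r$ turns this into $\int_{(\log n)/r}^{\log n}\re^s s^{-1}\,ds\le\int_1^{\log n}\re^s s^{-1}\,ds$, using $(\log n)/r\ge1$. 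Integrating by parts three times bounds the last integral:
\[
\int_1^{\log n}\frac{\re^s}{s}\,ds\le\frac{n}{\log n}+\frac{n}{(\log n)^2}+\frac{2n}{(\log n)^3}+O\!\Bigl(\frac{n}{(\log n)^4}\Bigr),
\]
which for $n$ large lies below $\tfrac n{\log n}+\tfrac{3n}{(\log n)^2}-\tfrac{2n}{(\log n)^3}$, the difference being $-\tfrac{2n}{(\log n)^2}\bigl(1+o(1)\bigr)<0$. Substituting back and using $\log(\log n/r)\ge0$ gives the claimed bound, and the case $r=1$ (where $\Sigma=0$ and $x=n$) is already included.

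The one delicate point is keeping the coefficient of $n/\log n$ equal to $1$: any cruder route—bounding $\Sigma$ term by term, or controlling the geometric-type sum $\sum_j n^{j/r}/j$ by its largest term after a dyadic split—produces a constant strictly larger than $1$, which is not sharp enough, and passing through the explicit expansion of $\sum x^j/j-n\log x$ in terms of the $d_{rk}$ has the same defect. The exact constant is what the exponential–integral estimate above provides; equivalently it reflects the fact that $\max_{2\le r\le\log n}n^{1-1/r}/(r-1)\sim n/(\re\log n)$, attained near $r=\log n$. The remaining ingredients—the application of Theorem~\ref{thm1}, the convexity of $g$, and the monotone integral comparison—are routine.
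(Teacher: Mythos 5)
Your argument is correct and reaches the paper's bound by a route that is parallel to, but not identical with, the one in the text. The paper does not invoke Theorem~\ref{thm1} directly; instead it applies Theorem~\ref{Theorem 1} (the $d_{rN}$ expansion), uses the explicit formula for $d_{rr}$ and the bound $d_{rN}\le 1/(r-N)$ for $1\le N\le r-1$, and so arrives at precisely the same sum $\Sigma=\sum_{M=1}^{r-1}n^{M/r}/M$ that you obtain by convexity of $g$ evaluated at $n^{1/r}$. From there the two arguments coincide in substance: the paper bounds $\Sigma\le\int_1^{\log n}(\re^t-1)t^{-1}\,dt+\log\log n=\int_1^{\log n}\re^t t^{-1}\,dt$, integrates by parts three times, and retains the $-\log r$ gain from the prefactor $1/\sqrt{2\pi nr}$ to produce the $\log(\log n/r)$ term; you instead bound $\Sigma\le\int_1^{\log n}\re^s s^{-1}\,ds$ directly and then discard both the prefactor gain and the $\log(\log n/r)$ slack. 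What your route buys is a slightly shorter path (no need to invoke Theorem~\ref{Theorem 1} or its coefficient estimates) and, incidentally, a marginally stronger bound without the $\log(\log n/r)$ term. One inaccuracy in your closing commentary should be flagged: you assert that "passing through the explicit expansion of $\sum x^j/j-n\log x$ in terms of the $d_{rk}$ has the same defect" of producing a constant larger than $1$ in front of $n/\log n$, but this is not so — the paper passes through exactly that expansion, bounds $d_{rN}\le 1/(r-N)$, and then uses the same monotone integral comparison you use, obtaining the sharp constant $1$. The defect you describe arises only if one bounds $\Sigma$ by its largest term or uses a cruder geometric estimate; the $d_{rk}$ route per se is not the obstruction.
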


\begin{proof}
We apply Theorem~\ref{Theorem 1}, 
\begin{align*}
\nu(n,r)&= \frac{1}{\sqrt{2\pi nr}}\exp\left\{-\frac{n\log n}{r}+\frac{n}{r}+\sum_{N=1}^rd_{rN}n^{(r-N)/r}\right\}\left(1+O\left(n^{-1/r}\right)\right)\\
&\ll  \exp\left\{-\frac{n\log n}{r}+\frac{n}{r}+\sum_{N=1}^rd_{rN}n^{(r-N)/r}-\log r \right\},
\end{align*}
where $d_{rr}=-(1/r)\sum_{j=2}^r1/j$ and
\[
d_{rN}=\frac{\Gamma(N+N/r)}{(r-N)\Gamma(N+1)\Gamma(1+N/r)}\leq \frac{1}{r-N}
\]
if $0 < N/r < 1 $. Therefore,
\begin{align*}
\nu(n,r) &\ll \exp\left\{-\frac{n\log n}{r}+\frac{n}{r}+\sum_{N=1}^{r-1}\frac{n^{N/r}}{N}-\log r \right\}
\end{align*}
and the proposition follows from an estimate
\begin{align*}
\sum_{N=1}^{r-1}\frac{n^{N/r}}{N}&\leq  \int_{1}^{\log n}\frac{{\re}^t-1}{t}dt+\log\log n\\
&=  \frac{{\re}^t-t-1}{t}\Big|_1^{\log n}+\frac{{\re}^t-t^2/2-t-1}{t^2}\Big|_1^{\log n}+2\int_{1}^{\log n}\frac{{\re}^t-t^2/2-t-1}{t^3}dt\\
&\quad\ +\log\log n\\
&\leq  \frac{n}{\log n}+\frac{n}{(\log n)^2}+2(\log n-1)\frac{n}{(\log n)^3}+\log\log n.
\end{align*}
\end{proof}

\smallskip

 \begin{lemma}\label{Tz} Let
\[
T(z)=\int_0^{z}\frac{\re^t-1}{t}\left(\frac{t}{r}\frac{\re^{t/r}}{\re^{t/r}-1}-1\right) dt, \quad z\in \C.
\]
If $z=\eta+i\tau$, $0\leq \eta\leq \pi r$ and $|\tau|\leq \pi r$, then
\begin{equation}
\Big|T(z)+\frac{z}{2r}\Big|\leq \frac{4\re^\eta}{r}+\frac{\tau^2}{12r^2},
\label{Tz1}
\end{equation}

\begin{equation}
\Big| T(\eta)-\frac{1}{2r}(\re^{\eta}-\eta-1)\Big|\leq \frac{\eta\re^{\eta}}{4r^2},
\label{Tz2}
\end{equation}

\begin{equation}\label{Tz3}
T(\eta)\leq \frac{\re^\eta}{2r}+\frac{\eta\re^\eta}{12r^2}.
\end{equation}
\end{lemma}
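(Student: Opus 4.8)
The plan is to work with the explicit series expansion of the integrand. Writing $g(w) = \frac{w\,\re^{w}}{\re^{w}-1} - 1$ for $w = t/r$, the first step is to recall the power series of $w/(\re^w-1) = \sum_{k\ge0} B_k w^k/k!$ in terms of Bernoulli numbers, so that $\frac{w\re^w}{\re^w-1} = \frac{w}{\re^w-1} + w = 1 + \frac{w}{2} + \frac{w^2}{12} - \cdots$; hence $g(w) = \frac{w}{2} + \frac{w^2}{12} + O(w^4)$ near $0$, with the cubic term vanishing. Setting $w = t/r$ gives $g(t/r) = \frac{t}{2r} + \frac{t^2}{12r^2} + O(t^4/r^4)$ for $|t|$ bounded relative to $r$, but since the integration range reaches $|z|$ comparable to $\pi r$, I will not expand globally; instead I will split $g(t/r) = \frac{t}{2r} + h(t/r)$ where $h(w) = g(w) - w/2$, and control $h$ directly. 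The key analytic input is that $h(w)$ is bounded on any compact region avoiding the poles of $w/(\re^w-1)$ at $2\pi i \mathbb{Z}\setminus\{0\}$; for $|\Im w| \le \pi$ (which holds since $|\tau|\le \pi r$ means $|\Im(t/r)| \le \pi$ along the contour) one has $|h(w)| \le C|w|^2$ uniformly, with $C$ a small absolute constant that a careful estimate pins down to give the stated numerical coefficients.

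For \eqref{Tz1}, I would write $T(z) + \frac{z}{2r} = \int_0^z \frac{\re^t-1}{t}\,g(t/r)\,dt + \frac{z}{2r}$, and since $\int_0^z \frac{\re^t-1}{t}\cdot\frac{t}{2r}\,dt = \frac{1}{2r}\int_0^z(\re^t-1)\,dt = \frac{\re^z - 1 - z}{2r}$, the main term $z/2r$ does not cancel cleanly — so instead I should peel off the correct piece. The cleaner route: note $\int_0^z \frac{\re^t-1}{t}\cdot \frac{t}{2r}\,dt = \frac{\re^z-z-1}{2r}$, which is itself $O(\re^\eta/r)$, so it can be absorbed; then $T(z) = \frac{\re^z-z-1}{2r} + \int_0^z \frac{\re^t-1}{t}h(t/r)\,dt$, and the remaining integral is bounded by parametrizing $t = z\cdot\theta$, $\theta\in[0,1]$, using $|\re^t-1|\le \re^{\eta}$-type bounds along with $|h(t/r)|\le C|t/r|^2 \le C\frac{|z|^2}{r^2}$, and $|\tau|\le \pi r$ to convert $|z|^2$ into the $\tau^2/r^2$ shape. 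Matching constants: the $\frac{4\re^\eta}{r}$ term should come from bounding $\frac{\re^z-z-1}{2r} - \frac{z}{2r}$ (i.e., the discrepancy between what we subtracted and the claimed $z/(2r)$) together with a first-order piece of $h$, while $\frac{\tau^2}{12r^2}$ is exactly the leading $w^2/12$ coefficient integrated against $\re^t-1 \approx t$ near $0$ and estimated generously on the rest.

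For \eqref{Tz2} and \eqref{Tz3} the argument specializes to real $z = \eta \ge 0$, where everything is real and monotone, so cruder but sharper-constant bounds are available. For \eqref{Tz2}, use $T(\eta) = \frac{\re^\eta - \eta - 1}{2r} + \int_0^\eta \frac{\re^t-1}{t}h(t/r)\,dt$ and bound the last integral by $\frac{C}{r^2}\int_0^\eta (\re^t-1)\frac{t}{\;}\,dt \le \frac{\eta\re^\eta}{4r^2}$ after checking the absolute constant $C$ against $1/4$; here one uses $\int_0^\eta t(\re^t-1)\,dt \le \eta\int_0^\eta(\re^t-1)\,dt \le \eta(\re^\eta-\eta-1)\le \eta\re^\eta$ generously. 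For \eqref{Tz3}, combine \eqref{Tz2} with $\re^\eta - \eta - 1 \le \re^\eta$ to get $T(\eta)\le \frac{\re^\eta}{2r} + \frac{\eta\re^\eta}{4r^2}$ — but the claim has $\frac{1}{12}$, not $\frac14$, so instead I should go back to $g(t/r) \le \frac{t}{2r} + \frac{t^2}{12r^2}\cdot(\text{correction})$ using that for real $w>0$, $\frac{w\re^w}{\re^w-1} - 1 - \frac{w}{2} = \frac{w^2}{12} + (\text{alternating higher terms})$, and crucially the full function $g(w) - w/2$ is nonnegative and bounded above by $w^2/12$ for $w \ge 0$ — a clean one-variable inequality provable by series or by checking $g''$. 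Then $T(\eta) \le \int_0^\eta \frac{\re^t-1}{t}\bigl(\frac{t}{2r} + \frac{t^2}{12r^2}\bigr)dt = \frac{\re^\eta-\eta-1}{2r} + \frac{1}{12r^2}\int_0^\eta t(\re^t-1)\,dt \le \frac{\re^\eta}{2r} + \frac{\eta\re^\eta}{12r^2}$, as claimed.

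The main obstacle I anticipate is pinning down the \emph{absolute constants} ($4$, $\tfrac1{12}$, $\tfrac14$) rather than the asymptotic shape: this requires the sharp one-variable inequality $0 \le \frac{w\re^w}{\re^w-1} - 1 - \frac{w}{2} \le \frac{w^2}{12}$ for $w\ge0$ (and a companion bound $|g(w)-w/2|\le \frac{|w|^2}{12}\cdot(1+o(1))$ in the strip $|\Im w|\le\pi$), plus honest bookkeeping of the error $\frac{\re^z-z-1}{2r} - \frac{z}{2r}$ when transferring between the "natural" main term $\frac{\re^z-z-1}{2r}$ produced by the computation and the "claimed" main term $\frac{z}{2r}$ in \eqref{Tz1}. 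Everything else is routine contour estimation with $|\re^t - 1| \le |t|\max_{[0,1]}\re^{\Re(z\theta)} = |t|\re^{\eta}$ along the segment from $0$ to $z$.
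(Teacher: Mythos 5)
Your plans for \eqref{Tz2} and \eqref{Tz3} are essentially the paper's: for real $w\ge 0$ the alternating Bernoulli series gives $0\le h(w):=g(w)-w/2\le w^2/12$, and the rest is monotonicity bookkeeping, e.g.\ $\int_0^\eta t(\re^t-1)\,dt\le\eta(\re^\eta-\eta-1)\le\eta\re^\eta$.

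For \eqref{Tz1} there is a genuine gap. Your strategy is to bound the remainder integral by a uniform inequality $|h(w)|\le C|w|^2$ on the strip together with a uniform bound on $|\re^t-1|$ along the segment. This produces an estimate of the shape
\[
\Big|\int_0^z\frac{\re^t-1}{t}\,h(t/r)\,dt\Big|\ \le\ \frac{C\,\re^\eta(\eta^2+\tau^2)}{r^2},
\]
and no choice of constant $C$, and no ``absorbing $\eta^2$ via $\eta\le\pi r$'', will remove the factor $\re^\eta$ from the $\tau^2$ piece. But the claimed bound is $\tau^2/(12r^2)$ with \emph{no} $\re^\eta$, and for $\eta>0$, $|\tau|\asymp\pi r$ the two are genuinely different (your version is larger by the unbounded factor $\re^\eta$). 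The paper avoids this by expanding $h(t/r)=2\sum_{k\ge1}(-1)^{k+1}\zeta(2k)(t/(2\pi r))^{2k}$, then for each $k$ splitting
\[
\int_0^z(\re^t-1)t^{2k-1}\,dt=\Big(\re^z z^{2k-1}-(2k-1)\int_0^z\re^t t^{2k-2}\,dt\Big)-\frac{z^{2k}}{2k},
\]
i.e.\ separating the ``$-1$'' from the ``$\re^t$''. The polynomial piece $-z^{2k}/(2k)$ carries no $\re^z$ at all, sums to $O\big(|z|^2/r^2\big)=O\big((\eta^2+\tau^2)/r^2\big)$ with constant exactly $1/12$, and the $\eta^2/(12r^2)$ part is harmless because $\eta\le\pi r$ lets it be absorbed into $\re^\eta/r$. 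The $\re^t$ piece, after one integration by parts, lives at the power $|z|^{2k-1}$ rather than $|z|^{2k}$, so summing gives $O\big(\re^\eta(\eta+|\tau|)/r^2\big)\ll\re^\eta/r$. That structural split (polynomial vs.\ exponential, plus one integration by parts) is what your proposal is missing; the crude $|h(w)|\le C|w|^2$ plus $|\re^t-1|\le 2\re^\eta$ cannot recover it.
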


\begin{proof} The well known theory of the Bernoulli numbers $\{b_n\}$, $n\geq 0$, gives us the series
\begin{equation}
 b(w):=    \frac{w}{1-\re^{-w}}=\sum_{n=0}^\infty \frac{b_n(-w)^n}{n!}=1+\frac{w}{2}+2\sum_{k=1}^\infty \frac{(-1)^{k+1}\zeta(2k)}{(2\pi)^{2k}} w^{2k}\label{Bern}
 \end{equation}
 converging for $|w|<2\pi$, $w\in\C$. Here $\zeta(2k)=\sum_{m\geq 1}m^{-2k}\leq \zeta(2)=\pi^2/6$. Hence,
\begin{align}
T(z)&=\frac{1}{2r}\int_0^{z} \left(\re^{t}-1\right) dt+2\sum_{k=1}^\infty \frac{(-1)^{k+1}\zeta(2k)}{(2\pi r)^{2k}}\int_0^{z}(\re^t-1)t^{2k-1}dt\nonumber\\
&=\frac{1}{2r}(\re^{z}-z-1)+2\sum_{k=1}^\infty \frac{(-1)^{k+1}\zeta(2k)}{(2\pi r)^{2k}}\left( \re^{z}z^{2k-1}-(2k-1)\int_0^{z}\re^t t^{2k-2}dt\right)
\nonumber\\
&\quad\ +2\sum_{k=1}^\infty \frac{(-1)^{k}\zeta(2k)z^{2k}}{2k(2\pi r)^{2k}}.
\label{Ts}
\end{align}
Under the assumed conditions, $|z|^2\leq 2\pi^2 r^2$; therefore, summing up the series, we obtain
\begin{align*}
\Big|T(z)+\frac{z}{2r}\Big|&\leq
\frac{\re^\eta}{r}+\frac{2\pi^2}{3}\re^\eta\sum_{k=1}^\infty\frac{|z|^{2k-1}}{(2\pi r)^{2k}}+\frac{\pi^2}{6}\sum_{k=1}^\infty\frac{|z|^{2k}}{k(2\pi r)^{2k}}\nonumber\\
&\leq
\frac{\re^\eta}{r}+\frac{\re^{\eta}(\eta+|\tau|)}{3 r^2}+\frac{\eta^2+\tau^2}{12r^2}
\leq \frac{\re^\eta}{r}\Big(1+\frac{2\pi}{3}+\frac{\pi}{12}\Big)+\frac{\tau^2}{12r^2}\\
&\leq
 \frac{4\re^\eta}{r}+\frac{\tau^2}{12r^2}.
\end{align*}
To prove (\ref{Tz2}), it suffices to repeat  estimation of the two series in  (\ref{Ts}).
To obtain (\ref{Tz3}), observe that
\[
\frac{\eta\re^\eta}{\re^\eta-1}\leq 1+\frac{\eta}{2}+\frac{\eta^2}{12}
\]
and thus,
\[
T(\eta)\leq \frac{1}{2r}\int_0^\eta(\re^t-1)dt+\frac{1}{12r^2}\int_0^\eta t(\re^t-1)dt.
\]
\end{proof}

\medskip

\section{Proof of Theorem~\ref{thm1}}

\medskip

\textbf{Theorem~\ref{thm1}.} \textit{Let $x$ be the positive solution to the equation $\sum_{j=1}^rx^j=n$.  Then
\begin{equation*}
\nu(n,r)=\frac{\exp\left\{\sum_{j=1}^r {x^j j^{-1}}\right\}}{x^n\sqrt{2\pi \sum_{j=1}^rjx^j}}\left(1+O\left(\frac{r}{n}\right)\right)
\end{equation*}
provided that $ 1\leq r\leq c n(\log n)^{-1}(\log\log n)^{-2}$,
where $c=1/(12\pi^2 \re)$ and $n\geq 4$.\\}

 The essential part of the proof concerns the following trigonometric sum
\[
g_r(t, y):=\sum_{j\leq r}\frac{y^j(\re^{itj}-1)}{j},\quad t\in (-\pi, \pi], \; y>1.
\]
 Its behavior  outside a vicinity of the point $t=0$ is rather complicated; therefore, we consider it in a separate lemma.
Denote
\[
\lambda_k:=\sum_{j=1}^rj^{k-1}x^j,\ k\geq 1.
\]
In particular, $\lambda_1=n$.

\begin{lemma}
\label{7lema} If $t\in [-\pi,\pi]$ and $y>1$, then
\begin{equation}
\Re g_r(t,y)\leq -\frac{2}{\pi^2}\frac{y^{r+1}}{r(y-1)}\frac{t^2}{(y-1)^2+t^2}+\frac{2y}{r(y-1)}.
\label{grt}
\end{equation}

If  $1/r\leq |t|\leq \pi$, $u=n/r\geq 3$, and $x$ is function defined in Theorem~\ref{thm1}, then
\begin{equation*}
\Re g_r(t,x)\leq -\frac{1}{4\pi^2} \frac{u^{1-4/(r+1)}}{\log^2 u}+\frac{2}{r}+\frac{2}{\log u}.
\end{equation*}
 \end{lemma}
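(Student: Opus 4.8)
The first inequality I would derive from the identity $-\Re g_r(t,y)=\sum_{j=1}^r y^j(1-\cos jt)/j$, all of whose terms are nonnegative; bounding $1/j$ below by $1/r$ gives
$$
\Re g_r(t,y)\le-\frac1r\sum_{j=1}^r y^j(1-\cos jt)=-\frac1r\,\Re\Bigl[\frac{y^{r+1}-y}{y-1}-\frac{w^{r+1}-w}{w-1}\Bigr],\qquad w:=y\re^{it}.
$$
It then remains to bound $\sum_{j=1}^r y^j(1-\cos jt)$ below by $\tfrac{2}{\pi^2}\tfrac{y^{r+1}}{y-1}\tfrac{t^2}{(y-1)^2+t^2}-\tfrac{2y}{y-1}$. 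The elementary inputs are $|w-1|^2=(y-1)^2+4y\sin^2(t/2)$ and Jordan's inequality $\sin^2(t/2)\ge(t/\pi)^2$ for $|t|\le\pi$, which (since $y>1$) give $|w-1|\ge\tfrac2\pi\sqrt{(y-1)^2+t^2}$; evaluating the geometric sums and using $\bigl|\sum_{j=1}^r\re^{ijt}y^j\bigr|\le 2y^{r+1}/|w-1|$ then yields the desired bound on the range where the asserted right--hand side is positive. Where it is not---which includes the whole range $y^r\le\pi^2$, and more generally the $t$ small relative to $y-1$---the inequality is automatic because $\Re g_r(t,y)\le0$; in the complementary range one necessarily has $y^r>\pi^2$ and $|t|$ bounded below in terms of $y-1$ and $y^r$, and the estimate is finished by a short case distinction according to whether $|t|$ is small or large compared with $y-1$.

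For the second inequality I would take $y=x$, the positive root of $\sum_{j=1}^r x^j=n$, and insert the properties of $x$ from Lemma~\ref{x}: $u^{1/r}\le x\le u^{2/(r+1)}$; $r(x-1)\ge r(\re^{(\log u)/r}-1)\ge\log u$; $x^{r+1}=x+u\,r(x-1)$ (equivalently $x^r=1+u\,r(x-1)/x$); and $\lambda(x)=\sum_{j=1}^r jx^j=r^2u\,(1+O(1/\log u))$, whence $x^{r+1}/(x-1)=\lambda(x)/r+u/(x-1)\asymp ru$. The error term is exact: $\tfrac{2x}{r(x-1)}=\tfrac2r+\tfrac2{r(x-1)}\le\tfrac2r+\tfrac2{\log u}$. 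For the main term I would split at $|t|=x-1$. For $|t|\ge x-1$ one has $t^2/((x-1)^2+t^2)\ge\tfrac12$ and $x^{r+1}/(r(x-1))=x/(r(x-1))+u>u\ge u^{1-4/(r+1)}$, which already suffices. For $1/r\le|t|<x-1$ one has $t^2/((x-1)^2+t^2)\ge 1/(1+r^2(x-1)^2)$ and $x^{r+1}\ge u\,r(x-1)$, so the main term is $\gg u/(1+r^2(x-1)^2)$, and the claim follows from the bounds on $x$ in Lemma~\ref{x} (with a minor distinction on the size of $r$ against $\log u$ and the slack $u^{-4/(r+1)}\le x^{-2}$), the finitely many small values of $u$ being trivial since there the claimed right--hand side is already $\ge0$.

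The delicate point is the first estimate: the bound $\bigl|\sum_{j=1}^r\re^{ijt}y^j\bigr|\le 2y^{r+1}/|w-1|$ is lossy precisely when $|w-1|$ is comparable to $y-1$, i.e. for small $|t|$, so the cancellation in $\sum_{j=1}^r y^j-\sum_{j=1}^r y^j\cos jt$ has to be followed accurately there; this is the reason for the case analysis and where the constant $2/\pi^2$ is determined. Granting the first inequality, the second is routine, the only point requiring care being that the upper bound $r(x-1)\le(1+o(1))\log u$ is sharp enough to secure the constant $\tfrac1{4\pi^2}$.
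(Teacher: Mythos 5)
Your starting move, replacing $1/j$ by $1/r$, is exactly the paper's first step, and your treatment of the second inequality (taking $y=x$, using $u^{1/r}\le x\le u^{2/(r+1)}$, $r(x-1)\ge\log u$, $x^{r+1}/(r(x-1))\ge u$ and $r(x-1)\le\tfrac{2r}{r+1}u^{2/(r+1)}\log u$) is in substance what the paper does; the split at $|t|=x-1$ is harmless but unnecessary, since $t^2/((x-1)^2+t^2)\ge 1/(1+r^2(x-1)^2)\ge 1/(2r^2(x-1)^2)$ already handles the whole range $1/r\le|t|\le\pi$ in one stroke.

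The first inequality is where you and the paper diverge, and where your outline has a genuine gap. After evaluating the geometric sums, the paper does \emph{not} split $\sum_j y^j(1-\cos jt)$ into $\sum_j y^j-\Re\sum_j w^j$ and estimate the pieces separately. It keeps the cancellation inside a single bracket: writing
\[
\frac1r\,\Re\sum_{j=1}^r y^j(\re^{ijt}-1)
=\frac{y^{r+1}}{r(y-1)}\Bigl(\Re\frac{\re^{it(r+1)}(y-1)}{y\re^{it}-1}-1\Bigr)
+\frac{y}{r(y-1)}\Bigl(1-\Re\frac{\re^{it}(y-1)}{y\re^{it}-1}\Bigr),
\]
so that the first bracket is $\le \tfrac{y-1}{|y\re^{it}-1|}-1\le 0$ automatically, and the $2/\pi^2$ constant drops out of the elementary inequality $\tfrac{\alpha}{\sqrt{\alpha^2+v^2}}-1\le -\tfrac12\tfrac{v^2}{\alpha^2+v^2}$ applied with $\alpha=y-1$, $v=(2/\pi)t$. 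In your version, the loosening $\bigl|\sum_jw^j\bigr|\le 2y^{r+1}/|w-1|$ is fatal, not merely lossy: with that factor of $2$ the lower bound $\tfrac{y^{r+1}-y}{y-1}-\tfrac{2y^{r+1}}{|w-1|}$ is \emph{negative} for all $|t|\lesssim 4(y-1)$ (it tends to $-(y^{r+1}-y)/(y-1)$ as $t\to0$), whereas the range covered by the trivial observation $\Re g_r(t,y)\le0$ is only $y^r\,t^2/((y-1)^2+t^2)\le\pi^2$, i.e.\ $|t|\lesssim \pi(y-1)/\sqrt{y^r}$. For $y^r$ large these two regimes do not meet, and the ``short case distinction according to whether $|t|$ is small or large compared with $y-1$'' you gesture at does not bridge the interval in between, because there the sign of the difference $\sum y^j-\Re\sum w^j$ has to be tracked exactly. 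The repair is to \emph{not} introduce the factor $2$: with the exact $|\sum_j w^j|\le (y^{r+1}+y)/|w-1|$ one gets
\[
\sum_j y^j(1-\cos jt)\ge (y^{r+1}+y)\Bigl(\frac1{y-1}-\frac1{|w-1|}\Bigr)-\frac{2y}{y-1},
\]
and then $|w-1|-(y-1)=\tfrac{4y\sin^2(t/2)}{|w-1|+(y-1)}\ge \tfrac{2yt^2/\pi^2}{|w-1|}$ together with $|w-1|^2\le y\bigl[(y-1)^2+t^2\bigr]$ finishes the estimate with the correct constant. At that point your argument and the paper's are the same computation in different clothes; the factor-of-$2$ shortcut is the one thing that cannot be afforded.
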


\begin{proof} Observe that
\begin{align}
\Re\sum_{j=1}^r\frac{y^j(e^{itj}-1)}{j} &\leq  \frac{1}{r} \Re \sum_{j=1}^ry^j(e^{itj}-1)\nonumber\\
&= \frac{y^{r+1}}{r(y-1)}\left(\Re \frac{e^{it(r+1)}(y-1)}{ye^{it}-1}-1\right)+\frac{y}{r(y-1)}\left(1-\Re \frac{e^{it}(y-1)}{ye^{it}-1}\right)\nonumber\\
&\leq
\frac{y^{r+1}}{r(y-1)}\left(\frac{y-1}{|ye^{it}-1|}-1\right)+\frac{2y}{r(y-1)}.
\label{grt0}
\end{align}
If $|t|\leq \pi$, then
\[
\frac{|ye^{it}-1|}{y-1}=\left(1+\frac{2y(1-\cos{t})}{(y-1)^2}\right)^{\frac{1}{2}}\geq\frac{((y-1)^2+(4/\pi^2)t^2)^{\frac{1}{2}}}{y-1}
\]
because of
\begin{equation}
2t^2/\pi^2\leq 1-\cos{t}\leq t^2/2.
\label{cos1}
\end{equation}
Using also
\[
\frac{\alpha}{\sqrt{\alpha^2+v^2}}-1\leq -\frac{1}{2}\frac{v^2}{\alpha^2+v^2},\quad  \alpha\geq 0,\; v\in\R,
\]
with $\alpha=y-1$ and $v=(2/\pi) t$,
we obtain
\[
\frac{y-1}{|ye^{it}-1|}-1\leq -\frac{2}{\pi^2}\frac{t^2}{(y-1)^2+t^2}.
\]
Inserting this into (\ref{grt0}),  we complete the proof of inequality (\ref{grt}).

If $y=x$, $1/r\leq |t|\leq \pi$ and $u\geq 3$, we  combine (\ref{grt}) with estimate (\ref{urx}). We have
\[
    \frac{x^{r+1}}{x-1}=n+\frac{x}{x-1}\geq ur
    \]
and
\[
  1<\log u\leq r(x-1)\leq r(u^{2/(r+1)}-1)\leq \frac{2r}{r+1} u^{2/(r+1)}\log u.
  \]

   So,  we obtain
\begin{align*}
\Re g_r(t,x)&\leq -\frac{1}{\pi^2} \frac{u}{r^2(x-1)^2}+\frac{2}{r}\Big(1+\frac{1}{x-1}\Big)\\
&\leq
-\Big(\frac{r+1}{2\pi r}\Big)^2 \frac{u^{1-4/(r+1)}}{\log^2 u}+\frac{2}{r}+\frac{2}{\log u}.
\end{align*}.
\end{proof}

\begin{proof}[Proof of Theorem~\ref{thm1}]
Recall \eqref{1Pnr}, so it suffices to consider the case when $r\geq 4$ and $n$ gets large. It is more convenient to examine the probability $\Pr\Big(\sum_{j=1}^rjZ_j=n\Big)$ introduced in (\ref{P-ell}). Set 
 \begin{equation}
 Q(z)= z^{-n} \exp\left\{\sum_{j=1}^r\frac{z^j-1}{j}\right\}.
 \label{Qz-naujas}
 \end{equation}
In the  introduced notation, we have $u\geq c^{-1} (\log n)(\log\log n)^2$ and
\begin{align}
\Pr\Big(\sum_{j=1}^rjZ_j=n\Big)&= \frac{Q(x)}{2\pi}\left(\int_{|t|\leq t_0}+\int_{t_0<|t|\leq \pi}\right)\exp\left\{g_r(t,x)\right\}e^{-itn}dt\nonumber\\
&=:
\frac{Q(x)}{2\pi}\big(K_1(n)+K_2(n)\big)\label{P-int}
\end{align}
with  $t_0:=r^{-7/12}n^{-5/12}$.

Expanding the integrand in $K_1(n)$, we use relations
${\re}^{it}=1+it-t^2/2-it^3/6+O\left(t^4\right)$ if  $ t\in\R$ and
$    \re^{w}=1+O\left(|w| \re^{|w|}\right)$ if $ w\in\C$.
 Consequently, checking that $\lambda_4t_0^4\leq (r^3 n)(r^{-7/3}n^{-5/3})= (r/n)^{2/3}\leq 1$ and using the abbreviation $\lambda:=\lambda_2$,  we obtain
\begin{align*}
&\exp\{g_r(t,x)\}\\
&=\exp\big\{i \lambda_1t-(\lambda/2)t^2-i(\lambda_3/6)t^3+O\left(\lambda_4t^4\right)\big\}\\
&=\exp\big\{it \lambda_1-(\lambda/2)t^2\big\}\big(1-i(\lambda_3/6)t^3 +O\left(\lambda_3^2 t^6\right)\big)+O\left(\lambda_4 t^4 \exp\big\{-(\lambda/2)t^2\big\}\right)\\
&=\exp\big\{it \lambda_1-(\lambda/2)t^2\big\}\big(1-i(\lambda_3/6)t^3\big)+O\left(\big(\lambda_4t^4+\lambda_3^2 t^6\big)\exp\big\{-(\lambda/2)t^2\big\}\right).
\end{align*}
Recall that $u=n/r$, $\lambda_1=n$, $\lambda_k\leq r^k u$ if $k\geq 1$, and,  by Lemma \ref{x}, $\lambda=\lambda(x)\sim nr$  as $n\to\infty$  because of $u\to\infty$. We now see that
\begin{align*}
K_1(n)&=\int_{|t|\leq t_0} \re^{-(\lambda/2)t^2} dt +O\left(\frac{1}{\sqrt\lambda}\left(\frac{\lambda_4}{\lambda^2}+\frac{\lambda_3^2}{\lambda^3}\right)\right)\\
&=
\sqrt{\frac{2\pi}{ \lambda}}-\frac{1}{\sqrt{\lambda}}\int_{|v|> t_0\sqrt{\lambda}} \re^{-v^2/2} dv+O\left(\frac{1}{u\sqrt{\lambda}}\right)=
\sqrt{\frac{2\pi}{ \lambda}}+O\left(\frac{1}{u\sqrt{\lambda}}\right).
\end{align*}

Considering  $K_2(n)$, we firstly observe that, by virtue of (\ref{cos1}),
$
\Re g_r(t,x)\leq -(2/\pi^2) \lambda t^2
$
if $t_0\leq |t|\leq 1/r$. Therefore, the contribution of the integral over this interval to $K_2(n)$ equals  $O\left((u\sqrt{\lambda})^{-1}\right)$.

Further, we apply Lemmas \ref{x} and \ref{7lema} to get
  \begin{align*}
  K_2(n)&\ll \max_{1/r\leq |t|\leq \pi} \big|\exp\big\{g_r(t,x)\big\}\big| +\frac{1}{u\sqrt{\lambda}}\\
  &\ll \frac{1}{\sqrt\lambda} \exp\bigg\{-\frac{1}{4\pi^2} \frac{u^{1-4/(r+1)}}{\log^2 u}+\frac{1}{2}\log u+\log r\bigg\}+\frac{1}{u\sqrt{\lambda}}.
  \end{align*}
It remains to prove that the quantity in the large curly braces does not exceed $-\log u+O\left(1\right)$ if the bounds of $r$ are as in Theorem \ref{thm1}. This is trivial, if
$4\log u>r+1\geq 5$.
If $4\log u\leq r+1$ and $n$ is sufficiently large, we have an estimate
\[
   \frac{1}{4\pi^2} \frac{u^{1-4/(r+1)}}{\log^2 u}\geq  \frac{3 c u}{\log^2 u}\geq  \frac{3\log n(\log\log n)^2}
      {(\log\log n+2\log\log\log n +O\left(1\right))^2}\sim 3\log n
\]
which assures the desired  bound $K_2(n)=O\left((u\sqrt \lambda)^{-1}\right)$.

   Inserting  the estimates of $K_j(n)$, $j=1,2$, into (\ref{P-int}), we finish the proof of Theorem~\ref{thm1}.
\end{proof}

\medskip

\section{Proof of Theorem~\ref{Theorem 1}}\label{s:5}

\medskip

\textbf{Theorem~\ref{Theorem 1}.} \textit{If $ 1\leq r\leq \log n$, then
\[
   \nu(n,r)=
   \frac{1}{\sqrt{2\pi nr}}\exp\bigg\{-\frac{n\log n}{r}+\frac{n}{r}+\sum_{N=1}^r   d_{rN} n^{(r-N)/r}\bigg\} \big(1+O\left( n^{-1/r}\right)\big).
   \]
 Here
\[
   d_{r,r}=-\frac{1}{r}\sum_{j=2}^r\frac{1}{j}
   \]
and
\[
             d_{rN}= \frac{\Gamma(N+N/r)}{(r-N)\Gamma(N+1) \Gamma(1+N/r)}
\]
if $1\leq N\leq r-1$.}

Primarily, the lemmas needed for the proof of Theorem~\ref{Theorem 1} will be presented.
Let $\C[[w]]$ be the set of formal power series over the field $\C$ and let $[w^n] g(w)$ denote the $n$th coefficient of $g(w)\in \C[[w]]$ where $n\in\N_0$.

\begin{lemma} \label{lema1}
Let
\[
\Phi(w)=\sum_{N=0}^\infty\Phi_N w^N
\]
be a power series in $\C[[w]]$ with $\Phi_0=1$. Then, the equation $w=z\Phi(w)$ admits a
unique solution
 \[
w=f(z)=\sum_{N=1}^\infty f_Nz^N, \quad   f_N=\frac{1}{N} [w^{N-1}]\Phi(w)^N, \quad N\geq 1.
\]
\end{lemma}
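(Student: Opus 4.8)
The plan is to prove the statement in two stages: first that the equation $w=z\Phi(w)$ has a unique solution $f\in\C[[z]]$, and then to extract its coefficients by a formal residue computation; this is the Lagrange--B\"urmann inversion formula.

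\emph{Existence and uniqueness.} Any solution must have zero constant term, since $z\Phi(w)$ vanishes at $z=0$; so write $f(z)=\sum_{N\ge1}f_Nz^N$. Comparing coefficients of $z^N$ in $f(z)=z\Phi(f(z))$ gives $f_1=\Phi_0=1$ for $N=1$, and for $N\ge2$
\[
f_N=[z^{N-1}]\Phi(f(z))=\sum_{k=0}^{N-1}\Phi_k\,[z^{N-1}]f(z)^k,
\]
which depends only on $f_1,\dots,f_{N-1}$ because $f(z)^k$ is divisible by $z^k$. Hence the coefficients $f_N$ are determined recursively, proving existence and uniqueness.

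\emph{The coefficient formula.} I would work in the field of formal Laurent series $\C((z))$ with the residue functional $\mathrm{Res}_z\,h:=[z^{-1}]h$, using the elementary fact $\mathrm{Res}_z\,h'(z)=0$ for all $h\in\C((z))$. Since $\Phi_0=1$, the series $\psi(w):=w/\Phi(w)$ lies in $\C[[w]]$ with $\psi(0)=0$ and $\psi'(0)=1$; it is therefore compositionally invertible, and the relation $w=z\Phi(w)$ says that its inverse is $f$, i.e.\ $\psi(f(z))=z$. The key step is the change-of-variables rule: for every $g\in\C((w))$,
\[
\mathrm{Res}_w\,g(w)=\mathrm{Res}_z\,g(f(z))f'(z).
\]
For $g(w)=w^m$ with $m\ne-1$ this is immediate from $g(f(z))f'(z)=\frac{1}{m+1}\big(f(z)^{m+1}\big)'$ and $\mathrm{Res}_z(\cdot)'=0$; the case $m=-1$ reduces to $\mathrm{Res}_z f'(z)/f(z)=1$, which holds because $f(z)=z\cdot(\text{unit})$; the general case follows by (summable) linearity. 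Applying the rule to $g(w)=w^{-N}\Phi(w)^N=\psi(w)^{-N}$ and using $\psi(f(z))=z$ gives
\begin{align*}
Nf_N&=[z^{N-1}]f'(z)=\mathrm{Res}_z\,z^{-N}f'(z)=\mathrm{Res}_z\,\psi(f(z))^{-N}f'(z)\\
&=\mathrm{Res}_w\,w^{-N}\Phi(w)^N=[w^{N-1}]\Phi(w)^N,
\end{align*}
which is the asserted identity $f_N=\frac1N[w^{N-1}]\Phi(w)^N$.

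\emph{Main obstacle.} The only subtle point is giving a clean, purely formal proof of the residue change-of-variables rule — in particular the exceptional $w^{-1}$ term — together with the verification that all the compositions involved ($\Phi(f(z))$, $\psi(f(z))$, $\Phi(w)^N$, $\psi(w)^{-N}$) are legitimate operations on formal (Laurent) series and that the relevant coefficientwise sums are finite. Both hinge on $\Phi_0=1$, which makes $f$ and $\psi$ have invertible linear parts; everything else is coefficient bookkeeping.
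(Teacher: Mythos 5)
Your proof is correct. Note, however, that the paper does not actually prove Lemma~\ref{lema1}: its ``proof'' is a one-line citation to the Lagrange--B\"urmann formula in Flajolet--Sedgewick. Your writeup therefore supplies a genuine argument where the paper supplies none. It is also worth comparing your method to the technique the paper \emph{does} deploy in the adjacent Lemma~\ref{lema2} (for the variants of the inversion formula), which is analytic: one truncates $\Phi$ to a polynomial so that $f$ becomes an honest analytic function near $0$, writes the target coefficient as a Cauchy integral, and performs the conformal substitution $z=w/\Phi(w)$. Your route is purely algebraic, working in $\C((z))$ with the formal residue and the change-of-variables rule $\mathrm{Res}_w\,g(w)=\mathrm{Res}_z\,g(f(z))f'(z)$, handling the exceptional term $m=-1$ via $\mathrm{Res}_z f'/f=1$ for $f=z\cdot(\text{unit})$. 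The trade-off is familiar: the formal approach avoids any truncation, convergence, or conformal-mapping considerations (everything lives in $\C[[z]]$ and $\C((z))$ from the start, and the $\Phi_0=1$ hypothesis is used exactly to make $f$ and $\psi=w/\Phi(w)$ have invertible linear part), whereas the analytic route, at the cost of a truncation step, gives the same computation in a setting where residues are literal contour integrals and the substitution is a genuine biholomorphism. Both are standard; your account is complete and correct, including the existence/uniqueness by the coefficient recursion and the check that only $f_1,\dots,f_{N-1}$ enter the formula for $f_N$ because $f(z)^k$ is divisible by $z^k$.
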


\begin{proof}
This is Lagrange-B\"{u}rmann Inversion Formula, presented, for instance on page 732 of a fairly concise book \cite{Fl-Sed}.
\end{proof} 

\smallskip

In this section, superpositions of series involving $f(z)$ are used, therefore we recall more variants of the inversion formula. Let us stress that, by Lemma \ref{lema1}, $f_1=1$; therefore,  $z/f(z)$ and $\log \big(z/f(z)\big)$ have  formal power series expansions.

\begin{lemma} \label{lema2} Let $f(z)$ be as in Lemma $\ref{lema1}$ and  $j\in \N$. Then
\[
   [z^N]\Big(\frac{z}{f(z)}\Big)^j= \frac{j}{j-N}[w^N]\Phi(w)^{N-j}
   \]
 if $N\in \N_0\setminus\{j\}$ and
\[
            [z^j]\Big(\frac{z}{f(z)}\Big)^j=-[w^{j-1}]\Big(\frac{\Phi'}{\Phi}(w)\Big).
                        \]
Moreover,
\[
   [z^N]\log \frac{z}{f(z)}= -\frac{1}{N}[w^N]\Phi(w)^{N}
   \]
   if $N\geq1$.
\end{lemma}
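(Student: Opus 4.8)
The plan is to derive all three formulas from the classical Lagrange–Bürmann Inversion Formula stated in Lemma~\ref{lema1}, together with its standard companion for extracting coefficients of $H(f(z))$ for an arbitrary formal power series $H$. Recall that the companion formula reads
\[
[z^N]\,H(f(z)) = \frac{1}{N}\,[w^{N-1}]\bigl(H'(w)\,\Phi(w)^N\bigr), \qquad N\geq 1,
\]
which follows from Lemma~\ref{lema1} by the usual residue/integration-by-parts argument in $\C[[w]]$. First I would note that, since $f_1=1$ by Lemma~\ref{lema1}, the series $z/f(z)=1/\Phi(f(z))$ and $\log(z/f(z))=-\log\Phi(f(z))$ are well-defined elements of $\C[[w]]$ with constant term $1$ and $0$ respectively, so all coefficient extractions below make sense.

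For the first identity, I would write $(z/f(z))^j = \Phi(f(z))^{-j} = H(f(z))$ with $H(w)=\Phi(w)^{-j}$, so that $H'(w) = -j\,\Phi'(w)\Phi(w)^{-j-1}$. Applying the companion formula gives
\[
[z^N]\Bigl(\frac{z}{f(z)}\Bigr)^j = \frac{1}{N}[w^{N-1}]\bigl(-j\,\Phi'(w)\Phi(w)^{N-j-1}\bigr) = -\frac{j}{N}[w^{N-1}]\bigl(\Phi'(w)\Phi(w)^{N-j-1}\bigr).
\]
When $N\neq j$, I would recognize $\Phi'\Phi^{N-j-1} = (N-j)^{-1}\bigl(\Phi^{N-j}\bigr)'$, whence $[w^{N-1}]\bigl(\Phi^{N-j}\bigr)' = N\,[w^N]\Phi^{N-j}$, and the claimed formula $\frac{j}{j-N}[w^N]\Phi(w)^{N-j}$ drops out after simplifying the sign. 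When $N=j$ this reduction is unavailable (it would divide by zero), and instead I would directly read off $-\frac{j}{j}[w^{j-1}](\Phi'/\Phi) = -[w^{j-1}](\Phi'/\Phi)$, which is exactly the stated exceptional case; here I use $\Phi'\Phi^{-1} = (\log\Phi)'$, legitimate since $\Phi_0=1$.

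For the logarithmic identity I would take $H(w) = -\log\Phi(w)$, so that $H(f(z)) = \log(z/f(z))$ and $H'(w) = -\Phi'(w)/\Phi(w)$. The companion formula yields
\[
[z^N]\log\frac{z}{f(z)} = \frac{1}{N}[w^{N-1}]\bigl(-\tfrac{\Phi'}{\Phi}(w)\,\Phi(w)^N\bigr) = -\frac{1}{N}[w^{N-1}]\bigl(\Phi'(w)\Phi(w)^{N-1}\bigr) = -\frac{1}{N}[w^{N-1}]\bigl(\tfrac{1}{N}(\Phi^N)'\bigr),
\]
and since $[w^{N-1}](\Phi^N)' = N\,[w^N]\Phi^N$ this collapses to $-\frac{1}{N}[w^N]\Phi(w)^N$, as required. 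The main obstacle, such as it is, is purely bookkeeping: one must be careful that the manipulations $\Phi'\Phi^{m-1} = m^{-1}(\Phi^m)'$ are only valid for $m\neq 0$, which is precisely what forces the case distinction $N\neq j$ versus $N=j$ in the first identity; otherwise everything is a direct application of the inversion formula already granted in Lemma~\ref{lema1}. If one prefers to avoid invoking the companion formula as a black box, it can itself be proved in one line from Lemma~\ref{lema1} by writing $H(f(z)) = \sum_k (H_k/k)\cdot k f(z)^k / H_k \cdots$ — more cleanly, by linearity it suffices to check $H(w)=w^k$, where $[z^N]f(z)^k = \frac{k}{N}[w^{N-k}]\Phi(w)^N$ is the standard power-form of Lagrange inversion, itself an immediate consequence of the statement in Lemma~\ref{lema1}.
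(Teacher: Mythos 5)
Your proof is correct and reaches all three identities, but you take a genuinely different route from the paper. The paper does not invoke the companion Lagrange--B\"urmann formula $[z^N]H(f(z)) = \frac{1}{N}[w^{N-1}]\bigl(H'(w)\Phi(w)^N\bigr)$ at all. Instead it reduces to $\Phi$ a polynomial of degree $N$, writes the target coefficient as a Cauchy contour integral $\frac{1}{2\pi i}\int_{|z|=\delta} dz / (f(z)^j z^{N+1-j})$, performs the analytic substitution $z = w/\Phi(w)$ (valid because this is a conformal bijection near $0$), and then integrates by parts directly in the $w$-contour to read off the answer; the exceptional case $N=j$ and the $\log$-identity are handled by the same mechanism, with the integration-by-parts step producing $\Phi'/\Phi$ exactly when the power $\Phi^{N-j}$ would be $\Phi^0$. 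Your approach instead treats the companion formula as a black box and plugs in $H(w)=\Phi(w)^{-j}$ and $H(w)=-\log\Phi(w)$, which is tidier and more modular, while the paper's is self-contained (it effectively re-derives the companion identity on the fly each time). Either is a legitimate proof.

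One small bookkeeping point: the lemma covers $N\in\N_0\setminus\{j\}$, hence $N=0$, and the companion formula you quote carries a $1/N$ factor and so does not literally apply at $N=0$. You should note that the $N=0$ case is checked directly ($[z^0](z/f(z))^j = 1$ since $f_1=1$, which matches $\frac{j}{j}[w^0]\Phi(w)^{-j}=1$); the paper's contour-integral derivation happens to go through without a separate case here because the potentially dangerous $\frac{1}{N-j}$ factor is never $\frac{1}{0}$ when $N=0$, $j\geq 1$. Also, the sentence sketching a proof of the companion formula via ``$H(f(z)) = \sum_k (H_k/k)\cdot k f(z)^k / H_k \cdots$'' is garbled as written; the clean reduction you then give (linearity plus the monomial case $H(w)=w^k$, which is the power form of Lagrange inversion) is the right justification and suffices.
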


\begin{proof} Without a proof the first part of Lemma \ref{lema2}  is exposed as \text{A.11} on pages 732-733 of  \cite{Fl-Sed}; however, an inaccuracy is left in the case $N=j$.  For readers convenience,   we provide a sketch of a proof.

    Let $N\in\N_0$ be fixed. The coefficients under consideration have expressions in terms of $\Phi_k$ with $0\leq k\leq N$ only; therefore, we may assume that $\Phi(w)$ is a polynomial of degree $N$. Then $f(z)$ is well defined as an analytic function in a vicinity of the zero point. Thus, we may apply the Cauchy formula. Afterwards let  $\delta$ and $\delta_1$ be sufficiently small positive constants. Using a substitution $z=w/\Phi(w)$ and properties of the one-to-one conformal mapping of the vicinities of the zero points in the $z$- and $w$-complex planes,  we obtain
\begin{align*}
[z^N]\Big(\frac{z}{f(z)}\Big)^j&=
\frac{1}{2\pi i}\int_{|z|=\delta}\frac{dz}{f(z)^j z^{N+1-j}}\\
&=
\frac{1}{2\pi i}\int_{|w|=\delta_1}\frac{d\big(w/\Phi(w)\big)}{w^j\big(w/\Phi(w)\big)^{N+1-j}}\\
&=
\frac{1}{2\pi i}\int_{|w|=\delta_1}\frac{\Phi(w)^{N-j} dw}{w^{N+1}}-\frac{1}{2\pi i}\int_{|w|=\delta_1}\frac{\Phi(w)^{N-j-1} d\Phi(w)}{w^N}\\
&=
[w^N]\Phi(w)^{N-j}-\frac{1}{2\pi (N-j)i}\int_{|w|=\delta_1}\frac{d\Phi(w)^{N-j}}{w^{N}}\\
&=
[w^N]\Phi(w)^{N-j}-\frac{N}{2\pi (N-j)i}\int_{|w|=\delta_1}\frac{\Phi(w)^{N-j} dw}{w^{N+1}}\\
&=
\frac{j}{j-N}[w^N]\Phi(w)^{N-j}
\end{align*}
provided that $N\not=j$.

The same argument gives
\begin{align*}
[z^j]\Big(\frac{z}{f(z)}\Big)^j&=
\frac{1}{2\pi i}\int_{|w|=\delta_1}\frac{d w}{w^{j+1}}-\frac{1}{2\pi i}\int_{|w|=\delta_1}\frac{\Phi'}{\Phi}(w) \frac{dw}{w^j}\\
&=
-[w^{j-1}]\Big(\frac{\Phi'}{\Phi}(w)\Big).
\end{align*}

Finally, applying the previous substitution, we derive
\begin{align*}
[z^N]\log \frac{z}{f(z)}&=
\frac{1}{2\pi N i }\int_{|z|=\delta}\frac{1}{z^{N}}d\log\frac{z}{f(z)} \\
&=
-\frac{1}{2\pi N^2i}\int_{|w|=\delta_1}\frac{ d\Phi(w)^N}{w^N}\\
&=
-\frac{1}{N}[w^N] \Phi(w)^N.
\end{align*}

\end{proof}

We will apply the lemmas in a very particular case. Then the first power series coefficients of implicitly defined functions attain a simple form.
 Let ${\mathbf 1}\{\cdot\}$ stand for the indicator function.

\begin{lemma}\label{lema3}    Let $k,r, j\in\N$, $y=y(z)$ satisfy an equation
 \[
 y=z\bigg(\frac{1-y^r}{1-y}\bigg)^{1/r},
 \]
 and let $g(z):=z/y(z)$, then  the following assertions hold.

   $(I)$ \quad If $g(z)^j=:\sum_{N=0}^\infty g_N^{(j)} z^N$, then
      \[
  g_N^{(j)}= \frac{j}{j-N}\sum_{rl+m=N\atop l,m\in\N_0} {(N-j)/r\choose l}(-1)^l {m-1+(N-j)/r\choose m}
   \]
   for $N\in\N_0\setminus\{j\}$ and
  \begin{equation}
  g_j^{(j)}={\mathbf 1}\{j\equiv 0(\mathrm{mod}\, r)\}-\frac{1}{r}.
   \label{gjj}
\end{equation}

   $(II)$ \quad If $\log g(z)=:\sum_{N=1}^\infty b_N z^N$, then
  \[
  b_N=-\frac{1}{N}\sum_{rl+m=N\atop l,m\in\N_0} {N/r\choose l}(-1)^l {m-1+N/r\choose m}, \quad N\geq1.
    \]

 $(III)$ \quad If
 \[
      h(z):=\sum_{j=1}^r\frac{1}{j y(z)^j}=\sum_{N=-r}^\infty h_N z^N,
      \]
      then
       $h_{-r}=1/r$,
\[
  h_0=- \frac{1}{r}\sum_{j=2}^r\frac{1}{j}
\]
and
\[
h_N= \frac{N+r}{N} b_{N+r}
\]
for  $N=-r+1,-r+2,\dots$ and $N\not=0$.

 $(IV)$ \quad If
 \[
      \Lambda(z):=\bigg(z^r\sum_{j=1}^r\frac{j}{y(z)^j}\bigg)^{-1}=\sum_{N=0}^\infty \Lambda_N z^N
      \]
then $\Lambda_0=1/r$ and $\Lambda_N=-Nb_N/r$ for $N=1, 2,\dots$.
    \end{lemma}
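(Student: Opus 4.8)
Write $\Phi(w):=\left(\frac{1-w^r}{1-w}\right)^{1/r}=(1+w+\dots+w^{r-1})^{1/r}$, so that $\Phi(0)=1$ and the defining relation for $y$ is precisely $y=z\Phi(y)$; hence Lemmas~\ref{lema1} and~\ref{lema2} apply with $f=y$. Parts $(I)$ and $(II)$ then reduce to evaluating $[w^N]\Phi(w)^{M}$ with $M=N-j$ and $M=N$ respectively. Writing $\Phi(w)^{M}=(1-w^r)^{M/r}(1-w)^{-M/r}$ and expanding each factor by the generalized binomial theorem, together with the identity $\binom{-a}{m}(-1)^m=\binom{m-1+a}{m}$, gives
\[
[w^N]\Phi(w)^{M}=\sum_{rl+m=N}\binom{M/r}{l}(-1)^l\binom{m-1+M/r}{m},
\]
which yields the stated formulas for $g_N^{(j)}$ (when $N\neq j$) and for $b_N$. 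For the exceptional coefficient, Lemma~\ref{lema2} gives $g_j^{(j)}=-[w^{j-1}](\Phi'/\Phi)(w)$; since $\frac{\Phi'}{\Phi}(w)=-\frac{w^{r-1}}{1-w^r}+\frac{1}{r(1-w)}$, the first term contributes $1$ to $[w^{j-1}]$ exactly when $r\mid j$ and the second contributes $1/r$, proving \eqref{gjj}.

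For $(III)$ and $(IV)$ the plan is to pass to $v(z):=1/y(z)$. Raising the defining relation to the $r$-th power gives $y^r/z^r=1+y+\dots+y^{r-1}$, and multiplying through by $v^r=y^{-r}$ produces the clean identity
\[
\sum_{j=1}^{r}v(z)^{j}=z^{-r}.
\]
Differentiating it gives $\bigl(\sum_{j=1}^{r}jv^{j-1}\bigr)v'=-rz^{-r-1}$, while factoring a $v$ out of the identity gives $\sum_{j=1}^{r}v^{j-1}=z^{-r}/v$. Since $h(z)=\sum_{j=1}^{r}v^{j}/j$ and $\log v=\log g-\log z$ (because $g=z/y$), these combine to the two differential relations
\[
h'(z)=z^{-r}\Bigl((\log g)'(z)-\tfrac1z\Bigr),\qquad \Lambda(z)=\frac1r-\frac{z}{r}(\log g)'(z),
\]
where for the second one also uses $z^{r}\sum_{j=1}^{r}jv^{j}=z^{r}v\sum_{j=1}^{r}jv^{j-1}=-rv/(zv')$.

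With $z(\log g)'(z)=\sum_{N\ge1}Nb_Nz^{N}$, the $\Lambda$-relation immediately gives $\Lambda_0=1/r$ and $\Lambda_N=-Nb_N/r$ for $N\ge1$, which is $(IV)$. For $(III)$, write the first relation as $h'(z)=-z^{-r-1}+\sum_{M\ge1}Mb_Mz^{M-r-1}$ and compare coefficients of $z^{N-1}$ with $h'(z)=\sum_N Nh_Nz^{N-1}$: this gives $(-r)h_{-r}=-1$, i.e. $h_{-r}=1/r$, and $Nh_N=(N+r)b_{N+r}$ for every integer $N\ge-r+1$ with $N\neq0$; the potential $z^{-1}$ term $rb_rz^{-1}$ on the right is absent since $b_r=-\frac1r[w^r](1+w+\dots+w^{r-1})=0$, which is exactly the consistency needed for $h$ to be a Laurent series. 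The differentiated relation says nothing about $h_0$, so I would compute it directly from $h(z)=\sum_{j=1}^{r}\frac1j\,g(z)^{j}z^{-j}$: then $h_0=\sum_{j=1}^{r}\frac1j[z^{j}]g^{j}=\sum_{j=1}^{r}\frac1j g_j^{(j)}$, and inserting \eqref{gjj} (only $j=r$ meets $r\mid j$ for $1\le j\le r$) collapses this to $-\frac1r\sum_{j=2}^{r}\frac1j$.

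The individual computations are routine; the points that need care are the Laurent-series bookkeeping in $(III)$—in particular recognizing that the differentiated identity pins down every $h_N$ except $h_0$, which must be obtained separately—and the standing justification that composition, taking logarithms, and term-by-term differentiation and coefficient extraction are legitimate. The latter is handled as in the proof of Lemma~\ref{lema2}: one may take $\Phi$ to be a polynomial and work with honest analytic functions near $z=0$, where $g$ is analytic with $g(0)=1$ (since $f_1=1$), $\Lambda$ is analytic with $\Lambda(0)=1/r$, and $h$ has a pole only at the origin; alternatively one argues throughout in the ring of formal Laurent series over $\C$.
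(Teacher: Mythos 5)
Your proof is correct and follows essentially the same route as the paper: both derive $(I)$–$(II)$ from Lemmas~\ref{lema1} and~\ref{lema2} with $\Phi(w)=(1+w+\dots+w^{r-1})^{1/r}$, both exploit the key identity $\sum_{j=1}^r y(z)^{-j}=z^{-r}$ (your $v$-substitution is only notational), and both obtain $h_0$ separately as $\sum_{j=1}^r g_j^{(j)}/j$ while getting the remaining $h_N$ and $\Lambda_N$ by differentiating that identity. One trivial slip: in your discussion of $\Phi'/\Phi$ you say "the first term contributes $1$ \dots and the second contributes $1/r$," but the first term of $\Phi'/\Phi$ actually contributes $-\mathbf{1}\{r\mid j\}$ and the second $+1/r$; the sign flip from $g_j^{(j)}=-[w^{j-1}](\Phi'/\Phi)$ then gives the stated $\mathbf{1}\{r\mid j\}-1/r$, so the conclusion is unaffected.
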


\begin{proof} To prove $(I)$, combine Lemmas \ref{lema1} and  \ref{lema2} with  an equality
     \[
    [y^{N}]\bigg(\frac{1-y^r}{1-y}\bigg)^{\alpha}=\sum_{rl+m=N\atop l,m\in\N_0} {\alpha\choose l}(-1)^l {m-1+\alpha \choose m}, \quad N\in\N_0,\, \alpha\in\R.
   \]

For $\eqref{gjj}$, apply the second part of Lemma \ref{lema2} to obtain
\[
   g_j^{(j)}=[y^{j-1}]\bigg(\frac{y^{r-1}}{1-y^r}-\frac{1}{r(1-y)}\bigg)={\mathbf 1}\{j\equiv 0(\mathrm{\mod}\, r)\}-\frac{1}{r}.
\]

   Similarly, $(II$) follows from the last formula in Lemma \ref{lema2}.

Having in mind that $z^rh(z)$ has a power series expansion in $\C[[z]]$, we may apply the same principles. Using $\eqref{gjj}$, it is easy to check that
\begin{align*}
h_0&=\frac{1}{2\pi  i }\int_{|z|=\delta}\frac{ h(z) dz}{z} =
\sum_{j=1}^r\frac{1}{j} \frac{1}{2\pi  i }\int_{|z|=\delta}\frac{ g(z)^j d z}{z^{j+1}}\\
&=
\sum_{j=1}^r\frac{1}{j} g_j^{(j)}=- \frac{1}{r}\sum_{j=2}^r\frac{1}{j}.
\end{align*}
Further, we  observe that
\begin{equation}
   \sum_{j=1}^r \frac{1}{y^j}=\frac{1-y^r}{y^r(1-y)}=\frac{1}{z^{r}}.
\label{frac}
\end{equation}
Hence,
\[
      h'(z)=-\frac{y'}{y}(z) \sum_{j=1}^r \frac{1}{y^j}=-\frac{y'}{y}(z) \frac{1}{z^{r}}= \frac{g'}{g}(z) \frac{1}{z^{r}}-\frac{1}{z^{r+1}}, \quad z\not=0.
\]
This implies that
\begin{align*}
h_N&=\frac{1}{2\pi N i }\int_{|z|=\delta}\frac{ d h(z)}{z^{N}} \\
&=
\frac{1}{2\pi N i }\int_{|z|=\delta}\frac{ d (\log g(z))}{z^{N+r}}-\frac{1}{2\pi N i }\int_{|z|=\delta}\frac{ d z}{z^{N+r+1}} \\
 &= \frac{N+r}{N} b_{N+r}
\end{align*}
if $N\geq -r+ 1$ and $h_{-r}=1/r$.

To prove $(IV)$, we use relation (\ref{frac}) again. Differentiating it, we arrive at
\begin{align*}
          \Lambda(z)&=\frac{z}{r} \frac{y'}{y}(z)=\frac{1}{r}-\frac{z\big(\log g(z)\big)'}{r}\\
          &=\frac{1}{r}\bigg(1-\sum_{N=1}^\infty Nb_N z^N\bigg).
\end{align*}
The assertion $(IV)$ is evident now.

\end{proof}

\begin{corollary}\label{cor1} As above, let $g(z)=z/y(z)$. Then $g_0=1$, $g_1=-1/r$,
    \begin{equation*}
   g_N=  \frac{ \Gamma(N+(N-1)/r)}{(1-N)\Gamma(N+1)\Gamma((N-1)/r)}
   %\label{ykj}
   \end{equation*}
   if $2\leq N\leq r-1$, and
   \[
   g_r=  \frac{ \Gamma(r+1-1/r)}{(1-r)\Gamma(r+1)\Gamma(1-1/r)}+\frac{1}{r}.
   \]
   Moreover,
   $
    |g_N|\leq \frac{1}{N-1} r^{(N-1)/r}
$
if $N\geq 2$.
     \end{corollary}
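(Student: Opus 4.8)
The plan is to read off all the values from Lemma~\ref{lema3}$(I)$ specialized to $j=1$, since $g(z)=g(z)^1$ forces $g_N=g_N^{(1)}$. Equivalently, by Lemma~\ref{lema2} one has the compact representation $g_N=(1-N)^{-1}[w^N]\bigl((1-w^r)/(1-w)\bigr)^{(N-1)/r}$ for $N\neq 1$, while the exceptional index $N=j=1$ is covered by $\eqref{gjj}$, which gives $g_1={\mathbf 1}\{1\equiv 0\,(\mathrm{mod}\,r)\}-1/r=-1/r$ (here $r\geq 2$). The value $g_0=1$ is immediate: by Lemma~\ref{lema1} we have $y(z)=z+O(z^2)$, so $g(z)=z/y(z)\to 1$; alternatively the $N=0$ instance of the sum in Lemma~\ref{lema3}$(I)$ is the single term $1$.

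For $2\leq N\leq r-1$ the Diophantine condition $rl+m=N$ with $0\leq N<r$ forces $l=0$ and $m=N$, so the sum in Lemma~\ref{lema3}$(I)$ collapses to the single term ${N-1+(N-1)/r \choose N}$. Rewriting this generalized binomial through ${a \choose N}=\Gamma(a+1)/(\Gamma(N+1)\Gamma(a-N+1))$ with $a=N-1+(N-1)/r$, so that $a+1=N+(N-1)/r$ and $a-N+1=(N-1)/r$, and dividing by $1-N$, yields exactly the stated Gamma-function formula. For $N=r$ there are precisely two admissible pairs, $(l,m)=(0,r)$ and $(l,m)=(1,0)$: the first contributes ${r-1+(r-1)/r \choose r}=\Gamma(r+1-1/r)/(\Gamma(r+1)\Gamma(1-1/r))$ (by the same identity, since $r+(r-1)/r=r+1-1/r$ and $(r-1)/r=1-1/r$), and the second contributes ${(r-1)/r \choose 1}(-1)^1=-(r-1)/r$. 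Dividing the sum of these two contributions by $1-r$ produces the displayed expression for $g_r$, the term $-(r-1)/r$ turning into the extra summand $1/r$.

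For the bound, write $g_N=(1-N)^{-1}[w^N]\Phi(w)^{N-1}$ with $\Phi(w)=\bigl((1-w^r)/(1-w)\bigr)^{1/r}$. Since $(1-w^r)/(1-w)=1+w+\cdots+w^{r-1}$ has no zero in the open unit disk, $\Phi$ is analytic there (principal branch, $\Phi(0)=1$) and $|\Phi(w)|^r=|1+w+\cdots+w^{r-1}|\leq 1+|w|+\cdots+|w|^{r-1}<r$ whenever $|w|<1$. Cauchy's coefficient estimate on the circle $|w|=\rho$ then gives $|[w^N]\Phi(w)^{N-1}|\leq \rho^{-N}r^{(N-1)/r}$ for every $\rho\in(0,1)$; letting $\rho\to 1^-$ gives $|[w^N]\Phi(w)^{N-1}|\leq r^{(N-1)/r}$, hence $|g_N|\leq (N-1)^{-1}r^{(N-1)/r}$ for $N\geq 2$. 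The only mildly delicate point is the passage $\rho\to 1^-$, which is legitimate because $\Phi$ stays uniformly bounded by $r^{1/r}$ throughout the disk; apart from that, and the small bookkeeping that isolates the $1/r$ term in the case $N=r$, everything is direct substitution into Lemma~\ref{lema3}.
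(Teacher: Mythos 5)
Your proposal is correct and follows essentially the same route as the paper: specialize Lemma~\ref{lema3}$(I)$ to $j=1$, identify the (one or two) admissible pairs $(l,m)$ with $rl+m=N$ for each range of $N$, convert the binomial coefficients to Gamma functions, and bound $|g_N|$ via the Cauchy coefficient inequality applied to $(1+w+\cdots+w^{r-1})^{(N-1)/r}$. The only added material is the explicit $\rho\to 1^{-}$ justification of the Cauchy bound and the detailed Gamma-function bookkeeping, which the paper leaves implicit.
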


\begin{proof} Apply $(I)$ of Lemma \ref{lema3} for $j=1$. If $2\leq N\leq r-1$, the relevant sum has the only nonzero summand corresponding to the pair $(l,m)=(0,N)$. A formula for $g_r$ has two summands giving the expression. If $N\geq 2$, then by Lemma \ref{lema3} and the Cauchy inequality,
  \[
    |g_N|=  \frac{1}{N-1}\Big|[y^N](1+y+\cdots+y^{r-1})^{(N-1)/r}\Big|\leq \frac{1}{N-1} r^{(N-1)/r}.
\]

\end{proof}

\begin{corollary} \label{cor2} We have
      \begin{equation}
  b_N=- \frac{\Gamma(N+N/r)}{N\Gamma(N+1)\Gamma(N/r)}
  \label{bN}
  \end{equation}
  if $1\leq N\leq r-1$ and $b_r=0$.

  Moreover,
  \[
  N|b_N|\leq 1\  \text{if}\ \:  N\leq r-1,
  \]
  \[
  b_N =O\left(N/r\right)\  \text{if}\ \:  r<N\leq 2 r-1,
  \]
and
   \begin{equation*}
          N|b_N|\leq  r^{N/r}\ \text{if}\ \; N\geq 1.
        %\label{NbN}
       \end{equation*}
     \end{corollary}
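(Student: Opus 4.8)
The plan is to derive everything from part $(II)$ of Lemma~\ref{lema3}, which gives
\[
b_N=-\frac{1}{N}\sum_{rl+m=N\atop l,m\in\N_0} \binom{N/r}{l}(-1)^l \binom{m-1+N/r}{m}.
\]
First I would treat the range $1\leq N\leq r-1$. Here the only admissible pair is $(l,m)=(0,N)$, since $rl\leq N<r$ forces $l=0$. Hence $b_N=-\frac{1}{N}\binom{N-1+N/r}{N}$, and rewriting the binomial coefficient through Gamma functions gives exactly \eqref{bN}. For $N=r$ there are two admissible pairs, $(0,r)$ and $(1,0)$: the first contributes $-\frac{1}{r}\binom{r-1+1}{r}=-\frac{1}{r}$ and the second contributes $-\frac{1}{r}\binom{1}{1}(-1)\binom{-1+1}{0}=+\frac{1}{r}$, so $b_r=0$. (This is consistent with the fact, used in Lemma~\ref{lema3}$(III)$ and $(IV)$, that $h_0=-\tfrac1r\sum_{j=2}^r 1/j$ already absorbed the constant term.)

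For the bound $N|b_N|\le 1$ when $N\le r-1$, I would note that $N|b_N| = \binom{N-1+N/r}{N}$, a single positive binomial coefficient; since $0<N/r<1$, each factor $\frac{(N/r)+k}{k+1}$ for $k=0,\dots,N-1$ is at most $1$ (because $N/r<1\le k+1-k$, i.e.\ $N/r+k\le k+1$), so the product is $\le 1$. Equivalently $N|b_N|=[y^N](1+y+\cdots+y^{r-1})^{N/r}$ and one applies the Cauchy inequality on $|y|=1$ exactly as in the proof of Corollary~\ref{cor1}; this also yields directly the general bound $N|b_N|\le r^{N/r}$ for all $N\ge 1$, since $|1+y+\cdots+y^{r-1}|\le r$ on the unit circle.

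It remains to handle the middle range $r<N\le 2r-1$, where I expect the main (though still routine) work to lie. Now the admissible pairs are $(l,m)=(0,N)$ and $(l,m)=(1,N-r)$, so
\[
b_N=-\frac{1}{N}\left[\binom{N-1+N/r}{N}-\frac{N}{r}\binom{N-r-1+N/r}{N-r}\right].
\]
The plan is to estimate each binomial coefficient. The first equals $\prod_{k=0}^{N-1}\frac{N/r+k}{k+1}$; splitting off the factor with $k=r-1$, namely $\frac{N/r+r-1}{r}=1+\frac{N/r-1}{r}=1+O(N/r^2)=1+O(1/r)$, and bounding the remaining factors (those with $k\ne r-1$) as above, one gets that the first binomial coefficient is $O(1)$, in fact $1+O(N/r)$ after a slightly more careful accounting; similarly the second binomial coefficient is $O(1)$. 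Multiplying the second term by $N/r=O(1)$ and dividing the bracket by $N$, both pieces are $O(1/N)\cdot O(1)$... which gives only $b_N=O(1)$; to reach the claimed $b_N=O(N/r)$ one must exploit cancellation between the two terms. The hard part will therefore be showing that $\binom{N-1+N/r}{N}-\frac{N}{r}\binom{N-r-1+N/r}{N-r}=O\!\left(\frac{N^2}{r}\right)$, i.e.\ that the two binomial coefficients nearly cancel. I would do this by writing both via the $\Gamma$-function and using $\Gamma(N/r)=\frac{r}{N}\Gamma(1+N/r)$ together with the near-equality $\Gamma(N+N/r)\approx\Gamma(N-r+N/r)\cdot(\text{ratio of }r\text{ consecutive factors each close to }1)$; more concretely, reindex the first product against the second so that the difference telescopes into a sum of $O(r)$ terms each of size $O(N/r^2)$, giving $O(N/r)$ after dividing by $N$ and multiplying back the $O(1)$ prefactors. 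Once that cancellation estimate is in hand, the three displayed bounds in the statement all follow.
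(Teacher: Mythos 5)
For $1\le N\le r-1$, for $b_r=0$, for the bound $N|b_N|\le 1$, and for the Cauchy-inequality bound $N|b_N|\le r^{N/r}$, your argument is correct and matches the paper.

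However, there is a genuine error in the middle range $r<N\le 2r-1$. You claim the first binomial coefficient $\binom{N-1+N/r}{N}=\prod_{k=0}^{N-1}\frac{N/r+k}{k+1}$ is $O(1)$ by "bounding the remaining factors as above". But "as above" relied on $N/r<1$, which made each factor $\frac{N/r+k}{k+1}\le 1$. For $r<N\le 2r-1$ you have $N/r>1$, so every factor is strictly greater than $1$, and the product is \emph{not} bounded: it is roughly $N^{N/r-1}$, which is $\asymp N\asymp r$ when $N$ is close to $2r$. So neither binomial coefficient is $O(1)$. Moreover, having (incorrectly) arrived at $b_N=O(1)$, you go looking for cancellation between the two terms to sharpen to $O(N/r)$ --- this is the wrong direction, and in fact not what is needed (note that for $r<N\le 2r-1$ the target bound $O(N/r)$ is weaker than $O(1)$, not stronger). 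The paper's proof shows no cancellation whatsoever is used: write
\begin{equation*}
b_N=-\frac{1}{r}\prod_{k=2}^{N}\Bigl(1+\frac{N/r-1}{k}\Bigr)+\frac{N}{r^2}\prod_{k=2}^{N-r}\Bigl(1+\frac{N/r-1}{k}\Bigr),
\end{equation*}
bound each term \emph{separately} by $\frac{1}{r}\exp\bigl\{(\tfrac{N}{r}-1)\sum_{k\le N}\tfrac1k\bigr\}\ll\frac{1}{r}N^{N/r-1}$ using $\log(1+x)\le x$, and then observe that $N^{N/r-1}\le N$ since $N/r-1<1$. This gives $|b_N|\ll N/r$ directly, with no delicate comparison of $\Gamma$-ratios required.
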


\begin{proof} Again, if $1\leq N\leq r-1$,  it suffices to observe that the relevant sum $(II)$ of  Lemma \ref{lema3} has the only nonzero summand corresponding to the pair $(l,m)=(0,N)$. A formula for $b_r$ has two subtracting summands.

If $N\leq r-1$, the given estimate follows from (\ref{bN}). If $r<N\leq 2r-1$, assertion $(II)$ in Lemma \ref{lema3} gives
     \begin{align*}
   b_N&=
   -\frac{1}{N}{N-1+N/r\choose N}+\frac{1}{r}{N-r-1+N/r\choose N-r}\\
   &=
   -\frac{1}{r} \prod_{k=2}^N\Big(1+\frac{N/r-1}{k}\Big)+\frac{N}{r^2} \prod_{k=2}^{N-r}\Big(1+\frac{N/r-1}{k}\Big)\\
   &\ll
       \frac{1}{r}\exp\bigg\{\Big(\frac{N}{r}-1\Big)\sum_{k=2}^N \frac{1}{k}\bigg\}\\
       &\ll
    \frac{1}{r}\exp\bigg\{\Big(\frac{N}{r}-1\Big)\log N\bigg\}\ll  \frac{ N}{r}.
   \end{align*}
   We have applied an inequality $\log(1+x)\leq x$ if $x>0$.

   Finally, by the Cauchy inequality,
   \begin{equation*}
          N|b_N|=
     \bigg| [y^{N}]\bigg(\frac{1-y^r}{1-y}\bigg)^{N/r}\bigg|=
      \big| [y^{N}](1+y+\cdots+ y^{r-1})^{N/r}\big|\leq
       r^{N/r}
       \end{equation*}
       if $N\geq 1$.
\end{proof}

\begin{lemma} \label{lema11}
	Let $x$ be the positive solution to the equation $\sum_{j=1}^r x^j=n$. If $r,n\in\N$ and $2\leq r\leq \log n$, then
	\begin{align*}
	x&=
	n^{1/r}-\frac{1}{r}  -\sum_{N=2}^{r}\frac{ \Gamma(N+(N-1)/r)}{(N-1)\Gamma(N+1)\Gamma((N-1)/r)}
	n^{-(N-1)/r}\\
	&\quad\    + \frac{1}{r}n^{-1+1/r}+O\left(\frac{1}{n}\right).
	%\label{xnr-Skleid}
	\end{align*}
\end{lemma}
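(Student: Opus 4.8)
The plan is to recognise $x$ as the value at $z=n^{-1/r}$ of the analytic function $z\mapsto n^{1/r}(z/y(z))$, where $y(z)$ is the function from Lemma~\ref{lema3}, and then to read off its Taylor expansion and control the tail.

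First I would reduce the statement to $g(z):=z/y(z)=\sum_{N\ge0}g_Nz^N$. For real $y\in[0,1]$ put $\psi(y):=y(1+y+\cdots+y^{r-1})^{-1/r}$; then $(\log\psi)'(y)=\frac1y-\frac1r\bigl(\sum_{k=1}^{r-1}ky^{k-1}\bigr)/\bigl(\sum_{k=0}^{r-1}y^k\bigr)\ge\frac1y-\frac{r-1}{r}>0$ on $(0,1)$, so $\psi$ is a strictly increasing bijection of $[0,1]$ onto $[0,r^{-1/r}]$ whose inverse agrees near $0$ with the power series $y(z)$ of Lemma~\ref{lema3}. From $z=\psi(y)$ one obtains $z^{-r}=\sum_{j=1}^r y(z)^{-j}$, which is \eqref{frac}; hence at $z_0:=n^{-1/r}$, which lies in $(0,r^{-1/r})$ since $r<n$, the number $1/y(z_0)>1$ solves $\sum_{j=1}^r t^j=n$, and by strict monotonicity of $t\mapsto\sum_{j=1}^r t^j$ on $(0,\infty)$ together with the definition of $x$ this forces $x=1/y(z_0)=n^{1/r}g(n^{-1/r})$.

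Next I would split off the tail. By Corollary~\ref{cor1}, $|g_N|\le(N-1)^{-1}r^{(N-1)/r}$ for $N\ge2$, so the series for $g$ has radius of convergence at least $r^{-1/r}>n^{-1/r}$, the previous identity is legitimate, and
\[
x=g_0n^{1/r}+g_1+\sum_{N=2}^{r}g_N\,n^{-(N-1)/r}+R,\qquad R:=n^{1/r}\sum_{N>r}g_N\,n^{-N/r}.
\]
Writing $q:=(r/n)^{1/r}$, so that $q^r=r/n$ and $n^{1/r}r^{(N-1)/r}n^{-N/r}=q^{N-1}$, one gets $|R|\le\sum_{k\ge r}k^{-1}q^k\le r^{-1}q^r/(1-q)=1/(n(1-q))$. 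Since $\log q=r^{-1}(\log r-\log n)$ has $r$-derivative $r^{-2}(1+\log(n/r))>0$ and tends to $-1$ when $r=\log n\to\infty$, we obtain $q\le1/2$ uniformly for $2\le r\le\log n$ and $n$ large, whence $R=O(1/n)$. It then remains to insert $g_0=1$, $g_1=-1/r$, and the values of $g_N$ for $2\le N\le r$ from Corollary~\ref{cor1}: the extra summand $+1/r$ present there only for $N=r$ contributes, via $n^{-(r-1)/r}=n^{-1+1/r}$, exactly the term $\frac1r n^{-1+1/r}$ of the statement, while the remaining terms reassemble into the claimed formula.

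The genuinely delicate point is the first step: passing from the formal power-series identity of Lemma~\ref{lema3} to a numerical identity at $z=n^{-1/r}$ requires both convergence there, supplied by the Cauchy bound on $g_N$ from Corollary~\ref{cor1}, and the identification of $y(n^{-1/r})$ as the positive branch with $1/y(n^{-1/r})=x$, supplied by the monotonicity of $\psi$ and the uniqueness of the positive root of $\sum_{j=1}^r t^j=n$. Everything else is bookkeeping, apart from the uniform remainder estimate, which is exactly where the hypothesis $r\le\log n$ enters, through $(r/n)^{1/r}\le1/2$.
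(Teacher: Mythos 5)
Your proof is correct and follows essentially the same route as the paper's own argument: both reduce the statement to the Lagrange--Bürmann expansion of $g(z)=z/y(z)$ from Lemma~\ref{lema3}, read off the coefficients $g_0,\dots,g_r$ from Corollary~\ref{cor1}, and bound the tail $\sum_{N>r}|g_N|\,n^{-N/r}$ via the Cauchy estimate $|g_N|\le (N-1)^{-1}r^{(N-1)/r}$ to get $O(1/n)$. The paper is terser at the key reduction step, simply asserting $y(n^{-1/r})=x^{-1}$, whereas you supply the monotonicity of $\psi(y)=y\bigl(1+y+\cdots+y^{r-1}\bigr)^{-1/r}$, the identity $z^{-r}=\sum_{j=1}^r y(z)^{-j}$, and the uniqueness of the positive root to justify it — a worthwhile clarification but not a different method. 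Your tail estimate via $q=(r/n)^{1/r}$ is likewise equivalent to the paper's $r^{-1-1/r}\sum_{N>r}|r^{1/r}z|^N$ bound; your aside "$q\le 1/2$" is slightly too strong near the boundary $r\approx\log n$ for moderate $n$ (e.g.\ $r=3$, $n\approx 20$ gives $q\approx 0.53$), but all that matters is $q$ bounded away from $1$ uniformly, which your monotonicity-in-$r$ observation does give, so the conclusion $R=O(1/n)$ stands.
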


\begin{proof}
	The equation  defining  $x$ can be rewritten as
	\[
	x^{-1}=\bigg(\frac{1-x^{-r}}{1-x^{-1}}\bigg)^{1/r}n^{-1/r}.
	\]
	This gives the relation $y(n^{-1/r})=x^{-1}$, where $y=y(z)$ has been explored in Lemma \ref{lema3}. Consequently, we may apply the expansions of $g(z)$ given in $(I)$  with respect to powers of  $z=n^{-1/r}$.
	The first coefficients have been calculated in Lemma \ref{cor1}. It remains to estimate the remainder. Using also the obtained estimates, we have
	\[
	\sum_{N=r+1}^\infty |g_N| |z|^N\leq r^{-1-1/r}\sum_{N=r+1}^\infty  |r^{1/r} z|^N\leq \frac{|z|^{r+1}}{1-\sqrt[3]{3} {\re}^{-1}}
	\]
	if $|z|\leq {\re}^{-1}$.  Consequently,  we obtain
	\begin{align*}
	x&= n^{1/r} \sum_{N=0}^r g_N n^{-N/r}+ O\left(1/n\right)\\
	&=
	n^{1/r}-\frac{1}{r}  -\sum_{N=2}^{r}\frac{ \Gamma(N+(N-1)/r)}{(N-1)\Gamma(N+1)\Gamma((N-1)/r)}
	n^{-(N-1)/r} \\
	& \quad\
	+ \frac{1}{r}n^{-1+1/r}+O\left(1/n\right)
	\end{align*}
	as desired.
\end{proof}

\textit{Proof of Theorem~\ref{Theorem 1}.} Let us preserve the  notation  introduced in Lemma \ref{lema3}.
First of all, we seek  a simple expression containing the first terms in an expansion of
\[
  K(z):=   \sum_{j=1}^r \frac{1}{j y(z)^j}-n\log \frac{z}{y(z)}= h(z)-n\log g(z).
\]
Let $D(x):=\exp\left\{\sum_{j=1}^r x^j/j\right\}$, we have
\begin{equation}
\log D(x)-n\log x= K(n^{-1/r})-\frac{n\log n}{r}.
\label{Dx}
\end{equation}
Define the  functions $R(z)$ and $K_r(z)$ by
\begin{equation}\label{5.41}
K(z)=\sum_{N=-r+1}^0 h_N z^N-n\sum_{N=1}^{r-1} b_N z^N +R(z)=K_r(z)+R(z)
\end{equation}
We claim  that $R(z)=O(|z|+n|z|^{r+1})$ if $|z|\leq \re^{-1}$ implying
\begin{equation}
  R(n^{-1/r})=O\left(n^{-1/r}\right).
  \label{Rz}
  \end{equation}
 for $r\leq \log n$. Indeed, by $(III)$ of Lemma \ref{lema3} and the estimates in Corollary \ref{cor2}, we have
       \begin{align*}
          \sum_{N=1}^\infty |h_N||z|^N&=
           \bigg(\sum_{N=1}^{r-1}+\sum_{N=r}^\infty\bigg)\frac{N+r}{N}|b_{N+r}||z|^N\\
          &\ll
            \sum_{N=1}^{r-1}\frac{N+r}{r}|z|^N +\sum_{N=r}^\infty\frac{r}{N}\big(r^{1/r}|z|\big)^N
          \ll|z|
          \end{align*}
       if $|z|\leq \re^{-1}$. Similarly,
       \[
          \sum_{N=r+1}^\infty |b_N||z|^N\ll |z|^{r+1}
          \]
       if $|z|\leq \re^{-1}$. The last two estimates yield our claim and (\ref{Rz}).

     For the main term, we obtain from  Lemma \ref{lema3} that
     \begin{align*}
     K_r(n^{-1/r})&= h_0+\sum_{N=-r+1}^{-1} h_N n^{-N/r}- \sum_{N=1}^{r-1} b_N n^{(r-N)/r}+h_{-r} n\\
     &=
     h_0 -\sum_{N=1}^{r-1} \frac{r-N}{N} b_{r-N} n^{N/r}- \sum_{N=1}^{r-1} b_N n^{(r-N)/r}+h_{-r} n\\
     &=
     h_0 -r\sum_{N=1}^{r-1} \frac{1}{N} b_{r-N} n^{N/r}+h_{-r} n\\
     &=
- \frac{1}{r}\sum_{j=2}^r\frac{1}{j}+ r \sum_{N=1}^{r-1}\frac{1}{N(r-N)}\, \frac{\Gamma(N+N/r)}{\Gamma(N+1)\Gamma(N/r)}n^{(r-N)/r}+\frac{n}{r}.
     \end{align*}

     It remains to approximate
     \[
     \Big(\frac{1}{\lambda(x)}\Big)^{1/2}= \frac{1}{\sqrt n} \Lambda(n^{-1/r})^{1/2}=\frac{1}{\sqrt {n r}}\bigg(1-\sum_{N=1}^\infty Nb_N n^{-N/r}\bigg)^{1/2}.
     \]

    By virtue of Corollary \ref{cor2}, $N|b_N|\leq 1$ if $N\leq r$ and  $N|b_N|\leq r^{N/r}$ if $N\geq 1$.
       Thus, if $2\leq r\leq \log n$, then
     \[
        \sum_{N=1}^\infty N|b_N|n^{-N/r}\leq (5/2) n^{-1/r}\leq (5/2){\re}^{-1}<1.
        \]
 Consequently,
\[
     \Big(\frac{1}{\lambda(x)}\Big)^{1/2}= \frac{1}{\sqrt {n r}}\big(1+O\left(n^{-1/r}\right)\big).
\]

     We  now  return to probabilities.
     Applying \eqref{P-ell}, \eqref{fullx},   \eqref{Dx}, \eqref{5.41},  \eqref{Rz}, and the last estimate, we obtain
\begin{align*}
   \nu(n,r)&=\frac{1}{{\sqrt{ 2\pi nr}}} n^{-n/r}\exp\big\{K_r(n^{-1/r})\big\}\big(1+O\left(n^{-1/r}\right)\big)
\end{align*}

for  all $2\leq r\leq \log n$.

\qed

\section{Proof of Theorem~\ref{thm2}}\label{s:4}

\bigskip

\textbf{Theorem~\ref{thm2}.} \textit{Let $u=n/r$. If $\sqrt{n\log n}\leq r\leq n$, then}
\[
\nu(n,r)=\varrho(u)\bigg(1+O\left(\frac{u\log(u+1)}{r}\right)\bigg).\\
\]

\begin{proof} The idea is to use the Cauchy integral
\begin{equation}
   \nu(n,r)={1\over 2\pi i}\int_{|z|=\alpha}
  \exp\left\{\sum_{j=1}^r\frac{z^j}{j}\right\} {dz\over z^{n+1}}
\label{C-integr1}
   \end{equation}
with  $\alpha:=\re^{\xi(u)/r}$ which is a good approximation of the saddle-point.
Here, $\xi:=\xi(u)$ is provided in Lemma~\ref{xi}. Such a choice  relates $Q(z)$ defined in (\ref{Qz-naujas}) with  the Laplace transform of the Dickman function. Namely, if $z=\re^{-s/r}$, $s=-\xi+ir t=:-\xi+i\tau $, and $|t|\leq\pi$, then (see Lemmas~\ref{IT} and~\ref{hatrho})
\begin{equation}
    Q\big(\re^{-s/r}\big)=
    \exp\big\{us+I(-s)+T(-s)\big\}=
    \hat\varrho(s)\exp\big\{-\gamma+us+T(-s)\big\},
    \label{Qz}
\end{equation}
where $T(-s)$ is examined in Lemma \ref{Tz}.

Let us introduce the following vertical line segments in the complex plane:
\[
\Delta_0:=\{s=-\xi+i\tau:\; |\tau|\leq \pi\},\qquad  \Delta_1:=\{s=-\xi+i\tau:\; \pi\leq \tau\leq r\pi\},
\]
\[
 \Delta_2:=\{s=-\xi+i\tau:\; -\pi r\leq \tau\leq -\pi\},\qquad \Delta=\{s=-\xi+i\tau:\; |\tau|\leq r\pi\},
 \]
 and $\Delta_\infty=\{s=-\xi+i\tau:\; |\tau|\geq r\pi\}$. Taking into account (\ref{Qz}), we have from (\ref{C-integr1})
 \begin{align*}
\Pr\Big(\sum_{j=1}^rjZ_j=n\Big)&=\frac{1}{2\pi i}\int_{|z|=y} \frac{Q(z) dz}{z}\nonumber\\
&=
\frac{\re^{-\gamma}}{r}\frac{1}{2\pi i}\int_{\Delta}\re^{us}\hat\varrho(s) ds+
\frac{\re^{-\gamma}}{2\pi r i}\int_{\Delta}\re^{us}\hat\varrho(s)\big(\re^{T(-s)}-1\big) ds\\
&=:
I+J.
\end{align*}

Using Lemmas \ref{xi}, \ref{rho}, and  \ref{rholap} for the case $|\tau|\geq \pi r>1+u\xi$, we obtain
\begin{align*}
I&=\frac{\re^{-\gamma}\varrho(u)}{r}- \frac{1}{2\pi i r u} \int_{\Delta_\infty}\hat\varrho(s) d(\re^{us})\\
&=
\frac{\re^{-\gamma}\varrho(u)}{r}+ O\left(\frac{\re^{-u\xi}}{ur^2}\right) + \frac{1}{2\pi i u r}\int_{\Delta_\infty}\re^{us}\hat\varrho(s)\frac{\re^{-s}-1}{s} ds\\
&=
\frac{\re^{-\gamma}\varrho(u)}{r}+ O\left(\frac{\re^{\xi-u\xi}}{ur^2}\right) \\
&=
\frac{\re^{-\gamma}\varrho(u)}{r}+ O\left(\frac{\varrho(u) \re^{\xi-I(\xi)}}{r^2}\right) \\
&=
\frac{\re^{-\gamma}\varrho(u)}{r}\Big(1+O\left(1/r\right)\Big).
\end{align*}
In the last step, we have used the fact that $I(\xi)\sim \re^\xi/\xi$ as $\xi\to\infty$.

 The next task is to estimate $J$. If $s\in\Delta$ then, by Lemma \ref{Tz}, $T(-s)=O\left(1\right)$ and  $\exp\{T(-s)\}= 1+ O\left(T(-s)\right)$. Let us split $J$
 into the sum of three integrals $J_k$  over the strips $\Delta_k$, where  $k=0,1$ and 2, respectively. If  $s\in \Delta_0$ then  $T(-s)=O(1+u\log u)/r$.
  Therefore, using Lemmas \ref{xi}, \ref{rho}, and  \ref{rholap}, now for the case $|\tau|\leq \pi$, we derive
\begin{align*}
   J_0&\ll \frac{(1+u\log u)}{r^2}\int_{\Delta_0} \big|\hat\varrho(s)\re^{us}\big||ds|\\
   &\ll
   \frac{(1+u\log u)\varrho(u)\sqrt u}{r^2}
   \int_{-\pi}^{\pi}\re^{-\tau^2u/(2\pi^2)} d\tau\\
&\ll
   \frac{(1+u\log u)\varrho(u)}{r^2}.
\end{align*}
Further,
\begin{align*}
   J_1&=\frac{1}{2\pi i u r}\int_{\Delta_1} \hat\varrho(s)\big(\re^{T(-s)}-1\big) d \re^{us}\\
   &\ll
   \frac{\re^{-u\xi}}{u r}\big|\hat\varrho(-\xi+\pi i)T(\xi-\pi i)\big| +\frac{\re^{-u\xi}}{u r}\big|\hat\varrho(-\xi+\pi r i)T(\xi-\pi r i)\big|\\
   &\quad\ +
   \frac{1}{u r}\int_{\Delta_1} \re^{us}\Big(\hat\varrho(s)'\big(\re^{T(-s)}-1\big)- \hat\varrho(s)T'(-s)\re^{T(-s)}\Big)d s\\
   &=: L_1+L_2+O\left(\frac{L_3}{u r}\right).
   \end{align*}

To estimate $L_1$, we combine the first estimate of $\hat\varrho(s)$ given in Lemma \ref{rholap} with  Lemmas~\ref{xi} and \ref{rho}. So we obtain
  \[
      L_1\ll \frac{(1+u\log u)}{u r^2}\re^{-u\xi+I(\xi)}\ll \frac{\varrho(u)(1+u\log u)}{r^2}.
\]
Similarly, the second estimate in Lemma \ref{rholap} leads to
  \[
      L_2\ll \frac{\re^{-u\xi}}{u r^2}\ll \frac{\varrho(u)\re^{-I(\xi)}}{r^2\sqrt u}\ll \frac{\varrho(u)}{r^2}.
      \]

  Estimation of the integral $L_3$ is more subtle. It uses an estimate
  \[
 1- b(-s/r)-T(-s)\ll \frac{\re^\xi}{r}+\left|\frac{s}{r}\right|^2
  \]
  following from \eqref{Tz1}, \eqref{Bern} and the asymptotic formula  $ b(v)=1+v/2+O\left(v^2\right) $  for  $|v|\leq \pi\sqrt2$.
 We have
  \begin{align*}
L_3&=
\int_{\Delta_1}\re^{us} \frac{\re^{-s}-1}{s}\hat{\varrho}(s)\left(1+\frac{s}{r(1-\re^{s/r})}\re^{T(-s)}\right) ds\\
&=
\int_{\Delta_1}\re^{us} \frac{\re^{-s}-1}{s}\hat{\varrho}(s)\left(1- b(-s/r)\re^{T(-s)}\right)ds\\
&=
\int_{\Delta_1}\re^{us} \frac{\re^{-s}-1}{s}\hat{\varrho}(s)\bigg(\big(1- b(-s/r)-T(-s)\big)+O\Big(\frac{s T(-s)}{r}+T(-s)^2\Big)\bigg)
ds\\
&\ll \re^{-u\xi}\int_{\Delta_1} \frac{|\re^{-s}-1|}{|s|}|\hat{\varrho}(s)|\Big(\frac{e^\xi}{r}+\frac{|s|^2}{r^2}\Big)
|ds|.
\end{align*}
Using the  two different estimates of $\hat\varrho(s)$ on the line segments $\Delta_{11}:=\{s\in\Delta_1:\; |\Im s|\leq 1+u\xi\}$ and
 $\Delta_{12}:=\Delta_1\setminus \Delta_{11}$ given by Lemma \ref{rholap}, we proceed as follows:
 \begin{align*}
L_3&\ll \exp\bigg\{ -u\xi+I(\xi)-\frac{u}{\pi^2+\xi^2} +\xi\bigg\}\int_{\Delta_{11}}\frac{1}{|s|} \bigg(\frac{\re^{\xi}}{r}+ \frac{|s|^2}{r^2}\bigg)|d s|\\
&\qquad
+\exp\big\{-u\xi +\xi\big\} \int_{\Delta_{12}}\frac{1}{|s|^2} \bigg(\frac{\re^{\xi}}{r}+ \frac{|s|^2}{r^2}\bigg)|d s|\\
&\ll \exp\bigg\{ -u\xi+I(\xi)-\frac{u}{\pi^2+\xi^2} +2\xi\bigg\}\frac{1+\xi}{r}+\frac{\exp\big\{-u\xi +\xi\big\}}{r}\\
&\ll
\frac{\varrho(u)\sqrt u\log(u+2)}{r}.
\end{align*}

Collecting the obtained estimates, we obtain
\[
   J_1= L_1+L_2+O\left(\frac{L_3}{u r}\right)\ll \frac{\varrho(u)(1+u\log u)}{r^2}.
   \]
   The same holds for integral $J_2$. Consequently,
 \[
 \Pr\Big(\sum_{j=1}^rjZ_j=n\Big)=I+J_0+J_1+J_2=
\frac{\re^{-\gamma}\varrho(u)}{r}\Big(1+O\left(\frac{1+u\log u}{r}\right)\Big).
\]
\end{proof}

\medskip

\section{Proofs of Theorem~\ref{1cor} and Corollary~\ref{2cor}}

\bigskip

\textbf{Theorem~\ref{1cor}.} \textit{If $ 1\leq r\leq c n(\log n)^{-1}(\log\log n)^{-2}$,
where $c=1/(12\pi^2 \re)$ and $n\geq 4$, then
\begin{equation*}
\nu(n,r)=
\varrho(u) \exp\Big\{\frac{u\xi(u)}{2r}\Big\}\left(1+O\left(\frac{u\log^2(u+1)}{r^2}+\frac{1}{u}\right)\right),
\end{equation*}}
where $u=n/r.$

\noindent\textbf{Corollary~\ref{2cor}.} \textit{The formulas presented in Theorems~\ref{thm1} and~\ref{1cor} remain to hold if the upper bound of $r$ is substituted by $n$.}

We first prove a key lemma.

\begin{lemma}\label{4lema}
Denote $x$ to be the positive solution to the equation  $\sum_{j=1}^rx^j=vr$, where $v\geq 1$ is a continuous parameter and $r\in\N$. Let $x(v):=x$ and $\lambda(x):=\sum_{j=1}^rjx^j$. If $ v\in(1, \re^r]$, $\xi:=\xi(v)$, and $\xi':=\xi'(v)$ (see Lemma~\ref{xi}), then
 \begin{equation}
x=x(v)= \exp\Big\{\frac{\xi}{r}\Big\}+O\left(\frac{\log (v+1)}{r^2}\right)
\label{xxii}
\end{equation}
and
\begin{equation}
\frac{r}{\lambda(x)}=\frac{x'}{x}(v)= \frac{\xi'}{r}\left(1+O\left(\frac{ \log (v+1)}{r}\right)\right).
\label{xxiii}
\end{equation}
\end{lemma}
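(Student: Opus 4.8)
The plan is to pass to the quantity $\eta=\eta(v):=r\log x(v)$, show it is a small perturbation of $\xi=\xi(v)$, and then translate back. By \eqref{xr} (with $u$ replaced by $v$) the equation defining $x$ is equivalent to $\re^{\eta}=1+vr\bigl(1-\re^{-\eta/r}\bigr)$, while Lemma~\ref{x}, cf.\ \eqref{urx}, gives $\log v\le\eta\le 2\log v$; in particular $\eta\ll\log(v+1)$ and $\eta/r\le 2$ on $(1,\re^{r}]$. Set $\rho:=\re^{-\eta/r}-1+\eta/r$, so that $0\le\rho\le\eta^{2}/(2r^{2})$, and $\phi(t):=\re^{t}-vt$. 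Then the displayed equation reads $\phi(\eta)=1-vr\rho=\phi(\xi)-vr\rho$, using $\phi(\xi)=\re^{\xi}-v\xi=1$ from the defining equation of $\xi$ in Lemma~\ref{xi}. As $\phi$ is convex with minimum at $t=\log v$ and both $\eta,\xi>\log v$, one gets $\eta\le\xi$ and, by convexity,
\[
0\le\xi-\eta\le\frac{\phi(\xi)-\phi(\eta)}{\phi'(\eta)}=\frac{vr\rho}{\re^{\eta}-v}\le\frac{v\eta^{2}}{2r\,(\re^{\eta}-v)}.
\]

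The decisive point is the uniform lower bound $\re^{\eta}-v\gg v\eta$; granting it, the previous display yields $\xi-\eta\ll\eta/r\ll\log(v+1)/r$ and, since $\eta\le\xi$, also $(\xi-\eta)/\xi\ll 1/r$. I would prove the lower bound by splitting according to the size of $\eta/r$. When $\eta/r$ is below a fixed small constant, $r(1-\re^{-\eta/r})=\eta+O(\eta^{2}/r)$ gives $\re^{\eta}-1\ge v\eta\bigl(1-\eta/(2r)\bigr)$, which combined with the reverse inequality $\re^{\eta}-1\le v\eta$ and an elementary estimate forces $\re^{\eta}-v\ge c_{0}v\eta$ for an absolute $c_{0}>0$; when $\eta/r$ is not small, $\eta$ is automatically of order $r$, and even the crude estimate $r(1-\re^{-\eta/r})\ge\eta\re^{-\eta/r}\ge\re^{-2}\eta$ gives $\re^{\eta}-v\ge v\bigl(\re^{-2}\eta-1\bigr)\gg v\eta$. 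Once $\xi-\eta\ll\log(v+1)/r$ is known, \eqref{xxii} follows at once: $x-\re^{\xi/r}=\re^{\eta/r}-\re^{\xi/r}=r^{-1}\re^{\theta/r}(\eta-\xi)$ for some $\theta$ between $\eta$ and $\xi$, and $\re^{\theta/r}\le v^{2/r}\le\re^{2}$ when $v\le\re^{r}$, so $x-\re^{\xi/r}=O\bigl(|\eta-\xi|/r\bigr)=O\bigl(\log(v+1)/r^{2}\bigr)$.

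For \eqref{xxiii}, differentiating $\sum_{j=1}^{r}x^{j}=vr$ in $v$ gives $(x'/x)\sum_{j=1}^{r}jx^{j}=r$, i.e.\ $x'/x=r/\lambda(x)$, which is the first equality of \eqref{xxiii}. For the second, use $x'/x=\eta'/r$ and differentiate $\re^{\eta}=1+vr(1-\re^{-\eta/r})$ to obtain $\eta'=r(1-\re^{-\eta/r})/(\re^{\eta}-v\re^{-\eta/r})$. Here $r(1-\re^{-\eta/r})=\eta\bigl(1+O(\log(v+1)/r)\bigr)$, and $\re^{\eta}-v\re^{-\eta/r}=(\re^{\eta}-v)+v(1-\re^{-\eta/r})=(\re^{\eta}-v)\bigl(1+O(1/r)\bigr)$, the last step relying again on $\re^{\eta}-v\gg v\eta$; hence $\eta'=\frac{\eta}{\re^{\eta}-v}\bigl(1+O(\log(v+1)/r)\bigr)$. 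On the other hand \eqref{deriv} gives $\xi'=\xi/(\re^{\xi}-v)$, because $v(\xi-1+1/v)=v\xi-v+1=\re^{\xi}-v$. Dividing, $\eta'/\xi'=(\eta/\xi)\,\bigl((\re^{\xi}-v)/(\re^{\eta}-v)\bigr)\bigl(1+O(\log(v+1)/r)\bigr)$, where $\eta/\xi=1+O(1/r)$ by the bound on $(\xi-\eta)/\xi$ and $(\re^{\xi}-v)/(\re^{\eta}-v)=1+O(\log(v+1)/r)$ since $\re^{\xi}-\re^{\eta}\ll\re^{\eta}(\xi-\eta)\ll v(\eta+1)\eta/r$. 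Therefore $\eta'=\xi'\bigl(1+O(\log(v+1)/r)\bigr)$, which is \eqref{xxiii}.

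The step I expect to be the main obstacle is the uniform bound $\re^{\eta}-v\gg v\eta$. It is sharp only up to a multiplicative constant — the ratio $(\re^{\eta}-v)/(v\eta)$ drops to about $1/2$ as $v\to 1^{+}$ — so one cannot use the crude lower bound $\eta\ge\log v$ near $v=1$ and must instead exploit the finer comparison $\eta\sim\xi\sim 2(v-1)$ there, which is what the elementary inequalities above are meant to capture. The remaining manipulations are routine, but \eqref{xxiii} does require some bookkeeping to confirm that all relative errors stay of size $O(\log(v+1)/r)$ near both endpoints $v\to 1^{+}$ and $v\to\re^{r}$.
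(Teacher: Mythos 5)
Your route is genuinely different from the paper's. For \eqref{xxii} the paper introduces an auxiliary parameter $w$ defined by $x(w)=\exp\{\xi(v)/r\}$, bounds $|v-w|\le w\xi/(2r)$ from the identity $\re^\xi-1=rw(1-\re^{-\xi/r})$, and then applies the mean value theorem to $x(\cdot)$ using the bound $\lambda(x)\gg r^2 q$ from Lemma~\ref{x}; for \eqref{xxiii} it introduces a second parameter $y$ with $x=\exp\{\xi(y)/r\}$ and proves $\xi(v)=\xi(y)\bigl(1+O(1/r)\bigr)$. You instead compare $\eta:=r\log x(v)$ directly with $\xi(v)$ by rewriting both defining equations through $\phi(t)=\re^t-vt$ and exploiting convexity, which is tidier and lets both \eqref{xxii} and \eqref{xxiii} flow from the one estimate $\xi-\eta\ll\eta/r$. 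The two routes rest on the same quantitative core: your lower bound $\re^\eta-v\gg v\eta$ is, via $\lambda(x)=r^2\bigl(\re^\eta-v\re^{-\eta/r}\bigr)\big/\bigl(r(1-\re^{-\eta/r})\bigr)$, essentially the paper's \eqref{lambda}, so you could cite Lemma~\ref{x} (and its separate treatment of $1<v\le3$) rather than re-derive it.

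The one real gap is exactly the step you flag as the obstacle. Writing "an elementary estimate forces $\re^\eta-v\ge c_0v\eta$" does not close the argument in the regime $\eta/r$ small, and the two inequalities you list ($\re^\eta-1\ge v\eta(1-\eta/(2r))$ and $\re^\eta-1\le v\eta$) do not by themselves imply the bound. What does work: prove first that $g(\eta):=\re^\eta(\eta-1)+1\ge\tfrac12(\re^\eta-1)\eta$ for all $\eta\ge0$ (the difference vanishes to first order at $\eta=0$ and has second derivative $\re^\eta\eta/2\ge0$). Then, since $a:=r(1-\re^{-\eta/r})\ge\eta-\eta^2/(2r)$ and $v=(\re^\eta-1)/a$,
\begin{equation*}
\re^\eta-v=\frac{a\re^\eta-\re^\eta+1}{a}\ge\frac{g(\eta)-\re^\eta\eta^2/(2r)}{a},
\end{equation*}
and when $\eta/r$ is below a small constant one checks $\re^\eta\eta^2/(2r)\le\tfrac14(\re^\eta-1)\eta$, giving $\re^\eta-v\ge\tfrac14(\re^\eta-1)\eta/a=\tfrac14v\eta$. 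Your argument for $\eta/r$ not small is fine; the residual case (both $\eta$ and $r$ bounded) is the "trivial case when $r$ is bounded" which the paper also skips. A second small point of the same kind: the convexity step needs $\phi'(\eta)=\re^\eta-v>0$, i.e.\ $\eta>\log v$ strictly; this fails when $r=1$ (where $\eta=\log v$ exactly), again bounded $r$.
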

%% Notice that here $x=x(v,r)$ and $x:\R_{\geq 1}\times\N\to \R_{\geq 1}$ while in Chapter~\ref{Chapter2} and in Lemma~\ref{x} we have 
%% $x=x(n/r,r)$ and $x:\Q_{\geq 1}\times\N \to \R_{\geq 1}$.
   \begin{proof}  One may skip the  trivial case when  $r$ is bounded.
   From the definition of $x(v)$ and (\ref{lambda}), we have
   \begin{equation}
   0<x'(q)=\frac{rx(q)}{\lambda(x(q))}\leq \frac{x(q)}{rq\big(1-\log^{-1} 3\big)}\ll\frac{x(q)}{rq}
   \label{xderiv}
   \end{equation}
if $q\geq 3$. The same holds if $1\leq q\leq 3$. Indeed, in this case it suffices to apply the trivial estimate
$
   \lambda(x(q))\geq  r^2/2\geq r^2 q/6.
$

  As a function of $q\geq 1$,  $\exp\big\{\xi(q)/r\big\}$ is also strictly increasing; therefore, given any  $v\geq 1$ and the
  value $\xi=\xi(v)$, we can find $w\geq1$ such that
   \[
   x(w)=\exp\big\{\xi/r\big\}.
   \]
Now
\begin{equation}
x-\exp\big\{\xi/r\big\}=x(v)-x(w)\ll (v-w) x'(q),
\label{xu=}
\end{equation}
 where $q$ is a point between the $v$ and $w$, irrespective of their relative position on the real line.

  Using  (\ref{xr}) with  $w$ instead of $v$, we have
  \[
    x^r(w)-1=  \re^\xi-1= rw\big(1-x(w)^{-1}\big)= rw\big(1-\re^{-\xi/r}\big).
    \]
By the definition of $\xi$ and Lemma 1, we obtain from the last relation that $v\xi= rw\big(\xi/r+ B(\xi/r)^2\big)$ with $|B|\leq 1/2$. Hence,
 \begin{equation}
     |v-w|\leq w\xi/(2r).
 \label{uminv}
 \end{equation}

 If $v\leq 3$ and $r\geq 1$, then $0.09w<w(1-\xi(3))\leq 2v\leq 6$ and $v-w\ll r^{-1}$. Therefore,  estimates (\ref{xderiv}) and (\ref{xu=}) imply
 \[
 x-\exp\big\{\xi/r\big\}\ll r^{-2},
 \]
 as desired in (\ref{xxii}).

 If $v\geq 3$, then by virtue of $\xi\sim\log v$ as $v\to\infty$ and $\log v\leq r$, we obtain from (\ref{uminv}) that $|v-w|\leq (3/4)w$ if $r$ is
  sufficiently large. Hence, $(4/7) v\leq w\leq 4v$ and $(4/7)v\leq q\leq 4 v$. By Lemma \ref{x}, this gives
   $x(q)\leq x(4v)=O\left(1\right)$. Formula (\ref{xxii}) again follows from (\ref{xderiv}) and (\ref{xu=}).

 To derive approximation (\ref{xxiii}) of the logarithmic derivative, we use similar arguments. Firstly,
  given   $v\geq 3$,  we define  $y>1$ such that
   $x=\re^{\xi(y)/r}$ and claim that
\begin{equation}
     \xi=\xi(y)\big(1+O\left(1/r\right)\big).
     \label{y}
     \end{equation}
     Indeed, if also $v\leq \re^r$, then an observation in the proof of Lemma \ref{x} gives us $\xi(y)=r\log x\leq \log (vr)
     \leq (6/5) r$ if $r$ is sufficiently large.
By the definitions and inequalities
\[
  0<\frac{t}{1-\re^{-t}}-1=\frac{t-1+\re^{-t}}{1-\re^{-t}}\leq \frac{t^2/2}{t-t^2/2}\leq \frac{3t}{2}
\]
if $0<t\leq 6/5$, we further obtain
     \begin{equation}
  v=\frac{x}{r} \frac{x^r-1}{x-1}=
     \frac{\re^{\xi(y)}-1} {\xi(y)} \frac {\xi(y)/r}{1-\re^{-\xi(y)/r}}=     y\Big(1+\frac{B\xi(y)}{r}\Big)
     \label{u}
     \end{equation}
     with $0<B\leq 3/2$.
      Hence, $ 15/14\leq (5/14)v< y\leq v $
       and also $ \xi'(q)\ll 1/q\ll 1/y$ for all $q\in[y, v]$, by Lemma \ref{xi}. Inserting this and (\ref{u})
      into   $ \xi-\xi(y)=(v-y)\xi'(q) $ with some $q\in[y, v]$,  we complete the proof of (\ref{y}).

      Let us keep in mind the bound $y\geq 15/14$ and  return to the logarithmic derivative. It follows from (\ref{Lambda}) and (\ref{xr}) that
\[
\frac{x'}{x}\bigg(\frac{x^r}{x^r-1}-\frac{1}{r(x-1)}\bigg)= \frac{1}{rv}.
 \]
Now, the idea is to rewrite  the quantity in large parentheses via $\xi(y)$, then use inequality (\ref{y}) to approximate it by $\xi$ and $\xi'$.

The inequality $0<t^{-1}-(\re^t-1)^{-1}<1$ applied with $t=\xi(y)/r$ gives $(r(x-1))^{-1}=1/\xi(y)+O\left(1/r\right)$; therefore,
\begin{equation*}
\frac{x'}{x}  \left(\frac{1+y\xi(y)-y}{y\xi(y)}+O\left(\frac{1}{r}\right)\right)= \frac{1}{rv}.
 \end{equation*}
 Because of (\ref{deriv}), the first ratio inside the parentheses is $1/(y\xi'(y))$ which, by Lemma \ref{xi},
 satisfies an inequality
 \[
      \frac{1}{y\xi'(y)}\geq \frac{y\log y-y+1}{y\log y}=:l(y)\geq l\Big(\frac{15}{14}\Big)>0.
 \]
Now using (\ref{u}) and (\ref{y}), we obtain
\begin{align*}
\frac{x'}{x} &= \frac{1}{rv} \frac{y\xi(y)}{1+y\xi(y)-y}\Big(1+O\left(\frac{1}{r}\right)\Big)\\
&= \frac{1}{rv} \frac{v\xi}{1+v\xi-v}\Big(1+O\left(\frac{\log v}{r}\right)\Big)\\
&= \frac{\xi'}{r}\Big(1+O\left(\frac{\log v}{r}\right)\Big)
 \end{align*}
 if $3\leq v\leq \re^r$.

 In the case $1<v\leq 3$, we have from (\ref{xxii})
 \begin{align*}
 \lambda(x)&=\sum_{j=1}^r j\bigg(\re^{\xi/r}+O\left(\frac{1}{r^2}\right)\bigg)^j=
 r\sum_{j=1}^r \frac{j}{r}\re^{\xi j/r}+O\left(r\right)=r^2\int_0^1t\re^{t\xi} dt+O\left(r\right)\\
 &=
 \frac{r^2}{\xi}(v\xi+1-v)+O\left(r\right)= \frac{r^2}{\xi'}+O\left(r\right).
 \end{align*}
 Hence,
 \[
    \frac{x'}{x}=\frac{r}{\lambda(x)}=\frac{\xi'}{r}\Big(1+O\left(\frac{1}{r}\right)\Big).
    \]

\end{proof}

Now, having the lemma, we can present the following proof.

\begin{proof} [Proof of Theorem~\ref{1cor}] Recall Lemma~\ref{IT} and let 
$$
Q(x)=\frac{1}{x^n}\exp\left\{ \sum_{j=1}^r \frac{x^j-1}{j} \right\},
$$
and $\lambda(x)=\sum_{j=1}^rjx^j$. Thus,
\begin{align}
\log Q(x)&=
-n\log x+\int_1^x\sum_{j=1}^r t^{j-1} dt
=-n\log x+\int_1^x\frac{t^r-1}{t-1} dt\nonumber\\
&=
-n\log x+\int_0^{r\log x}\frac{\re^v-1}{v} \frac{v}{r} \frac{dv}{1-\re^{-v/r}}\nonumber\\
&=
-u r\log x +I(r\log x)+T(r\log x).
\label{Tx}
\end{align}
Observe that relation (\ref{xr}), rewritten as
\[
   \re^{r\log x}=1+\Big(\frac{u(1-x^{-1})}{\log x}\Big)(r\log x)=:1+u'(r\log x),
   \]
gives $\xi(u')=r\log x$ with the uniquely defined $u'=u(1-x^{-1})/\log x\leq u$. Hence, by virtue of monotonicity, $r\log x\leq\xi(u)=\xi$ if $x\geq 1$. Therefore,
\begin{align*}
-u r\log x +I(r\log x)
&=
-u \xi +I(\xi)+u(\xi- r\log x) -\int_{r\log x}^\xi\frac{\re^{t}-1}{t} dt\\
&=
-u \xi +I(\xi)+(\xi- r\log x)\bigg(u-\frac{\re^{t_0}-1}{t_0}\bigg)
\end{align*}
with a $t_0\in[r\log x,\xi]$  and, consequently, if $1\leq u\leq \re^r$,
\[
(\re^{t_0}-1)/t_0\in \big[(x^r-1)/(r\log x), u\big]=[u+O\left(u\xi/r\right), u].
\]
In the last step, we have applied (\ref{xxii}) in the form
\[
    x^r=(1+u\xi)\big(1+O\left(r^{-1}\log(u+1)\right)\big).
\]
This yields
\begin{equation}
-u r\log x +I(r\log x)=-u \xi +I(\xi)+O\left(\frac{u\log^2(u+1)}{r^2}\right).
\label{urlog}
\end{equation}
If $1\leq u\leq \re^r$,  by Lemma~\ref{xi}, we have  $r\log x\leq \xi \leq  2\log u$. Thus, we may apply estimate (\ref{Tz2}) in Lemma~\ref{Tz} to obtain
\begin{align*}
   T(r\log x)&=\frac{x^r}{2r}+O\left(\frac{\log (u+1)}{r}\right)+O\left(\frac{u\log^2(u+1)}{r^2}\right)\\
   &=
   \frac{u\xi}{2r}+O\left(\frac{\log (u+1)}{r}\right)+O\left(\frac{u\log^2(u+1)}{r^2}\right).
\end{align*}
Inserting this and (\ref{urlog}) into expression (\ref{Tx}), we deduce a relation
\begin{equation*}
\log Q(x)=-u \xi +I(\xi)+\frac{u\xi}{2r}+O\left(\frac{\log (u+1)}{r}\right)+O\left(\frac{u\log^2(u+1)}{r^2}\right),
\end{equation*}
which is non-trivial if  $\log u=o(r)$. Combining this with (\ref{xxiii}) and Lemma \ref{rho}, we arrive at
\begin{align}
\frac{Q(x)}{\sqrt{2\pi\lambda(x)}}&=
\frac{\re^{-\gamma}}{r}\frac{\sqrt{\xi'}}{\sqrt{2\pi}}\exp\bigg\{\gamma-u\xi +I(\xi)+\frac{u\xi}{2r}\bigg\}\nonumber\\
&\quad\ \cdot\left(1+O\left(\frac{\log (u+1)}{r}\right)+O\left(\frac{u\log^2(u+1)}{r^2}\right)\right)\nonumber\\
&=
\frac{\re^{-\gamma}}{r}\varrho(u)\re^{u\xi/(2r)}\left(1+O\left(\frac{1}{u}\right)
+O\left(\frac{u\log^2(u+1)}{r^2}\right)\right)
\label{Qlambda}
\end{align}
if $n^{1/3}\log^{2/3} (n+1)\leq r\leq n$. If, in addition, $r\leq cn(\log n)^{-1}(\log\log n)^{-2}$, then  by Theorem \ref{thm1}, the ratio on the left hand side approximates the probability $\Pr\Big(\sum_{j=1}^rjZ_j=n\Big)$. Recalling (\ref{P-ell}), we complete the proof of the corollary.
\end{proof}

\begin{proof}[Proof of Corollary~\ref{2cor}] By relation (\ref{Qlambda}), the result of Theorem \ref{thm2} can be exposed as (\ref{fullx}) provided that $\sqrt{n\log n}\leq r\leq n$. Then the assertions of Theorems \ref{thm1} and \ref{thm2} can be joined up. Now, formula (\ref{fullx}), valid for $1\leq r\leq n$, and (\ref{Qlambda}) justify (\ref{nu-cor1}) for $n^{1/3}\log^{2/3}(n+1)\leq r \leq n$, one only needs to repeat the argument for Theorem~\ref{1cor}.
\end{proof}

\medskip

\section{Proofs of Theorems~\ref{thm3.2},~\ref{theorem3.3},~\ref{theorem3.4}, and Corollary~\ref{corollary3.5}}

\bigskip

\textbf{Theorem~\ref{thm3.2}.}
	\textit{Let $(t)_+=\max\{t,0\}$, $t\in\R.$
	For all $1\leq r<n/2$ and $\alpha>1$, we have 
	\begin{align*}
	|\kappa(n,r)-\re^{-H_r}| &\leq  \frac{\pi\re^4\alpha^{2r-n+3/2}}{n^2(\alpha-1)^2}\exp\left\{\sum_{j=1}^r\frac{\alpha^j-2}{j}+E(r,\alpha)\right\}\nonumber\\
	&\quad\ +\frac{4\re\alpha^{2r-n+2}}{\pi n^2r(\alpha-1)^3}\exp\left\{-\frac{\alpha(\alpha^r-1)}{2r(\alpha-1)}-H_r\right\}, 	
	\end{align*}
	where $$
	E(r, \alpha)=-\frac{2}{r}\frac{\alpha^{r+1}}{\alpha-1}\left(\frac{\pi^{-2}}{1+(r\alpha-r)^2}-\alpha^{-r/2} \right)_+ +\min\{2r\log\alpha, 2\log(\re r)\}.
	$$}

\noindent \textbf{Theorem~\ref{theorem3.3}.} \textit{Let $\varepsilon$ and $\delta$ be an arbitrary but fixed positive numbers, and $u=n/r$. We have
\begin{equation}
\kappa(n,r)=\re^{-H_{r}}+O_{\varepsilon, \delta} \left(\frac{\varrho(u)}{r}\exp\left\{-\frac{2u\left(1-\delta\right)}{\pi^2(\log (1+u))^2}\right\}\right)\label{thm121}
\end{equation} 
if $(\log n)^{3+\varepsilon}\leq r< n$. Moreover, 
\begin{equation}
\kappa(n,r)=\re^{-H_{r}}+O\left(\frac{\varrho(u)u^{u/r}}{r}\right) \label{thm122}
\end{equation}
if $\log n\leq r< (\log n)^{3+\varepsilon}$. Here $\varrho(u)$ is the Dickman function.}

\bigskip

\noindent\textbf{Theorem~\ref{theorem3.4}.} \textit{Let $\varepsilon$ be an arbitrary but fixed positive number, and $u=n/r$, then
\begin{equation}\label{131}
\kappa(n,r)=\re^{-H_r}+O_\varepsilon\Bigg(\frac{\nu(n,r)}{r}\exp\left\{-\frac{u^{1-4/r}(1-\varepsilon)}{4\pi^2(\log (u+1))^2}\right\}\Bigg)
\end{equation}
if $5\leq r< n$, and
\begin{equation}\label{132}
\kappa(n,r)=\re^{-H_r}+O\Bigg(\nu(n,r)n^{5/2}\Bigg)
\end{equation}
if $2\leq r<5$.}

\bigskip

\noindent\textbf{Corollary~\ref{corollary3.5}.} For $2\leq r\leq \log n$, we have
	\[
	\kappa(n,r)=\re^{-H_r}+O\left(\exp\left\{-\frac{n}{r}\log\frac{n}{\re}+\frac{n}{\log n}+\frac{3n}{(\log n)^2}\right\}\right).\\
	\]

The next lemma is an essential part of the proofs. We use the following notation
$$
(a)_+=\max\{a,0\}
$$
if $a\in\R$.
 
\begin{lemma} \label{trig}
If $1/r\leq\left|t\right|\leq \pi$, $r\in\N$, and $\alpha>1$, then

\begin{align}
\sum_{j=1}^r\frac{1-\alpha^j\cos(t j)}{j}&\leq  \sum_{j=1}^r\frac{\alpha^j-1}{j}-\frac{2}{r}\frac{\alpha^{r+1}}{\alpha-1}\left(\frac{\pi^{-2}}{1+(r\alpha-r)^2}-\alpha^{-r/2}\right)_+\nonumber\\
&\quad\ +\log\frac{\left|\alpha -\re^{it}\right|}{\alpha-1}+\min\{2r\log\alpha,\ 2\log(\re r)\}+4 \label{trig1}
\end{align}
and
\begin{equation}\label{trig2}
\sum_{j=1}^r\frac{1-\alpha^j\cos(t j)}{j}\leq -\frac{1}{2r}\frac{\alpha(\alpha^r-1)}{\alpha-1}+1
\end{equation}
if $|t|\leq 1/r$.
\end{lemma}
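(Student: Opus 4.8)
The plan is to treat the two ranges of $t$ separately, beginning with the short one. For $|t|\le1/r$, i.e.\ inequality \eqref{trig2}, the argument is elementary: since $|tj|\le1$ for $1\le j\le r$, the bound $\cos\theta\ge1-\theta^2/2$ gives $1-\alpha^j\cos(tj)\le(1-\alpha^j)+\tfrac12\alpha^jt^2j^2$, and $t^2j=|t|\cdot|t|j\le|t|$, so summing over $j$ and using $\sum_{j=1}^r\alpha^j=\alpha(\alpha^r-1)/(\alpha-1)$,
\[
\sum_{j=1}^r\frac{1-\alpha^j\cos(tj)}{j}\le-\sum_{j=1}^r\frac{\alpha^j-1}{j}+\frac{|t|}{2}\,\frac{\alpha(\alpha^r-1)}{\alpha-1}.
\]
Because $\alpha^j-1\ge0$ and $1/j\ge1/r$, one has $\sum_{j=1}^r\frac{\alpha^j-1}{j}\ge\frac1r\sum_{j=1}^r(\alpha^j-1)=\frac{\alpha(\alpha^r-1)}{r(\alpha-1)}-1$, and inserting this together with $|t|\le1/r$ yields \eqref{trig2}.

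For $1/r\le|t|\le\pi$, i.e.\ inequality \eqref{trig1}, put $w=\alpha\re^{it}$, so that $|w|=\alpha>1$ and, exactly as in the proof of Lemma~\ref{7lema}, $|w-1|^2=(\alpha-1)^2+4\alpha\sin^2(t/2)$, whence $|w-1|\ge2|\sin(t/2)|\ge2|t|/\pi$, the last step being the inequality $1-\cos t\ge2t^2/\pi^2$. Writing
\[
\sum_{j=1}^r\frac{1-\alpha^j\cos(tj)}{j}=-\sum_{j=1}^r\frac{\alpha^j-1}{j}+S,\qquad S:=\sum_{j=1}^r\frac{\alpha^j\bigl(1-\cos(tj)\bigr)}{j}=\sum_{j=1}^r\frac{\alpha^j}{j}-\Re\sum_{j=1}^r\frac{w^j}{j},
\]
turns \eqref{trig1} into the upper bound $S\le2\sum_{j=1}^r\frac{\alpha^j-1}{j}+\bigl(\text{the four extra terms on the right of \eqref{trig1}}\bigr)$ for the nonnegative sum $S$. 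I would bound $S$ by splitting the summation range at $j_0:=\min\{r,\lfloor1/|t|\rfloor\}$ (with $j_0=0$ if $|t|>1$): on $j\le j_0$ the trivial estimate $1-\cos(tj)\le t^2j^2/2$ gives $\sum_{j\le j_0}\frac{\alpha^j(1-\cos tj)}{j}\le\frac{t^2}{2}\sum_{j\le j_0}j\alpha^j$, which is $O(1)$ when $\alpha$ is close to $1$ and is $\ll\sum_{j\le r}\frac{\alpha^j-1}{j}$ otherwise; on $j_0<j\le r$ one writes $\sum_{j_0<j\le r}\frac{\alpha^j(1-\cos tj)}{j}=\sum_{j_0<j\le r}\frac{\alpha^j}{j}-\Re\sum_{j_0<j\le r}\frac{w^j}{j}$, notes $\sum_{j_0<j\le r}\frac{\alpha^j}{j}\le\sum_{j=1}^r\frac{\alpha^j}{j}=\sum_{j=1}^r\frac{\alpha^j-1}{j}+H_r$, and estimates the oscillating tail $\Re\sum_{j_0<j\le r}w^j/j$ by an analysis parallel to Lemma~\ref{7lema}. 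There the dominant, $j\approx r$, contribution is treated by the device of Lemma~\ref{7lema} — a partial summation bringing out $\frac1r\sum_{j\le r}w^j$ and the real-part refinement $\frac{\alpha-1}{|w-1|}-1\le-\tfrac12\frac{|w-1|^2-(\alpha-1)^2}{|w-1|^2}$ — which, since $|w-1|^2-(\alpha-1)^2=4\alpha\sin^2(t/2)$ so that the resulting gain over the range $1/r\le|t|\le\pi$ is smallest at $t=1/r$ (where it equals $\pi^{-2}/(1+(r\alpha-r)^2)$), and since the ``$-\alpha$'' in $\alpha^{r+1}-\alpha$ must be discounted (the role of $-\alpha^{-r/2}$), produces exactly the negative term $-\frac2r\frac{\alpha^{r+1}}{\alpha-1}\bigl(\frac{\pi^{-2}}{1+(r\alpha-r)^2}-\alpha^{-r/2}\bigr)_+$; the lower-order part of the tail is bounded by Dirichlet summation, $\bigl|\sum_{j_0<j\le r}w^j/j\bigr|\le\frac{2}{j_0+1}\max_{j_0<k\le r}\bigl|\sum_{j=j_0+1}^kw^j\bigr|$ with $1/(j_0+1)<|t|$, and the denominator $|w-1|$ occurring in $\sum w^j=\frac{w^{r+1}-w}{w-1}$ turns what is left into $-\log(\alpha-1)$, that is, into $\log\frac{|\alpha-\re^{it}|}{\alpha-1}$ up to a nonnegative error; the harmonic term $H_r$ together with the alternative estimate $\sum_{j\le r}(\alpha^j-1)/j\ll r\log\alpha$ (available when $\alpha$ is extremely close to $1$) accounts for $\min\{2r\log\alpha,2\log(\re r)\}$, and the residual slack is the additive constant $4$.

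The main obstacle is making this estimate uniform in $\alpha>1$. When $\alpha$ is bounded away from $1$ the geometric bound $\sum_{j\le r}\alpha^j\asymp\alpha^{r+1}/(\alpha-1)$ is of the right order and the only real point is to harvest the cancellation in $\sum_{j\le r}\alpha^j\cos(tj)$ that yields the $(\cdot)_+$-term, precisely as in Lemma~\ref{7lema}. But as $\alpha\downarrow1$, i.e.\ $r(\alpha-1)=O(1)$, that geometric bound is hopelessly lossy, since then $S$ is comparable to $\sum_{j\le r}\frac{1-\cos tj}{j}=\log\bigl(2r|\sin(t/2)|\bigr)+\gamma+O(1)$ — from the Fourier series $\sum_{j\ge1}j^{-1}\cos tj=-\log(2|\sin(t/2)|)$ and a Dirichlet-test tail bound valid for $|t|\ge1/r$ — so one must check that $\log\bigl(2r|\sin(t/2)|\bigr)$ is absorbed by $\log\frac{|\alpha-\re^{it}|}{\alpha-1}+\sum_{j\le r}\frac{\alpha^j-1}{j}+4$; since $|w-1|\ge2|\sin(t/2)|$ and $\sum_{j\le r}\frac{\alpha^j-1}{j}\ge r(\alpha-1)$, this reduces, writing $x=r(\alpha-1)$, to the inequality $\log x\le x$. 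Splicing the two regimes together and verifying that the stated constants ($\tfrac2r\tfrac{\alpha^{r+1}}{\alpha-1}$, $\pi^{-2}$, $+4$) are mutually compatible is where the labour lies; conceptually nothing beyond Lemma~\ref{7lema} and classical Fourier/geometric-sum estimates is needed.
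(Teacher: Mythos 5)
Your proof of \eqref{trig2} is correct and matches the paper's in substance. Your treatment of \eqref{trig1}, however, is a plan rather than a proof: the lemma asserts a fully explicit inequality --- with the precise prefactor $\tfrac2r\tfrac{\alpha^{r+1}}{\alpha-1}$, the $\pi^{-2}$, the $-\alpha^{-r/2}$, and the additive $4$ --- whereas your argument runs through $O(1)$, $\ll$ and ``up to a nonnegative error'' placeholders and closes by conceding that ``splicing the two regimes together and verifying that the stated constants are mutually compatible is where the labour lies.'' That unperformed labour is essentially the entire proof. In particular, a split at $j_0\approx1/|t|$ followed by a Dirichlet-test bound on $\Re\sum_{j_0<j\le r}w^j/j$ gives no visible route to the exact shape $\tfrac2r\tfrac{\alpha^{r+1}}{\alpha-1}\bigl(\pi^{-2}/(1+(r\alpha-r)^2)-\alpha^{-r/2}\bigr)_+$, and the worry you raise about uniformity as $\alpha\downarrow1$ is a real symptom that the decomposition you chose does not match the statement.

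The missing idea is the algebraic rearrangement
\[
\sum_{j=1}^r\frac{1-\alpha^j\cos(tj)}{j}=\sum_{j=1}^r\frac{\alpha^j-1}{j}-\sum_{j=1}^r\frac{\alpha^j\bigl(1+\cos(tj)\bigr)-2}{j},
\]
after which one \emph{lower}-bounds the subtracted sum by splitting at $[r/2]$, not at $1/|t|$. For $j\le[r/2]$ use $\alpha^j\ge1$ and $1+\cos(tj)\ge0$ to get $\alpha^j(1+\cos(tj))-2\ge\cos(tj)-1$; for $[r/2]<j\le r$ replace $1/j$ by $1/r$ and evaluate $\tfrac1r\Re\sum_{[r/2]<j\le r}\alpha^j(1+\re^{itj})$ as two geometric sums, applying the elementary $1-p/\sqrt{p^2+v^2}\ge\tfrac12 v^2/(p^2+v^2)$ with $p=\alpha-1$, $v=\sqrt{2\alpha(1-\cos t)}$, then $|t|\ge1/r$, $2t^2/\pi^2\le1-\cos t\le t^2/2$, and monotonicity in $\alpha$ to produce the $(\cdot)_+$ term; the $-\alpha^{-r/2}$ comes from $2\alpha^{[r/2]-r}\le2\alpha^{-r/2}$, and the $+4$ from $\sum_{[r/2]<j\le r}2/j\le4$. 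Finally $\sum_{j\le r}(1-\cos(tj))/j\le\log\tfrac{|\alpha-\re^{it}|}{\alpha-1}+2\sum_{j\le r}(1-\alpha^{-j})/j$, giving the last two terms of \eqref{trig1}. This argument is uniform in $\alpha>1$ from the outset, which is precisely the difficulty you correctly flagged but did not resolve.
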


\begin{proof} Note that 

\begin{equation} \label{cos} 
2t^2/\pi^2\leq 1-\cos t\leq t^2/2 
\end{equation}
if $|t|\leq \pi$. Estimate \eqref{trig2} we get is not difficult. If $|t|\leq 1/r$, we have

\begin{align*}
\sum_{j=1}^r\frac{1-\alpha^j\cos(t j)}{j}&\leq -\sum_{j=1}^r\frac{\alpha^j-1}{j}+\sum_{j=1}^r\frac{\alpha^j\left(1-\cos(j/r)\right)}{j}\\
&\leq -\sum_{j=1}^r\frac{\alpha^j-1}{j}+\frac{1}{2r^2}\sum_{j=1}^rj\alpha^j\\
&\leq -\frac{1}{2r}\sum_{j=1}^r \alpha^j+1.
\end{align*}

To obtain estimate \eqref{trig1}, we use the strategy found in \cite[p.~407]{GT}. Notice that
\begin{align}
\sum_{j=1}^r\frac{1-\alpha^j\cos(t j)}{j}&=\sum_{j=1}^r\frac{\alpha^j-1}{j}-\sum_{j=1}^r\frac{\alpha^j\left(1+\cos(t j)\right)-2}{j}.\label{visa}
\end{align}
Thus, we evaluate only the second sum
\begin{align}
& \sum_{j=1}^r\frac{\alpha^j\left(1+\cos(t j)\right)-2}{j}\nonumber\\ 
&\geq  \sum_{j=1}^{[r/2]}\frac{\cos(t j)-1}{j}+\frac{1}{r}\sum_{j=[r/2]+1}^r\alpha^j\left(1+\cos(t j)\right)-\sum_{j=[r/2]+1}^r\frac{2}{j}\nonumber\\
&\geq \sum_{j=1}^{r}\frac{\cos(t j)-1}{j}+\frac{1}{r}\Re\sum_{j=[r/2]+1}^r \alpha^j\left(1+\re^{it j}\right)-4.\label{Re}
\end{align}
To obtain $-4$, we used inequalities $\log(1+r) \leq \sum_{j=1}^r1/j\leq 1+\log r$. Let us turn our attention to the most difficult part, namely, to the sum
\begin{align}
&\Re\sum_{j=[r/2]+1}^r \alpha^j\left(1+\re^{it j}\right)\nonumber\\
&=\frac{\alpha^{r+1}-\alpha^{[r/2]+1}}{\alpha-1}+\Re\frac{(\alpha\re^{it})^{r+1}-(\alpha\re^{it})^{[r/2]+1}}{\alpha\re^{it}-1}\nonumber\\
&= \frac{\alpha^{r+1}}{\alpha-1}\left(1-\alpha^{[r/2]-r}+\Re \frac{\re^{it(r+1)}-\alpha^{[r/2]-r}\re^{it([r/2]+1)}}{\alpha\re^{it}-1}\left(\alpha-1\right)\right)\nonumber\\
&\geq  \frac{\alpha^{r+1}}{\alpha-1}\left(1-2\alpha^{[r/2]-r}+\Re \frac{\re^{it(r+1)}\left(\alpha-1\right)}{\alpha\re^{it}-1}\right)_+\nonumber\\
&\geq  \frac{\alpha^{r+1}}{\alpha-1}\left(1-2\alpha^{-r/2}-\left|\frac{\alpha-1}{\alpha\re^{it}-1}\right|\right)_+.\label{Re>}
\end{align}
In the last steps, we used a relation $\Re z\geq -|z|$, $z\in\mathbf{C}$. Applying
\[ 1-\frac{p}{\sqrt{p^2+v^2}}\geq \frac{1}{2}\frac{v^2}{p^2+v^2},\quad p> 0,\quad v\in\R,\] 
with $p=\alpha-1$ and $v=\sqrt{2\alpha (1-\cos t)}$, and recalling \eqref{cos}, we get that
\begin{align*}
1-\left|\frac{\alpha-1}{\alpha\re^{it}-1}\right|&\geq \frac{1}{2}\frac{2\alpha (1-\cos t)}{(\alpha-1)^2+2\alpha(1-\cos t)}\\
&\geq \frac{(1-\cos(1/r))}{(\alpha-1)^2+2(1-\cos(1/r))}\\
&\geq \frac{2}{\pi^2}\frac{1}{(r\alpha-r)^2+1}.
\end{align*}

Inserting the last lower estimate into \eqref{Re>}, in conjunction with \eqref{Re}, we obtain
\begin{align}
&\sum_{j=1}^r\frac{\alpha^j\left(1+\cos(t j)\right)-2}{j}\nonumber\\ 
&\geq \sum_{j=1}^{r}\frac{\cos(t j)-1}{j}+\frac{2}{r}\frac{\alpha^{r+1}}{\alpha-1}\left(\frac{\pi^{-2}}{1+(r\alpha-r)^2}-\alpha^{-r/2}\right)_+-4.\label{trigsum}
\end{align}
It is necessary to make a specific estimate for the remaining sum. We have
\begin{align*}
\sum_{j=1}^r\frac{1-\cos(tj)}{j}&\leq  \Re\sum_{j=1}^\infty\frac{1-\re^{itj}}{j}\alpha^{-j}+\sum_{j=1}^r\frac{1-\cos(tj)}{j}(1-\alpha^{-j})\\
&\leq \log\frac{\left|1-\alpha^{-1}\re^{it}\right|}{1-\alpha^{-1}}+2\sum_{j=1}^r\frac{1-\alpha^{-j}}{j}\\
&\leq \log\frac{\left|\alpha-\re^{it}\right|}{\alpha-1}+\min\{2r\log\alpha,\ 2\log(\re r)\}.
\end{align*}
The reasons for such estimate will reveal in the proofs of Theorems. Combining latter estimate with \eqref{trigsum} and \eqref{visa} we arrive at the assertion.\\
\end{proof}

{\textit{The common part of the proofs (Theorem~\ref{thm3.2}).} We set the proofs for the case $r<n/2$. The case $n/2\leq r\leq n$ is an easy exercise. For $0<\beta<1$, we have 
\begin{align*}
\Pr\Big(\sum_{j=r+1}^njZ_j=n\Big)&=\frac{1}{2\pi i}\int_{|z|=\beta}\exp\left\{\sum_{j=r+1}^n\frac{z^j-1}{j}\right\}\frac{dz}{z^{n+1}}\\
      &=\frac{\re^{-H_n}}{2\pi i}\int_{|z|=\beta}\exp\left\{\sum_{j=1}^r\frac{1-z^j}{j}\right\}\frac{dz}{(1-z)z^{n+1}},
\end{align*}
because, when we add $\sum_{j=n+1}^\infty z^j/j$ to $\sum_{j=r+1}^n z^j/j$, the $n$-th Taylor coefficient of the integrand function does not change. Let $s=-r\log z$ and $u=n/r$, then
\begin{align*}
\Pr\Big(\sum_{j=r+1}^njZ_j=n\Big)&=\frac{\re^{-H_n}}{2\pi i}\int_{-r\log\beta-ir\pi}^{-r\log\beta+ir\pi}\exp\left\{\sum_{j=1}^r\frac{1-\re^{-js/r}}{j}\right\}\frac{\re^{us+s/r}}{r(\re^{s/r}-1)}ds\\
&=: \frac{\re^{-H_n}}{2\pi i}\int_{-r\log\beta-ir\pi}^{-r\log\beta+ir\pi} L(s)\re^{us}ds.
\end{align*}
Notice that $\operatorname{Res}\limits_{s=0}L(s)\re^{us}=1$. Assuming $\alpha>1$ we apply the residue theorem to get
\begin{align}
\Pr\Big(\sum_{j=r+1}^njZ_j=n\Big)&=\re^{-H_n}\left(1+\frac{1}{2\pi i}\int_{-r\log\alpha-ir\pi}^{-r\log\alpha+ir\pi} L(s)\re^{us}ds\right)\nonumber\\
&=:\re^{-H_n}\left(1+R\right).\label{PR}
\end{align}
So, it remains to estimate $R$. Firstly, we integrate by parts twice (with respect to $d\re^{us}$ and to $d\re^{(u-1)s}$). Note that $L'(s)=-\frac{\re^{-s}}{r(\re^{s/r}-1)}L(s),$
\begin{align}
R&=\frac{1}{2\pi iu}\int_{-r\log\alpha-ir\pi}^{-r\log\alpha+ir\pi}L(s)d\re^{us}\nonumber\\
&=\frac{1}{2\pi in(u-1)}\int_{-r\log\alpha-ir\pi}^{-r\log\alpha+ir\pi}\frac{L(s)}{\re^{s/r}-1}d\re^{(u-1)s}\nonumber\\
&=\frac{1}{2\pi in(n-r)}\int_{-r\log\alpha-ir\pi}^{-r\log\alpha+ir\pi}\frac{\re^{s/r}+\re^{-s}}{(\re^{s/r}-1)^2}L(s)\re^{(u-1)s}ds.\label{R}
\end{align}
Secondly, using abbreviations 
\begin{equation}
E(r,\alpha):=-\frac{2}{r}\frac{\alpha^{r+1}}{\alpha-1}\left(\frac{\pi^{-2}}{1+(r\alpha-r)^2}-\alpha^{-r/2} \right)_+ +\min\{2r\log\alpha, 2\log(\re r)\}\label{Eev}
\end{equation}
and
\[
J(\tau,\alpha):=\frac{1}{(\alpha-1)^2+\alpha\left(2\tau/(\pi r)\right)^2},\ \tau\in\R,
\]
we apply \eqref{cos} and Lemma~\ref{trig} to~\eqref{R}:
\begin{align}
|R|&\leq \frac{4\alpha^{2r-n+2}}{\pi n^2r}\int_0^{r\pi}\frac{1}{\left|\re^{i\tau/r}-\alpha \right|^3}\exp\left\{\sum_{j=1}^r\frac{1-\alpha^j\cos(\tau j/r)}{j}\right\}d\tau\nonumber \\
&\leq  \frac{4\re^{4}\alpha^{2r-n+2}}{\pi n^2r(\alpha-1)}\exp\left\{\sum_{j=1}^r\frac{\alpha^j-1}{j}+E(r,\alpha)\right\}\int_{1}^{r\pi}J(\tau, \alpha)d\tau\nonumber\\
&\quad\ +\frac{4\re \alpha^{2r-n+2}}{\pi n^2r}\exp\left\{-\frac{\alpha(\alpha^r-1)}{2r(\alpha-1)}\right\}\int_{0}^{1}J(\tau, \alpha)^{3/2}d\tau,\nonumber
\end{align}
because $\left|\re^{s/r}+\re^{-s}\right|\leq 2\alpha^{{r}}$, and the additional factor $1/|\re^{i\tau/r}-\alpha|$ came from $|L(s)|$. Since $$\int_{1}^{r\pi}J(\tau,\alpha)d\tau=\frac{\pi r}{2\sqrt{\alpha}(\alpha-1)}\arctan\left(\frac{2\sqrt{\alpha}\tau}{\pi r(\alpha-1)}\right)\Big|_1^{r\pi}\leq \frac{\pi^2r}{4\sqrt{\alpha}(\alpha-1)}$$
and $$\int_{0}^{1}J(\tau,\alpha)^{3/2}d\tau\leq \left(\alpha-1\right)^{-3},$$ 
we obtain
\begin{align}\label{RR}
|R| &\leq  \frac{\pi\re^4\alpha^{2r-n+3/2}}{n^2(\alpha-1)^2}\exp\left\{\sum_{j=1}^r\frac{\alpha^j-1}{j}+E(r,\alpha)\right\}\nonumber\\
&\quad\ +\frac{4\re\alpha^{2r-n+2}}{\pi n^2r(\alpha-1)^3}\exp\left\{-\frac{\alpha(\alpha^r-1)}{2r(\alpha-1)}\right\}.
\end{align}

Hereafter, we divide the argument into two parts. In the first part we take $\alpha=\re^{\xi/r}$ (Theorem~\ref{theorem3.3}) and in the second $\alpha=x$ (Theorem~\ref{theorem3.4}), where $\xi=\xi(n/r)$ is defined in Lemma~\ref{xi} and $x=x(n,r)$ in Lemma~\ref{x}.\qed

\begin{proof}[Proof of Theorem~\ref{theorem3.3}] Let $\alpha=\re^{\xi/r}$. Recall that $\log n\leq r<n/2$ and $\log u\leq \xi\leq 2\log u$. Note, referring to the equation $\re^\xi-1=u\xi$, we have $\xi>1$.  Now, we need the following estimates
\begin{align*}
I(\xi)&=\int_0^\xi\frac{1}{t} d(\re^t-t-1)\geq \frac{\re^{t}-t-1}{t}\Big|_0^\xi = u-1,
\end{align*}
\begin{equation}\label{T}
0 \leq T(\xi)\leq  \frac{2u\xi}{3r}+\frac{1}{r}
\end{equation}
(this follows from \eqref{Tz3}; note $\xi\leq 2r$), and
\begin{align*}
& E(r,\re^{\xi/r})-\min\{2\xi, 2\log(\re r)\}\nonumber\\
&= -\frac{2}{r}\frac{\re^{\xi+\xi/r}}{\re^{\xi/r}-1}\left(\frac{\pi^{-2}}{1+(r\re^{\xi/r}-r)^2}-\re^{-\xi/2}\right)_+\\
&= \frac{-2\re^{\xi}\pi^{-2}}{\xi+\xi(r\re^{\xi/r}-r)^2}\frac{\xi\re^{\xi/r}}{r(\re^{\xi/r}-1)}\left(1-\frac{\pi^2+\pi^2(r\re^{\xi/r}-r)^2}{\re^{\xi/2}}\right)_+.
\end{align*}
Now, applying estimates $\xi\leq 2r$, $(\xi/r)\re^{\xi/r}/(\re^{\xi/r}-1)=1+O_+(\xi/r)$ (see \eqref{Bern}), and $r\re^{\xi/r}-r=\xi(1+O_+(\xi/r))$, where $O_+$ is an estimate of the positive quantity, we obtain
\begin{align}
& E(r,\re^{\xi/r})-\min\{2\xi, 2\log(\re r)\}\nonumber\\
&= \frac{-2(u+1/\xi)\pi^{-2}}{1+\xi^2(1+O_+(\xi/r))}\frac{\xi\re^{\xi/r}}{r(\re^{\xi/r}-1)}\left(1+O\left(\frac{(\log u)^{3/2}}{\sqrt{u}}\right)\right) \label{E1}\\
&= \frac{-2(u+1/\xi)\pi^{-2}}{1+\xi^2(1+O_+(\xi/r))}\left(1+O\left(\frac{\log u}{r}+\frac{(\log u)^{3/2}}{\sqrt{u}}\right)\right)\label{E2}
\end{align}
Certainly, there exists constants $c_1, c_2>0$ such that $I(\xi)+T(\xi)+E(r,\re^{\xi/r})\geq c_1 u$ for all $u>c_2$. Recalling \eqref{IT} and applying latter estimate to $\eqref{RR}$, we conclude that
\begin{align}\label{RRR}
R & \ll \frac{\re^{2\xi-u\xi}}{(u\xi)^2}\exp\left\{\sum_{j=1}^r\frac{\re^{j\xi/r}-1}{j}+E(r,\re^{\xi/r}) \right\}\nonumber\\ 
& \ll \exp\left\{-u\xi+I(\xi)+T(\xi)+E(r,\re^{\xi/r})\right\}.
\end{align}
To obtain \eqref{thm122}, we first observe that combining \eqref{P-ell}, \eqref{PR} and \eqref{RRR} we get
\[
\kappa(n,r)=\re^{-H_r}+O\left( \frac{1}{r}\exp\left\{-u\xi+I(\xi)+T(\xi)+E(r,\re^{\xi/r})\right\}\right).
\]
Here, we apply Lemma~\ref{rho} and formula \eqref{deriv} to obtain
\begin{equation}\label{almostepsilon}
\kappa(n,r)=\re^{-H_r}+O\left( \frac{\varrho(u)}{r}\exp\left\{T(\xi)+E(r,\re^{\xi/r})+\log u\right\}\right).
\end{equation}
It is left to evaluate $T(\xi)+E(r,\re^{\xi/r})+\log u$. For that we use \eqref{T}, \eqref{E1}-\eqref{E2}, and then we apply \eqref{xi asymp}. Recall that $\log n\leq r\leq (\log n)^{3+\varepsilon}$, we have
\begin{align}
& T(\xi)+E(r,\re^{\xi/r})+\log u\nonumber\\
&\leq \frac{2u\xi}{3r}+\frac{1}{r}+2\xi-\frac{2(u+1/\xi)\pi^{-2}}{1+\xi^2(1+O_+(\xi/r))}\left(1+O\left(\frac{\log u}{r}+\frac{(\log u)^{3/2}}{\sqrt{u}}\right)\right)+\log u\nonumber\\
&=\frac{2u\xi}{3r}\left(1-\frac{r}{\xi^3}\cdot\frac{3}{(\pi/\xi)^2+\pi^2(1+O_+(\xi/r))}\left(1+O\left(\frac{\log u}{r}+\frac{(\log u)^{3/2}}{\sqrt{u}}\right)\right) \right)\nonumber\\
&\quad\ +O\left(\xi\right)\nonumber\\
&\leq \frac{u\log u}{r}\label{lastepsitimate}
\end{align}
if $u$ is sufficiently large, because $(\log u)/r$ (in the error estimate) came from the factor in \eqref{E1} that is greater than 1. When $u$ is bounded, equation \eqref{thm122} is trivial. Applying \eqref{lastepsitimate} to \eqref{almostepsilon}, we prove \eqref{thm122}. 

Now we turn to assertion \eqref{thm121}, recalling that $(\log n)^{3+\varepsilon}\leq r< n$, $\varepsilon> 0$, and $\delta> 0$. Let $n_0=n_0(\varepsilon, \delta)$ and $u_0=u_0(\varepsilon, \delta)$ be such sufficiently large positive constants, depending on parameters $\varepsilon$ and $\delta$, that if $n\geq n_0$ and $u\geq u_0$, using \eqref{E2}, \eqref{T}, and Lemma~\ref{xi}, we obtain
\begin{align*}
&E(r,\re^{\xi/r})+2\xi+T(\xi)\\
&\leq -\frac{2u}{\pi^2\xi^2}\left(1+O\left(\frac{\log u}{r}+\frac{1}{(\log u)^2}\right)\right)+4\xi+ \frac{2u\xi}{3r}+\frac{1}{r}\\
&=  -\frac{2u}{\pi^2\xi^2}\left(1+O\left(\frac{1}{(\log u)^2}+\frac{(\log u)^3}{r}\right)\right)\\
&=  -\frac{2u}{\pi^2 (\log u)^2}\left(1+O\left(\frac{\log\log(u+2)}{\log u}+\frac{1}{(\log n)^\varepsilon}\right)\right)\\
&\leq -\frac{2u}{\pi^2(\log (1+u))^2}\left(1-\delta\right).
\end{align*} 
Combining the latter estimate with \eqref{RRR}, \eqref{PR}, and Lemma~\ref{rho} we prove assertion~\eqref{thm121} for the case $n\geq n_0$ and $u\geq u_0$. In case when $n\leq n_0$ or $u\leq u_0$, assertion~\eqref{thm121} follows trivially from \eqref{PR}, \eqref{RRR}, \eqref{E2} and Lemma~\ref{rho}.
\end{proof}

\begin{proof}[Proof of Theorem~\ref{theorem3.4}] Let $\alpha=x$. Recall that $u>2$ and $r\geq 5$. Throughout the proof, we apply inequalities $\re^{(\log u)/r}\leq x\leq \re^{(2\log u)/r}$ and equation 
	\[
	\frac{x^{r+1}-x}{x-1}=n,
	\]   
	which come from Lemma~\ref{x}.
	Recall \eqref{Eev}. The proof starts with the observation that
	\begin{align*}
	E(r,x)-\min\{2r\log x, 2\log(\re r)\}&\leq -2u\left(\frac{\pi^{-2}}{1+(rx-r)^2}-x^{-r/2}\right)_+\\
	&\leq \frac{-u^{1-4/r}}{4\pi^2(\log u)^2}\left(1-\frac{\pi^2}{x^{r/2}}-\frac{\pi^2(rx-r)^2}{x^{r/2}}\right)_+\\
	&\leq \frac{-u^{1-4/r}}{4\pi^2(\log u)^2}\left(1-\frac{\pi^2}{\sqrt{u}}-\frac{\pi^2 x^{3r/2+2}}{u^2}\right)_+,
	\end{align*}
	where the second inequality is obtained using estimate $rx-r\leq r\re^{(2\log u)/r}-r\leq 2(\log u)u^{2/r}$, and the third -- inequalities $x\geq \re^{(\log u)/r}$ and $(x-1)/x^r\leq x/n.$
	Now, we apply the latter estimate and $x^2/(nx-n)^2\ll 1$ to \eqref{RR}, and thus obtain
	\begin{align*}
	R&\ll  x^{2r-n}\exp\left\{\sum_{j=1}^r\frac{x^j-1}{j}-\frac{u^{1-4/r}}{4\pi^2(\log u)^2}\left(1-\frac{\pi^2}{\sqrt{u}}-\frac{\pi^2 x^{3r/2+2}}{u^2}\right)_+\right\}.
	\end{align*}
	Applying Theorem~\ref{thm1} with Corollary~\ref{2cor}, and inequalities $r^2<\sum_{j=1}^rjx^j\leq rn$, which follow from $\sum_{j=1}^rj\leq \sum_{j=1}^rjx^j\leq r\sum_{j=1}^rx^j$, it follows that
	\begin{align}
	R &\ll \sqrt{u}x^{2r}\nu(n,r)\exp\left\{-\frac{u^{1-4/r}}{4\pi^2(\log u)^2}\left(1-\frac{\pi^2}{\sqrt{u}}-\frac{\pi^2 x^{3r/2+2}}{u^2}\right)_+\right\}\label{penultimateR}\\
	&\ll \nu(n,r)\exp\left\{-\frac{u^{1-4/r}}{4\pi^2(\log u)^2}\left(1-\frac{\pi^2}{\sqrt{u}}-\frac{\pi^2 x^{3r/2+2}}{u^2}\right)_++5\log u\right\}.\label{lastR}
	\end{align}
	Now, formula \eqref{132} follows simply from \eqref{penultimateR} when we apply \eqref{wedge}. It remains to prove the formula \eqref{131}. Let $u_1=u_1(\varepsilon)$ be such sufficiently large positive constant depending on parameter $\varepsilon>0$ that if $u\geq u_1$  applying \eqref{wedge} to \eqref{lastR} it follows that
	\begin{align}
	R &\ll  \nu(n,r)\exp\left\{-\frac{u^{1-4/r}}{4\pi^2(\log u)^2}\left(1+O\left((\log u)^3u^{2/r-1/2}\right)\right)\right\}\nonumber\\
	&\ll_\varepsilon  \nu(n,r)\exp\left\{-\frac{u^{1-4/r}(1-\varepsilon)}{4\pi^2(\log(u+1))^2}\right\}.\nonumber
	\end{align}
	Actually, this estimate is also correct in the case of $u\leq u_1$. Recalling~\eqref{PR} and \eqref{P-ell} we finish the proof of \eqref{131}.  
\end{proof}

\begin{proof}[Proof of Corollary~\ref{corollary3.5}]
	Using Theorem~\ref{Theorem 1} we deduce Lemma~\ref{very small} which combined with Theorem~\ref{theorem3.4} gives the assertion.
	\end{proof}

\section{Proof of Theorem~\ref{theorem3.2}}

\bigskip

\textbf{Theorem \ref{theorem3.2}.} \textit{Let $u=n/r$. If $\sqrt{n\log n}\leq r<n$, then 
	\begin{equation*}
	\kappa(n,r)=\re^{-H_r+\gamma} \omega(u)+O\left(\frac{R(u)u^{3/2}\log^2 (u+1)}{r^2}\right),
	\end{equation*}
	where the function $R(u)$ is described by Definition~\ref{Rdefinition} and Lemmas~\ref{omega,R} and~\ref{Rneeded}.}
\begin{proof}
	Throughout the argument, we will use the following abbreviations 
	$$\xi_0(u):=\left\{
	\begin{array}{ll}
	\Re\zeta_0(u) & (1< v_1\leq u)\\
	\xi_0(v_1)    & (1\leq u\leq v_1)
	\end{array}
	\right.$$
	(see Lemma~\ref{zeta} and Lemma~\ref{Robert}),
	$$u_0:=\max\{v_0, v_1, 3\}$$ 
	(for $v_0$ see \eqref{R estimate}), and
	$$N(n,r):=\re^{H_r}\kappa(n,r).$$
	Proposition \ref{EM} gives the result for $u\leq u_0$, note that $u_0$ is fixed. Given proof will concern the region $u_0\leq u\leq \sqrt{n/\log n}$. We split it into a few steps.
	
	\smallskip
	
	\textbf{1.}  As usual, let $[z^n] g(z)$ be the $n$th power series coefficient of a function $g(z)$. Then
	\[
	N(n,r)=\re^{H_r}[z^n]\exp\bigg\{\sum_{r<j\leq n}\frac{z^j}{j}\bigg\}=
	[z^n]\frac{1}{1-z}\exp\big\{G(z)\big\},
	\]
	where
	\begin{align*}
	G(z)&:=\sum_{j=1}^r\frac{1-z^j}{j}\\
	&=\int_0^{r\log z}\frac{1-\re^t}{t}dt+\int_0^{r\log z}\frac{1-\re^t}{t}\left(\frac{\frac{t}{r}\re^{\frac{t}{r}}}{\re^{\frac{t}{r}}-1}-1\right)dt\\
	&=
	-I(r\log z)-T(r\log z).
	\end{align*}
	By the Cauchy formula for $0<\alpha<1<\alpha'$,
	\begin{align*}
	N(n,r)&=\frac{1}{2\pi i}\int_{|z|=\alpha}\frac{\re^{G(z)} dz}{(1-z)z^{n+1}}=1+\frac{1}{2\pi i}\int_{|z|=\alpha'}\frac{\re^{G(z)} dz}{(1-z)z^{n+1}}\\
	&=
	1+\frac{1}{2\pi n i}\int_{|z|=\alpha'}\frac{\re^{G(z)} dz}{(1-z)^2z^{n-r}}
	\end{align*}
	because of the equality $\frac{d}{dz}\big(\exp\{G(z)\}/(1-z)\big)=z^r\exp\{G(z)\}/(1-z)^{2}$.
	We further have to shift the integration contour to a path going closely to the saddle point. Thus, after a substitution $z=\re^{-s/r}$, where $s\in\Delta:=\{s=-\xi_0(u)+i \tau:\;  -\pi r <\tau\leq \pi r\}$, by virtue of Lemma \ref{hatrho}, we obtain
	\begin{align}
	N(n,r)&=
	1+\frac{1}{2\pi n r i}\int_{\Delta} \frac{\re^{(u-1)s-I(-s)-s/r} ds}{(1-\re^{-s/r})^2}\nonumber\\
	&\quad\
	+\frac{1}{2\pi nr i}\int_{\Delta} \frac{\re^{(u-1)s-I(-s)-s/r}}{(1-\re^{-s/r})^2}
	\big(\re^{-T(-s)}-1\big) ds \nonumber\\
	&=
	1+\frac{\re^\gamma}{2\pi n r i}\int_{\Delta}\frac{\re^{(u-1)s+s/r}ds}{\hat\varrho(s)(\re^{s/r}-1)^2}\nonumber\\
	&\quad\
	+\frac{\re^\gamma}{2\pi n r i}\int_{\Delta} \frac{\re^{(u-1)s+s/r}\big(\re^{-T(-s)}-1\big) ds}{\hat\varrho(s)(\re^{s/r}-1)^2}
	\nonumber\\
	&=: 1+J_0+J_1.
	\label{J0R}
	\end{align}
	
	We can hardly expect a global trick to overcome the tiresome calculations of the appearing integrals; therefore, from now on we begin the divide and conquer quest.
	To follow the required sharpness in estimates, it is worth  to have in mind that, by Lemmas \ref{rho}, \ref{zeta} and \ref{omega,R},
	\begin{equation}
	R(u)= \exp\Big\{-u\xi_0(u)+u +O\left(\frac{u}{\xi(u)}\right)\Big\}, \quad u\geq u_0.
	\label{Rugrubus}
	\end{equation}
	Afterwards, we will use the abbreviations $\xi_0=\xi_0(u)$ and $\xi=\xi(u)$, where $u=n/r\leq \sqrt{n/\log n}$ and $n$ is large.
	Keep in mind that $\re^{\xi_0}=\re^{\xi}(1+O(\xi^{-2}))=u\log u+O(u)=O(r)$ if $u\geq u_0$.
	
	\medskip

	\textbf{2.} This step is devoted to integral $J_0$. By Lemma  \ref{hatomega}, we may start with
	\begin{align}
	J_0&=\frac{\re^\gamma}{2\pi nr i}\int_{\Delta} \frac{\re^{(u-1)s+s/r}s(1+\hat\omega(s))ds}{(\re^{s/r}-1)^2}\nonumber\\
	&=
	\frac{\re^\gamma}{2\pi u i}\int_{\Delta} \re^{(u-1)s} \frac{(1+\hat\omega(s))}{s} ds\nonumber\\
	&\quad\ +
	\frac{\re^\gamma}{2\pi u i}\int_{\Delta} \re^{(u-1)s} \bigg(\frac{\re^{s/r}(s/r)^2}{(\re^{s/r}-1)^2}-1\bigg) \frac{(1+\hat\omega(s))}{s}ds\nonumber\\
	&=:
	J_{01}+J_{02}.
	\label{J01J02}
	\end{align}
	The goal is to extract from $J_{01}$ the inverse Laplace transform
	\[
	\omega(u)=\int_{(\beta)} \re^{us} \hat\omega(s) ds,
	\]
	defined for arbitrary $\beta>0$. Here, and in the sequel, the notation $(b)$, $b\in\R$, under integral denotes the integration line from  $b-i\infty$ to $b+i\infty$.
	
	From Lemmas \ref{IsJs} and \ref{hatomega}, we have
	$\hat\omega(s)=\re^{J(s)}-1$ and, consequently, $\hat\omega(-\xi_0\pm ir\pi)\ll\re^{\xi}/r$. This and relation (\ref{tapat}) imply
	\[
	J_{01}=
	-\frac{\re^\gamma}{2\pi u i}\int_{\Delta} \re^{us} d\hat\omega(s)
	=
	O\left(\frac{\re^{-u\xi_0+\xi}}{r}+\frac{\re^\gamma}{2\pi  i}\int_{\Delta} \re^{us}\hat\omega(s) ds\right).
	\]
	Similarly, using again Lemma \ref{hatomega} and  $J(s)=\re^{-s}/s+O\left(\re^{\xi}/|s|^2\right)$ if $s=-\xi_0+i\tau$, where $\pi r\leq |\tau|\leq T$, we obtain
	\begin{align*}
	& \lim_{T\to\infty}\frac{\re^\gamma}{2\pi i}\int_{\substack{\Re s=-\xi_0 \\ r\pi\leq |\tau|\leq T}}\re^{us}\hat{\omega}(s) ds\\
	&=
	\lim_{T\to\infty}\left[ \frac{\re^\gamma}{2\pi i}\int_{\substack{\Re s=-\xi_0 \\ r\pi\leq |\tau|\leq T}}\frac{\re^{(u-1)s}}{s}ds+O\left(\re^{2\xi-u\xi_0}\int_{\substack{\Re s=-\xi_0 \\ r\pi\leq |\tau|\leq T}}\frac{1}{|s|^2}|ds|\right)\right]\\
	&\ll \frac{\re^{-u\xi_0+2\xi}}{r}.
	\end{align*}
	Collecting the last two estimates, we complete estimation of $J_{01}$. Indeed, for a fixed $\beta>0$, by the residue theorem, we obtain
	\begin{align}
	J_{01}
	&=
	\frac{\re^\gamma}{2\pi  i}\int_{(-\xi_0)} \re^{us}\hat\omega(s) ds + O\left(\frac{\re^{-u\xi_0+2\xi}}{r}\right)\nonumber\\
	&=
	-1+\frac{\re^\gamma}{2\pi  i}\int_{(\beta)} \re^{us}\hat\omega(s) ds + O\left(\frac{\re^{-u\xi_0+2\xi}}{r}\right)\nonumber\\
	&=
	-1+\re^\gamma\omega(u)+O\left(\frac{\re^{-u\xi_0+2\xi}}{r}\right).
	\label{J01Final}
	\end{align}
	By (\ref{Rugrubus}), the remainder here is better than indicated in Theorem \ref{theorem3.2}.

	The remaining  integral in (\ref{J01J02}) is
	\[
	J_{02}= \frac{\re^\gamma}{2\pi u(u-1) i}\int_{\Delta}  W\Big(\frac{s}{r}\Big)\frac{(1+\hat\omega(s))}{s}d\re^{(u-1)s},
	\]
	where
	\begin{equation}
	W(v)=\frac{\re^{v}v^2}{(\re^{v}-1)^2}-1\ll v^2
	\label{Wv}
	\end{equation}
	if $v=s/r$ and $s\in \Delta$. Moreover, $W'(v)\ll v$ in the same region. Hence,
	\[
	\bigg(\frac{\hat\omega(s)+1}{s} W\Big(\frac{s}{r}\Big)\bigg)'\ll \frac{|s||\hat\omega(s)'|}{r^2}+\frac{|\hat\omega(s)+1|}{r^2}\ll \re^\xi\frac{|\hat\omega(s)+1|}{r^2}
	\]
	by (\ref{tapat}).
	For  $\hat\omega(s)+1$,  where $s=-\xi_0+i\tau\in\Delta$, we further  apply Lemma \ref{Robert} if $|\tau|\leq \re^\xi$ and $\hat\omega(s)+1 = O\left(1\right)$ otherwise.  The latter stems from Lemma \ref{hatomega}. So, we obtain
	\begin{align*}
	J_{02}&=
	\frac{\re^\gamma}{2\pi u (u-1) i}\int_{\Delta}\re^{(u-1)s} \bigg(  W\Big(\frac{s}{r}\Big)\frac{(1+\hat\omega(s))}{s}\bigg)' d s+
	O\left(\frac{\re^{-u\xi_0+\xi}}{nu }\right)\\
	&\ll
	\frac{\re^{-u\xi_0+2\xi}}{n^2}\int_{\Delta} |\hat\omega(s)+1| |d s|+
	\frac{\re^{-u\xi_0+\xi}}{nu }\\
	&\ll
	\frac{\re^{3\xi}R(u)\sqrt u}{n^2 }+ \frac{\re^{-u\xi_0+2\xi}}{nu }\ll \frac{R(u)\sqrt u \log^2 u}{r }.
	\end{align*}
	
	Returning to (\ref{J01J02}) we obtain an asymptotic formula for integral $J_0$. Namely, we have
	\begin{equation}
	J_0=-1+\re^\gamma\omega(u)+O\left(\frac{R(u)\sqrt u\log^2 u}{r}\right).
	\label{J0Final}
	\end{equation}
	
	\medskip

	\textbf{3.} Estimation of the integral $J_1$ involving function $T(s)$ is more subtle.  In the notation above,
	\begin{align*}
	J_1&=
	\frac{\re^\gamma}{2\pi i n(u-1)}\int_{\Delta}\left(\hat{\omega}(s)+1\right) \frac{(s/r)\re^{s/r}}{(\re^{s/r}-1)^2}\big(\re^{-T(-s)}-1\big) d\re^{(u-1)s}\\
	&=
	-\frac{\re^\gamma}{2\pi in(u-1)}\int_{\Delta} \re^{(u-1)s}\left[\left(\hat{\omega}(s)+1\right)\Big(\frac{r}{s}W\Big(\frac{s}{r}\Big)+\frac{r}{s}\Big) \big(\re^{-T(-s)}-1\big)\right]' ds \\
	&\quad\
	+O\left(\frac{\re^{-u\xi_0+\xi}}{nu}\right)\\
	&=:-\frac{\re^\gamma}{2\pi in(u-1)}\int_{\Delta} \re^{(u-1)s}\left[\left(\hat{\omega}(s)+1\right)\Big(\frac{r}{s}W\Big(\frac{s}{r}\Big)\Big) \big(\re^{-T(-s)}-1\big)\right]' ds \\
	&\quad\
	+J_{12}+O\left(\frac{\re^{-u\xi_0+\xi}}{nu}\right)\\
	&=:
	J_{11}+J_{12}+O\left(\frac{\re^{-u\xi_0+\xi}}{nu}\right).
	\end{align*}
	The remainder here has been estimated using  Lemma \ref{Tz}  and (\ref{Wv}). Observe that integral $J_{11}$ becomes $J_{12}$ if we substitute $W(s/r)$ by the unit.
	
	Taking into account identity (\ref{tapat}), we have
	\begin{align}
	&\left[\left(\hat{\omega}(s)+1\right)\Big(\frac{r}{s}W\Big(\frac{s}{r}\Big)\Big) \big(\re^{-T(-s)}-1\big)\right]'\nonumber\\
	&=
	-\frac{r(\hat{\omega}(s)+1)}{s}U(s)W\Big(\frac{s}{r}\Big)+\frac{\hat{\omega}(s)+1}{s}W'\Big(\frac{s}{r}\Big) \big(\re^{-T(-s)}-1\big).
	\label{suWrs}
	\end{align}
	Here
	\begin{align}
	U(s)&:=\big(\re^{-T(-s)}-1\big)\frac{\re^{-s}+1}{s}-  T'(-s)\re^{-T(-s)}\nonumber\\
	&=
	\frac{\re^{-s}}{s}\bigg(\re^{-T(-s)}\frac{s/r}{\re^{s/r}-1}-1\bigg)
	+\frac{1}{s}\bigg(2\re^{-T(-s)}-1-\re^{-T(-s)} \frac{s/r}{\re^{s/r}-1}\bigg)\nonumber\\
	&=
	\frac{\re^{-s}}{s}\bigg(-\frac{s}{r}+O\left(\frac{\re^\xi}{r}\right)+O\left(\frac{|s|^2}{r^2}\right)\bigg)+O\bigg(\frac{\re^\xi}{r|s|}+\frac{|s|}{r^2}\bigg)\nonumber\\
	&=
	-\frac{\re^{-s}}{r}+O\left(\frac{\re^{2\xi}}{|s|r}+\frac{|s|\re^{\xi}}{r^2}\right),
	\quad s\in \Delta,
	\label{Us}
	\end{align}
	by Lemma \ref{Tz}. Hence and from (\ref{Wv}) we obtain
	\[
	s^{-1}U(s)W(s/r)\ll\re^{\xi}r^{-2}
	\]
	if $s\in\Delta$.
	
	Applying again a bound $\hat\omega(s)+1=O\left(1\right)$ if $\tau>\re^\xi$ and Lemma \ref{Robert} otherwise and having in mind  that $\re^{-T(-s)}-1=O\left(1\right)$ if $s\in\Delta$, from the last relation and (\ref{suWrs}), we derive
	\begin{align*}
	J_{11}&=
	\frac{\re^\gamma }{2\pi iu(u-1)}\int_{\Delta} \re^{(u-1)s}\frac{(\hat{\omega}(s)+1)}{s} U(s)W\Big(\frac{s}{r}\Big) ds+O\left(\frac{R(u)}{r}\right)\\
	&\ll
	\frac{R(u)\sqrt u}{u^2 r^2}\re^{3\xi}+ \frac{1}{u^2 r}\re^{-u\xi_0+2\xi}+\frac{R(u)}{r}\\
	&\ll
	\frac{R(u)\sqrt u\log^2 u}{r}.
	\end{align*}
	
	Similarly, by (\ref{Us}),
	\begin{align*}
	J_{12}&=
	\frac{\re^\gamma }{2\pi iu(u-1)}\int_{\Delta} \re^{(u-1)s}\frac{(\hat{\omega}(s)+1)}{s} U(s) ds\\
	&=
	-\frac{\re^\gamma }{2\pi i n(u-1)}\int_{\Delta}\frac{(\hat{\omega}(s)+1)}{s}\re^{(u-2)s} ds\\
	&\quad\ +
	O\left(\frac{\re^{2\xi}R(u)\sqrt u}{u^2}\int_\Delta {\mathbf 1}\{|\Im s|\leq \re^\xi\}  \Big(\frac{\re^\xi}{r|s|^2}+\frac{1}{r^2}\Big) |ds|\right)\\
	&\quad\ +
	O\left(\frac{\re^{-u\xi_0+2\xi}}{u^2}\int_\Delta {\mathbf 1}\{\re^\xi\leq |\Im s|\leq \pi r\}  \Big(\frac{\re^\xi}{r|s|^2}+\frac{1}{r^2}\Big) |ds|\right)\\
	&=
	-\frac{\re^\gamma }{2\pi i n(u-1)}\int_{\Delta}\frac{(\hat{\omega}(s)+1)}{s}\re^{(u-2)s} ds+O\left(\frac{u^{3/2}R(u)\log^2 u}{r}\right).
	\end{align*}
	We have dealt with a such type integral! Apart from a factor and a substitution $u\mapsto u-1$, the last integral is just $J_{01}$ in (\ref{J01J02}) of step 2. By virtue of (\ref{J01Final}), this yields
	\begin{align*}
	J_{12}
	&=
	\frac{1}{n}\bigg(1-\re^\gamma\omega(u-1)+ O\left(\frac{\re^{-u\xi_0+3\xi}}{r}\right)\bigg) +O\left(\frac{u^{3/2}R(u)\log^2 u}{r}\right)\\
	&\ll
	\frac{u^{3/2}R(u)\log^2 u}{r}.
	\end{align*}
	Collecting the estimates, we obtain
	\[
	J_{1}=  J_{11}+J_{12}\ll \frac{u^{3/2}R(u)\log^2 u}{r}.
	\]
	
	Finally, combining estimates (\ref{J0R}), (\ref{J0Final}) and the last one, we obtain
	\[
	N(n,r)=1+J_0+J_1=\re^\gamma \omega(u)+O\left(\frac{u^{3/2}R(u)\log^2 u}{r}\right).
	\]
	
\end{proof}

\newpage
\section{Proof of Theorem~\ref{EMRP}}\label{section5.8}

\bigskip

\textbf{Theorem~\ref{EMRP}.}  \textit{Let $u=n/r$. If $\sqrt{n\log n}\leq r\leq n$, then
	\begin{equation*}
	d_{TV}(n,r)=H(u)\bigg(1+O\left(\frac{u^{3/2}\log^2 (u+1)}{r}\right)\bigg),
	\end{equation*}}
\noindent where 
\[
H(u)=\frac{1}{2}\int_0^\infty\big|\omega(u-v)-{\re}^{-\gamma}\big| \varrho(v) dv +\frac{\varrho(u)}{2}.
\]

\medskip

At first, we list some facts and lemmas needed for the proof. 

\begin{lemma}\label{dtvformula} For the random vectors $\bar{k}_r(\sigma)=(k_1(\sigma),\ldots,k_r(\sigma))$ and $\bar{Z}_r=(Z_1,\ldots,Z_r)$ described in Chapter~\ref{Chapter4}, we have 
	\begin{align}
	d_{TV}(n,r)&=\sup_{V\subset\N^r_0}\left|\frac{\#\{\sigma : \bar{k}_r(\sigma)\in V\}}{n!}-\Pr\left(\bar{Z}_r\in V\right)\right|\nonumber\\
	&=\frac{1}{2}\sum_{m=0}^\infty \Pr\Big(\sum_{j=1}^rjZ_j=m\Big)\left|\re^{H_n}\Pr\Big(\sum_{j=r+1}^n jZ_j=n-m\Big)-1\right|\label{dtv(n,r)andP}\\
	&=\frac{1}{2}\sum_{m=0}^\infty\nu(m,r)\left|\kappa(n-m,r)-\re^{-H_r}\right|\label{dtv(n,r)}	
	\end{align}
	if $1\leq r\leq n$.
\end{lemma}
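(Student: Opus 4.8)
\noindent\emph{Proof strategy.} The plan is to unwind both sides into the Poisson representation already recorded in the excerpt, using two textbook facts about the total variation distance on a countable space. First I would invoke the identity
\[
\sup_{V\subset\N^r_0}\bigl|\mu(V)-\nu(V)\bigr|=\frac12\sum_{\bar z\in\N^r_0}\bigl|\mu(\{\bar z\})-\nu(\{\bar z\})\bigr|,
\]
valid for any two probability measures $\mu,\nu$ on $\N^r_0$ (the extremal $V$ being $\{\bar z:\mu(\{\bar z\})\ge\nu(\{\bar z\})\}$; cf.\ \cite[p.~67]{ABT}). With $\mu=\mathcal L(\bar k_r(\sigma))$ and $\nu=\mathcal L(\bar Z_r)$ this already reduces the first displayed quantity in the statement to $\tfrac12\sum_{\bar z}\bigl|\tfrac{1}{n!}\#\{\sigma:\bar k_r(\sigma)=\bar z\}-\Pr(\bar Z_r=\bar z)\bigr|$, so the whole lemma becomes a pointwise computation.

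Next I would compute the pointwise mass. Fix $\bar z=(z_1,\dots,z_r)\in\N^r_0$ and put $m:=z_1+2z_2+\dots+rz_r$. By the exact cycle-count formula quoted just before \eqref{exact1},
\[
\frac{1}{n!}\#\{\sigma\in\S_n:\bar k_r(\sigma)=\bar z\}=\sum_{\bar s}\ \prod_{j=1}^n\frac{1}{j^{s_j}s_j!},
\]
the sum running over $\bar s\in\N^n_0$ with $(s_1,\dots,s_r)=\bar z$ and $\ell(\bar s)=n$. Multiplying each term by $\re^{H_n}\prod_{j\le n}\re^{-1/j}=1$ recognises the summand as $\re^{H_n}\prod_{j\le n}\Pr(Z_j=s_j)$, so the mass equals $\re^{H_n}\Pr\bigl(\bar Z_r=\bar z,\ \sum_{j=1}^njZ_j=n\bigr)$. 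Since the $Z_j$ are independent, $\sum_{j\le r}jZ_j$ and $\sum_{r<j\le n}jZ_j$ are independent, and on $\{\bar Z_r=\bar z\}$ the first of them equals $m$; hence the event factorises and the mass equals $\re^{H_n}\Pr(\bar Z_r=\bar z)\,\Pr\bigl(\sum_{r<j\le n}jZ_j=n-m\bigr)$. Subtracting $\Pr(\bar Z_r=\bar z)$, taking absolute values and summing over $\bar z$, then collecting all $\bar z$ with a common value of $m$ (for which $\sum_{\bar z:\,z_1+\dots+rz_r=m}\Pr(\bar Z_r=\bar z)=\Pr(\sum_{j\le r}jZ_j=m)$), produces exactly \eqref{dtv(n,r)andP}.

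Finally I would pass from \eqref{dtv(n,r)andP} to \eqref{dtv(n,r)} using the two identities already in the text. Formula \eqref{P-ell} reads $\Pr(\sum_{j=1}^rjZ_j=m)=\re^{-H_r}\nu(m,r)$. For the other factor, note that $\sum_{r<j\le n}jZ_j=n-m$ forces $Z_j=0$ for $n-m<j\le n$, so for $0\le m\le n$ one gets $\re^{H_n}\Pr(\sum_{r<j\le n}jZ_j=n-m)=\re^{H_r}\kappa(n-m,r)$ from \eqref{P-ell2} applied with $n-m$ in place of $n$; while for $m>n$ both sides vanish once $\kappa(\cdot,r)$ is read as $0$ on negative arguments. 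Substituting these into \eqref{dtv(n,r)andP} yields \eqref{dtv(n,r)}. I expect no serious obstacle here: the only place requiring care is this last bookkeeping step — keeping the index ranges and the harmonic sums consistent, and checking that truncating $\sum_{r<j\le n}jZ_j$ to the indices $\le n-m$ changes nothing — while the rest is a direct unwinding of the Poisson representation.
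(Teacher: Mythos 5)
Your proof is correct and follows essentially the same route as the paper's: both recognize the pointwise mass $\tfrac{1}{n!}\#\{\sigma:\bar k_r(\sigma)=\bar z\}$ in terms of the Poisson representation, pass to the $\ell^1$ form of the total variation distance, factor via independence of $(Z_1,\dots,Z_r)$ and $(Z_{r+1},\dots,Z_n)$, group by $m=\ell_r(\bar z)$, and finally substitute \eqref{P-ell} and \eqref{P-ell2} together with $\Pr(\ell(\bar Z)=n)=\re^{-H_n}$. The only cosmetic difference is that you cite the identity $\sup_V|\mu(V)-\nu(V)|=\tfrac12\sum_{\bar z}|\mu(\{\bar z\})-\nu(\{\bar z\})|$ as a known fact, whereas the paper re-derives it via the extremal sets $V^>\subset V\subset V^{\ge}$, and you work directly with the joint probability $\Pr(\bar Z_r=\bar z,\ \ell(\bar Z)=n)$ rather than with the conditional form $\Pr(\bar Z_r=\bar z\mid\ell(\bar Z)=n)$, which is the same computation up to a constant.
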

\begin{proof}
	Here we recall some calculations from \cite[pp.~67-69]{ABT}. Let $\bar{s}_r=(s_1,\ldots,s_r)\in\N^r_0$, $\bar{s}=(s_1,\ldots,s_n)\in\N^n_0$, $\bar{Z}:=\bar{Z}_n$, and $\ell(\bar{s})=1s_1+2s_2+\ldots+ns_n$, then
	
	\begin{align*}
		\frac{\#\{\sigma : \bar{k}_r(\sigma)=\bar{s}_r\}}{n!}&=\prod_{j=1}^r\frac{1}{j^{s_j}s_j!}\Bigg(\sum_{\substack{\forall (s_{r+1},\ldots,s_n)\in\N_0^{n-r} \\ \ell(\bar{s})=n}}\prod_{j=r+1}^n\frac{1}{j^{s_j}s_j!}\Bigg)\\
		&= \Pr\left(\bar{Z}_r=\bar{s}_r\ |\ \ell(\bar{Z})=n\right).
	\end{align*}
	
	Therefore, 
	\begin{equation}\label{dtv}
	d_{TV}(n,r)=\sup_{V\subset\N^r_0}\left|\Pr\left(\bar{Z}_r\in V\ |\ \ell(\bar{Z})=n\right)-\Pr\left(\bar{Z}_r\in V\right)\right|.
	\end{equation}
	
	Defining 
	\[
	V^>=\left\{\bar{s}_r\in\N^r_0\ :\ \Pr\left(\bar{Z}_r=\bar{s}_r\ |\ \ell(\bar{Z})=n\right)>\Pr\left(\bar{Z}_r=\bar{s}_r\right)\right\}
	\]
	and
	\[
	V^{\geq}=\left\{\bar{s}_r\in\N^r_0\ :\ \Pr\left(\bar{Z}_r=\bar{s}_r\ |\ \ell(\bar{Z})=n\right)\geq\Pr\left(\bar{Z}_r=\bar{s}_r\right)\right\},
	\]
	it is easy to see that a set $V$ achieves the supremum in \eqref{dtv} if and only if $V^{>}\subset V\subset V^{\geq}$; in particular 
	\begin{align*}
		d_{TV}(n,r)&=\Pr\left(\bar{Z}_r\in V^>\ |\ \ell(\bar{Z})=n\right)-\Pr\left(\bar{Z}_r\in V^>\right)\\
		&=\sum_{\bar{s}_r\in\N_0^r}\left(\Pr\left(\bar{Z}_r=\bar{s}_r\ |\ \ell(\bar{Z})=n\right)-\Pr\left(\bar{Z}_r=\bar{s}_r\right)\right)^+.
	\end{align*}
	
	We have written $a^+$ and $a^-$ for the positive and negative parts of a real number $a$, so $a=a^+-a^-$ and $|a|=a^++a^-$. The relation 
	\[
	\sum_{\bar{s}_r\in\N_0^r}\Pr\left(\bar{Z}_r=\bar{s}_r\ |\ \ell(\bar{Z})=n\right)=1=\sum_{\bar{s}_r\in\N_0^r}\Pr\left(\bar{Z}_r=\bar{s}_r\right)
	\]
	
	implies that 
	\begin{align*}
		&\sum_{\bar{s}_r\in\N_0^r}\left(\Pr\left(\bar{Z}_r=\bar{s}_r\ |\ \ell(\bar{Z})=n\right)-\Pr\left(\bar{Z}_r=\bar{s}_r\right)\right)^+\\
		&=\sum_{\bar{s}_r\in\N_0^r}\left(\Pr\left(\bar{Z}_r=\bar{s}_r\ |\ \ell(\bar{Z})=n\right)-\Pr\left(\bar{Z}_r=\bar{s}_r\right)\right)^-\\
		&=\frac{1}{2}\sum_{\bar{s}_r\in\N_0^r}\left|\Pr\left(\bar{Z}_r=\bar{s}_r\ |\ \ell(\bar{Z})=n\right)-\Pr\left(\bar{Z}_r=\bar{s}_r\right)\right|
	\end{align*}
	
	Thus, it follows that 
	
	\begin{align*}
		2d_{TV}(n,r)&=\sum_{\bar{s}_r\in\N_0^r}\left|\Pr\left(\bar{Z}_r=\bar{s}_r\ |\ \ell(\bar{Z})=n\right)-\Pr\left(\bar{Z}_r=\bar{s}_r\right)\right|\nonumber\\
		&=\sum_{\bar{s}_r\in\N_0^r}\left|\frac{\Pr\left(\bar{Z}_r=\bar{s}_r,\ \ell(\bar{Z})=n\right)}{\Pr\left(\ell(\bar{Z})=n\right)}-\Pr\left(\bar{Z}_r=\bar{s}_r\right)\right|\nonumber\\	
		&=\sum_{m\geq0} \sum_{\substack{\bar{s}_r\in\N_0^r\\ 1s_1+\dots+rs_r=m}}\Bigg|\frac{\Pr\left(\bar{Z}_r=\bar{s}_r\right) \Pr\left(\sum_{j=r+1}^n jZ_j=n-m\right)}{\Pr\left(\ell(\bar{Z})=n\right)}\\
		&\qquad -\Pr\left(\bar{Z}_r=\bar{s}_r\right)\Bigg|\nonumber\\	
		&=\sum_{m\geq0}\Pr\Big(\sum_{j=1}^rjZ_j=m\Big)\left|\frac{ \Pr\left(\sum_{j=r+1}^n jZ_j=n-m\right)}{\Pr\left(\ell(\bar{Z})=n\right)}-1\right|.
	\end{align*}		
	
	An easy computation shows that applying \eqref{P-ell}, \eqref{P-ell2}, and the fact $\Pr\left(\ell(\bar{Z})=n\right)=\re^{-H_n}$, we arrive at the proposition.
	
\end{proof}

An accurate summation of series in   (\ref{dtv(n,r)}) defining the distance $d_{TV}(n,r)$ suffices.
For this, we will apply the well-known inequality. 

\begin{lemma} \label{KH-ineq} Let $r,a,b\in\N$ and $a<b$. If $f:[a,b]\to \R$ is a differentiable function and $|f'(v)|$ is integrable on $[a,b]$, then
\[
   \bigg|\frac{1}{r}\sum_{a<k\leq b} f\Big(\frac{k}{r}\Big)- \int_{a/r}^{b/r}f(v) dv\bigg|\leq \frac{1}{r}\int_{a/r}^{b/r}|f'(v)| dv.
   \]
   \end{lemma}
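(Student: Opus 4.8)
The plan is to compare the sum with the integral cell by cell, using the mesh points $k/r$ as the sample nodes of a left-/right-endpoint Riemann sum. First I would partition the interval of integration $[a/r,b/r]$ into the $b-a$ subintervals $I_k:=[(k-1)/r,\,k/r]$ for $a<k\leq b$, each of length $1/r$, and rewrite the quantity to be estimated as a single sum of local discrepancies:
\[
\frac{1}{r}\sum_{a<k\leq b} f\!\Big(\frac{k}{r}\Big)-\int_{a/r}^{b/r}f(v)\,dv=\sum_{a<k\leq b}\int_{I_k}\Big(f\!\Big(\frac{k}{r}\Big)-f(v)\Big)\,dv,
\]
which is valid because $\frac1r f(k/r)=\int_{I_k}f(k/r)\,dv$ (the integrand is constant and $|I_k|=1/r$) and $\int_{a/r}^{b/r}=\sum_{a<k\leq b}\int_{I_k}$ since the $I_k$ tile $[a/r,b/r]$.

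Second, I would bound the integrand on each cell by the fundamental theorem of calculus. For $v\in I_k$ we have $f(k/r)-f(v)=\int_v^{k/r}f'(t)\,dt$, hence $|f(k/r)-f(v)|\leq\int_{I_k}|f'(t)|\,dt$, a bound that no longer depends on $v$. Integrating this over $v\in I_k$ gives $\big|\int_{I_k}\big(f(k/r)-f(v)\big)\,dv\big|\leq\frac1r\int_{I_k}|f'(t)|\,dt$. Summing over $a<k\leq b$ and using additivity of the integral, $\sum_{a<k\leq b}\int_{I_k}|f'|=\int_{a/r}^{b/r}|f'|$, yields the asserted inequality.

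The argument is elementary and short; the only place that genuinely uses the hypotheses is the step invoking the fundamental theorem of calculus in the form $f(y)-f(x)=\int_x^y f'$, which is exactly guaranteed by $f$ being differentiable together with $|f'|$ being integrable, so I would cite the appropriate version of FTC and note that the regularity is used nowhere else. I would also flag a minor point of bookkeeping: for the statement to typecheck, the domain of $f$ should be read as $[a/r,b/r]$ (matching both the sample points $k/r$ and the limits of integration), rather than $[a,b]$ as literally written. With that understood, no further obstacle arises.
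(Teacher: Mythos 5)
Your proof is correct and complete. The paper itself does not spell out an argument: it simply cites the opening of the Euler–Maclaurin summation proof in Tenenbaum's book. Your cell-by-cell decomposition of $[a/r,b/r]$ into the intervals $I_k=[(k-1)/r,k/r]$, together with the pointwise bound $|f(k/r)-f(v)|\le\int_{I_k}|f'|$ from the fundamental theorem of calculus, is exactly the elementary idea underlying that cited step (the textbook version reaches the same conclusion by integrating by parts against the sawtooth $\lfloor t\rfloor - t$ and then bounding $|\lfloor t\rfloor - t|\le 1$; your version bypasses the integration by parts but is otherwise the same estimate). So the two are essentially the same approach, with yours written out in full rather than left to a reference. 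You are also right to flag the domain: for the statement to make sense as written, $f$ must be defined on $[a/r,b/r]$ (this is how the lemma is actually used later in the proof of Theorem~\ref{EMRP}, with $f=\varrho$ and related functions), and the ``$f:[a,b]\to\R$'' in the lemma statement is a slip.
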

   
   \begin{proof}
   	The inequality simply follows from the beginning of the proof of Euler-Maclaurin summation formula \cite[p.~5]{GT}.
   	\end{proof}
In what follows, $\bar{Z}$ stands for the vector $(Z_1,\ldots,Z_n)$, where $Z_j$, $j\in\N$, are the Poisson random variables such that $\E Z_j=1/j$.
%% As it was mentioned in Section~\ref{section4.1} we will use Theorem~\ref{thm2} and Theorem~\ref{theorem3.2}.

   \begin{lemma} \label{Prm} For all  $m\geq 0$, we have
\[
\Pr\big(\ell_r(\bar Z)=m\big)=\frac{\re^{-\gamma}}{r}\varrho\Big(\frac{m}{r}\Big)\bigg(1+O\left(\frac{1+{\mathbf{1}\{m>r\}\re^{\xi(m/r)}}}{r}\right)\bigg).
\]
\end{lemma}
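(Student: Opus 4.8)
The starting point is that $\ell_r(\bar Z)=\sum_{j=1}^r jZ_j$, so by \eqref{P-ell} we have $\Pr\big(\ell_r(\bar Z)=m\big)=\re^{-H_r}\nu(m,r)$. Since $H_r=\log r+\gamma+O(1/r)$, this becomes $\Pr\big(\ell_r(\bar Z)=m\big)=\frac{\re^{-\gamma}}{r}\,\nu(m,r)\,(1+O(1/r))$, and the task reduces to the relative estimate
\[
\nu(m,r)=\varrho(m/r)\Big(1+O\big((1+{\mathbf 1}\{m>r\}\re^{\xi(m/r)})/r\big)\Big),
\]
i.e. to transferring the asymptotics of $\nu$ from Chapter~\ref{Chapter2} onto the scale of the error term claimed here.

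I would split according to the size of $m$. If $m\le r$, the interval $\overline{r+1,m}$ is empty, hence $\nu(m,r)=1$; moreover $m/r\le 1$ gives $\varrho(m/r)=1$ and ${\mathbf 1}\{m>r\}=0$, so the required identity is just $1=1\cdot(1+O(1/r))$. If $m>r$, put $u=m/r>1$; then $\xi(u)>\log u>0$ by Lemma~\ref{xi}, so $\re^{\xi(u)}>1$ and the asserted error is $O(\re^{\xi(u)}/r)$. In the principal range, when moreover $m\log m\le r^2$ — which, as here $\sqrt{n\log n}\le r\le n$, holds for every $m\le n$ since then $m\log m\le n\log n\le r^2$, while $r\le m$ because $m>r$ — Theorem~\ref{thm2} applies and gives $\nu(m,r)=\varrho(u)\big(1+O(u\log(u+1)/r)\big)$. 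It then remains to verify $u\log(u+1)\ll\re^{\xi(u)}$: by definition $\re^{\xi(u)}=1+u\xi(u)$ and $\xi(u)>\log u$, so $\re^{\xi(u)}>1+u\log u$, which dominates $u\log(u+1)$ (for $u\ge 2$ because $\log(u+1)\le 2\log u$, and for $1<u<2$ because the left side is bounded while the right side exceeds $2$). This elementary inequality, together with the reduction above, is essentially the whole of the main case.

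For the residual range $m>r$ with $m\log m>r^2$ — which under the hypotheses in force requires $m>n$, and which in the intended application is swallowed by a crude tail bound — both $\nu(m,r)$ and $\varrho(u)$ are super-exponentially small while $\re^{\xi(u)}/r\gg1$, so the claim reduces to the one-sided bound $\nu(m,r)\ll\varrho(u)\re^{\xi(u)}/r$. This I would obtain from the saddle-point upper bound for $\nu(m,r)$ furnished by \eqref{fullx} and Corollary~\ref{2cor}, combined with Lemma~\ref{rho} and the relations between the saddle point $x$ and $\xi$ from Lemmas~\ref{x} and~\ref{4lema} (when $r\ge m^{1/3}(\log m)^{2/3}$ one may instead quote \eqref{nu-cor1} with Corollary~\ref{2cor} directly). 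I expect this extreme-tail case to be the main obstacle: it is the only place where the sharp results of Chapter~\ref{Chapter2} do not deliver the statement outright and one must control the ratio $\nu(m,r)/\varrho(m/r)$ by hand; everything else is bookkeeping around \eqref{P-ell}, Theorem~\ref{thm2} and the expansion $H_r=\log r+\gamma+O(1/r)$.
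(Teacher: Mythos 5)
Your handling of $m\le r$, and of $r<m$ with $m\log m\le r^2$, reproduces the paper's terse proof: reduce to $\nu(m,r)$ via \eqref{P-ell} and $H_r=\log r+\gamma+O(1/r)$, note $\nu(m,r)=\varrho(m/r)=1$ for $m\le r$, and apply Theorem~\ref{thm2} together with the elementary comparison $u\log(u+1)\ll\re^{\xi(u)}$ otherwise. You are also right that Theorem~\ref{thm2}, transposed to $m$, requires $\sqrt{m\log m}\le r\le m$, so there is a genuine residual range, and such $m$ do occur in $K_4$ of the proof of Theorem~\ref{EMRP}; the paper's two-sentence proof is silent about this.

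Your proposed fix, however, fails. The one-sided bound $\nu(m,r)\ll\varrho(u)\re^{\xi(u)}/r$ is not a consequence of \eqref{fullx} or of \eqref{nu-cor1} with Corollary~\ref{2cor}; in fact it is false. For $m^{1/3}(\log m)^{2/3}\le r\le m$, Corollary~\ref{2cor} gives $\nu(m,r)=\varrho(u)\exp\{u\xi(u)/(2r)\}(1+o(1))$, and this exponential factor eventually outgrows $\re^{\xi(u)}/r\sim u\xi(u)/r$. Take $m=r^2$, so $u=r$: then $\nu(m,r)\sim\varrho(r)\exp\{\xi(r)/2\}$ is of order $\varrho(r)\sqrt{r\log r}$, whereas $\varrho(r)\re^{\xi(r)}/r$ is of order $\varrho(r)\log r$; the ratio $\sqrt{r/\log r}$ diverges. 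Thus the relative error $O(\re^{\xi(u)}/r)$ claimed in the lemma is actually wrong once $u$ is of order $r/\log r$ or larger, and no display from Chapter~\ref{Chapter2} will produce the bound you need there. (The final estimate for $d_{TV}(n,r)$ is unaffected, since such $m$ contribute to $K_4$ an amount super-exponentially smaller than $H(u)$; but the right repair is to restrict the range of $m$ in the lemma, or to phrase the error in absolute rather than relative form, not to push the asymptotics of $\nu$ further out as your argument attempts.)
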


\begin{proof} 
If $0\leq m\leq r$, this is an easy exercise.
For $m>r$, apply Theorem~\ref{thm2} and \eqref{P-ell}.
\end{proof}

Here and subsequently, $\xi(v)$, $v\geq 1$, is the function defined in Lemma~\ref{xi}.
%% We further list more properties of the involved  functions.

\begin{lemma} \label{omegau} The function $\omega(v)-\re^{-\gamma}$, $v\geq1$,
 has  only simple zeros $1\leq \lambda_1<\lambda_2<\cdots$ satisfying
   \[
      \lambda_{k+1}-\lambda_k=1-\frac{1}{\log k}+O\left(\frac{\log\log k}{\log^2 k}\right), \quad k\geq 3.
      \]
   \end{lemma}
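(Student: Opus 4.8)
\emph{Proof proposal.} The plan is to reduce the zeros of $\omega(v)-\re^{-\gamma}$ to the zeros of a single slowly modulated cosine and then to read off the spacing from the phase. By Lemma~\ref{omega,R} one has, for $v\ge 1$,
\[
\omega(v)-\re^{-\gamma}=-2\re^{-\gamma}R(v)\big(\cos\vartheta(v)+\psi(v)\big),\qquad \psi(v)=O\left(1/v\right),
\]
with $R$ and $\vartheta$ real, differentiable and $R(v)>0$; the remainder $\psi$ is in fact a $C^1$ function with $\psi'(v)=O(1/v)$, as is visible from the saddle--point remainder underlying Lemma~\ref{omega,R}. Hence the zeros of $\omega(v)-\re^{-\gamma}$ coincide with those of $h(v):=\cos\vartheta(v)+\psi(v)$. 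The first task is to control $\vartheta$. Unwinding the construction of \cite{GT-Crible} and Definition~\ref{Rdefinition}, the phase is $\vartheta(v)=\pi-v\,\Im\zeta_0(v)-\Im I(\zeta_0(v))-\arg\zeta_0(v)-\tfrac12\arg\big(1-1/\zeta_0(v)\big)$ with $\zeta_0(v)$ as in Lemma~\ref{zeta}. Differentiating $\re^{\zeta_0}=1-v\zeta_0$ gives $\zeta_0'(v)=\zeta_0/(v(\zeta_0-1)-1)$, and since $\re^{\zeta_0}-1=-v\zeta_0$ one checks that the contributions of $\Im I(\zeta_0)$ and of $v\,\Im\zeta_0'$ to $\vartheta'$ cancel; together with $\Im\zeta_0(v)=-\pi\xi(v)/(\xi(v)-1)+O(1/\xi(v)^3)$ from Lemma~\ref{zeta} and $\xi'(v)=v^{-1}\exp\{O(1/\log(v+1))\}$ from Lemma~\ref{xi} this yields
\[
\vartheta'(v)=-\Im\zeta_0(v)+O\left(\frac{1}{v\,\xi(v)^2}\right)=\pi+\frac{\pi}{\xi(v)}+O\left(\frac{1}{\xi(v)^2}\right)
\]
for all large $v$; in particular $\vartheta$ is strictly increasing for $v\ge v_0$, say, and $\vartheta(v)\to\infty$.

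Next I would locate and classify the zeros. Since $\cos$ is monotone on each $[\pi m,\pi(m+1)]$ with range $[-1,1]$, on the unique (for $m$ large) $v$--interval where $\vartheta$ traverses $[\pi m,\pi(m+1)]$ the function $h=\cos\vartheta+\psi$ is strictly monotone and changes sign exactly once; thus the zeros of $\omega(v)-\re^{-\gamma}$ in $[v_0,\infty)$ form a strictly increasing sequence $\lambda_{k_0}<\lambda_{k_0+1}<\cdots\to\infty$. Simplicity is immediate from
\[
\big(\omega(v)-\re^{-\gamma}\big)'\big|_{v=\lambda}=-2\re^{-\gamma}R(\lambda)\big(-\sin\vartheta(\lambda)\,\vartheta'(\lambda)+\psi'(\lambda)\big),
\]
since at a zero $\cos\vartheta(\lambda)=-\psi(\lambda)=O(1/\lambda)$ forces $|\sin\vartheta(\lambda)|=1+O(1/\lambda^2)$ while $\vartheta'(\lambda)=\pi+o(1)$ and $\psi'(\lambda)=O(1/\lambda)$, so the right side is nonzero. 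The finitely many zeros with $v<v_0$ are handled directly from the piecewise formulas for $\omega$ (e.g. $\omega(v)=1/v$ on $[1,2]$, so $\lambda_1=\re^{\gamma}$), and for the corresponding small values of $k$ they are absorbed into the (arbitrarily large) implied constant of the claimed error term — which is why the statement is restricted to $k\ge3$, where $\log k$ and $\log\log k$ also make sense.

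Finally I would extract the spacing. At consecutive zeros one has $\vartheta(\lambda_k)=\pi(m_k+\tfrac12)+O(1/\lambda_k)$ with $m_{k+1}=m_k+1$, so $\vartheta(\lambda_{k+1})-\vartheta(\lambda_k)=\pi+O(1/k)$, using $\lambda_k\asymp k$. By the mean value theorem $\lambda_{k+1}-\lambda_k=\big(\pi+O(1/k)\big)/\vartheta'(\eta_k)$ for some $\eta_k\in(\lambda_k,\lambda_{k+1})$, and the expansion of $\vartheta'$ gives $\lambda_{k+1}-\lambda_k=1-1/\xi(\eta_k)+O(1/\xi(\eta_k)^2)+O(1/k)$. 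From $\vartheta'=\pi+O(1/\log v)$ one gets $\vartheta(v)=\pi v+O(v/\log v)$, so $\vartheta(\lambda_k)=\pi k+O(1)$ forces $\lambda_k=k+O(k/\log k)$, hence $\log\lambda_k=\log k+O(1/\log k)$ and, by Lemma~\ref{xi}, $\xi(\eta_k)=\xi(\lambda_k)+O(1/k)=\log k+\log\log k+O(\log\log k/\log k)$. Inverting this expansion gives $1/\xi(\eta_k)=1/\log k-\log\log k/\log^2 k+O\big((\log\log k)^2/\log^3 k\big)$, and substituting yields
\[
\lambda_{k+1}-\lambda_k=1-\frac{1}{\log k}+\frac{\log\log k}{\log^2 k}+O\left(\frac{1}{\log^2 k}\right)=1-\frac{1}{\log k}+O\left(\frac{\log\log k}{\log^2 k}\right),
\]
as asserted.

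The step I expect to be the main obstacle is the first one: establishing $\vartheta'(v)=\pi+\pi/\xi(v)+O(1/\xi(v)^2)$ together with the eventual strict monotonicity of $\vartheta$ and the $C^1$ bound $\psi'(v)=O(1/v)$. This cannot be read off Lemma~\ref{omega,R} as a black box; it requires reopening the saddle--point/residue derivation of that lemma in \cite{GT-Crible}, tracking the phases of $\zeta_0(v)$, $I(\zeta_0(v))$ and the $\sqrt{\,\cdot\,}$ factor in Definition~\ref{Rdefinition} with enough precision, and verifying the cancellation that produces the clean leading term $-\Im\zeta_0(v)$ in $\vartheta'$.
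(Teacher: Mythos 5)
The paper does not prove this lemma: its ``proof'' is a one-line citation to \cite{AH-JLMS90} and \cite[p.~418]{GT}, where the statement appears as part of the asymptotic analysis of $\omega$ via the delay--differential equation. Your proposal therefore supplies an argument where the paper defers to the literature, and in outline it is the Hildebrand--Tenenbaum argument: reduce $\omega-\re^{-\gamma}$ to $-2\re^{-\gamma}R(\cos\vartheta+\psi)$, show $\vartheta$ is eventually strictly increasing with $\vartheta'\to\pi$, locate one simple zero per half-period of $\cos$, and read the gap off the phase derivative. The arithmetic you do with $\xi(\eta_k)=\log k+\log\log k+O(\log\log k/\log k)$ and the expansion $(\pi+O(1/k))/\vartheta'(\eta_k)=1-1/\xi(\eta_k)+O(1/\xi(\eta_k)^2)+O(1/k)$ is correct and does give the stated spacing.

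Two remarks. First, the ``main obstacle'' you flag is in fact already resolved in the thesis: Lemma~\ref{Ru-lema} (cited from \cite[Lemma~4]{GT-Crible}) states that $R,\vartheta$ are differentiable with
\[
\vartheta'(u)=\frac{\pi\xi(u)}{\xi(u)-1}+O\!\left(\frac{1}{\xi(u)^2}\right)=\pi+\frac{\pi}{\xi(u)}+O\!\left(\frac{1}{\xi(u)^2}\right),
\]
which is precisely the expansion you re-derive by hand from Lemma~\ref{zeta}. You could simply invoke Lemma~\ref{Ru-lema} and skip the explicit formula for $\vartheta$ entirely. Second, there remains a genuine gap you do acknowledge but that Lemma~\ref{Ru-lema} does not fill: your argument for simplicity and for the one-zero-per-half-period count needs a $C^1$ control $\psi'(v)=O(1/v)$ on the remainder in Lemma~\ref{omega,R}, and Lemma~\ref{omega,R} as quoted gives only $\psi(v)=O(1/v)$. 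Without that derivative bound a wild oscillation of $\psi$ near the points where $\cos\vartheta$ is small is not excluded, and neither the simplicity computation nor the estimate $\vartheta(\lambda_{k+1})-\vartheta(\lambda_k)=\pi+O(1/\lambda_k)$ is justified. This bound is indeed available from the residue computation in \cite{GT-Crible}, but you would have to extract it, just as you anticipate; and the finitely many zeros in $[1,v_0]$ (simplicity included) also need an ad hoc check, e.g.\ from the explicit piecewise expressions for $\omega$ on $[1,2]$, $[2,3]$, etc.
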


\begin{proof}
	 See \cite{AH-JLMS90} or \cite[p.~418]{GT}.
\end{proof}	 
	 
\smallskip

   \begin{lemma} \label{Ru-lema} The functions $R(u)$ and $\vartheta(u)$  introduced in Lemma~\ref{omega,R} are differentiable and
\[
   R'(u)=-R(u)\Big(\xi_0(u)+O\left(\frac{1}{u}\right)\Big),
   \qquad
    \vartheta'(u)=\frac{\pi \xi(u)}{\xi(u)-1}+O\left(\frac{1}{\xi(u)^2}\right)
 \]
 if $u\geq u_0$, where $\xi_0(u)$ is defined in Lemma~\ref{Robert}.
   \end{lemma}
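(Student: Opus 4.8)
The plan is to exploit the explicit description behind Definition~\ref{Rdefinition}: by construction in \cite{GT-Crible}, $R$ and $\vartheta$ arise from the steepest-descent evaluation of $\omega(v)-\re^{-\gamma}$ as, respectively, the modulus and (a continuous determination of) the argument, up to an additive constant, of
\[
\mathcal{R}(v):=\frac{\exp\{-v\zeta_0(v)-I(\zeta_0(v))\}}{\zeta_0(v)\sqrt{2\pi v(1-1/\zeta_0(v))}},
\]
so that $R(v)=|\mathcal{R}(v)|$ (this is Definition~\ref{Rdefinition}) and $\vartheta(v)-\arg\mathcal{R}(v)$ is constant. Since $\zeta_0(v)$ solves the analytic equation $\re^{\zeta}=1-v\zeta$ whose $\zeta$-derivative $\re^{\zeta}+v$ does not vanish near $\zeta_0(v)$ — indeed $|\re^{\zeta_0(v)}+v|\geq\re^{\xi_0(v)}-v\gg v\log v$ since $\re^{\xi_0(v)}=\re^{\xi(v)}(1+O(\xi(v)^{-2}))$ by Lemmas~\ref{zeta} and~\ref{xi} — the implicit function theorem makes $\zeta_0$, hence $\mathcal{R}$, real-analytic for $v\geq u_0$, and $\mathcal{R}$ is nowhere zero there. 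Consequently $R$ and $\vartheta$ are differentiable with
\[
\frac{R'(v)}{R(v)}=\Re\,\frac{\mathcal{R}'(v)}{\mathcal{R}(v)},\qquad \vartheta'(v)=\Im\,\frac{\mathcal{R}'(v)}{\mathcal{R}(v)},
\]
and the whole lemma reduces to computing $\tfrac{d}{dv}\log\mathcal{R}(v)$.

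\textbf{The computation.} Writing $\zeta_0=\zeta_0(v)$ and differentiating $\log\mathcal{R}(v)=-v\zeta_0-I(\zeta_0)-\log\zeta_0-\tfrac12\log(2\pi v)-\tfrac12\log(1-1/\zeta_0)$, I would use $I'(w)=(\re^{w}-1)/w$ together with the defining relation $\re^{\zeta_0}=1-v\zeta_0$, which yields both $(\re^{\zeta_0}-1)/\zeta_0=-v$ and, upon differentiation, $\zeta_0'(v)=-\zeta_0/(\re^{\zeta_0}+v)$. The first of these makes the contribution $-v\zeta_0'$ (from $-v\zeta_0$) and $-I'(\zeta_0)\zeta_0'=+v\zeta_0'$ (from $-I(\zeta_0)$) cancel, leaving the clean identity
\[
\frac{d}{dv}\log\mathcal{R}(v)=-\zeta_0(v)-\frac{\zeta_0'(v)}{\zeta_0(v)}-\frac{1}{2v}-\frac{\zeta_0'(v)}{2\zeta_0(v)(\zeta_0(v)-1)}.
\]
Then I would feed in Lemma~\ref{zeta}: $\Re\zeta_0(v)=\xi_0(v)$, $\Im\zeta_0(v)=-\pi\xi(v)/(\xi(v)-1)+O(\xi(v)^{-3})$, $\xi(v)\ll|\zeta_0(v)|\ll\log v$, together with $\zeta_0'(v)=O(1/v)$ from the denominator bound above; hence $\zeta_0'(v)/\zeta_0(v)=O(1/(v\xi(v)))=O(\xi(v)^{-2})$ (using $\xi(v)\leq 2\log v\leq v$) and the last term is $O(1/(v\xi(v)^2))$. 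Taking real parts gives $R'(v)/R(v)=-\xi_0(v)+O(1/v)$, i.e.\ the first claim; taking imaginary parts, the $-1/(2v)$ term drops out and $\vartheta'(v)=-\Im\zeta_0(v)+O(\xi(v)^{-2})=\pi\xi(v)/(\xi(v)-1)+O(\xi(v)^{-2})$, the second.

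\textbf{Main obstacle.} Everything here is a short, mechanical computation once the cancellation $-v\zeta_0'+v\zeta_0'=0$ is noticed; the one point demanding care is the \emph{sign} of the leading term of $\vartheta'$, i.e.\ making sure that $\vartheta$, as fixed in \cite{GT-Crible}, is a continuous determination of $\arg\mathcal{R}(v)$ rather than of $-\arg\mathcal{R}(v)$ — matching against the even function $\cos\vartheta$ in Lemma~\ref{omega,R} does not settle this by itself, so it has to be read off from the construction in \cite{GT-Crible} and from the convention in Lemma~\ref{zeta} that $\zeta_0(v)$ is the root with negative imaginary part. Once that orientation is fixed, the identity $\vartheta'(v)=\Im(\mathcal{R}'/\mathcal{R})$ and the expansion above deliver the statement.
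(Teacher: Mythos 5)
The paper offers no proof of this lemma beyond the citation ``See \cite[Lemma~4]{GT-Crible}''; your argument supplies the computation that citation represents, and it is correct. The strategy is the right one: with $\mathcal{R}(v)$ as in Definition~\ref{Rdefinition} one has $R=|\mathcal{R}|$ and $\vartheta$ a continuous branch of $\arg\mathcal{R}$ (up to an additive constant), and logarithmic differentiation collapses cleanly after the cancellation you identify, namely $-v\zeta_0'-I'(\zeta_0)\zeta_0'=-v\zeta_0'+v\zeta_0'=0$, which uses $I'(w)=(\re^{w}-1)/w$ together with the defining equation $\re^{\zeta_0}-1=-v\zeta_0$. The remaining estimates are also right: differentiating $\re^{\zeta_0}=1-v\zeta_0$ gives $\zeta_0'=-\zeta_0/(\re^{\zeta_0}+v)$, and since $|\re^{\zeta_0}+v|\geq \re^{\xi_0}-v\gg v\xi$ (Lemmas~\ref{zeta} and~\ref{xi}) while $\xi\asymp|\zeta_0|\asymp\log v\leq v$, all of $\zeta_0'/\zeta_0$, $1/(2v)$, and $\zeta_0'/(2\zeta_0(\zeta_0-1))$ are $O(1/v)$ in the real part and $O(\xi^{-2})$ in the imaginary part, so $R'/R=-\xi_0+O(1/v)$ and $\vartheta'=-\Im\zeta_0+O(\xi^{-2})=\pi\xi/(\xi-1)+O(\xi^{-2})$. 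The only issue you flag — the sign of $\vartheta'$, i.e.\ whether $\vartheta$ tracks $\arg\mathcal{R}$ or $-\arg\mathcal{R}$, which Lemma~\ref{omega,R} alone cannot decide because $\cos$ is even — is indeed a convention that has to be read off from the construction in \cite{GT-Crible}; once fixed as you indicate, your expansion reproduces the stated formulas exactly. In short, you did more than the paper: where it defers to the reference, you give a self-contained, correct derivation.
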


\begin{proof} 
See \cite[Lemma~4]{GT-Crible}.
\end{proof}

   \begin{lemma} \label{Dick} If  $u\geq 1$, then
      \[
\varrho(u-1)=\varrho(u)\re^{\xi(u)}\left(1+O\left(\frac{1}{u}\right)\right).
\]
  Moreover,
 \[
\varrho'(u)=-\xi(u)\varrho(u)\left(1+O\left(\frac{1}{u}\right)\right)
\]
for $u\geq u_0$ and
\begin{equation*}
\varrho(u-v)\ll\varrho(u)\re^{v\xi(u)}
%\label{rho2}
\end{equation*}
uniformly for $0\leq v\leq u$.
\end{lemma}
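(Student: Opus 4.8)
The plan is to derive all three assertions from the saddle-point representation in Lemma~\ref{rho} together with the properties of $\xi$ collected in Lemma~\ref{xi}, disposing of bounded $u$ throughout by continuity and positivity of $\varrho$ and $\xi$. The computational backbone is the identity $\phi'(v)=-\xi(v)$, where $\phi(v):=-v\xi(v)+I(\xi(v))$; it follows at once from $I'(s)=(\re^s-1)/s$ and the defining equation $\re^{\xi(v)}=1+v\xi(v)$, which gives $(\re^{\xi(v)}-1)/\xi(v)=v$ and hence cancellation of the $v\xi'(v)$ terms. I will also use the quantity $g(v):=v+(1-v)/\xi(v)$ that appears under the square root in Lemma~\ref{rho}: since $\xi(v)>\log v>1-1/v$ for $v>1$, one checks $0<g(v)\le v$ and $g(v)\asymp v$ on $[1,\infty)$, and I will repeatedly invoke the lower bound $\xi'(s)>1/s$ for $s>1$, which is immediate from the first equality in \eqref{deriv}.

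For the first assertion I take $u\ge2$ (so $u-1\ge1$) and expand both $\varrho(u-1)$ and $\varrho(u)$ by Lemma~\ref{rho}, obtaining $\varrho(u-1)/\varrho(u)=\exp\{\phi(u-1)-\phi(u)\}\sqrt{g(u)/g(u-1)}\,(1+O(1/u))$. Then $\phi(u-1)-\phi(u)=\int_{u-1}^{u}\xi(w)\,dw=\xi(u)+O(1/u)$, using $\xi'(s)=O(1/u)$ on $[u-1,u]$ from \eqref{deriv}; and the prefactor ratio is $1+O(1/u)$ because $g(u)-g(u-1)=O(1)$ while $g(u-1)\asymp u$. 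This yields the claim. The derivative formula is then immediate: the Dickman equation gives $\varrho'(u)=-\varrho(u-1)/u$ for $u>1$, and substituting the first assertion together with $\re^{\xi(u)}=1+u\xi(u)$ gives $\varrho'(u)=-\varrho(u)\bigl(\xi(u)+1/u\bigr)\bigl(1+O(1/u)\bigr)=-\xi(u)\varrho(u)(1+O(1/u))$ for $u\ge u_0$, since $1/(u\xi(u))=O(1/u)$ there.

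For the third assertion I split at $v=u-1$. When $v>u-1$ one has $u-v\in[0,1)$, so $\varrho(u-v)=1$, and Lemma~\ref{rho} combined with $\re^{\xi(u)}=1+u\xi(u)$ shows $\varrho(u)\re^{(u-1)\xi(u)}\gg\re^{I(\xi(u))-\xi(u)}/\sqrt{u}\to\infty$ (because $I(\xi)\sim\re^{\xi}/\xi\ge u$ while $\xi=O(\log u)$), whence $1\ll\varrho(u)\re^{v\xi(u)}$. When $0\le v\le u-1$ I again expand by Lemma~\ref{rho} to get $\varrho(u-v)/\varrho(u)\ll\exp\bigl\{\int_{u-v}^{u}\xi(w)\,dw\bigr\}\sqrt{g(u)/g(u-v)}\ll\exp\bigl\{\int_{u-v}^{u}\xi(w)\,dw+\tfrac12\log\tfrac{u}{u-v}\bigr\}$, using $g(u)\le u$ and $g(u-v)\gg u-v$. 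It then remains to show $v\xi(u)-\int_{u-v}^{u}\xi(w)\,dw\ge\tfrac12\log\tfrac{u}{u-v}-O(1)$.

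This last inequality is the main obstacle, and the point is that the ``gap'' on the left is just large enough to pay for the square-root prefactor. Writing the gap as $\int_{u-v}^{u}\int_{w}^{u}\xi'(s)\,ds\,dw=\int_{u-v}^{u}\xi'(s)(s-u+v)\,ds$ and using $\xi'(s)>1/s$ for $s>1$, it is bounded below by $\int_{u-v}^{u}\frac{s-u+v}{s}\,ds=v-(u-v)\log\frac{u}{u-v}$; putting $x=u-v\ge1$ and $L=\log(u/x)$ (so $v=x(\re^{L}-1)$) this equals $x(\re^{L}-1-L)\ge xL^{2}/2\ge L^{2}/2$, and $L^{2}/2\ge L/2-\tfrac18=\tfrac12\log\tfrac{u}{u-v}-O(1)$, as needed. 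Collecting the two cases gives $\varrho(u-v)\ll\varrho(u)\re^{v\xi(u)}$ uniformly for $0\le v\le u$.
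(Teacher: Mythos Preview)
Your argument is correct. The paper does not actually prove this lemma but simply cites \cite[pp.~376--377]{GT}; your derivation---using the saddle-point formula of Lemma~\ref{rho}, the identity $\phi'(v)=-\xi(v)$, and the inequality $\xi'(s)>1/s$ to control the gap $v\xi(u)-\int_{u-v}^{u}\xi(w)\,dw$---is essentially the standard proof one finds in that reference.
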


\begin{proof} 
See \cite[pp.~376 and~377]{GT}.
\end{proof}

%\smallskip

\begin{proof}[Proof of Theorem~\ref{EMRP}]
In the case $u\leq u_0$, the argument is much easier; therefore, we concentrate on the proof when  $u> u_0$.  Let us split the sum in expression \eqref{dtv(n,r)andP} of distance $d_{TV}(n,r)$ into four parts:
\begin{align*}
K_1&:=
\sum_{0\leq m<n-r} \Pr\big(\ell_r(\bar Z)=m\big)\bigg|\re^{H_n}\Pr\big(\ell_{rn}(\bar Z)=n-m\big) -1\bigg|,\\
K_2&:=
\sum_{n-r\leq m<n} \Pr\big(\ell_r(\bar Z)=m\big),\\
K_3&:=
\Pr\big(\ell_r(\bar Z)=n\big)\bigg|\re^{H_n}\Pr\big(\ell_{rn}(\bar Z)=0\big) -1\bigg|,\\
K_4&:=
\sum_{m>n} \Pr\big(\ell_r(\bar Z)=m\big).
\end{align*}

We evaluate $K_l$, $1\leq l\leq 4$, in order of increasing difficulty. An estimate of $K_3$ is easy. Indeed,  applying Lemma \ref{Prm}, we have
\begin{align}
K_3&=\re^{-\gamma}\frac{\varrho(u)}{r}\big|\re^{H_r}-1\big|\bigg(1 +O\left(\frac{\re^{\xi(u)}}{r}\right)\bigg)\nonumber\\
&=
\varrho(u)\bigg(1+O\left(\frac{\re^{\xi}}{r}\right)\bigg).
\label{K3}
\end{align}

We infer again from Lemma  \ref{Prm} that
\begin{align*}
K_2&=\sum_{n-r\leq m<n} \Pr\big(\ell_r(\bar Z)=m\big)\\
&=\bigg(\re^{-\gamma}+O\left(\frac{\re^\xi}{r}\right)\bigg)\frac{1}{r}\sum_{n-r\leq m<n}\varrho\Big(\frac{m}{r}\Big).
\end{align*}
Here we have used the fact that $\xi:=\xi(u)$ is monotonically increasing. Summing up the values of monotonically decreasing function $\varrho(u)$, we proceed
\begin{align}
K_2&=
\bigg(\re^{-\gamma}+O\left(\frac{\re^\xi}{r}\right)\bigg)\int_{u-1}^u\varrho(v) dv+O\left(\frac{\varrho(u-1)}{r}\right)\nonumber\\
&=
\int_{u-1}^u\big|\omega(u-v)-\re^{-\gamma}\big| \varrho(v) dv+ O\left(\frac{ u\re^{\xi}\varrho(u)}{r}\right).
\label{K2}
\end{align}
In the last step, we applied the definition of $\omega(v)$, a relation $u\varrho(u)=\int_{u-1}^u \varrho(v) dv$ and Lemma \ref{Dick}.

Similarly, by Lemmas \ref{Prm} and \ref{KH-ineq} (now the use with $b=\infty$ is legitimate), we have
\begin{align}
K_4&= \sum_{m>n} \Pr\big(\ell_r(\bar Z)=m\big)\nonumber\\
&=
\frac{\re^{-\gamma}}{r}\sum_{m>n}\varrho\Big(\frac{m}{r}\Big)+
O\left(\frac{1}{r^2}\sum_{m>n}\varrho\Big(\frac{m}{r}\Big)\re^{\xi(m/r)}\right)\nonumber\\
&=
\re^{-\gamma}\int_{u}^\infty\varrho(v) dv+ O\left(\frac{\varrho(u)}{r}\right)\nonumber\\
&\quad\ +
O\left(\frac{1}{r}\int_{u}^\infty\varrho(v)\re^{\xi(v)} dv\right)+
O\left(\frac{1}{r^2}\int_{u}^\infty\Big|\big(\varrho(v)\re^{\xi(v)}\big)'\Big| dv\right)\nonumber\\
&=
\int_{u}^\infty\big|\omega(u-v)-\re^{-\gamma}\big| \varrho(v) dv+ O\left(\frac{\re^{\xi}\varrho(u)}{r}\right)+ O\left(\frac{1}{r}\int_{u}^\infty\re^{\xi(v)} d(-\varrho(v))\right)\nonumber\\
&=
\int_{u}^\infty\big|\omega(u-v)-\re^{-\gamma}\big| \varrho(v) dv+ O\left(\frac{\re^{\xi}\varrho(u)}{r}\right).
\label{K4}
\end{align}
Estimating the appearing integrals, we have applied a relation $\xi'(v)\sim v^{-1}$ as $v\to\infty$ and Lemma \ref{Prm}.

Integral $K_1$ is crucial. By Lemma \ref{Prm} again, we can start with
\[
K_1=\frac{\re^{-\gamma}}{r}\Big(1+O\left(\frac{\re^{\xi}}{r}\right)\Big)\sum_{0\leq m<n-r}\varrho\Big(\frac{m}{r}\Big)
\Big|\re^{H_n}\Pr\big(\ell_{rn}(\bar Z)=n-m\big)-1\Big|,
\]
where according to Theorem~\ref{theorem3.2} and \eqref{P-ell2},
\begin{align*}
\re^{H_n}\Pr\big(\ell_{rn}(\bar Z)=n-m\big)
&=\re^{\gamma} \omega\Big(\frac{n-m}{r}\Big)\\
&\quad\ +
O\left(\frac{1}{r} \Big(\frac{n-m}{r}\Big)^{3/2}R\Big(\frac{n-m}{r}\Big)\log^2\Big(1+\frac{n-m}{r}\Big)
\right).
\end{align*}
Hence,
\begin{align*}
K_1&=\frac{1}{r}\sum_{0\leq m<n-r}\varrho\Big(\frac{m}{r}\Big)\Big|\omega\Big(\frac{n-m}{r}\Big)-\re^{-\gamma}\Big|\\
&\quad\
+O\left(\frac{u^{3/2} \log^2 (u+1)}{r^2} \sum_{0\leq m<n-r}\varrho\Big(\frac{m}{r}\Big)R\Big(\frac{n-m}{r}\Big)\right)\\
&=:
K_{11}+K_{12}
\end{align*}
by $\re^{\xi}\ll u\log (u+1)$ and Lemma~\ref{omega,R}.

 By virtue of Lemma \ref{omegau}, the difference $\omega(u)-\re^{\gamma}$ changes the sign in the interval $(1, u]$ at most $O\left(u\right)$ times; therefore, Lemma \ref{KH-ineq} can be applied separately for the partial sums in $K_{11}$. Later the estimates can be glued up. Consequently,
\begin{align*}
K_{11}&=
 \int_0^{u-1} \varrho(v)\big|\omega(u-v)-\re^{-\gamma}\big| dv \\
 &\quad\ +
 O\left(\frac{1}{r}\int_0^{u-1} \Big| d\big(\varrho(v)\big|\omega(u-v)-\re^{-\gamma}\big|\big)\Big|\right)\\
 &=
 \int_0^{u-1} \varrho(v)\big|\omega(u-v)-\re^{-\gamma}\big| dv \\
 &\quad\
+ O\left(\frac{1}{r}\int_0^{u-1} |\varrho'(v)|\big|\omega(u-v)-\re^{-\gamma}\big| dv\right)\\
 &\quad\
+ O\left(\frac{1}{r}\int_1^{u} \varrho(u-v)\Big|d\big|\omega(v)-\re^{-\gamma}\big|\Big|\right).
\end{align*}
As it has been noticed in \cite{GT-Crible} on p. 26, the definitions of $\omega(v)$ and $R(v)$ combined with Lemma \ref{Ru-lema} imply
\[
\Big|d\big|\omega(v)-\re^{-\gamma}\big|\Big|\ll R(v)\log (v+1) dv, \quad v\geq1.
\]
Using also Lemma \ref{Dick}, we obtain
\begin{align*}
K_{11}&=
\int_0^{u-1} \varrho(v)\big|\omega(u-v)-\re^{-\gamma}\big| dv\\
&\quad\ +
 O\left(\frac{\log (u+1)}{r}\int_1^{u} \varrho(u-v) R(v) dv\right).
\end{align*}
The same argument gives
\[
K_{12}\ll \frac{u^{3/2} \log^2 (u+1)}{r}\int_1^{u} \varrho(u-v) R(v) dv
\]
and, further,
\begin{align}
K_1&=
\int_0^{u-1} \varrho(v)\big|\omega(u-v)-\re^{-\gamma}\big| dv\nonumber\\
&\quad\ +
O\left(\frac{u^{3/2} \log^2 (u+1)}{r}\int_1^{u} \varrho(u-v) R(v) dv\right)\nonumber\\
&=
\int_0^{u-1} \varrho(v)\big|\omega(u-v)-\re^{-\gamma}\big| dv +
O\left(\frac{u^{3/2} \log^2 (u+1)}{r} H(u)\right)
\label{K1}
\end{align}
as it has been observed on p. 25 of \cite{GT-Crible} without giving any hint. The last estimate can be substantiated as follows.

   The function
   \[
     g(v):=\big|\omega(v)-\re^{-\gamma}\big| R(v)^{-1}
   \]
   is uniformly continuous and, by Lemma \ref{omegau},  has only $O\left(u\right)$ simple separated  zeros in $[1, u]$. Thus, given $0<\varepsilon<1$, the Lebesgue measure
      \[
         \operatorname{meas} A_\varepsilon:=\operatorname{meas}\{v:\; 1\leq v\leq u,\,  g(v)\leq \varepsilon\}\ll u\delta(\varepsilon)
         \]
      with a $\delta(\varepsilon)\to0$ as $\varepsilon\to0$. Now we can fix $\varepsilon$, so that
      \[
          L(u):=\int_1^{u} \varrho(u-v) R(v) dv\geq 2\int_{A_\varepsilon} \varrho(u-v) R(v) dv=:2L_\varepsilon(u).
          \]
On the other hand,
          \[
          L(u)\leq   L_\varepsilon(u)+\varepsilon^{-1} \int_1^u \varrho(u-v)\big|\omega(v)-\re^{-\gamma}\big| dv.
          \]
       Hence, we obtain an inequality
    \[
          L(u)\leq(1/2)L(u)+ \varepsilon^{-1}H(u)
          \]
implying (\ref{K1}).

        By the asymptotic formula of $H(u)$ given in (\ref{Hu-asymp}), the error terms in estimates (\ref{K3}), (\ref{K2}) and (\ref{K4}) are satisfactory; therefore, inserting them together with (\ref{K1}) into \eqref{dtv(n,r)andP} we complete the proof of Theorem~\ref{EMRP}.
        
\end{proof}        

\chapter{Concluding remarks}\label{Chapter6}	

\begin{itemize}
\item The classic saddle-point method is effective in obtaining asymptotic formulas for the density of permutations
with a given cycle structure pattern in a symmetric group but not in the whole range of variables.

\item The use of the Laplace transforms of the functions involved in the main asymptotic terms supplements the classic method.

\item The technique developed in number theory for the analysis of prime factorization of a natural number can be used to obtain similar results for the cyclic structure
      of a permutation $\sigma\in\mathrm{S}_n$.
      
\item The results for the third problem can be refined by improving the estimates obtained for the densities $\nu(n,r)$ and $\kappa(n,r)$.      
       	
\item The applied techniques are effective and can be used to solve similar problems on other decomposable combinatorial structures. In particular, the methods should be efficient when employed to the logarithmic combinatorial structures described in~\cite{ABT}.	
\end{itemize}

\newpage
\null\thispagestyle{empty}
\newpage

\begin{center}
	\textbf{\Large Short CV}
\end{center}

\begin{center}
	\textbf{\large Robertas Petuchovas}
\end{center}

\bigskip

\begin{description}
		\item[Birth Date and Place] \hfill\\
		\vspace{-30pt}
		\begin{description}
			\item[27/01/1988] Klaipėda city, Lithuania.
		\end{description}
		
	\item[Education] \hfill\\
	\vspace{-30pt}
	\begin{description}
		\item[1998] Graduate from Klaipėda primary school "Aušrinė".
		\item[2006] Graduate from Klaipėda high school "Ąžuolyno".
		\item[2010] Bachelor's degree in Mathematics and Applications of Mathematics 
		
		 \hspace{+1mm} from Vilnius University.
		\item[2012] Master's degree in Mathematics from Vilnius University.
		\item[2016] PhD's degree in Mathematics from Vilnius University.
	\end{description}
	
	\item[Teaching Experience] \hfill\\
	\vspace{-30pt}
	\begin{description}
		\item[2012-2016] Teaching fellow at Vilnius University.
	\end{description}
	\item[Other Work Experience] \hfill\\
	\vspace{-30pt}
	\begin{description}
		\item[2010] \text{(three months)} \mbox{Assistant of Economic Department at "Šilutės Durpės" Ltd.} 
		\item[2011] \text{(six months)} Vilnius Regional manager at "Dėvėdra" Ltd.
	\end{description}
	
		\item[Mother Tongues] \hfill\\
		\vspace{-30pt}
		\begin{description}
			\item Lithuanian, Russian.
		\end{description}

			\item[Hobbies] \hfill\\
			\vspace{-30pt}
			\begin{description}
				\item[2012] Lithuanian amateur chess championship, third place winner.
				\item[2015-2016]\mbox{Long-distance running competition participant.}
			\end{description}
\end{description}


\begin{thebibliography}{9}

\bibitem{Alladi} K.~Alladi, \textit{The Tur\'an--Kubilius inequality for integers without large prime factors}, J. Reine Angew. Math., \textbf{335} (1982), 180--196.

\bibitem{AmdMoll} T.~Amdeberhan and V.H.~Moll, \textit{Involutions and their progenies}, J. Comb., \textbf{6} (4) (2015), 483--508.

\bibitem{ABT}  R.~Arratia, A.D.~Barbour, and S.~Tavar\'{e}, \textit{Logarithmic Combinatorial Structures: A Probabilistic Approach}, EMS Monogr. Math., EMS Publishing House, Z\"{u}rich, 2003.

\bibitem{AT-AP92} R.~Arratia and S.~Tavar\'{e}, \textit{The cycle structure of random permutations}, Ann. Probab., \textbf{20} (1992), 1567--1591.

\bibitem{Bar-Vin1964} M.B.~Barban and A.I.~Vinogradov, \textit{On the number theoretic basis of probabilistic number theory}, Dokl. Akad. Nauk. SSSR, \textbf{154} (1964), 495--496; = Soviet Math. Doklady \textbf{5} (1964), 96--98.

\bibitem{BeMaPaRi-JComTh04} E.A.~Bender, A.~Mashatan, D.~Panario, and L.B.~Richmond, \textit{Asymptotics of combinatorial structures with large smallest component}, J. Comb. Th., Ser. A, \textbf{107} (2004), 117--125. 

\bibitem{MBona} M.~B\' ona, \textit{Combinatorics of Permutations}, A CRC Press Company, Boca Raton London New York Washington, D.C., 2004.

\bibitem{ChHeMo1951} S.~Chowla, I.N.~Herstein, and W.K.~Moore, \textit{On recursions connected with symmetric groups. I.}, Can. J. Math., \textbf{3} (1951), 328--334.

\bibitem{Dickman} K.~Dickman, \textit{On the frequency of numbers containing prime factors of a certain relative magnitude}, Ark. Mat. Astr. Fys., \textbf{22} (1930), 1--14.

\bibitem{Elliott}  P.D.T.A.~Elliott, \textit{Probabilistic number theory: mean value theorems} (Grundlehren der Math. Wiss 239), Springer-Verlag, New York, Berlin, Heidelberg, 1979.

\bibitem{Fl-Sed} P.~Flajolet and R.~Sedgewick, \textit{Analytic Combinatorics}, Cambridge University Press, 2009.


\bibitem{GarePana} T.~Garefalakis and D.~Panario, \textit{Polynomials over finite fields free from large and small degree irreducible factors}, Journal of Algorithms, \textbf{44} (1) (2002), 98--120.

\bibitem{Gonch44} V.L.~Goncharov, \textit{On the field of combinatorial analysis}, Izv. Akad. Nauk SSSR, Ser. Mat.,  \textbf{8} (1944), 3--48 (Russian). Translation: Transl. Am. Math. Soc., \textbf{19} (2) (1962), 1--46.

\bibitem{Gourd-th} X.~Gourdon, \textit{Combinatoire, algorithmique et g\'{e}om\'{e}trie des polyn\^{o}mes}, Ph.D. thesis, \'{E}cole Polytechnique,
1996.

\bibitem{Gourdon} X.~Gourdon, \textit{Largest component in random combinatorial structures}, Discrete Math., \textbf{180} (1998), 185--209.

\bibitem{Granv-EJC06} A.~Granville, \textit{Cycle lengths in a permutation are typically Poisson}, Electronic J. Comb., \textbf{13} (2006), \#R107, 23 pp. 

\bibitem{Granv} A.~Granville, \textit{Smooth numbers: computational number
theory and beyond}, Algorithmic Number Theory, \textbf{44} of \textit{MSRI Publications} (2008), 267--323.

\bibitem{Har+Schon68}   B.~Harris and  L.~Schoenfeld, \textit{Asymptotic expansions for the coefficients of analytic generating functions}, Illinois J. Math., \textbf{12} (1968), 264--277.

\bibitem{Hayman} W.K.~Hayman, \textit{A generalization of Stirling's formula}, J. Reine Angew. Math., \textbf{196} (1956), 67--95.

\bibitem{H86} A.~Hildebrand, \textit{On the number of positive integers $\leq x$ and free of prime factors $>y$}, J.~Number Theory, \textbf{22} (1986), 289--307.

\bibitem{AH-JLMS90} A.~Hildebrand, \textit{The asymptotic behavior of the solutions of a class of differential-difference equations}, J. London Math. Soc., \textbf{42} (1990), 11--31.

\bibitem{AHGT-JA93} A.~Hildebrand and G.~Tenenbaum, \textit{On a class of difference--differential equations arising in number theory}, J. d'Analyse, \textbf{61} (1993), 145--179.

\bibitem{AH+GT} A.~Hildebrand and G.~Tenenbaum, \textit{Integers without large prime factors}, J. Th\'eorie des Nombres de Bordeaux, \textbf{5} (1993), 411--484.

\bibitem{JK1956} J.~Kubilius, \textit{Probabilistic methods in the theory of numbers}, Uspehi Mat. Nauk (N.S.), \textbf{11} (1956), 31--66; = Amer. Math. Soc. Translations, \textbf{19} (1962), \mbox{47--85}.

\bibitem{JK1964} J.~Kubilius, \textit{Probabilistic methods in the theory of numbers}, Amer. Math. Soc. Monographs 11, Providence (1964).

\bibitem{Lugo} M.~Lugo, \textit{The number of cycles of specified normalized length in permutations}, \arxiv{0909.2909v1}, 2009, 15 pp.

\bibitem{EM-LMJ92} E.~Manstavi\v cius, \textit{Remarks on the semigroup elements
free of   large prime factors}, Lith. Math. J., \textbf{32} (4) (1992), 400--410.

\bibitem{EM-Pal92} E.~Manstavi\v cius,  \textit{Semigroup elements free of large prime
	factors}, Analytic and Probabilistic Methods in Number Theory,  \textbf{2} of New Trends in Probability and Statistics (F.~Schweiger and E.~Manstavi\v cius, Eds) (1992), 135--153. 
%% TEV/Vilnius and VSP/Utrecht, .

\bibitem{EM-LMR02}  E.~Manstavi\v cius, \textit{On permutations missing short cycles}, {Lietuvos matem. rink.}, spec. issue, \textbf{42} (2002), 1--6.

\bibitem{NotesDM2015} E. Manstavi\v cius and R.~Petuchovas, \textit{Permutations without long or short cycles}, Electronic Notes in Discrete Mathematics, \textbf{49} (2015), 153--158.

\bibitem{EMRP-ArX15}  E.~Manstavi\v cius and R.~Petuchovas, \textit{Local probabilities for random permutations without long cycles}, Electron. J. Combin., \textbf{23} (1) (2016), \#P1.58, 25~pp.

\bibitem{RAMA2016} E.~Manstavi\v cius and R.~Petuchovas, \textit{Local probabilities and total variation distance for random permutations}, The Ramanujan J., (2016), doi: 10.1007/s11139-016-9786-0, 18 pp.

\bibitem{1713} P.R.~de~Montmort, \textit{Essay d'analyse sur les jeux de hazard, Seconde Edition}, Revue $\&$ augmentée de plusieurs Lettres. Paris: Jacque Quillau, 1713.

\bibitem{Moree} P. Moree, \textit{Nicolaas Govert de Bruijn, the enchantert of friable integers}, Indag. Math., \textbf{4} (2013), 774--801.

\bibitem{MoWy55} L.~Moser and M.~Wyman, \textit{On solutions of $x^d=1$ in symmetric groups}, Canad. J. Math.,  \textbf{7} (1955), 159--168.

\bibitem{MoWy1956} L.~Moser and M.~Wyman, \textit{Asymptotic expansions}, Canad. J. Math., \textbf{8} (1956), 225--233.

\bibitem{Odlyzko} A.M.~Odlyzko, \textit{Discrete logarithms and smooth polynomials}, Finite Fields: Theory, Applications and Algorithms, \textbf{168} of Contemporary Math. (1993), 269--278.

\bibitem{PaRi-Alg01} D.~Panario and B.~Richmond, \textit{Smallest components in decomposable structures: Exp-Log class}, Algorithmica, \textbf{29} (2001), 205--226. 

\bibitem{AofA2016} R.~Petuchovas, \textit{Recent results on permutations without short cycles},  \url{http://aofa.tcs.uj.edu.pl/proceedings/aofa2016.pdf} or \arxiv{1605.02690v1}, 2016, 3~pp.

\bibitem{Pl-Ro29} M.~Plancherel and W.~Rotach, \textit{Sur les valeurs asymptotiques des polyn\^{o}mes d{'}Hermite}, Commentarii Math. Helvetici, \textbf{1} (1929), 273--288.

\bibitem{Sachkov} V.N.~Sachkov, \textit{Asymptotic formulas and limit distributions for combinatorial configurations generated by polynomials}, Discrete Math., \textbf{19} (4) (2007), 3--14 (Russian). Translation: Discrete Math. Appl., \textbf{17} (4) (2007), 319--330.

\bibitem{Schmutz88} E.~Schmutz, \textit{Asymptotic expansions for the coefficients of $\re^{P(z)}$}, Bull. London Math. Soc., \textbf{21} (1989), 482--486.

\bibitem{Stark} D.~Stark, \textit{Explicit limits of total variation distance in
approximations of random logarithmic assemblies
by related Poisson processes}, Comb. Probab. Comput., \textbf{6} (1997), 87--105. 

\bibitem{GT} G.~Tenenbaum, \textit{Introduction to Analytic and Probabilistic Number Theory}, Cambridge Univ. Press, New York and Oakleigh, 1995.

\bibitem{GT-Crible} G.~Tenenbaum, \textit{Crible d'\'{E}ratosth\`{e}ne
et mod\`{e}le de Kubilius}, Number Theory in Progress, Proc. Conf. in Honor of Andrzej Schinzel,
Zakopane, Poland (1997).  K.~Gy\H ory, H.~Iwaniec, J.~Urbanowicz (Eds.), Walter de Gruyter, Berlin, New York, (1999),
1099--1129.

\bibitem{Timash} A.N.~Timashov, \textit{Random permutations with cycle lengths in a given finite set}, Discrete Math., \textbf{20} (1) (2008), 25--37 (Russian). Translation: Discrete Math. Appl., \textbf{18} (1) (2008), 25--39.

\bibitem{WaMe} C.~Wang and I.~Mez\H o, \textit{Some limit theorems with respect to constrained permutations and partitions}, Monatsh. Math., (2016), doi:10.1007/s00605-015-0855-z, 10 pp.

\bibitem{Warlimont} R.~Warlimont, \textit{Arithmetical semigroups, II: sieving by large and small prime elements, sets of multiples}, Manuscripta Math., \textbf{71} (1991), 197--221.

\bibitem{AW-2015} A.~Weingartner, \textit{On the degrees of polynomial divisors over finite fields}, \arxiv{1507.01920}, 2015, 21 pp.

\bibitem{Wilf} H.~Wilf, \textit{generatingfunctionology}, Academic Press, San Diego, 2nd edn, 1994.

\bibitem{VZ-RJ09} V.~Zacharovas, \textit{Voronoi summation formula and multiplicative functions on permutations}, The Ramanujan J., \textbf{24} (2011), 289--329.

\end{thebibliography}
\end{document}